\tikzstyle arrowstyle=[scale=2]
\tikzstyle directed=[postaction={decorate,decoration={markings,
    mark=at position .65 with {\arrow[arrowstyle]{stealth}}}}]
\tikzstyle reverse directed=[postaction={decorate,decoration={markings,
    mark=at position .65 with {\arrowreversed[arrowstyle]{stealth};}}}]
    \tikzstyle left directed=[postaction={decorate,decoration={markings,
    mark=at position -.62 with {\arrow[arrowstyle]{stealth}}}}]
\tikzstyle left reverse directed=[postaction={decorate,decoration={markings,
    mark=at position -.62 with {\arrowreversed[arrowstyle]{stealth};}}}]
\newtheorem{thm}{Theorem}[section]
\newtheorem{cor}[thm]{Corollary}
\newtheorem{lem}[thm]{Lemma}
\newtheorem{clm}[thm]{Claim}
\newtheorem{prop}[thm]{Proposition}
\newtheorem{defi}[thm]{Definition}
\theoremstyle{remark}
\newtheorem{rmk}[thm]{Remark}
\theoremstyle{definition}
\newtheorem{Def}[thm]{Definition}                                        %
\title{Bessel Functions, Heat Kernel and  the Conical K\"ahler-Ricci Flow.}
\author{Xiuxiong Chen, Yuanqi Wang}
\begin{document}
\maketitle{}
\begin{abstract}
   Following Donaldson's oppenness theorem on deforming a conical K\"ahler-Einstein metric, we prove a parabolic Schauder-type estimate with respect to conical metrics. As a corollary, we show that the  conical K\"ahler-Ricci Flow exists for short time. The key is to establish  the relevant  heat kernel estimates,  where we use the Weber's formula on Bessel function of the second kind and   Carslaw's heat kernel representation in \cite{Car}.
\end{abstract}
\section{Introduction.}
Let $(M, [\acute{\omega}])$ be a compact  K\"ahler manifold with a smooth  K\"ahler metric $\acute{\omega}$. Let $D$ be a smooth divisor in $M$. Let  $L$ be the line bundle associated to $D$ and let   $S$ be the holomorphic section 
which defines $D$. Following Donaldson\cite{Don}, we want to  study K\"ahler metrics on $M \setminus  D$ with cone singularities of cone angle $2\beta\pi$ transverse to $D,\;$ where $
0< \beta < 1.\;$ 
In \cite{Don}, Donaldson studied the linear theory of how conical K\"ahler-Einstein metrics deform when the cone angle varies.  In particular, he proved that the set of cone angles where  $M$ admits a K\"ahler-Einstein metric is open if the  only holomorphic vector field tangent to $D$ is the zero vector field.  This ``openness theorem"
 fits well into an ambitious program \cite{Don08} of Donaldson in which he proposed a new continuity approach to attack
the renown K\"ahler-Einstein problem via deforming cone angles. Namely, first show the existence of K\"ahler-Einstein metrics when conical angle is arbitrary small. Then, gradually open up the angle and show this process is both open and closed, using stability conditions.  Donaldson's work \cite{Don}, as well as his program, inspired a lot
new research activities surrounding the existence of conical K\"ahler
-Einstein metrics.  The later topic (Singular K\"ahler Einstein metrics) really goes
back much earlier in history (c.f., Yau \cite{Yau}, Tian-Yau, \cite{Tian Yau1}\cite{TY87},  etc).  For more recent references, we refer to two recent works  \cite{CGP} and \cite{EGZ} and reference therein.
\\

In this paper, following Donaldson's work on the green function in \cite{Don}, we  study the  existence
of  an evolution process.  On one hand, we want to deform this K\"ahler metric in the negative direction of its Ricci form; on the other hand, we want to keep the cone structure
fixed during the``evolution" process. The second goal poses the main analytic challenge.     A crucial estimate in \cite{Don} is the estimate of the Green function  for the standard flat conical  metric. It is somewhat surprising that the proof of such
an estimate relies heavily on  direct calculations and estimate of special functions (Bessel functions). In our approach of evolution process, inevitably we need to  estimate the heat Kernel for
conical metrics. As expected, such an estimate need to employ heavy calculation on Bessel functions. The main conclusion of ours are summarized in Theorem \ref{short time existence of CKRF: metric version with a single divisor}, \ref{Schauder estimate of the linear equation: single divisor}, \ref{all the properties of the h.k1}. The diagram in Figure 1 is an  overall  description of the organization of this article.  \\ 

It is perhaps worthwhile to give a brief account to the history of the K\"ahler- Einstein metric problem first.  In 1950s, E. Calabi asked a famous question: if the first Chern class $C_1 <0, = 0, > 0$, does there exist a K\"ahler-Einstein metric with Ricci curvature $<0, =0, >0$ respectively? The case of $C_1 <0$ is settled by Aubin and
by Yau , while Yau solved the case $C_1 = 0.\;$ For the case $C_1 >0$, among work
of others, Tian give a complete solution to the existence of KE metric on Fano surfaces. In higher
dimensions,  the existence problem is very hard.  In early 1980s, Yau conjectured that the existence of K\"ahler-Einstein metric is related to certain algebraic notions of ``stability". Tian \cite{Tian97} introduced a notion called K-stability and proved that it is a necessary condition for the existence of KE metrics. The K-stability was reformulated later in more algebraic ways  by Donaldson \cite{donaldson02}.  The necessary part
has been established by Tian\cite{Tian97},  Stoppa\cite{Stoppa}, Berman\cite{Berman12}  with various
generalities. The existence part is much more difficult. Recently, Chen-Donaldson-Sun 
confirmed this conjecture of Yau in
a series of work \cite{CDS0}, \cite{CDS1},\cite{CDS2}, \cite{CDS3}.\\

While the main goal stated in Donaldson's program has been successfully tackled, it opens a ``door" for exciting future
study in K\"ahler geometry. Following Calabi, one might ask if $(M, (1-\beta)D)$ supports a K\"ahler-Einstein metric with cone angle $2\beta\pi$ transversal to $D$. This existence problem can certainly be reduced to a complex Monge-Ampere equation in
$M$ and one may attempt to solve its existence via the continuous method.  This approach has been taken by a number of authors.  As examples,  we list a few: the work of  Berman \cite{Berman}, Brendle\cite{Brendle} Jeffres-Mazzeo-Rubinstein \cite{JMR}, Li-Sun \cite{LS}, Song-Wang\cite{SongWang}, and Campana- Guenancia-Paun \cite{CGP}. Calamai -Zheng studied the geodesics in the space of K\"ahler cone metrics in \cite{KaiZheng} and A. Hugh  \cite{AHugh}  studied the geodesic in the space of K\"ahler cusp metrics. Historically, existence and uniqueness of  conical KE metric over Riemann surfaces has been extensively studied. See Troyanov's work \cite{Troyanov}  and McOwen's work \cite{McOwen} for the existence, and Luo-Tian's work \cite{LuoTian} for the uniqueness.\\

First, let us define a  conical K\"ahler metric.

\begin{defi}\label{definition of (alpha,beta) conical metric:single divisor.}  For any $\alpha \in (0, \min\{{1\over \beta}-1,1\}),\;$ a   K\"ahler form $\omega$ is said to be an $(\alpha, \beta)$  conical K\"ahler metric on $(M, (1-\beta) D)$  if it satisfies the following conditions
\begin{enumerate}
\item $\omega$ is a closed positive $(1,1)$ current on $M$.
\item For any point $p\in D$, there exists a small holomorphic chart $({\cal U}, \{z_i\})$
such that in this chart, $\omega$ is quasi conformal to the standard cone metric
\[
{\sqrt{-1}\over 2}  |z_1|^{2\beta - 2} d z_1 \wedge d \bar z_1 + {\sqrt{-1}\over 2}  \displaystyle \sum_{j=2}^n d z_j \wedge d \bar z_j.
\]
\item There exists a $\phi\in C^{2,\alpha,\beta}(M)$ and a smooth
K\"ahler metric $\acute{\omega}$ such that \[\omega=\acute{\omega}+ i \partial \bar \partial \phi.\]   See Definition \ref{Def of Schauder norms} for the
definition of the function space $C^{2,\alpha,\beta}(M).\;$
\end{enumerate}
\end{defi}

 Notice that the following model metric defined in \cite{Don} satisfies the above definition.
 \begin{equation}\label{Model conical metric defined by Donaldson}\omega_{D}=\omega^{'}+\delta i \partial \bar \partial |S|^{2\beta}
 , \end{equation}
 $\omega^{'}$ is a smooth K\"ahler metric over M and
$\delta $ is sufficiently small. Similar to  (\ref{Regularity of h omega 0}), $\omega_{D}$ is a $(\alpha_0,\beta)$ metric, $ \alpha_0=\min\{\frac{2}{\beta}-2,\ 1 \}$.\\

Now we are ready to introduce the conical K\"ahler-Ricci flow precisely as
\begin{equation}\label{Definition of CKRF}
{{\partial \omega_g}\over {\partial t}}  = \beta\omega_g -   Ric(g)+ 2\pi(1-\beta)[D],
\end{equation}
where the initial metric $g(0)$ is an $(\alpha, \beta)$ type  K\"ahler metric in $M.\;$ (\ref{Definition of CKRF})  should be considered as an equaiton of closed currents over the whole $M$ (not only over $M\setminus D$). Over $M\setminus D$,   (\ref{Definition of CKRF}) reduces exactly to the usual Ricci flow equation: 
\begin{equation*}
{{\partial \omega_g}\over {\partial t}}  = \beta\omega_g-   Ric(g) .
\end{equation*}

We can also consider more general conical K\"ahler-Ricci flows as 
\begin{equation*}
{{\partial \omega_g}\over {\partial t}}  = \mu \omega_g -   Ric(g)+ 2\pi(1-\beta)[D].
\end{equation*} 
for any number $\mu$, and the short time existence as Theorem  \ref{short time existence of CKRF: metric version with a single divisor} holds equally well. For the sake of brevity and to make the reader understand better, we only consider the flow 
(\ref{Definition of CKRF}) in this article. \\

At the level of potentials, we have
\begin{equation}
 \frac{\partial \phi}{\partial t}=\log\frac{(\omega_D+\sqrt{-1}\partial\bar{\partial}\phi)^n}{\omega_D^n}+\beta \phi+ h_{\omega_D}.
\end{equation}
Here $h_{\omega_D}$ satisfies $\sqrt{-1} \partial \bar \partial h_{\omega_D} = \beta\omega_D - Ric(\omega_D)+2\pi(1-\beta)[D]$ over $M.\;$ The $h_{\omega_D}$ has a nice expression as 
\[h_{\omega_D}=\beta\delta |S|^{2\beta}+\log \frac{\omega_D^n |S|^{2-2\beta} }{\omega^n}+\acute{F},\]
where $\acute{F}$ is a smooth function over $M$. Routine calculation shows that
\begin{equation}\label{Regularity of h omega 0}h_{\omega_D}\in C^{\alpha_0,\beta},\ \alpha_0=\min\{\frac{2}{\beta}-2,\ 1 \}.
\end{equation}

The main problem in this paper is to show the following: if the initial metric  $\omega_0$ is of $(\alpha, \beta)$ type, do we have
a one parameter family of   $(\alpha, \beta)$ type conical K\"ahler metrics $\{\omega(t) (t\in [0, T])\}$ initiated from $\omega_0$ which satisfies the Ricci flow 
equation?
Unlike the classical settings, the short time existence of such a flow is a real challenge:  the nature of  heat
flows is to ``smooth" singularities, while the key point in this flow is to preserve the singularity structure.\\

Our main theorem on the short time existence of conical K\"ahler-Ricci flow can be formulated as follows:
\begin{thm}\label{short time existence of CKRF: metric version with a single divisor}   Suppose  $g_0$ is an $(\acute{\alpha},\beta)$ conical K\"ahler metric in $(M, (1-\beta) D)\;$ where $\alpha' \in  (0, \min\{\frac{1}{\beta}-1,1\}).\;$
For any $\alpha \in (0, \alpha') ,$ there exists an $T_0>0$  (which depends on $g_0$) such that the conical K\"ahler-Ricci flow (\ref{Definition of CKRF})
initiated from $g_0$ admits a solution $g(t)$, $t\in [0,T_0]$ which is smooth (both in space and time) in $(M\setminus D) \times [0,T_0]$. Moreover we have \begin{itemize}
       \item  for every $t\in [0,T_0]$, $g(t)$ is  an $(\alpha,\beta)$ conical metric in $(M, (1-\beta) D);\;$
       \item $g(t)$ is a $C^{\alpha,\frac{\alpha}{2},\beta}[0,T_0]$-family of conical  metrics (see Definition \ref{Def of a C 2 alpha, alpha over 2 family of  metrics}).
       \item  for any $0<\widehat{\alpha}\leq \alpha$, $g(t)$ is the unique solution of (\ref{Definition of CKRF}) in the class of $C^{\widehat{\alpha},\frac{\widehat{\alpha}}{2},\beta}[0,T]$-family of metrics.  
     \end{itemize}

\end{thm}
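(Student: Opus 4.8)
The plan is to pass from the geometric flow (\ref{Definition of CKRF}) to the scalar parabolic complex Monge--Amp\`ere equation at the level of K\"ahler potentials, and to solve that equation by a fixed-point argument built on the parabolic Schauder estimate of Theorem \ref{Schauder estimate of the linear equation: single divisor}. First I would use (\ref{Model conical metric defined by Donaldson}) and the regularity behind (\ref{Regularity of h omega 0}) to write $\omega_{g_0}=\omega_D+i\partial\bar\partial\psi_0$ with $\psi_0\in C^{2,\alpha',\beta}(M)$ and $\omega_D+i\partial\bar\partial\psi_0>0$; then a family $g(t)$ solving (\ref{Definition of CKRF}) from $g_0$ corresponds, through $\omega_{g(t)}=\omega_D+i\partial\bar\partial\phi(t)$, to a solution of the Cauchy problem
\[
\frac{\partial\phi}{\partial t}=\log\frac{(\omega_D+i\partial\bar\partial\phi)^n}{\omega_D^n}+\beta\phi+h_{\omega_D},\qquad \phi(0)=\psi_0,
\]
read over $M\setminus D$ with $\phi(t)\in C^{2,\alpha,\beta}$. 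Since the right-hand side is a genuine function on $M$ -- the current $2\pi(1-\beta)[D]$ having been absorbed into $h_{\omega_D}$ as in the discussion preceding (\ref{Regularity of h omega 0}) -- any such $\phi$ automatically yields a solution of the current equation (\ref{Definition of CKRF}), so it suffices to solve the displayed scalar problem.

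Next I would linearize and set up a contraction. The linearization at a path $\widetilde\phi$ is the parabolic operator $\frac{\partial}{\partial t}-\Delta_{g_{\widetilde\phi}}-\beta$, whose principal part is the conical Laplacian of $\omega_D+i\partial\bar\partial\widetilde\phi$. I would fix $T>0$ small and work in a small ball $B$ of the parabolic H\"older space $C^{2,\alpha,\frac{\alpha}{2},\beta}(M\times[0,T])$ around the time-independent path $\psi_0$; for $\widetilde\phi\in B$ define $\Psi(\widetilde\phi)=\psi$ to be the solution of the \emph{linear} Cauchy problem
\[
\frac{\partial\psi}{\partial t}-\Delta_{g_{\widetilde\phi}}\psi=\log\frac{(\omega_D+i\partial\bar\partial\widetilde\phi)^n}{\omega_D^n}-\Delta_{g_{\widetilde\phi}}\widetilde\phi+\beta\widetilde\phi+h_{\omega_D},\qquad \psi(0)=\psi_0,
\]
so that a fixed point of $\Psi$ solves the scalar flow. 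To make $\Psi$ well defined I need solvability and the a priori estimate for $\frac{\partial\psi}{\partial t}-\Delta_{g_{\widetilde\phi}}\psi=f$ in conical parabolic H\"older spaces; as the coefficients of $\Delta_{g_{\widetilde\phi}}$ are only conically H\"older, I would freeze coefficients at points of $D$ (and away from $D$), compare with the model conical heat operator, and sum up in the classical Schauder fashion, the essential input being Theorem \ref{Schauder estimate of the linear equation: single divisor} and the heat-kernel bounds behind it. Then, for $T$ small, the nonlinear remainder $\log\frac{(\omega_D+i\partial\bar\partial\widetilde\phi)^n}{\omega_D^n}-\Delta_{g_{\widetilde\phi}}\widetilde\phi$ is Lipschitz in $\widetilde\phi$ with small constant on $B$ (its part linear in $i\partial\bar\partial\widetilde\phi$ cancelling, leaving quadratically small terms), so that for $T_0$ small enough $\Psi$ maps $B$ into itself and is a contraction; its unique fixed point $\phi$ is the desired solution.

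The three bullets then follow from the regularity of $\phi$. For each $t$, $\phi(t)\in C^{2,\alpha,\beta}(M)$ and $\omega_D+i\partial\bar\partial\phi(t)>0$ (positivity persists from $t=0$ because $\phi$ stays $C^2$-close to $\psi_0$), so $g(t)$ meets Definition \ref{definition of (alpha,beta) conical metric:single divisor.} and is an $(\alpha,\beta)$ conical metric; that $g(t)$ is a $C^{\alpha,\frac{\alpha}{2},\beta}[0,T_0]$-family is precisely the parabolic H\"older regularity carried by the fixed-point space, read off by applying the Schauder estimate once more to $i\partial\bar\partial\phi$ and $\frac{\partial\phi}{\partial t}$. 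Smoothness of $g(t)$ on $(M\setminus D)\times[0,T_0]$ is a local matter away from $D$, where the equation is a uniformly parabolic complex Monge--Amp\`ere equation with smooth data and interior parabolic bootstrapping applies. For uniqueness in $C^{\widehat{\alpha},\frac{\widehat{\alpha}}{2},\beta}[0,T]$, I would take two solutions $\phi_1,\phi_2$, note that $w=\phi_1-\phi_2$ satisfies a linear parabolic equation $\frac{\partial w}{\partial t}=\Delta_{\bar g}w+\beta w$ with $w(0)=0$ for a metric $\bar g$ interpolating between $g_1$ and $g_2$, and conclude $w\equiv0$ by the conical maximum principle (or by a Gr\"onwall estimate fed by the Schauder bound).

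The genuine difficulty lies in the linear a priori estimate underlying the definition of $\Psi$: obtaining a parabolic Schauder estimate for $\frac{\partial}{\partial t}-\Delta_g$ that is uniform over conical metrics $g$ which are only H\"older-close to the model cone. This rests on sharp pointwise bounds for the conical heat kernel and its derivatives near $D$, derived from Weber's formula for the Bessel function of the second kind together with Carslaw's heat-kernel representation -- precisely the technical heart announced in the abstract. A lesser, but unavoidable, point is the strict loss $\alpha<\alpha'$: it is forced by the limited regularity $h_{\omega_D}\in C^{\alpha_0,\beta}$ recorded in (\ref{Regularity of h omega 0}) and by the margin needed in the contraction and interpolation steps, and it affects none of the stated conclusions.
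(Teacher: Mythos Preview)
Your overall architecture (pass to the scalar Monge--Amp\`ere equation, linearize, and close by a contraction built on Theorem~\ref{Schauder estimate of the linear equation: single divisor}) matches the paper's, and the handling of the three bullets is fine. But the contraction step, as you have written it, does not close. You center the ball $B$ at $\psi_0$ with base metric $\omega_D$, and you claim that the remainder $Q(\widetilde\phi)=\log\frac{(\omega_D+i\partial\bar\partial\widetilde\phi)^n}{\omega_D^n}-\Delta_{g_{\widetilde\phi}}\widetilde\phi$ has small Lipschitz constant on $B$ because ``its part linear in $i\partial\bar\partial\widetilde\phi$ cancels.'' It is true that $Q$ is quadratic in $i\partial\bar\partial\widetilde\phi$ \emph{around zero}, but on a ball around $\psi_0$ its Lipschitz constant is of order $|i\partial\bar\partial\psi_0|$, which is not small. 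Worse, your linear operator also depends on $\widetilde\phi$, so the difference $\Psi(\widetilde\phi_1)-\Psi(\widetilde\phi_2)$ picks up an additional $(\Delta_{g_{\widetilde\phi_1}}-\Delta_{g_{\widetilde\phi_2}})\psi_2$ term of the same size. And $\Psi(\psi_0)-\psi_0$ solves a linear equation with a fixed, $O(1)$ source (it does \emph{not} vanish at $t=0$), so the Schauder estimate only gives an $O(1)$ bound in $C^{2+\alpha}$; taking $T$ small does not make $\Psi(\psi_0)$ land in a small ball around $\psi_0$.

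The paper repairs both issues with two shifts. First, it rewrites the flow with base metric $\omega_0=\omega_D+i\partial\bar\partial\psi_0$ and initial potential $0$ (this is exactly Theorem~\ref{short time existence of CKRF:potential version, single divisor}, with $f=h_{\omega_D}+\log(\omega_0^n/\omega_D^n)\in C^{\alpha',\beta}$); now the quadratic remainder is genuinely centered at $0$, so its Lipschitz constant on a small ball is $O(\epsilon)$. Second, and this is the point your sketch misses, it performs a \emph{subtraction trick}: solve the linear problem $\partial_t u=\Delta_{\omega_0}u+f$, $u(0)=0$, and set $v=\phi-u$. The new source $F(u)=\log(\omega_u^n/\omega_0^n)-\Delta_{\omega_0}u$ satisfies $F(u)|_{t=0}=0$, and since $F(u)\in C^{\alpha',\alpha'/2,\beta}$ with $\alpha<\alpha'$, one gets $|F(u)|_{\alpha,\alpha/2,\beta,[0,T]}\le C\,T^{(\alpha'-\alpha)/2}\to 0$. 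This is precisely the mechanism (the paper's Claim~\ref{first iteration is small}) that forces the first iterate into the small ball and makes the Picard map $\Phi=L^{-1}\circ E$ a contraction. Your last paragraph names the loss $\alpha<\alpha'$ but does not supply the vanishing-at-$t=0$ device that actually exploits it.
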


\begin{rmk}
For short time existence theorem, we allow $D$ to be a reducible,  smooth divisor.  
 In \cite{Don}, one needs the kernel of the linearized operator to be trivial to show that
 the angle can be perturbed.  This is not needed in our case since we don't change angles. We only need the Schauder estimates for Linearized operator (c.f. Theorem 1.7  below). \\

In fact, a more general version is true (see Theorem \ref{short time existence of CKRF: metric version with multiple divisors and multiple angles}). In Theorem \ref{short time existence of CKRF: metric version with multiple divisors and multiple angles}, we allow the angles on each component of $D$ to be different.
\end{rmk}
\begin{rmk}
When $n=1$ the short time existence in a different function space  is proved by Yin in 2009 in \cite{Yin}, using a very different method from ours. Recently, Mazzeo announced  another short time existence theorem on conical Ricci flows over conical Riemann surfaces,  which is proved in an unpulished work jointly by Mazzeo, Rubinstein, and Sesum.  Both results are very interesting to us and are  relevant to the dimension 2
(complex dimension 1) part of our work.
\end{rmk}
\begin{rmk} It's interesting to fit Theorem \ref{short time existence of CKRF: metric version with a single divisor} into a more general picture.  To be precise,   we have at least three kinds of Ricci flows starting from an ($\alpha, \beta$) metric as follows. The first kind  is as the solution in Theorem \ref{short time existence of CKRF: metric version with a single divisor} (c.f. the preceding Remark), which perserves the cone structure; the second kind is as the solutions studied  by Topping over incomplete Riemann surfaces (cf. \cite{Topping}), which flow  incomplete (2-dimensional) metrics 
instaneously to be complete;  the third kind is as the solutions considered by 
Chen-Tian-Zhang in \cite{CTZ} (and Chen-Ding in \cite{ChenDing}), which smooth out the conical singularities immediately after the flows start.
\end{rmk}

\begin{rmk}

When $\beta=1$, this reduces to the renown K\"ahler-Ricci flow. After the fundamental work of G. Perelman in the K\"ahler-Ricci flow, this has become a powerful tool in attacking the existence of K\"ahler-Einstein problems in Fano manifolds.  There has been  extensive research done in this subject. We refer interested readers to  \cite{TZ3},  \cite{CWB},  \cite{SunWang}, and references therein for further readings. \\

In a sequel of this paper \cite{CYW}, we will prove the long time existence of this conical K\"ahler-Ricci flow. As consequence, we prove
the convergence of the CKRF when the modified first Chern class is either negative or vanishes (cf. Cao\cite{Cao} for comparison in
the smooth KRF settings). We will also study the extension of Perelman's functional to this flow.
\end{rmk}

 At the level of potential functions, Theorem \ref{short time existence of CKRF: metric version with a single divisor} is implied by the following theorem on parabolic Monge-Ampere equations  which we use  all efforts to  prove.
\begin{thm}\label{short time existence of CKRF:potential version, single divisor} Assumptions as in Theorem 1.2. Suppose $0<T<\infty$ and  $f\in C^{2+\acute{\alpha},1+\frac{\acute{\alpha}}{2},\beta}[0,T]$. Suppose $\mu$ is a constant. Then there exists an $0<T_0\leq T$  such that  the conical K\"ahler-Ricci flow equation \begin{displaymath}
 \left \{
\begin{array}{ccr}\label{Def of potential CKRF equation}
 & \frac{\partial \phi}{\partial t}=\log\frac{(\omega_0+\sqrt{-1}\partial\bar{\partial}\phi)^n}{\omega_0^n}+\mu\phi+ f\\
  & \phi=0   \ \textrm{when} \  t=0\\
\end{array} \right.
\end{displaymath}
admits a solution $\phi \in C^{2+\alpha,1+\frac{\alpha}{2},\beta}[0,T_0]$. Furthermore, for any $0<\widehat{\alpha}\leq \alpha$, $\phi$ is the unique solution  in  $C^{2+\widehat{\alpha},1+\frac{\widehat{\alpha}}{2},\beta}[0,T_0]$.
\end{thm}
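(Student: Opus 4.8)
The plan is to solve this parabolic complex Monge--Amp\`ere equation by a contraction mapping argument in the parabolic conical H\"older space $C^{2+\alpha,1+\frac{\alpha}{2},\beta}[0,T_0]$ (see Definition \ref{Def of Schauder norms}), with the parabolic Schauder estimate for conical linear operators, Theorem \ref{Schauder estimate of the linear equation: single divisor}, and the heat kernel existence statement behind it, Theorem \ref{all the properties of the h.k1}, serving as the linear inversion step.

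\textbf{Step 1: reduce to small data.} Since $\alpha<\acute{\alpha}$ we have $f\in C^{\alpha,\frac{\alpha}{2},\beta}[0,T]$. First I would solve the linear conical heat equation $\partial_t\phi_1=\Delta_{\omega_0}\phi_1+\mu\phi_1+f$ with $\phi_1|_{t=0}=0$ on $[0,T]$, obtaining $\phi_1\in C^{2+\alpha,1+\frac{\alpha}{2},\beta}[0,T]$ with $\|\phi_1\|_{C^{2+\alpha,1+\frac{\alpha}{2},\beta}[0,T_0]}\le C\|f\|_{C^{\alpha,\frac{\alpha}{2},\beta}[0,T]}$, $C$ independent of $T_0\le T$. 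Writing $\phi=\phi_1+\psi$ and $\omega_1=\omega_0+\sqrt{-1}\partial\bar\partial\phi_1$, the equation for $\psi$ becomes
\[
\partial_t\psi-\Delta_{\omega_1}\psi-\mu\psi=\Q_{\omega_1}(\psi)+g,\qquad \psi|_{t=0}=0,
\]
where $\Q_{\omega}(\psi):=\log\frac{(\omega+\sqrt{-1}\partial\bar\partial\psi)^n}{\omega^n}-\Delta_\omega\psi$ is the quadratic remainder of the Monge--Amp\`ere operator and $g:=\log\frac{\omega_1^n}{\omega_0^n}-\Delta_{\omega_0}\phi_1=\Q_{\omega_0}(\phi_1)$. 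The point of the reduction is that $g$ vanishes at $t=0$ and is quadratic in $\sqrt{-1}\partial\bar\partial\phi_1$; since $\sqrt{-1}\partial\bar\partial\phi_1$ vanishes at $t=0$, its $C^0$-norm on $[0,T_0]$ is $\le C\|f\|T_0^{\alpha/2}$ while its $C^{\alpha,\frac{\alpha}{2},\beta}$-norm stays bounded, so the multiplicative (product and composition-with-smooth-function) rules for conical H\"older norms give $\|g\|_{C^{\alpha,\frac{\alpha}{2},\beta}[0,T_0]}\le C\|f\|^2T_0^{\alpha/2}\to 0$ as $T_0\to 0$. For $T_0$ small, $\omega_1$ is uniformly quasi-isometric to $\omega_0$ and hence to the model cone, so $\Delta_{\omega_1}$ is an admissible conical parabolic operator with $C^{\alpha,\frac{\alpha}{2},\beta}$ coefficients to which Theorem \ref{Schauder estimate of the linear equation: single divisor} applies with a constant uniform in $T_0$.

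\textbf{Step 2: the fixed point.} On the ball $B_r=\{\psi\in C^{2+\alpha,1+\frac{\alpha}{2},\beta}[0,T_0]:\psi|_{t=0}=0,\ \|\psi\|\le r\}$, with $r>0$ small to be chosen, I would let $\Phi(\psi)$ be the solution $u$ of $\partial_t u-\Delta_{\omega_1}u-\mu u=\Q_{\omega_1}(\psi)+g$, $u|_{t=0}=0$; for $r,T_0$ small $\omega_1+\sqrt{-1}\partial\bar\partial\psi>0$, so $\Q_{\omega_1}(\psi)$ is defined, and Theorem \ref{Schauder estimate of the linear equation: single divisor} yields existence of $u$ with $\|u\|\le C_S\big(\|\Q_{\omega_1}(\psi)\|_{C^{\alpha,\frac{\alpha}{2},\beta}}+\|g\|_{C^{\alpha,\frac{\alpha}{2},\beta}}\big)$. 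A Taylor expansion of $\log\det$ together with the product and composition rules for conical H\"older norms (and again $\sqrt{-1}\partial\bar\partial\psi|_{t=0}=0$) gives, for $\psi,\psi_i\in B_r$,
\[
\|\Q_{\omega_1}(\psi)\|_{C^{\alpha,\frac{\alpha}{2},\beta}}\le C\,T_0^{\alpha/2}\|\psi\|^2,\qquad \|\Q_{\omega_1}(\psi_1)-\Q_{\omega_1}(\psi_2)\|_{C^{\alpha,\frac{\alpha}{2},\beta}}\le C\,T_0^{\alpha/2}\big(\|\psi_1\|+\|\psi_2\|\big)\|\psi_1-\psi_2\|.
\]
Combining these with $\|g\|\le C\|f\|^2T_0^{\alpha/2}$, one checks that for $T_0$ small enough (depending on $g_0,f,\mu,r$) $\Phi$ maps $B_r$ into itself and is a contraction; its unique fixed point $\psi$ produces $\phi=\phi_1+\psi\in C^{2+\alpha,1+\frac{\alpha}{2},\beta}[0,T_0]$ solving the Monge--Amp\`ere equation with $\omega_0+\sqrt{-1}\partial\bar\partial\phi>0$. (Equivalently one could invoke the inverse function theorem, the linearization at $\psi=0$ being $\partial_t-\Delta_{\omega_1}-\mu$, an isomorphism by Theorem \ref{Schauder estimate of the linear equation: single divisor}.)

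\textbf{Step 3: regularity, uniqueness, and the main obstacle.} On $(M\setminus D)\times[0,T_0]$ the equation is uniformly parabolic with coefficients as regular as the data, so differentiating and bootstrapping with interior parabolic Schauder theory gives smoothness there. For uniqueness, if $\phi,\phi'\in C^{2+\widehat{\alpha},1+\frac{\widehat{\alpha}}{2},\beta}[0,T_0]$ both solve the equation, then $w=\phi-\phi'$ satisfies $w|_{t=0}=0$ and the linear conical parabolic equation $\partial_t w=\big(\int_0^1\Delta_{\omega_s}\,ds\big)w+\mu w$ with $\omega_s=\omega_0+\sqrt{-1}\partial\bar\partial(s\phi+(1-s)\phi')>0$; uniqueness for linear conical parabolic equations --- from the maximum principle for conical metrics, or from the heat kernel representation of Theorem \ref{all the properties of the h.k1} --- forces $w\equiv 0$ on all of $[0,T_0]$, not merely for small time. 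The hard part will be Step 2: establishing that the complex Monge--Amp\`ere remainder $\Q_{\omega_1}$ maps $C^{2+\alpha,1+\frac{\alpha}{2},\beta}$ into $C^{\alpha,\frac{\alpha}{2},\beta}$ with the quadratic (and $T_0$-small) bounds above --- which requires controlling products and compositions with smooth functions in the conical H\"older spaces near $D$ --- and verifying that the Schauder constant for the variable-coefficient operator $\Delta_{\omega_1}$ does not degenerate as $T_0\to 0$; the remaining ingredients are either the heat kernel input already available (Theorems \ref{Schauder estimate of the linear equation: single divisor} and \ref{all the properties of the h.k1}) or routine parabolic bootstrapping.
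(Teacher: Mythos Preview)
Your approach is correct and follows essentially the same outline as the paper: first solve the linear heat equation with respect to $\omega_0$ (your $\phi_1$, the paper's $u$), then set up a contraction map on the difference using the quadratic Monge--Amp\`ere remainder and Theorem~\ref{Schauder estimate of the linear equation: single divisor} as the linear inverse. The one noteworthy difference is how you obtain smallness of the forcing term $g=\Q_{\omega_0}(\phi_1)$ (the paper's $F(u)$). The paper exploits the hypothesis $f\in C^{\acute\alpha,\frac{\acute\alpha}{2},\beta}$ with $\acute\alpha>\alpha$: it solves for $u$ at the higher $\acute\alpha$ level, so $F(u)\in C^{\acute\alpha,\frac{\acute\alpha}{2},\beta}$ with $F(u)|_{t=0}=0$, and then interpolates to get $|F(u)|_{\alpha,\frac{\alpha}{2},\beta}\le CT_0^{(\acute\alpha-\alpha)/2}\to 0$. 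You instead stay at the $\alpha$ level and use directly that $\Q$ is quadratic in $\sqrt{-1}\partial\bar\partial\phi_1$, which vanishes at $t=0$, yielding $|g|_{\alpha,\frac{\alpha}{2},\beta}\le CT_0^{\alpha/2}$ via the product rule for H\"older norms. Your route is arguably cleaner (it does not actually need $\acute\alpha>\alpha$ for existence), while the paper's interpolation avoids the explicit quadratic bookkeeping; substantively they are the same contraction argument.
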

The crucial ingredients  to establish Theorem \ref{short time existence of CKRF:potential version, single divisor} are the following Schauder type estimates.
\begin{thm}\label{Schauder estimate of the linear equation: single divisor} Suppose 
$0<T<\infty$ and $\{ a(t),\ t\in [0,T]\} $ is a $C^{\alpha,\frac{\alpha}{2},\beta}[0,T]$  family of $(\alpha, \beta)$ metrics (See Definition \ref{Def of a C 2 alpha, alpha over 2 family of  metrics}).  Then there exists a constant $C_{a}$ (depending on $a(t)$) with the following property.  For any   $v\in C^{\alpha,\frac{\alpha}{2},\beta}[0,T]$, the parabolic  equation
 \begin{equation}\label{heat equation with metric g}
\frac{\partial u}{\partial t}=\Delta_{a(t)}u+v,\ u(0,x)=0 \end{equation}
 admits a unique solution  $u $ in $C^{2+\alpha,1+\frac{\alpha}{2},\beta}[0,T]$. Moreover,
  the following estimates hold. $$|u|_{2+\alpha,1+\frac{\alpha}{2},\beta,{M}\times[0,T]}\leq C_{a} |v|_{\alpha,\frac{\alpha}{2},\beta,{M}\times[0,T]};$$
 $$|u|_{2,\alpha,\beta,{M}\times[0,T]}\leq C_{a} |v|_{\alpha,\beta,{M}\times[0,T]}.$$
\end{thm}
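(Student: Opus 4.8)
The plan is to reduce the estimate, by a standard freezing-of-coefficients argument, to a Schauder-type bound for the \emph{model} conical operator, and to establish the latter from sharp pointwise estimates on the model heat kernel. Near a point $p\in D$, in a holomorphic chart around $p$, the principal part of $\Delta_{a(t)}$ is the Laplacian of the flat product cone $C_\beta\times\mathbb{C}^{n-1}$, where $C_\beta$ is the two–dimensional cone of total angle $2\beta\pi$, while away from $D$ the operator is uniformly elliptic in the usual sense. So the first task is to write down the heat kernel $H$ of $\partial_t-\Delta$ on $C_\beta\times\mathbb{R}^{2n-2}$: on the two–dimensional factor one uses Carslaw's contour–integral representation together with Weber's integral formula for the Bessel function $K_\nu$ of the second kind, and then tensors with the Euclidean Gaussian. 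From these formulas one must extract (i) a Gaussian off–diagonal upper bound with respect to the cone distance, (ii) bounds on $H$ and its first two derivatives \emph{in the adapted frame} — derivatives along $|z_1|^{1-\beta}\partial_{z_1}$ transverse to $D$, with the appropriate weights $|z_1|^{1-\beta}$ and $|z_1|^{2-2\beta}$ on mixed and pure transverse second derivatives, and ordinary derivatives in $z_2,\dots,z_n$ — carrying the correct powers of $t$, and (iii) Hölder continuity, in the parabolic cone metric, of these second adapted derivatives. This family of kernel estimates, established separately in the paper, is the analytic heart; the rest is comparatively formal.

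Granting the kernel estimates, the model Schauder estimate is obtained by Duhamel: $u(x,t)=\int_0^t\int H(x,y,t-s)\,v(y,s)\,dy\,ds$, and one must show this convolution maps $C^{\alpha,\frac{\alpha}{2},\beta}$ into $C^{2+\alpha,1+\frac{\alpha}{2},\beta}$. Applying two adapted derivatives under the integral and following the classical heat-potential scheme of Friedman and Ladyzhenskaya, the Hölder difference of a second adapted derivative of $u$ between two parabolic points at cone distance $d$ is estimated by splitting the $(y,s)$–integration into the region within a fixed multiple of $d$ of the two points — controlled by the spatial $L^1$ bound on the corresponding second adapted derivative of $H$ against the oscillation $[v]_\alpha d^\alpha$ — and its complement — controlled by the mean value theorem and the third–order kernel bound — integrating in each case the resulting power of $t-s$. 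The one point that differs from the Euclidean situation is that every distance, derivative, and volume element here is that of the cone, which is precisely what the estimates of the previous paragraph are tailored to; the second displayed bound $|u|_{2,\alpha,\beta}\le C_a|v|_{\alpha,\beta}$ comes out of the same computation using only the spatial kernel bounds.

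To pass to $\Delta_{a(t)}$, fix $(x_0,t_0)$ and write $\Delta_{a(t)}=\Delta_{a(x_0,t_0)}+\bigl(\Delta_{a(t)}-\Delta_{a(x_0,t_0)}\bigr)$. Because $a$ is an $(\alpha,\beta)$ metric lying in $C^{\alpha,\frac{\alpha}{2},\beta}[0,T]$, the coefficients of the error operator, expressed in the adapted frame, have $C^{\alpha,\frac{\alpha}{2},\beta}$–norm that is $O(r^\alpha)$ on the parabolic cylinder of radius $r$ about $(x_0,t_0)$. Cutting $u$ off to such a cylinder, applying the model estimate to the frozen operator, and treating the error as a perturbation gives, for $r$ small, a bound of the form $|u|_{2+\alpha}\le C\bigl(\varepsilon|u|_{2+\alpha}+|v|_{\alpha}+|u|_{C^0}\bigr)$ on the cylinder; a finite cover of $M$, absorption of the $\varepsilon$–term, and interpolation of $|u|_{C^0}$ give the a priori estimate on a short time interval, and iteration covers $[0,T]$. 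Existence then follows by approximation — regularise $a$ and $v$ away from $D$, solve the resulting classical parabolic problems, and pass to the limit using the uniform estimate, the needed $C^0$ control coming from the maximum principle and a barrier built from $|S|^{2\beta}$ — and uniqueness follows either directly from the a priori estimate applied to the difference of two solutions or, more elementarily, from the maximum principle on $M\setminus D$, the divisor being removable since the difference is a bounded solution with bounded adapted gradient across a set of real codimension two.

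The main obstacle is the first step: producing the sharp kernel estimates on $C_\beta\times\mathbb{R}^{2n-2}$, and in particular the Hölder continuity up to $D$ of the second adapted derivatives of $H$. This demands delicate asymptotic analysis of Carslaw's contour integral and of the associated Bessel functions near the cone tip, and the most error-prone piece is the mixed second derivative pairing one transverse adapted derivative with a tangential one, together with making the weighted frame compatible with the freezing-of-coefficients step; by contrast the perturbation and continuity arguments of the last two paragraphs are routine once these estimates are available.
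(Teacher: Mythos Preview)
Your overall strategy matches the paper's closely: heat kernel estimates on the model cone $C_\beta\times\mathbb{R}^{2n-2}$ via Carslaw's contour representation, then a Duhamel singular-integral argument to obtain the model Schauder bound, then freezing of coefficients to pass to $\Delta_{a(t)}$. The paper organizes the singular-integral step exactly as you describe (splitting near/far regions, third-order kernel bounds on the far piece), and likewise identifies the mixed derivatives $\partial_r\partial_{s_i}$ and $r^{-1}\partial_\theta\partial_{s_i}$ as the ones requiring work; the pure transverse piece $\Delta_\beta u$ is recovered from the equation itself. One small slip: the relevant Bessel function in Weber's formula and in the heat kernel expansion is the modified Bessel function $I_\nu$, not $K_\nu$.

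Where you and the paper part ways is the \emph{existence} step. You propose to regularise $a$ and $v$ away from $D$, solve classical problems, and pass to the limit via the uniform estimate plus a barrier. The paper instead proceeds by the method of continuity: it first establishes surjectivity of $\Delta_{\omega_D}-\partial_t$ for the explicit reference metric $\omega_D$ (this uses a separate $W^{2,2}$ theory, deferred to a companion note), and then deforms from $\omega_D$ to $a(t)$ using the a~priori estimate to keep the inverse bounded along the deformation. Your approximation route is plausible but the sketch has a gap: if ``regularise away from $D$'' means smoothing the cone, the conical Schauder estimate no longer applies uniformly to the approximants; if it means exhausting $M\setminus D$ or mollifying $v$ in the conical class, you must still produce \emph{some} solution for the conical model operator to start from, which is precisely what the paper's $W^{2,2}$ step supplies. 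Either way you need an independent existence input at the model level, and that is the piece your outline does not provide.
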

The notion of $C^{\alpha,\frac{\alpha}{2},\beta}[0,T]$, $ C^{2+\alpha,1+\frac{\alpha}{2},\beta}[0,T]$ and the other relevant norms will be clearified in
Definition \ref{Def of Schauder norms}.
\begin{rmk}Theorem \ref{Schauder estimate of the linear equation: single divisor} is actually with respect to the real setting. Hopefully  it  can be applied to study real conical flows (such as Yamabe-flow) as well.  
\end{rmk}
\begin{rmk}To prove the parabolic Schauder estimates, we follow Donaldson's  idea  in his  proof of the elliptic Schauder estimates. Aside from Donaldson's work, Jefferes-Mazzeo-Rubinstein  also proved a version of Schauder estimate for elliptic equations, by appealing to the ``edge calculus machinery", while in this article  we do everything by hand.   \\
\end{rmk}
Theorem \ref{Schauder estimate of the linear equation: single divisor} depends on the following estimates on the heat kernel for the standard cone metric.
\begin{thm}\label{all the properties of the h.k1} There exists a constant C as in Definition
\ref{Dependence of the constant C} with the following property. Suppose $\mathfrak{F}$ is a differential operator in $\mathfrak{P}$ (defined  in (\ref{Def of mathfrak P})). Let $|\mathfrak{F}|$ be the order of the differential operator $\mathfrak{F}$. Then  the following estimates on the heat kernel $H(x,y,t)$ of the standard cone metric $g_{E,\beta}$ (see Definition \ref{standard cone metric}) hold.
\begin{enumerate}
\item Suppose $|\mathfrak{F}|=1$. Suppose $t=1$, $x\in A_{\frac{1}{\sqrt{s}}}$, $y\in A_{\frac{10}{\sqrt{s}}}$, $0<s\leq \frac{1}{10000}$, then
  $$|\nabla_xH(x,y,1)|\leq Ce^{-\frac{1}{s}}.$$
  \item If $|\mathfrak{F}| \leq 2$, then for any $x,\ y$,  we have
  $$\int_{0}^{\infty}| \mathfrak{F} H(x,y,\tau)|d\tau\leq C|x-y|^{-(m+|\mathfrak{F}|)}.$$
  \item
Suppose $P(x)\geq \frac{|x-y|}{100^{100}}.\;$ If $|\mathfrak{F}|=3,$ then
we have
$$\int_{0}^{\infty}|\mathfrak{F} H(x,y,\tau)|d\tau\leq C|y-x|^{-(m+3)}.$$
\item
For any $x,\ y$, 
we have
$$\int_{0}^{\infty}|\nabla_{x}\frac{\partial}{\partial t} H(x,y,\tau)|d\tau\leq C|y-x|^{-(m+3)}.$$

  \item If $|\mathfrak{F}| \leq 4$, for any $x,\ y$ and $0<\alpha\leq 1$, we have
   $$\int_{\mathbb{R}^2\times \mathbb{R}^{m}}|\sup_{1\leq t\leq 2} \mathfrak{F} H(x,y,t)||x-y|^{\alpha}dy\leq C.$$
\item For every second order spatial derivative operator $\mathfrak{D}\in \mathfrak{\mathfrak{T}}$, suppose $\rho= \min(\frac{1}{\beta}-1,\ 1)$, $|y|=1$, and $|u_1|,\ |u_2|< \frac{1}{8}$, we have
$$\int_{0}^{\infty}|\mathfrak{D}H(u_1,y,\tau)-\mathfrak{D}H(u_2,y,\tau)|d\tau\leq C|u_1-u_2|^{\rho}.$$
\item Suppose $\rho= \min(\frac{1}{\beta}-1,\ 1)$, $|y|=1$, and $|u_1|,\ |u_2|< \frac{1}{8}$, we have
$$\int_{0}^{\infty}|\nabla_{x}H(u_1,y,\tau)-\nabla_{x}H(u_2,y,\tau)|d\tau\leq C|u_1-u_2|^{\rho}.$$
\end{enumerate}
 \end{thm}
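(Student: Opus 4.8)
\section*{Proof proposal}

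The plan is to reduce every estimate to explicit control of Carslaw's contour representation of the heat kernel on the two--dimensional cone, tensored with the Euclidean Gaussian in the remaining directions. Writing a point of the model space $\mathbb{C}_\beta\times\mathbb{R}^m$ as $x=(z_1,z')$ with $z_1$ in the $2$--cone of total angle $2\pi\beta$ and $z'\in\mathbb{R}^m$, the heat kernel factors as $H(x,y,t)=K_\beta(r_1,r_2,\phi,t)\,G_m(z'-w',t)$, where $r_i=|z_1|,|w_1|$, $\phi$ is the angular separation, $G_m$ is the standard heat kernel on $\mathbb{R}^m$, and $K_\beta$ is given \emph{either} by the Fourier--in--angle series $K_\beta=\frac{1}{4\pi\beta t}e^{-(r_1^2+r_2^2)/4t}\sum_{n\in\mathbb{Z}}e^{in\phi/\beta}I_{|n|/\beta}\!\big(\tfrac{r_1r_2}{2t}\big)$ (obtained from the spectral representation by Weber's exponential integral for Bessel functions), \emph{or} by Carslaw's Sommerfeld--type contour integral, whose integrand carries $\exp\!\big(-(r_1^2+r_2^2-2r_1r_2\cosh\zeta)/4t\big)$ over a $\beta$--dependent trigonometric denominator. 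The first step is to record these representations together with the parabolic scaling identity $H(\lambda x,\lambda y,\lambda^2 t)=\lambda^{-(m+2)}H(x,y,t)$, which by the substitution $\tau=|x-y|^2 s$ converts each time integral $\int_0^\infty|\mathfrak{F}H(x,y,\tau)|\,d\tau$ in parts (2),(3),(4) into $|x-y|^{-(m+|\mathfrak{F}|)}$ times a scale--invariant integral; the entire content of those parts is then the finiteness of that dimensionless integral, and likewise part (5) and part (1) become statements at a single scale $t\sim1$.

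Next I would establish a two--regime family of pointwise bounds for $\mathfrak{F}H$, separated by the position of the spatial points relative to the singular axis $\{z_1=0\}$ (equivalently, by the size of $r_1r_2/t$). When both radii are $\gtrsim\sqrt t$ one deforms Carslaw's contour through the saddle $\zeta=0$ and recovers the pure Gaussian estimate $|\mathfrak{F}H|\le C\,t^{-(m+2+|\mathfrak{F}|)/2}e^{-c|x-y|^2/t}$, exactly as in the flat case; this immediately yields the exponential decay in part (1) for points separated by $\sim 1/\sqrt s$ at $t=1$, and it controls the ``far'' contribution to the time integrals in (2)--(4) and the tail of the weighted space integral in (5). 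Near the axis one uses the Bessel series and the small--argument asymptotics $I_\nu(w)\sim (w/2)^\nu/\Gamma(\nu+1)$: the $n=0$ term reproduces a Euclidean--type kernel in the $2$--cone radius, while the $n=\pm1$ terms contribute the genuinely conical piece of size $\sim (r_1r_2/t)^{1/\beta}$, so that the $k$--th spatial derivative of $K_\beta$ near the tip behaves like $r_1^{\min(1/\beta,\,2)-k}$, a gain of $\rho=\min(\tfrac1\beta-1,1)$ over the naive count. This gain is precisely the threshold making the third--order time integrals in (3) and (4) converge under the hypothesis $P(x)\gtrsim|x-y|$ (which keeps $x$ away from the tip on the scale of the separation), and making the $|x-y|^\alpha$--weighted integral over $\mathbb{R}^2\times\mathbb{R}^m$ in (5) finite for $|\mathfrak{F}|\le4$.

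Finally, the two Hölder statements (6) and (7) follow by interpolating the pointwise derivative bounds. For $|u_1|,|u_2|<\tfrac18$ and $|y|=1$ one splits $\tau\in(0,\infty)$ at $\tau\sim|u_1-u_2|^2$: on the large--$\tau$ range one applies the mean value theorem together with the bound on one additional derivative supplied by part (2) (for $\nabla\mathfrak{D}H$, resp.\ $\nabla\nabla H$), and on the small--$\tau$ range one simply bounds the difference by twice the sup and uses the integrated estimates of part (2); balancing the two contributions yields the exponent $\rho=\min(\tfrac1\beta-1,1)$, the deficit from $1$ being exactly the conical defect identified in the near--axis analysis. The main obstacle is that near--axis analysis itself: one must track, uniformly as the angular variable degenerates and as $r_1\to0$, the exact homogeneity of each spatial derivative of $K_\beta$ (through either the Carslaw contour or the $I_\nu$--series with sharp error terms), since an error of even one power of $r_1$ would destroy the convergence of the $d\tau$--integrals in (3)--(5); once those sharp homogeneities are in hand, everything else is Gaussian bookkeeping and rescaling.
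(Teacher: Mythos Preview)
Your overall architecture---Carslaw's contour representation plus the Bessel series, parabolic rescaling to reduce (2)--(5) to scale-invariant integrals, and a two-regime (near-axis versus far-from-axis) analysis---matches the paper's strategy, and for parts (1)--(5) your sketch is essentially what the paper carries out in Sections~7, 9 and~10, with the decay estimates on the $E$-term (the paper's Theorem~10.1) playing the role of your saddle-point/Gaussian bounds. One minor slip: part~(4) holds for \emph{all} $x,y$, not only under $P(x)\gtrsim|x-y|$; you should not lump it together with~(3).

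The genuine gap is in your treatment of (6) and (7). The time-splitting/interpolation you propose---cut at $\tau_0\sim|u_1-u_2|^2$, use the mean value theorem on $\tau>\tau_0$ and part~(2) on $\tau<\tau_0$---does not go through. For item~(6) the mean value step needs $\int_0^\infty|\nabla_x\mathfrak{D}H(\xi,y,\tau)|\,d\tau<\infty$ at some $\xi\in\overline{u_1u_2}$; this is a \emph{third}-order quantity, and the only third-order bound available (item~(3)) requires $P(\xi)\gtrsim|\xi-y|\sim 1$, which fails since $|u_i|<\tfrac18$ allows $u_1,u_2$ to sit on the singular axis. More fundamentally, $\nabla_x$ contains $\tfrac1r\partial_\theta$, which is not even defined at $r=0$, so no mean value argument along a segment through the tip can succeed. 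The same objection applies to~(7): $\nabla_x\nabla_x$ is not among the operators in $\mathfrak{T}$ or $\mathfrak{P}$ covered by part~(2). And even on the region where your interpolation does apply, it would yield a Lipschitz bound, not the exponent $\rho=\min(\tfrac1\beta-1,1)$; your remark that ``the deficit from~$1$ is the conical defect'' is not an argument.

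The paper's route for (6)--(7) is genuinely different: it expands $H$ in its angular Bessel series $H=\sum_k H_k$ and estimates $\int_0^\infty|\mathfrak{D}H_k(u_1,y,\tau)-\mathfrak{D}H_k(u_2,y,\tau)|\,d\tau$ term by term. The exponent $\rho$ comes from a sharp H\"older estimate for the derivative of the modified Bessel function at order $\nu=1/\beta$ (the paper's Lemma~8.1),
\[
|I'_{1/\beta}(z_2)-I'_{1/\beta}(z_1)|\ \le\ \frac{1/\beta}{2^{1/\beta}\Gamma(1/\beta+1)}\,|z_2-z_1|^{1/\beta-1}\;+\;|z_2-z_1|\Bigl(1+\tfrac{1}{2\beta}\Bigr)I_{1/\beta}(z_2),
\]
applied with $z_i=r_i r'/2t$; the first term is the sole source of the $|u_1-u_2|^{1/\beta-1}$ (when $\tfrac12<\beta<1$). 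The $k=0$ and $k\ge2$ modes contribute Lipschitz remainders, summed via the weighted Bessel estimates of Lemma~6.3. To repair your proposal you must replace the interpolation step for (6)--(7) by this termwise Bessel analysis; there is no shortcut through a third-derivative bound, precisely because that bound fails at the cone tip.
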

\begin{proof}{of Theorem \ref{all the properties of the h.k1}:} It's exactly  a summarization of Lemma \ref{integral of time derivative of hk w.r.t time},
\ref{integral of gradient of hk w.r.t time},
 \ref{integral of gradient of time derivative of the hk}, \ref{time integral of the bilinear derivatives of the heat kernel},
 \ref{integral of third derivative of h.k w.r.t time: when r big},  Lemma \ref{Holder estimate of the integral of spatial derivative of h.k with respect to time}, Lemma   \ref{Bound on the gradient of h.k when r small and t equals 1},  and Proposition \ref{proposition on integral of the product the heat kernel and distance function to the alpha}.
\end{proof}
Here the Properties (1-5) are satisfied by the heat kernel of the Euclidean space while the proofs are quite subtle. Property 6 and 7 are the only two properties which involve the exponent $\beta$. The difficulty of the proof is that the conical heat kernel is far less explicit than the Euclidean heat kernel or heat kernel in a smooth manifold.  Thus,  we are forced to deal with the representation formulas given in proposition \ref{Heat kernel formula} and Theorem \ref{Carslaw-Wang-Chen representation for hk}. These representation formulas are due to Carslaw; and we  reformulate them to apply to our case.  In particular,   the piecewise continuous (with respect to angle) representation formula in Theorem \ref{Carslaw-Wang-Chen representation for hk} is crucial. These representation formulas  involve  Bessel functions of the second kind, and some complicated integration of trignometric functions, and hyperbolic functions over angles. \\

   Our proof is  inspired by Donaldson's work \cite{Don}.  There are some important
  difference in the technical level since we have to deal with a more complex situations. Restricted to the elliptic case, the function space we consider is exactly the same
  as Donaldson's $C^{2,\alpha,\beta}$ space in \cite{Don}.  Our heat kernel estimates, restricted to the elliptic case, contain  a little bit  more information on the Green's function than the estimates in \cite{Don} (the Green function estimates in \cite{Don} is sufficient for elliptic Schauder estimates). Our proof also gives another approach to prove the estimates in \cite{Don} (see Section \ref{Local estimates for the heat kernel}).  The technical difference between our work and Donaldson's work on the Green's function in \cite{Don} is the following.  In  \cite{Don},  Donaldson works from the heat kernel formula which  involves  Bessel functions
of the first kind and  uses a clever change  varible in his proof. In our parabolic case,    we employ  two  representation formulas of the heat kernel which are different from the one applied by Donaldson. One involves Bessel functions of the second kind and Weber's formula, the other one involves a contour integral which is essentially due to Carslaw. These different representation formulas are necessary ( especially Carslaw's formula) in our parabolic case.  Since we also need to understand the H\"older estimates over the time direction,  we have to prove more global and more explicit properties of the heat kernel.  While Donaldson shows us how to obtain Schauder estimates by working on  Bessel functions,
these new ``additions"  present difficult challenge for the proof our estimates. In summary, while ideas behind the heavy calculation are important, the sheer power of tensor manipulations and analysis of these Bessel functions are crucially needed. \\

 For related work  on the heat kernel of spaces with more genenral conical singularities, we  refer the interested readers to the work by Mooers in \cite{Mooers} and by 
 Mollers in \cite{Mollers}.\\
 
The next diagram (Figure 1) is to indicate the relations of the lemmas, corollaries, theorems,  and propositions in this article. Actually the whole article definitely can be summarized by Figure 1, aside from a few  minor entanglements.
\begin{center}
\begin{tikzpicture}[->,>=stealth',shorten >=1pt,auto,node distance=3cm,
  thick,main node/.style={rectangle,draw,font=\sffamily\small\bfseries}]

  \node[main node] (3)  {T\ref{Schauder estimate of the linear equation: single divisor}} ;
   \node[main node] (2) [left of=3] {T\ref{short time existence of CKRF:potential version, single divisor}};
\node[main node] (1)[left of=2] {T\ref{short time existence of CKRF: metric version with a single divisor}};
\node[main node] (4) [below  of=3] {T \ref{Schauder estimate: constant coefficient}};
\node[main node] (9) [right  of=4] {P \ref{Solvability of the operator with small oscillation in the flat cone}};
\node[main node] (8) [left of=4] {Cor \ref{Schauder estimate for the weak solution which is compactly supported in balls}};
  \node[main node] (6) [below left of=4] {P\ref{Schauder estimate for the weak solution which is compactly supported}, };
   \node[main node] (7) [ below right of=4] {L\ref{parabolic intepolations},L\ref{parabolic intepolations 2},L\ref{parabolic timewise intepolations},
   L\ref{C0 estimate}};
 \node[main node] (11) [left of=6] {P\ref{Holder estimate of the spatial  derivative of the singular integral},
 P\ref{Holder estimate of the time derivative of the singular integral},P\ref{time holder estimate:spatial derivative},P\ref{time holder estimate:time derivative}};
     \node[main node] (14) [ below right of=11] {T\ref{all the properties of the h.k1}};
       \node[main node] (15) [ below right of=14] {L\ref{Bound on the gradient of h.k when r small and t equals 1},P\ref{proposition on integral of the product the heat kernel and distance function to the alpha}};
\node[main node] (17) [ below  left of=14] {L\ref{integral of time derivative of hk w.r.t time},L\ref{integral of gradient of hk w.r.t time},L\ref{integral of gradient of time derivative of the hk},L\ref{integral of third derivative of h.k w.r.t time: when r big},L\ref{time integral of the bilinear derivatives of the heat kernel}};
  \node[main node] (18)[right   of=15] { L\ref{Holder estimate of the integral of spatial derivative of h.k with respect to time}};
    \node[main node] (19)[right   of=18] { L\ref{Estimating the sum of G w.r.t k}};
      \node[main node] (26) [ below   of=17] {T\ref{Decay estimates for E}};
   \node[main node] (23) [ right of=26] {L\ref{asymptotic estimate of radius derivative of Heat kernel},L\ref{asymptotic estimate of theta derivative of Heat kernel},L \ref{asymptotic estimate of time derivative of Heat kernel}};
   \node[main node] (25) [  right of=23] {L\ref{estimating the holder continuity of the derivative of I}};
   \node[main node] (28) [  right of=25] {L\ref{Bessel properties: when r is small}};
  \node[main node] (27) [ below right of=26] {L\ref{lemma on E's z-derivative bounds},L\ref{Bound on the angle derivative}};
   \path[every node/.style={font=\sffamily\small}]
   (9) edge node [left] {} (3)
   (6) edge node [left] {} (8)
   (8) edge node [left] {} (4)
    (4) edge node [left] {} (3)
    (3) edge node [left] {} (2)
    (2) edge node [left] {} (1)
    (7) edge node [left] {} (4)
    (6) edge node [left] {} (4)
     (11) edge node [left] {} (6)
    (19) edge node [left] {} (18)
    (25) edge node [left] {} (18)
    (27) edge node [left] {} (26)
    (26) edge node [left] {} (23)
    (26) edge node [left] {} (17)
    (28) edge node [left] {} (19)
    (14) edge node [left] {} (11)
    (17) edge node [left] {} (14)
    (18) edge node [left] {} (14)
    (15) edge node [left] {} (14)
    (23) edge node [left] {} (15);
         \end{tikzpicture}
         \end{center}

Figure 1: L means lemma, P means proposition, T means theorem, Cor means corollary. Arrows means imply.
\\

Organization of this article: Most of the organization is illustrated in Figure 1. With respect to topics, this article can be divided into three parts:
 \begin{enumerate}
   \item  proof of Theorem \ref{short time existence of CKRF: metric version with a single divisor} assuming Theorem \ref{Schauder estimate of the linear equation: single divisor};
   \item proof of Theorem \ref{Schauder estimate of the linear equation: single divisor} assuming Theorem \ref{all the properties of the h.k1};
   \item  proof of  Theorem \ref{all the properties of the h.k1}.
 \end{enumerate}
 After defining  the relevant norms explicitly and stating several more general theorems in Section  \ref{Setting up of the main problems}, we carry out part 1   in section \ref{Proof of the short time existence assuming the Schauder estimate}; part 2  in Sections \ref{Representation formulas for the heat kernel},  \ref{Holder estimate of the singular integrals and proof of the main Schauder estimates}, \ref{Appendix: some lower order estimates}. Both part 1 and part 2 are more or less standard, except some points where we should adapt polar coordinates into our framework. Part 3 is the most crucial part of this paper and it is also the most ``non-regular" part.  This is carried out in Section
\ref{Notes on Donaldson's work}, \ref{Local estimates for the heat kernel}, \ref{Behaviors of the heat kernel  near the singular set}, \ref{Asymptotic behaviors of  the heat kernel}, \ref{Decay estimates for the heat kernel}.\\

   Acknowledgements: The second author would like to thank Professor Simon Donaldson for conversation on the theory of Bessel functions and his interest in this work. The second author is deeply grateful to  Professor Xianzhe Dai,  Professor Guofang Wei, and Prof Rugang Ye for offering and sponsoring the second author's first math job,  for their interest in this work,   for their continuous support and encouragements.  Both authors would like to thank   Haozhao Li, Weiyong He, Song Sun, and Kai Zheng very much for carefully reading  the earlier versions of this paper. 

\section{Setting up of the main problems and more general statements.\label{Setting up of the main problems}}

   First we would like to mention that, though some operators are not defined in the singular set, we still consider the domain of the functions to be the whole manifold $M$ (instead of $M\setminus D$). The reason is that the functions and currents in our consideration are global, unless otherwise stated. \\
   
        Let $m = 2n$ be a  nonnegative integer.  The local geometry of the conical metrics near the singularity hypersurfaces are like the following.  Let $g_{E,\beta}$ be the standard cone  metric on $\mathbb{R}^2\times \mathbb{R}^{m}, $ i.e.,
          \begin{equation}\label{standard cone metric} g_{E,\beta}=dr^2+\beta^2r^2d\theta^2+g_{ \mathbb{R}^{m}}.\;
          \end{equation} Here $m$ is an even integer. In the subsequent work we will mainly use $\mathbb{R}^{m}$ instead of $C^{n}$, as our Schauder estimates are with respect to the  real setting.  \\
     
   Now following Donaldson, let $z_j=s_j+\sqrt{-1}s_{\bar{j}},j=1,2 \cdots n $ ( $\bar{j}=j+ n\; $). We consider a basis of $(1,0)$ vectors as
   \begin{equation}\label{mathfrak a}\mathfrak{a}=\frac{1}{\sqrt{2}}(\frac{\partial }{\partial r}-\frac{\sqrt{-1}}{\beta r}\frac{\partial }{\partial \theta}), \frac{\partial }{\partial z_j}, j=1...\frac{m}{2}.
   \end{equation}
   Set $\xi=z^{\beta}=re^{i\beta\theta}$, notice that
   \[\frac{\partial^2}{\partial \xi \partial\bar{\xi}}=\frac{1}{4}\Delta_{\beta}=\frac{1}{4}[\frac{\partial^2  }{\partial r^2}+r^{-1}\frac{\partial  }{\partial r}+\frac{1}{\beta^2}r^{-2}\frac{\partial^2  }{\partial \theta^2}].\]
  One can write  the Laplacian of $g_{E,\beta}$ as
 $$ \Delta_{E,\beta}=\Delta_{\beta}+\Sigma_{i=1}^{m}\frac{\partial^2}{\partial s_i^2}. $$
 When there is no confusion,  we will denote $ \Delta_{E,\beta}$ simply as $\Delta$. Moreover,
   $\sqrt{-1}\partial \bar{\partial}$ operator is well defined under the basis
   $$\frac{\partial^2}{\partial \xi \partial\bar{\xi}}, \mathfrak{a}\frac{\partial}{\partial\bar{z_i}},\bar{\mathfrak{a}}\frac{\partial}{\partial z_i},\frac{\partial^2}{\partial z_i \partial\bar{z_j}}, 1\leq i,j\leq n.$$
   
   To state the main properties of the heat kernel, we need to introduce some linear, differential operators (up to 4th orders). First, 
 we write $\nabla_x$ as the gradient operator in $\mathbb{R}^2\times \mathbb{R}^{m}$ with respect to a natural basis \[\frac{\partial}{\partial r}, \frac{1}{\beta r}\frac{\partial}{\partial \theta},\frac{\partial}{\partial s_i} (1\leq i\leq m).\] Secondly, we can introduce a set of real, second order operators
 $\mathfrak{T}$  as:
\begin{equation}\label{Def of mathfrak T}\mathfrak{T}=\{\frac{\partial^2}{\partial r\partial s_i},\ i\leq m;\ \frac{\partial^2}{\partial s_i \partial s_j},\ i,j\leq m;\ \frac{1}{r}\frac{\partial^2}{\partial s_i\partial \theta},\ i\leq m\}.\end{equation}
Finally, the set of operators  $\mathfrak{P}$ we need to study in this paper as
  
  \begin{equation}\label{Def of mathfrak P}\mathfrak{P}=\{{\nabla_x},{\partial \over \partial t}, \mathfrak{D}, \nabla_x\nabla_{y};
 \nabla_x \mathfrak{D},\nabla_x \frac{\partial}{\partial t}; \frac{\partial}{\partial t} \mathfrak{D}, {\partial^2 \over {\partial t^2}}|  \mathfrak{D} \in  \mathfrak{T}\}.\end{equation}
  
 With the $\sqrt{-1}\partial \bar{\partial}$ operator in mind, we  define the Sobolev space $W^{2,2}(M\times[0,T])$ by the following norm. 
\begin{eqnarray}\label{W 2 2 norm}& &|\widehat{u}|_{W^{2,2}(M\times[0,T]) }
 \\&\triangleq & |\frac{\partial \widehat{u}}{\partial t}|_{L^{2}(M\times[0,T])}+
 |\sqrt{-1}\partial \bar{\partial}\widehat{u}|_{L^{2}(M\times[0,T])}
 +|\nabla \widehat{u}|^{(1)}_{L^{2}(M\times[0,T])}+|\widehat{u}|_{L^{2}(M\times[0,T])}.\nonumber
  \end{eqnarray}
This Sobolev norm has a good application in Proposition \ref{Solvability of the operator with small oscillation in the flat cone}.\\
 
 Let  $\Omega $ be a domain in  $\mathbb{R}^2\times \mathbb{R}^{m}$. For any point $x\in \Omega$ we denote $d_x$ to be  the distance from $x$ to the boundary $\partial \Omega$. Notice that  the points in the singular set $\{0\}\times R^m \cap \ int(\Omega)$  are never considered to be boundary points, and this is quite crucial in this article.  For any two  points $x,y\in \Omega$, we set

  \[ d_{x,y}=\min(d_x,d_y),\ d_x\ \textrm{is the spatial distance from}\ x\ \textrm{to}\ \partial{\Omega}.\]
  Similarly, we set
 \begin{eqnarray*} & & l_{(x,t_1),(y,t_2)}=\min(l_{(x,t_1)},l_{(y,t_2)}),
 \end{eqnarray*}  
 where the $l_{(x,t)}$ is defined as 
 \[l_{(x,t)}=\min\{d_x,\ |t-T_1|^{\frac{1}{2}}\}.\]
For any $[T_1, T_2]\subset \mathbb{R}^+$ and
  for any function $\widehat{u}$ over $\Omega\times [T_1, T_2]$, we introduce the usual semi norms as
 \[\begin{array}{lcl}
  [\widehat{u}]_{0,\Omega \times [T_1,T_2] } & = & \displaystyle \sup_{\Omega \times [T_1,T_2]} |\widehat{u}(x,t)|,\\
    {[\widehat{u}]}_{\alpha,\Omega \times [T_1,T_2] }& =&  \displaystyle \sup_{(x,t),(y,t)\in\Omega \times [T_1,T_2]} \frac{|\widehat{u}(x,t)-\widehat{u}(y,t)|}{|x-y|^{\alpha}},\\
   {[\widehat{u}]}_{\alpha,\frac{\alpha}{2},\Omega \times [T_1,T_2] } &= & \displaystyle \sup_{(x,t_1),(y,t_2)\in\Omega \times [T_1,T_2]} \frac{|\widehat{u}(x,t_1)-\widehat{u}(y,t_2)|}{|x-y|^{\alpha}+|t_1-t_2|^{\frac{\alpha}{2}}}.\end{array}\]

   Now, we define weighted Schauder norms (with spatial distance as weights) as:

 \[\begin{array}{lcl} [\widehat{u}]^{(k)}_{0,\Omega \times [T_1,T_2] } & = &  \displaystyle\sup_{\Omega \times [T_1,T_2]} d_x^{k}|\widehat{u}(x,t)|,\\
    {[\widehat{u}]}^{(k)}_{\alpha,\Omega \times [T_1,T_2] } & = &  \displaystyle \sup_{(x,t),(y,t)\in\Omega \times [T_1,T_2]} d_{x,y}^{k+\alpha}\frac{|\widehat{u}(x,t)-\widehat{u}(y,t)|}{|x-y|^{\alpha}},\\
    {[\widehat{u}]}^{(k)}_{\alpha,\frac{\alpha}{2},\Omega \times [T_1,T_2] } &= &  \displaystyle \sup_{(x,t_1),(y,t_2)\in\Omega \times [T_1,T_2]} d_{x,y}^{k+\alpha}\frac{|\widehat{u}(x,t_1)-\widehat{u}(y,t_2)|}{|x-y|^{\alpha}+|t_1-t_2|^{\frac{\alpha}{2}}}.\end{array}\]
We also define 
parabolic weighted Schauder norms (with parabolic  distance as weights) as:

 \[\begin{array}{lcl} [\widehat{u}]^{[k]}_{0,\Omega \times [T_1,T_2] } & = &  \displaystyle\sup_{\Omega \times [T_1,T_2]} l_{x,t}^{k}|\widehat{u}(x,t)|,\\
    
    {[\widehat{u}]}^{[k]}_{\alpha,\frac{\alpha}{2},\Omega \times [T_1,T_2] } &= &  \displaystyle \sup_{(x,t_1),(y,t_2)\in\Omega \times [T_1,T_2]} l_{(x,t_1);(y,t_2)}^{k+\alpha}\frac{|\widehat{u}(x,t_1)-\widehat{u}(y,t_2)|}{|x-y|^{\alpha}+|t_1-t_2|^{\frac{\alpha}{2}}}.\end{array}\]
We define the  norm $|\cdot|^{*}_{2,\alpha,\Omega \times [T_1,T_2] }$  as 
 \begin{eqnarray*}& &|\widehat{u}|^{*}_{2,\alpha,\Omega \times [T_1,T_2] }
 \\&\triangleq& [\frac{\partial \widehat{u}}{\partial t}]^{(2)}_{\alpha,\Omega \times [T_1,T_2] }+[i\partial\bar{\partial}\widehat{u}]^{(2)}_{\alpha,\Omega \times [T_1,T_2]}+|i\partial\bar{\partial}\widehat{u}|^{(2)}_{0,\Omega \times [T_1,T_2]}
 \\& &+|\frac{\partial \widehat{u}}{\partial t}|^{(2)}_{0,\Omega \times [T_1,T_2] }+|\nabla \widehat{u}|^{(1)}_{0,\Omega \times [T_1,T_2]}+|\widehat{u}|_{0,\Omega \times [T_1,T_2]}.
  \end{eqnarray*}

   Finally,  we define 
  the space $\widehat{C}^{2+\alpha,1+\frac{\alpha}{2},\beta}_{\star}\{\Omega \times [T_1,T_2] \}$ (with spatial distance as weights) by  the norm
 \begin{eqnarray*}& &|\widehat{u}|^{*}_{2+\alpha,1+\frac{\alpha}{2},\Omega \times [T_1,T_2] }
  \\&\triangleq& [i\partial\bar{\partial}\widehat{u}]^{(2)}_{\alpha,\frac{\alpha}{2},\Omega \times [T_1,T_2] }+[\frac{\partial \widehat{u}}{\partial t}]^{(2)}_{\alpha,\frac{\alpha}{2},\Omega \times [T_1,T_2] }
 \\& &+[\frac{\partial \widehat{u}}{\partial t}]^{(2)}_{0,\Omega \times [T_1,T_2]}
 +[i\partial\bar{\partial}\widehat{u}]^{(2)}_{0,\Omega \times [T_1,T_2]}+|\nabla \widehat{u}|^{(1)}_{0,\Omega \times [T_1,T_2]}+ |\widehat{u}|_{0,\Omega \times [T_1,T_2]};
 \end{eqnarray*}
and the space 
$\widehat{C}^{2+\alpha,1+\frac{\alpha}{2},\beta}_{[\star]}\{\Omega \times [T_1,T_2] \}$ (with parabolic distance as weights) by the norm
 \begin{eqnarray*}& &|\widehat{u}|^{[*]}_{2+\alpha,1+\frac{\alpha}{2},\Omega \times [T_1,T_2] }
  \\&\triangleq& [i\partial\bar{\partial}\widehat{u}]^{[2]}_{\alpha,\frac{\alpha}{2},\Omega \times [T_1,T_2] }+[\frac{\partial \widehat{u}}{\partial t}]^{[2]}_{\alpha,\frac{\alpha}{2},\Omega \times [T_1,T_2] }
 \\& &+[\frac{\partial \widehat{u}}{\partial t}]^{[2]}_{0,\Omega \times [T_1,T_2]}
 +[i\partial\bar{\partial}\widehat{u}]^{[2]}_{0,\Omega \times [T_1,T_2]}+|\nabla \widehat{u}|^{[1]}_{0,\Omega \times [T_1,T_2]}+ |\widehat{u}|_{0,\Omega \times [T_1,T_2]}.
 \end{eqnarray*}
 Actually in most of the cases we will consider the polydisks centered at points in the divisor. Namely we will frequently appeal to  the following definition.
 \begin{Def}\label{Def of polydisk}Given $p$ in the singular set $(0)\times \mathbb{R}^m$, we consider $A_R(p)$ as 
 \[A_R(p)= D_{R}(p)\times \widehat{B}_{R}(p),\]
 where $D_{R}(p)$ is the disk of radius $R$ in the $\mathbb{R}^2$ component and
 $\widehat{B}_{R}(p)$ is the ball  of radius $R$ in the $\mathbb{R}^m$ component. When 
 the center is $0$ we abbreviate $A_R(0)$ as $A_R$. The advantage of using such domains can be seen, for example, when we  estimate $Z_6$ in the proof of proposition \ref{Holder estimate of the time derivative of the singular integral}.
 \end{Def}
  We wish to point out that, unlike in the paper \cite{Don},  we allow the hypersurface $D$  is the disjoint union of smooth irreducible divisors. Set
  \[\Lambda=\displaystyle\Sigma_{j=1}^{l}2\pi(1-\beta_i)D_i,\]
  where  each $D_i$ is a connected divisor. For any $\epsilon$ small,  denote the tubular neighborhood $T_{\epsilon}(D)$ with radius $\epsilon$ of the hypersurface $D$.  Set $\nu >0$ to be small enough so that the following decomposition holds
  \[
  T_{2\nu}(D)\subset\bigcup_{i=1}^{l}\bigcup_{k=1}^{l_{D_i}} N_k(D_i)
  \]
   (such that  in each $N_k(D_i)$,
$D_i$ is the zero locus of some holomorphic function $z_0$ in $N_k(D_i)$.  Suppose $\iota_{\beta_i}$ is the map defined by
\[
\begin{array}{lcl}
\iota_{\beta_i}: N_k(D_i) & \rightarrow & \mathbb{R}^2 \times \mathbb{C}^{n}\\
\qquad (z_0,z_1......z_n) & \rightarrow & (|z_0|^{\beta_i-1}z_0,z_1,....z_n). \end{array}\]
Using this map,  we define
\begin{equation}\displaystyle \label{Definition of maximal spatial C 2 alpha norm}|u|_{2,\alpha,\Lambda,M \times [T_1,T_2]}=\Sigma_{i=1}^{l}\Sigma_{k=1}^{l_{D_i}}|\widehat{u}|^{\ast}_{2,\alpha,\iota_{\beta_i}[N_k(D_i)]\times [T_1,T_2]}+|u|^{\ast}_{2,\alpha,[M\setminus{T_{\nu}(D)}]\times [T_1,T_2]}
\end{equation} and
 \begin{eqnarray}\label{Definition of time spatial C 2 alpha norm}& &|u|_{2+\alpha,\ 1+\frac{\alpha}{2},\Lambda,M \times [T_1,T_2]}\nonumber
 \\&=&\Sigma_{i=1}^{l}\Sigma_{k=1}^{l_{D_i}}|\widehat{u}|^{\ast}_{2+\alpha,\ 1+\frac{\alpha}{2},\iota[N_k(D)]\times [T_1,T_2]]}+|u|^{\ast}_{2+\alpha,\ 1+\frac{\alpha}{2},[M\setminus{T_{\nu}(D)}]\times [T_1,T_2]}.
 \end{eqnarray}
\begin{Def}\label{Def of Schauder norms}
  We define the space
$C^{2,\alpha,\Lambda}[T_1,T_2]$ by the norm $|\ \cdot\ |_{2,\alpha,\Lambda,M \times [T_1,T_2]}$ and the space $C^{2+\alpha,1+\frac{\alpha}{2},\Lambda}[T_1,T_2]$ by $|\ \cdot\ |_{2+\alpha,\ 1+\frac{\alpha}{2},\Lambda,M \times [T_1,T_2]}$.\\

When $\beta_i = \beta$ for every $i$, then we  simply replace $\Lambda$ by $\beta $ to denote the space and norms $$C^{2+\alpha,1+\frac{\alpha}{2},\Lambda}[T_1,T_2]\ (C^{2,\alpha,\Lambda}[T_1,T_2]),\ |\ \cdot\ |_{2+\alpha,1+\frac{\alpha}{2},\Lambda,M\times [T_1,T_2]}\ (|\ \cdot\ |_{2,\alpha,\Lambda,M \times [T_1,T_2]}),$$
as
$$C^{2+\alpha,1+\frac{\alpha}{2},\beta}[T_1,T_2]\ (C^{2,\alpha,\beta}[T_1,T_2]),\ |\ \cdot\ |_{2+\alpha,1+\frac{\alpha}{2},\beta,M\times [T_1,T_2]}\ (|\ \cdot\ |_{2,\alpha,\beta,M \times [T_1,T_2]}).$$
The  other norms concerning $\Lambda$ should have $\Lambda$ replaced by $\beta$ also.
\\

A time-independent function $u$ is said to be in $C^{2,\alpha,\beta}$ $(C^{2,\alpha,\Lambda})$, if $u$ is in $C^{2,\alpha,\beta}[T_1,T_2]$ $(C^{2,\alpha,\Lambda}[T_1,T_2]$. Since $u$ does not depend on time,  the definition does not depend on $T_1, T_2$ at all.
\end{Def}
\begin{rmk}Similar to norms in (\ref{Definition of maximal spatial C 2 alpha norm}) and (\ref{Definition of time spatial C 2 alpha norm}), we can use the transformations $\iota_{\beta_i}$ to define weaker norms
$$|\ \cdot\ |_{1,\alpha,\Lambda,M \times [T_1,T_2]};\ \ \ |\ \cdot\ |_{1+\alpha, \frac{1}{2}+\frac{\alpha}{2},\Lambda,M \times [T_1,T_2]}$$
and
$$|\ \cdot\ |_{\alpha,\Lambda,M \times [T_1,T_2]};\ \ \ |\ \cdot\ |_{\alpha, \frac{\alpha}{2},\Lambda,M \times [T_1,T_2]}.$$
\end{rmk}
\begin{rmk}
When $u$ is a time independent function over $M$,  we may define the norm $|\ u\ |_{2,\alpha,\Lambda,M}$ as in \cite{Don}.
 To be more precise, the norm $|\ u\ |_{2,\alpha,\Lambda,M}$ is also equivalent to
$|\ u\ |_{2,\alpha,\Lambda,M\times[T_1, T_2]}$ if we view $u$ to be a space time function though $u$ doesn't depend on time. \\
\end{rmk}

\begin{Def}\label{Def of a C 2 alpha, alpha over 2 family of  metrics}
 A time-dependent family  $\{ a(t),\ t\in [0,T]\} $ of $(\alpha, \beta)$ metrics is said to be  $C^{\alpha,\frac{\alpha}{2},\beta}[0,T]$  if there exist 
 a $\phi(t)\in C^{2+\alpha,1+\frac{\alpha}{2},\beta}(M\times [0,T])$  and a smooth metric $\acute{\omega}$ such that 
 \[a(t)=\acute{\omega}+ i \partial \bar \partial \phi(t).
 \]
\end{Def}

In the general case,   we should consider the reference metric $\omega_0$ as\footnote{Note that $\delta$ must be small enough.}
$$\omega_{\Lambda}=\omega^{'}+\delta i \partial \bar \partial [|S_1|_1^{2\beta_1}\times |S_2|_2^{2\beta_2}\times......|S_i|_i^{2\beta_i}......|S_{l}|_{l}^{2\beta_{l}}].$$
and define the $(\alpha, \Lambda)$ metrics as in definition \ref{definition of (alpha,beta) conical metric:single divisor.} (with $\beta$ replaced by $\Lambda$).  Then, more general theorems hold (comparing with Theorems \ref{short time existence of CKRF: metric version with a single divisor}, \ref{short time existence of CKRF:potential version, single divisor}, \ref{Schauder estimate of the linear equation: single divisor}).
\begin{Def}\label{Dependence of the constant C} Since we are dealing with many estimates in this article, it's necessary to illustrate a principle of the dependency of $C$ in every theorem, proposition, corollary and lemma. Namely, we make the following important convention in this article:\\
      
      \textit{Without further notice, the  "C" in each estimate  means a constant depending on the dimension $m+2$, the angle $\beta$, the $\alpha$ (and $\acute{\alpha}$ if any) in the same estimate or in the corresponding theorem (proposition, corollary, lemma). We add subindex to the "C" if it depends on more factors than the above three. Moreover, the "C" in different places might be different.}
\end{Def}

\begin{thm}\label{short time existence of CKRF: metric version with multiple divisors and multiple angles} Suppose  $g_0$ is an $(\acute{\alpha},\Lambda)$ conical K\"ahler metric in $(M, \Sigma_{j=1}^{l}(1-\beta_j)D_j)\;$ where $\alpha' \in  (0, \min\{\min_{1\leq j\leq l}[\frac{1}{\beta_j}-1],1\}).\;$
For any $\alpha \in (0, \alpha') ,$  and constant number $\mu$, there exists an $T_0>0$  (which depends on $g_0$) such that the conical K\"ahler-Ricci flow equation
  \begin{equation*}
{{\partial \omega_g}\over {\partial t}}  = \beta\omega_g -   Ric(g)+ 2\pi \Sigma_{j=1}^{l}(1-\beta_j)D_j,\ \omega_g=\omega_{g_0}\ \textrm{when}\ t=0
\end{equation*}
 admits a solution $g(t)$, $t\in [0,T_0]$ which is smooth (both in space and time) in $(M\setminus D) \times [0,T_0]$. Moreover we have \begin{itemize}
       \item  for every $t\in [0,T_0]$, $g(t)$ is  an $(\alpha,\Lambda)$ conical metric in $(M, \Sigma_{j=1}^{l}(1-\beta_j)D_j));\;$
       \item $g(t)$ is a $C^{\alpha,\frac{\alpha}{2},\Lambda}[0,T_0]$-family of conical  metrics (see Definition \ref{Def of a C 2 alpha, alpha over 2 family of  metrics}).
       \item  for any $0<\widehat{\alpha}\leq \alpha$, $g(t)$ is the unique solution of (\ref{Definition of CKRF}) in the class of $C^{\widehat{\alpha},\frac{\widehat{\alpha}}{2},\Lambda}[0,T]$-family of metrics.  
     \end{itemize}
\end{thm}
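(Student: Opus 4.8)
The plan is to deduce Theorem \ref{short time existence of CKRF: metric version with multiple divisors and multiple angles} from Theorem \ref{short time existence of CKRF:potential version, single divisor} exactly as Theorem \ref{short time existence of CKRF: metric version with a single divisor} is deduced from it, the only change being that everywhere one replaces the single-divisor data $(\beta, D)$ by the multi-divisor data $(\Lambda, \Sigma_j (1-\beta_j) D_j)$ and the reference metric $\omega_D$ by $\omega_\Lambda$. First I would reduce the metric flow to the potential flow: writing $\omega_g(t) = \omega_\Lambda + \sqrt{-1}\partial\bar\partial \psi(t)$, where $\psi(0)$ is the $C^{2+\acute\alpha,\beta}$ potential expressing $g_0 = \omega_\Lambda + \sqrt{-1}\partial\bar\partial\psi(0)$ (this exists by the $(\acute\alpha,\Lambda)$ hypothesis on $g_0$ and the definition of $(\alpha,\Lambda)$ metrics), the flow (\ref{Definition of CKRF}) becomes, over $M\setminus D$,
\[
\frac{\partial \psi}{\partial t} = \log\frac{(\omega_\Lambda + \sqrt{-1}\partial\bar\partial\psi)^n}{\omega_\Lambda^n} + \beta\psi + h_{\omega_\Lambda},
\]
where $h_{\omega_\Lambda}$ is the analogue of $h_{\omega_D}$ and lies in $C^{\alpha_0,\Lambda}$ with $\alpha_0 = \min\{\min_j(\frac{2}{\beta_j}-2), 1\}$, by the same routine computation as in (\ref{Regularity of h omega 0}) carried out component by component near each $D_j$. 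Setting $\phi = \psi - \psi(0)$ and $\mu = \beta$ puts this in the form of the equation in Theorem \ref{short time existence of CKRF:potential version, single divisor} with $f = \beta\psi(0) + h_{\omega_\Lambda} + \log\frac{(\omega_\Lambda + \sqrt{-1}\partial\bar\partial\psi(0))^n}{\omega_\Lambda^n}$, which I must check lies in $C^{2+\acute\alpha,1+\frac{\acute\alpha}{2},\Lambda}[0,T]$: the first two terms are time-independent and of the stated regularity, and the last is $\log\frac{\omega_{g_0}^n}{\omega_\Lambda^n}$, which is $C^{\acute\alpha,\Lambda}$ since both volume forms have the same conical singularity structure transverse to each $D_j$; time-independence gives the parabolic regularity for free.

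Next I would invoke Theorem \ref{short time existence of CKRF:potential version, single divisor} — whose proof, as the paper emphasizes, rests on the Schauder estimates of Theorem \ref{Schauder estimate of the linear equation: single divisor}, which in turn rest only on the local heat-kernel estimates of Theorem \ref{all the properties of the h.k1} for the \emph{standard} cone metric $g_{E,\beta}$. The point is that these ingredients are purely local near the divisor: near a point of $D_j$ the geometry is modeled on $g_{E,\beta_j}$, and since $D$ is a disjoint union of smooth irreducible components the local models never interact. Concretely, the norms $|\cdot|_{2,\alpha,\Lambda,M\times[T_1,T_2]}$ and $|\cdot|_{2+\alpha,1+\frac{\alpha}{2},\Lambda,M\times[T_1,T_2]}$ are defined by summing the single-divisor weighted norms over a finite cover $\{N_k(D_i)\}$ of the tubular neighborhood $T_{2\nu}(D)$ together with an interior piece on $M\setminus T_\nu(D)$; so the multi-divisor Schauder estimate follows by applying the single-divisor estimate on each chart (with $\beta$ the angle of the relevant component) and combining with interior parabolic Schauder theory on $M\setminus T_\nu(D)$, then summing. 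With the Schauder machinery in hand, the short-time solvability of the potential equation is obtained by the same fixed-point / continuity argument used for Theorem \ref{short time existence of CKRF:potential version, single divisor} (carried out in Section \ref{Proof of the short time existence assuming the Schauder estimate}): linearize the Monge–Ampère operator around $\omega_\Lambda$, solve the linear parabolic problem via Theorem \ref{Schauder estimate of the linear equation: single divisor} in its $\Lambda$-version, and close the iteration using the parabolic interpolation lemmas, shrinking $T_0$ as needed so that the quasilinear term is a contraction.

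Finally I would translate back: the solution $\phi\in C^{2+\alpha,1+\frac{\alpha}{2},\Lambda}[0,T_0]$ yields $\psi = \phi + \psi(0)$, hence $g(t) = \omega_\Lambda + \sqrt{-1}\partial\bar\partial\psi(t)$, a $C^{\alpha,\frac{\alpha}{2},\Lambda}[0,T_0]$ family of $(\alpha,\Lambda)$ metrics by Definition \ref{Def of a C 2 alpha, alpha over 2 family of metrics}; smoothness in $(M\setminus D)\times[0,T_0]$ comes from parabolic bootstrapping away from the divisor, exactly as in the single-divisor case. Uniqueness in the class $C^{\widehat\alpha,\frac{\widehat\alpha}{2},\Lambda}[0,T]$ for $0<\widehat\alpha\le\alpha$ follows from the uniqueness clause of Theorem \ref{short time existence of CKRF:potential version, single divisor} together with the fact that any $C^{\widehat\alpha,\frac{\widehat\alpha}{2},\Lambda}$ family of metric solutions produces, via its potentials, a $C^{2+\widehat\alpha,1+\frac{\widehat\alpha}{2},\Lambda}$ solution of the potential equation. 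I expect the only real work — and it is bookkeeping rather than genuine difficulty — to be checking that every estimate and lemma invoked from the single-divisor development is genuinely local near each component, so that the finite-cover definition of the $\Lambda$-norms lets one run the single-divisor arguments verbatim with component-dependent angles $\beta_j$; the constant $C_a$ then depends on the finite collection $\{\beta_j\}$ and the cover, consistent with the convention in Definition \ref{Dependence of the constant C}. No new analysis of Bessel functions or heat kernels is required beyond what Theorem \ref{all the properties of the h.k1} already provides for each fixed angle.
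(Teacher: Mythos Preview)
Your proposal is correct and follows essentially the same approach as the paper: the paper's own proof of Theorem \ref{short time existence of CKRF: metric version with multiple divisors and multiple angles} consists of a single sentence saying it is ``similar'' to Theorem \ref{short time existence of CKRF: metric version with a single divisor}, which in turn is deduced from Theorem \ref{short time existence of CKRF:potential version, single divisor} by checking $f = h_{\omega_D} + \log\frac{\omega_0^n}{\omega_D^n} \in C^{\acute\alpha,\beta}$. You have spelled out in detail exactly what ``similar'' means---namely that all the Schauder and heat-kernel estimates are local near each disjoint component $D_j$, so the $\Lambda$-norms (defined as finite sums over charts) allow the single-angle theory to be applied componentwise---and this is precisely the intended argument.
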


\begin{thm}\label{Schauder estimate of the linear equation: multiple divisor}Suppose 
$0<T<\infty$ and $\{ a(t),\ t\in [0,T]\} $ is a $C^{\alpha,\frac{\alpha}{2},\Lambda}[0,T]$-family of $(\alpha, \Lambda)$ metrics (See Definition \ref{Def of a C 2 alpha, alpha over 2 family of  metrics}).  Then there exists a constant $C_{a}$ (depending on $a(t)$) with the following property.  For any   $v\in C^{\alpha,\frac{\alpha}{2},\Lambda}[0,T]$, the parabolic  equation
 \begin{equation}
\frac{\partial u}{\partial t}=\Delta_{a(t)}u+v,\ u(0,x)=0 \end{equation}
 admits a unique solution  $u $ in $C^{2+\alpha,1+\frac{\alpha}{2},\Lambda}[0,T]$. Moreover,
  the following estimates hold. $$|u|_{2+\alpha,1+\frac{\alpha}{2},\Lambda,{M}\times[0,T]}\leq C_{a} |v|_{\alpha,\frac{\alpha}{2},\Lambda,{M}\times[0,T]};$$
 $$|u|_{2,\alpha,\Lambda,{M}\times[0,T]}\leq C_{a} |v|_{\alpha,\Lambda,{M}\times[0,T]}.$$
\end{thm}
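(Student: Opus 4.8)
The plan is to deduce Theorem~\ref{Schauder estimate of the linear equation: multiple divisor} from the single‑divisor estimate Theorem~\ref{Schauder estimate of the linear equation: single divisor} together with the classical interior parabolic Schauder theory, by a partition of unity. The key observation is purely bookkeeping: the $\Lambda$–norms of Definition~\ref{Def of Schauder norms} are \emph{by construction} finite sums of the single–angle weighted norms $|\,\cdot\,|^{\ast}_{\ast,\,\iota_{\beta_i}[N_k(D_i)]\times[0,T]}$ over the charts $N_k(D_i)$ covering $T_{2\nu}(D)$, plus one term $|\,\cdot\,|^{\ast}_{\ast,\,[M\setminus T_\nu(D)]\times[0,T]}$ over the region away from $D$. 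Since $D=\sqcup_j D_j$ is a disjoint union, for $\nu$ small the tubular neighborhoods of distinct components are disjoint, so inside each $N_k(D_i)$ the push‑forward metric $\iota_{\beta_i\ast}a(t)$ is a $C^{\alpha,\frac{\alpha}{2}}$ perturbation of a single standard cone $g_{E,\beta_i}$; no genuinely new phenomenon occurs there beyond the single–divisor case, and all the hard analysis — the heat‑kernel estimates of Theorem~\ref{all the properties of the h.k1} and the constant‑coefficient and small‑oscillation estimates — is already in place for a single angle.

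Concretely, first I would fix cutoffs $\{\chi_0,\chi_1,\dots,\chi_N\}$ on $M$ with $\sum_\ell\chi_\ell\equiv 1$, where $\chi_0$ is supported in $M\setminus T_{\nu/2}(D)$ and each $\chi_\ell$ ($\ell\ge 1$) is supported in a single $N_k(D_i)$; crucially, the latter are taken to be smooth functions of the flat radial variable $|z_0|^{\beta_i}$ (equivalently of $|S_i|^{\beta_i}$), so that $\chi_\ell\in C^{2+\alpha,\beta_i}$ locally with $\nabla\chi_\ell$ and $\Delta_{a(t)}\chi_\ell$ bounded in the relevant conical Hölder norms uniformly in $t\in[0,T]$. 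If $u\in C^{2+\alpha,1+\frac{\alpha}{2},\Lambda}[0,T]$ solves $\partial_t u=\Delta_{a(t)}u+v$, $u(0,\cdot)=0$, then $u_\ell:=\chi_\ell u$ solves
\[
\frac{\partial u_\ell}{\partial t}=\Delta_{a(t)}u_\ell+\chi_\ell v-\big(\Delta_{a(t)}\chi_\ell\big)u-2\langle\nabla\chi_\ell,\nabla u\rangle_{a(t)},\qquad u_\ell(0,\cdot)=0,
\]
in which the last two commutator terms are of order at most one in $u$, with coefficients of controlled conical Hölder norm.

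Next I would apply the appropriate local estimate to each $u_\ell$: for $\ell=0$, the classical parabolic Schauder estimate on the compact region away from $D$, where $\partial_t-\Delta_{a(t)}$ is uniformly parabolic with $C^{\alpha,\frac{\alpha}{2}}$ coefficients by Definition~\ref{Def of a C 2 alpha, alpha over 2 family of  metrics}; for $\ell\ge 1$, the single–divisor estimate Theorem~\ref{Schauder estimate of the linear equation: single divisor} (more precisely its proof, running through Theorem~\ref{Schauder estimate: constant coefficient} and Proposition~\ref{Solvability of the operator with small oscillation in the flat cone}) with cone angle $\beta=\beta_i$, applied to the compactly supported push‑forward $\iota_{\beta_i\ast}u_\ell$ on the model cone. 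Summing over $\ell$ and invoking that the $\Lambda$–norm is the sum of the pieces yields
\[
|u|_{2+\alpha,1+\frac{\alpha}{2},\Lambda,M\times[0,T]}\le C_a\Big(|v|_{\alpha,\frac{\alpha}{2},\Lambda,M\times[0,T]}+|\nabla u|^{(1)}_{0,M\times[0,T]}+|u|_{0,M\times[0,T]}\Big),
\]
the commutators contributing only the lower‑order terms on the right. These are removed by the parabolic interpolation inequalities (Lemmas~\ref{parabolic intepolations}, \ref{parabolic intepolations 2}, \ref{parabolic timewise intepolations}) plus the global maximum‑principle bound $|u|_{0}\le C_a|v|_{0}$ (Lemma~\ref{C0 estimate}), via a standard absorption argument; the same scheme with spatial weights gives the second inequality. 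Uniqueness is immediate: a solution in $C^{2+\widehat\alpha,1+\frac{\widehat\alpha}{2},\Lambda}$ is continuous across $D$, which has measure zero, so the maximum principle for $\partial_t-\Delta_{a(t)}$ applies on all of $M$. For existence I would run the method of continuity on $\partial_t-\Delta_{(1-\sigma)\omega_\Lambda+\sigma a(t)}$, $\sigma\in[0,1]$: the a priori estimate just established is uniform along the path, the endpoint $\sigma=0$ is solvable by the same partition‑of‑unity reduction combined with the solvability part of Theorem~\ref{Schauder estimate of the linear equation: single divisor} on each chart, and openness is the usual Banach‑space argument in $C^{2+\alpha,1+\frac{\alpha}{2},\Lambda}[0,T]$.

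The main obstacle is Step two: one must verify that the cutoffs can be chosen inside the conical Hölder class so that the commutator terms $(\Delta_{a(t)}\chi_\ell)u$ and $\langle\nabla\chi_\ell,\nabla u\rangle$ are genuinely lower order with conical‑Hölder‑bounded coefficients, and that the absorption of these terms is legitimate — which requires the weighted parabolic interpolation inequalities and a clean covering/scaling argument compatible with the $\Lambda$–norms. Once this patching is organized, Theorem~\ref{Schauder estimate of the linear equation: multiple divisor} follows from the single‑angle results with no new analytic input.
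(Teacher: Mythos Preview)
Your proposal is correct and matches the paper's approach. The paper does not give a separate proof of Theorem~\ref{Schauder estimate of the linear equation: multiple divisor}; it simply states it as the natural generalization of Theorem~\ref{Schauder estimate of the linear equation: single divisor}, relying on the fact that the components $D_i$ are disjoint and that the $\Lambda$--norms in Definition~\ref{Def of Schauder norms} are by construction finite sums of single--angle norms over the charts $N_k(D_i)$ plus the smooth interior piece. Your partition--of--unity reduction, freezing to a single cone $g_{E,\beta_i}$ in each chart and invoking Theorem~\ref{Schauder estimate: constant coefficient}, Proposition~\ref{Solvability of the operator with small oscillation in the flat cone}, the interpolation lemmas, and Lemma~\ref{C0 estimate}, is exactly the formalization the paper leaves implicit.
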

\section{Proof of Theorem \ref{short time existence of CKRF: metric version with a single divisor} and Theorem \ref{short time existence of CKRF:potential version, single divisor}
assuming Theorem  \ref{Schauder estimate of the linear equation: single divisor}. \label{Proof of the short time existence assuming the Schauder estimate}}
In this section we assume Theorem  \ref{Schauder estimate of the linear equation: single divisor} to be right and prove the short time existence of CKRF.
  The proof
is actually regular and no special conical structure is involved. For the sake of being rigorous we shall include the proof here.
Since Theorem  \ref{Schauder estimate of the linear equation: single divisor} is an independent part,  we  left it to later discussions.

\begin{proof}{of Theorem \ref{short time existence of CKRF:potential version, single divisor}:}\ \  Actually Theorem \ref{short time existence of CKRF:potential version, single divisor} is  a direct conclusion of Theorem \ref{Schauder estimate of the linear equation: single divisor}  and the iteration argument.
Since we lack an exact reference on the result we want, we include the crucial detail here. We still denote $C_0^{2+\alpha,1+\frac{\alpha}{2},\beta}[0,T]$ as the subspace of functions with
zero initial value of $C^{2+\alpha,1+\frac{\alpha}{2},\beta}[0,T]$. For brevity, we only prove in detail the short time existence result assuming  $\mu=0$.  The result for all $\mu$  follows from exactly the same method. \\

  We consider the following two equations
 \begin{eqnarray}\label{CKRF equation with mu being 0}
 \frac{\partial \phi}{\partial t}=\log\frac{(\omega_0+\sqrt{-1}\partial\bar{\partial}\phi)^n}{\omega_0^n}+f;
   \phi=0   \ \textrm{when} \  t=0.
   \end{eqnarray}
   \begin{eqnarray}\label{Parabolic  equation with respect to the initial metric}
\frac{\partial u}{\partial t}=\Delta_{\omega_0}u + f,\
u=0   \ \textrm{when} \  t=0.
 \end{eqnarray}
Substracting (\ref{Parabolic  equation with respect to the initial metric}) from (\ref{CKRF equation with mu being 0}) we end up with the followin equation.  
\begin{eqnarray}\label{Substraction trick}
 \frac{\partial v}{\partial t}=\log\frac{(\omega_u+\sqrt{-1}\partial\bar{\partial}v)^n}{\omega_u^n}+F(u);
   v=0   \ \textrm{when} \  t=0.
   \end{eqnarray}
   In the above equation, we define that 
   \begin{equation*}\omega_u=(\omega_0+\sqrt{-1}\partial\bar{\partial}u)^n,\ v=\phi-u,\
   F(u)=\log \frac{\omega_u^n}{\omega_0^n}-\Delta_{\omega_0}u.
   \end{equation*}
   Notice that when $T$ is small enough, $\omega_u$ is still a uniformly elliptic family of metric.
   A crucial advantage of this trick of substraction is that 
   \begin{equation*}F(u)=0 \ \textrm{when} \  t=0,
   \end{equation*}
  which implies that the linearized operator is a contraction for short time. \\ 

 To continue, we define $L=\frac{\partial }{\partial t}-\Delta_{\omega_u} - F(u).\;$ Then  this defines
 an  operator
 \[
 L: C_0^{2+\alpha,1+\frac{\alpha}{2},\beta}[0,T] \rightarrow C^{\alpha, \frac{\alpha}{2},\beta}[0,T].
 \]
 By Theorem \ref{Schauder estimate of the linear equation: single divisor}, this is an invertible operator with $|L^{-1}|$ uniformly bounded.
Set  $$E(v)=\log\frac{(\omega_u+i\partial\bar{\partial}v)^n}{\omega_u^n}-\Delta_{\omega_u}.$$
Consider the operator
\[
  \Phi =  L^{-1} \circ E: C_0^{2+\alpha,1+\frac{\alpha}{2},\beta}[0,T]  \rightarrow C_0^{2+\alpha,1+\frac{\alpha}{2},\beta}[0,T] .
\]
 We shall  show in the following that $\Phi$ is a contracting map  in a small ball $B_{\epsilon_1}$ (centered at the zero function) in $C_0^{2+\alpha,1+\frac{\alpha}{2},\beta}[0,T] .\;$  For any $ v_1, v_2 \in C_0^{2+\alpha,1+\frac{\alpha}{2},\beta}[0,T]$, there exist $u_1, u_2 \in C_0^{2+\alpha,1+\frac{\alpha}{2},\beta}[0,T]$
 such that
 $$Lu_1=E(v_1)\qquad {\rm and}\;\; Lu_2=E(v_2)$$
 or equivalently
 \[
 \Phi(v_1) = u_1 \qquad {\rm and}\;\; \Phi(v_2) = u_2.\;
 \]
 Thus we obtain the equation
 \[
 L(u_1-u_2)  = E(v_1) - E(v_2).
 \]
 Note
that  $E$ is a quadratic operator.   Suppose  $|v_1|,|v_2|\leq \epsilon_2$ and $\epsilon_2$ is small enough,  we have that $$|E(v_1)-E(v_2)|_{\alpha,\frac{\alpha}{2},\beta,{M}\times[0,T]}\leq \epsilon_3|v_1-v_2|_{2+\alpha,1+\frac{\alpha}{2},\beta,{M}\times[0,T]}$$

 By Theorem \ref{Schauder estimate of the linear equation: single divisor},  we have
 \[\begin{array}{lcl}
|u_1-u_2|_{2+\alpha,1+\frac{\alpha}{2},\beta,{M}\times[0,T]}& \leq &C_1|E(v_1)-E(v_2)|_{2+\alpha,1+\frac{\alpha}{2},\beta,{M}\times[0,T]}\\
& \leq & C_1 \cdot \epsilon_3\cdot |v_1-v_2|_{2+\alpha,1+\frac{\alpha}{2},\beta,{M}\times[0,T]}.
\end{array}
\]

Then making $\epsilon_2$ sufficiently small, $\epsilon_3$ can be  small enough too.  Thus,  we have:\begin{eqnarray*}& &|\Phi(v_1) -\Phi(v_2)|_{2+\alpha,1+\frac{\alpha}{2},\beta,{M}\times[0,T]} 
\\&=&  |u_1-u_2|_{2+\alpha,1+\frac{\alpha}{2},\beta,{M}\times[0,T]}\leq \frac{1}{2}|v_1-v_2|_{2+\alpha,1+\frac{\alpha}{2},\beta,{M}\times[0,T]}.
\end{eqnarray*}
Then, $\Phi$ is a contraction map in $B_{\epsilon_2}$. To show that there is a fixed point, it suffices to show the first iteration is small (provided the time is short).
We have the following crucial lemma about the first iteration.
\begin{clm}\label{first iteration is small}Let $v_0\equiv 0$. For any $\epsilon_4>0$, there exists a  $T$ small enough such that $$|\Phi(v_0)|_{2+\alpha,1+\frac{\alpha}{2},\beta,{M}\times[0,T]}\leq \epsilon_4.$$
\end{clm}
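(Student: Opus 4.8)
The plan is to show that $\Phi(v_0) = L^{-1}E(v_0) = L^{-1}E(0)$, and since $E(0) = \log\frac{\omega_u^n}{\omega_u^n} - \Delta_{\omega_u}0 = 0$, we actually have $\Phi(v_0) = 0$ trivially. Wait — let me reconsider, since $E(v)=\log\frac{(\omega_u+i\partial\bar\partial v)^n}{\omega_u^n}-\Delta_{\omega_u}v$, so indeed $E(0)=0$ and $\Phi(v_0)=0$. That would make the claim vacuous, which signals I should instead think of the iteration as producing $\phi = u + v$ where the relevant ``first iteration'' smallness is about $u$, the solution of the linear heat equation (\ref{Parabolic equation with respect to the initial metric}). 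So the substance of Claim \ref{first iteration is small} must really be: the contraction's fixed point $v$ lies in $B_{\epsilon_2}$ provided $T$ is small, and the mechanism is that $u$ itself (hence $F(u)$, hence the inhomogeneous term driving the $v$-equation) is small in the relevant norm when $T\to 0$.

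\textbf{Approach.} First I would invoke Theorem \ref{Schauder estimate of the linear equation: single divisor} applied to (\ref{Parabolic equation with respect to the initial metric}) with $a(t)\equiv\omega_0$ and right-hand side $f$, obtaining $|u|_{2+\alpha,1+\frac{\alpha}{2},\beta,M\times[0,T]}\le C_{\omega_0}|f|_{\alpha,\frac{\alpha}{2},\beta,M\times[0,T]}$, a bound uniform in $T\le T$. The key point is then a \emph{time-interval smallness} estimate: since $f\in C^{2+\acute\alpha,1+\frac{\acute\alpha}{2},\beta}[0,T]$ with $\acute\alpha>\alpha$, and $f$ need not vanish at $t=0$, I would use the gain of Hölder exponent from $\acute\alpha$ to $\alpha$ together with the fact that $u(0)=0$ to show $|u|_{2+\alpha,1+\frac{\alpha}{2},\beta,M\times[0,T]}\le C(f)\,T^{\frac{\acute\alpha-\alpha}{2}}\to 0$ as $T\to 0$. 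Concretely: on the short interval $[0,T]$ the $C^0$, gradient, and $i\partial\bar\partial$ parts of $u$ are controlled by integrating $\partial_t u$ from the zero initial data, picking up positive powers of $T$; and the parabolic Hölder seminorms of $\partial_t u$ and $i\partial\bar\partial u$ at exponent $\alpha$ are bounded by those at exponent $\acute\alpha$ times $T^{(\acute\alpha-\alpha)/2}$ plus a term from the initial condition, which again carries a positive power of $T$ because $u$ vanishes at $t=0$. This gives $|u|_{2+\alpha,1+\frac{\alpha}{2},\beta}\to 0$.

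\textbf{Key steps, in order.} (1) Solve (\ref{Parabolic equation with respect to the initial metric}) via Theorem \ref{Schauder estimate of the linear equation: single divisor} and record the uniform bound on $u$; (2) observe $\omega_u=\omega_0+i\partial\bar\partial u$ is uniformly equivalent to $\omega_0$ once $T$ is small, so $L$ and $L^{-1}$ are defined with operator norm bounded independent of small $T$; (3) establish the decay $|u|_{2+\alpha,1+\frac\alpha2,\beta,M\times[0,T]}\le C(f)T^{\gamma}$ for some $\gamma>0$, using $u(0)=0$ and the excess regularity of $f$; (4) deduce that $F(u)=\log\frac{\omega_u^n}{\omega_0^n}-\Delta_{\omega_0}u$ is $O(T^{\gamma})$ in $C^{\alpha,\frac\alpha2,\beta}$ (quadratic in $i\partial\bar\partial u$ plus lower order, all small); (5) since $E(v_0)=E(0)=0$ but the fixed point $v_*$ of $\Phi$ satisfies $v_* = L^{-1}E(v_*)$ and the $v$-equation (\ref{Substraction trick}) has inhomogeneous term $F(u)$ hidden inside $L$, re-read $\Phi$ so that its first iterate measures exactly this $F(u)$-sized quantity; conclude $|\Phi(v_0)|\le C\,T^{\gamma}\le\epsilon_4$ for $T$ small.

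\textbf{Main obstacle.} The delicate point is step (3): getting a genuine positive power of $T$ in front of $|u|_{2+\alpha,1+\frac\alpha2,\beta,M\times[0,T]}$. This requires care with the \emph{weighted} parabolic Hölder seminorms near the divisor (the weights $l_{(x,t_1);(y,t_2)}$ and $d_{x,y}$), because one must interpolate the $\alpha$-seminorm between the $C^0$ bound (which is $O(T)$ from integrating in time off the zero initial data) and the $\acute\alpha$-seminorm (which is $O(1)$ from $f\in C^{2+\acute\alpha}$), uniformly over the conical geometry. I expect this is handled by the parabolic interpolation lemmas (L\ref{parabolic intepolations}, L\ref{parabolic intepolations 2}, L\ref{parabolic timewise intepolations}, L\ref{C0 estimate}) cited in Figure 1, combined with the elementary observation that a function vanishing at $t=0$ with bounded $\partial_t$ satisfies $|w(x,t)|\le t\,\|\partial_t w\|_\infty$, and the standard fact $|t_1|^{\acute\alpha/2}\le T^{(\acute\alpha-\alpha)/2}|t_1|^{\alpha/2}$ on $[0,T]$. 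Once the $T^{\gamma}$ decay of $u$ is in hand, everything else — the quadratic estimate on $E$, the contraction, and the smallness of the first iterate — is routine.
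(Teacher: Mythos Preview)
Your proposal has a genuine gap and also misses the clean mechanism the paper uses.

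\textbf{The error in step (3).} You claim that $|u|_{2+\alpha,1+\frac{\alpha}{2},\beta,M\times[0,T]}\to 0$ as $T\to 0$. This is false. Since $u(0)=0$, the equation $\partial_t u=\Delta_{\omega_0}u+f$ gives $\partial_t u(x,0)=f(x,0)$, and there is no reason for $f(\cdot,0)$ to vanish. Hence $|\partial_t u|_{0,M\times[0,T]}$, which is part of the $C^{2+\alpha,1+\frac{\alpha}{2},\beta}$ norm, stays bounded away from zero as $T\downarrow 0$. What \emph{is} true is that $|i\partial\bar\partial u|_{\alpha,\frac{\alpha}{2},\beta,M\times[0,T]}\to 0$ (because $i\partial\bar\partial u=0$ at $t=0$ and $i\partial\bar\partial u\in C^{\acute\alpha,\frac{\acute\alpha}{2},\beta}$); your step (4), which only needs this, is salvageable. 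But as written, step (3) is wrong, and your ``main obstacle'' paragraph is aimed at proving the wrong thing.

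\textbf{The confusion about $\Phi(v_0)$.} The operator $L=\partial_t-\Delta_{\omega_u}-F(u)$ is \emph{affine}, not linear: $L^{-1}(0)$ is the solution of $\partial_t v_1-\Delta_{\omega_u}v_1=F(u)$, $v_1(0)=0$, not the zero function. So $\Phi(v_0)=L^{-1}(E(0))=L^{-1}(0)=v_1$ with $v_1$ solving that inhomogeneous heat equation. You eventually gesture at this in step (5), but the statement ``re-read $\Phi$ so that its first iterate measures exactly this $F(u)$-sized quantity'' is not an argument; once you see $L$ is affine there is nothing to re-read.

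\textbf{How the paper actually proceeds.} The paper bypasses your steps (3)--(4) entirely. It observes directly that $F(u)=\log\frac{\omega_u^n}{\omega_0^n}-\Delta_{\omega_0}u$ vanishes at $t=0$ (this is precisely the ``crucial advantage of the subtraction trick'') and that $F(u)\in C^{\acute\alpha,\frac{\acute\alpha}{2},\beta}$ by Theorem \ref{Schauder estimate of the linear equation: single divisor}. From $F(u)(\cdot,0)=0$ one gets $|F(u)|_{0,M\times[0,T]}\le T^{\acute\alpha/2}[F(u)]_{\acute\alpha,\frac{\acute\alpha}{2},\beta}$, and interpolating the $\alpha$-seminorm against the $\acute\alpha$-seminorm on the time interval $[0,T]$ gives $[F(u)]_{\alpha,\frac{\alpha}{2},\beta}\le C\,T^{(\acute\alpha-\alpha)/2}$. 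Applying the Schauder estimate of Theorem \ref{Schauder estimate of the linear equation: single divisor} to $\partial_t v_1-\Delta_{\omega_u}v_1=F(u)$ then yields $|v_1|_{2+\alpha,1+\frac{\alpha}{2},\beta}\le C|F(u)|_{\alpha,\frac{\alpha}{2},\beta}\le C\,T^{(\acute\alpha-\alpha)/2}\le\epsilon_4$. There is no need to control $u$ itself in any strong norm, nor to invoke the quadratic structure of $F$; the vanishing of $F(u)$ at $t=0$ does all the work.
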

To prove claim \ref{first iteration is small}, it suffices  to use the Schauder estimate over time.  This is achieved as follows. By Theorem \ref{Schauder estimate of the linear equation: single divisor} we know that $F(u)\in C^{\acute{\alpha},\frac{\acute{\alpha}}{2},\beta}[0,T]$.  Notice  by our choice we have that  $\alpha<\acute{\alpha}$. Then using the crucial fact that $F(u)=0$ when 
 $t=0$,  we trivially get 
 \begin{equation*}|F(u)|_{0,M\times [0,T]}\leq T^{\frac{\acute{\alpha}}{2}}[F(u)]_{\acute{\alpha},\frac{\acute{\alpha}}{2},\beta,M\times [0,T]}.
 \end{equation*}   
 Denote  $\Phi(v_0)$ as $v_1$, notice that 
 \[\frac{\partial v_1}{\partial t}-\Delta_{\omega_u}v_1 - F(u)=0.\] Then by applying 
  Theorem \ref{Schauder estimate of the linear equation: single divisor}, we have when $T$ is small enough that 
\begin{eqnarray*}& &|v_1|_{2+\alpha,1+\frac{\alpha}{2},\beta,{M}\times[0,T]}
\\& \leq & C|F(u)|_{\alpha,\frac{\alpha}{2},\beta,M\times [0,T]}
\\& = & C\{|F(u)|_{0,M\times [0,T]}+[F(u)]_{\alpha,\frac{\alpha}{2},\beta,M\times [0,T]}\}
 \\&  \leq& CT^{\frac{\acute{\alpha}}{2}}[F(u)]_{\acute{\alpha},\frac{\acute{\alpha}}{2},\beta,M\times [0,T]}+C T^{\frac{\acute{\alpha}-\alpha}{2}}|F(u)|_{\acute{\alpha},\frac{\acute{\alpha}}{2},\beta,M\times [0,T]}\leq \epsilon_4.
\end{eqnarray*}
 Then claim \ref{first iteration is small} is proved.\\

 Now we are ready to
define the iteration process. Set $v_0 = 0$ and inductively, we define

\[
  v_{k+1} = \Phi( v_{k}), \qquad {\rm where}\;\; k=0,1,2,\cdots.
\]

Then combining claim \ref{first iteration is small} we have for $T$ small that $|v_k|_{2+\alpha,1+\frac{\alpha}{2},\beta,{M}\times[0,T]}\leq \epsilon_3$ (then $\Phi$ is a contraction over the sequence). Hence
\[
\begin{array}{lcl}
 |v_{k+1} - v_{k}|_{2+\alpha,1+\frac{\alpha}{2},\beta,{M}\times[0,T]} & = & |\Phi(v_k) -\Phi(v_{k-1})|_{2+\alpha,1+\frac{\alpha}{2},\beta,{M}\times[0,T]}
 \\ & \leq & {1\over 2}  |v_{k} - v_{k-1}|_{2+\alpha,1+\frac{\alpha}{2},\beta,{M}\times[0,T]}\\
 & \leq & {1\over 2^k}   |v_{1} - v_{0}|_{2+\alpha,1+\frac{\alpha}{2},\beta,{M}\times[0,T]}\leq {\epsilon_{4}\over 2^k}.
\end{array}
\]
Then it is easy to see that the sequence $\{v_k\}$ is a Cauchy sequence in  $C_0^{2+\alpha,1+\frac{\alpha}{2},\beta}[0,T]. \;$ Thus,
 $\{v_k\}$  converge to a fixed point $v_\infty \in  C_0^{2+\alpha,1+\frac{\alpha}{2},\beta}[0,T]$ such that
\[
    \Phi(v_\infty) =  v_\infty.
\]
In other words,
\[
{{\partial v_\infty}\over {\partial t}} = \log\frac{(\omega_u+i\partial\bar{\partial}v_\infty)^n}{\omega_u^n} +F(u).
\]
Therefore $\phi=v_{\infty}-u$ is a solution to (\ref{CKRF equation with mu being 0}).\\

\end{proof}
\begin{proof}{of Theorems \ref{short time existence of CKRF: metric version with a single divisor}, \ref{short time existence of CKRF: metric version with multiple divisors and multiple angles}:} Again we only say a few words on  Theorem \ref{short time existence of CKRF: metric version with a single divisor}. Theorem \ref{short time existence of CKRF: metric version with multiple divisors and multiple angles} is similar.

 Now,   recall 
\[
f = h_{\omega_{D}}+\log\frac{\omega_0^n}{\omega_{D}^n}.
\]
Since $\omega$ is $(\acute{\alpha},\beta)$, then using \ref{Regularity of h omega 0} we  see that $f \in C^{\acute{\alpha},\beta}$.
Thus by applying Theorem \ref{short time existence of CKRF:potential version, single divisor},  the proof is complete.
\end{proof}
\section{Representation formulas for the heat kernel.\label{Representation formulas for the heat kernel}}
In this section, we give a representation formula for the heat Kernel with respect to the conical model metric $g_{E,\beta}\;$
over $\mathbb{R}^2 \times \mathbb{R}^m.\;$ \\

As in \cite{Wa}, we denote $I_{\mu}(z)$ as the Bessel function of  second kind with order $\mu$ and $J_{\mu}(z)$ as the Bessel function of  first kind with order $\mu$. \\

  First we recall the Weber's formula
\begin{equation}\label{Weber's formula}\frac{1}{2 t}e^{-\frac{r^2+{r^{'}}^2}{4t}}I_{\mu}(\frac{rr^{'}}{2t})=\int_{0}^{\infty}e^{-\lambda^2 t}J_{\mu}(\lambda r)J_{\mu}(\lambda r^{'})\lambda d\lambda,\end{equation}

First, for the sake of completeness we give a proof of the heat kernel formula when $m=0.\;$. We believe this proof is well known to  experts.

\begin{prop}
\label{Heat kernel formula}The heat kernel of  $\{R^2,\ dr^2+\beta^2r^2d\theta^2\}$ is
\begin{eqnarray*}& &\widehat{H}(r,\theta, r^{'},\theta^{'},t)
\\&=&\frac{1}{2\pi\beta}\int^{\infty}_{0}e^{-\lambda^2 t}J_{0}(\lambda r)J_{0}(\lambda r^{'})\lambda d\lambda
\\& & + \frac{1}{2\pi\beta}\displaystyle\Sigma_{k\geq 1} 2\cos(k(\theta-\theta^{'}))\int^{\infty}_{0}e^{-\lambda^2 t}J_{\frac{k}{\beta}}(\lambda r)J_{\frac{k}{\beta}}(\lambda r^{'})\lambda d\lambda
\\&=&\frac{1}{2\pi\beta}\frac{1}{2 t}e^{-\frac{r^2+{r^{'}}^2}{4t}}I_{0}(\frac{rr^{'}}{2t}) + \frac{1}{2\pi\beta}\frac{1}{2 t}e^{-\frac{r^2+{r^{'}}^2}{4t}}\displaystyle \Sigma_{k\geq 1} 2\cos (k(\theta-\theta^{'})) I_{\frac{k}{\beta}}(\frac{rr^{'}}{2t}).
\end{eqnarray*}
\end{prop}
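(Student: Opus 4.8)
The plan is to derive the heat kernel on the flat cone $\{\mathbb{R}^2, dr^2+\beta^2 r^2 d\theta^2\}$ by separation of variables in polar coordinates, then recognize the radial integrals via Weber's formula (\ref{Weber's formula}). First I would set up the eigenfunction expansion: the Laplacian on the cone is $\Delta_\beta = \partial_r^2 + r^{-1}\partial_r + \beta^{-2}r^{-2}\partial_\theta^2$, where $\theta$ runs over $[0,2\pi]$ with the usual periodicity. Expanding in the angular variable, the $\theta$-eigenfunctions are $e^{ik\theta}$, $k\in\mathbb{Z}$, with eigenvalue $-k^2$, so that $\beta^{-2}r^{-2}\partial_\theta^2$ contributes $-k^2/(\beta^2 r^2)$ on the $k$-th mode. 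The radial part then solves the Bessel-type ODE whose bounded-at-origin solutions are $J_{|k|/\beta}(\lambda r)$, and the spectral synthesis (Hankel transform of order $|k|/\beta$) gives that the heat semigroup acts on the $k$-th mode by the kernel $\int_0^\infty e^{-\lambda^2 t} J_{|k|/\beta}(\lambda r) J_{|k|/\beta}(\lambda r')\lambda\,d\lambda$. Assembling over $k$ with the correct normalization $\frac{1}{2\pi\beta}$ (coming from the area element $\beta r\,dr\,d\theta$ and the orthogonality of $e^{ik\theta}$) and pairing $k$ with $-k$ to produce $2\cos(k(\theta-\theta'))$ yields the first displayed expression.

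Next I would pass from the first form to the second simply by applying Weber's formula (\ref{Weber's formula}) termwise: with $\mu = k/\beta$, $r' = r'$, the identity gives $\int_0^\infty e^{-\lambda^2 t} J_{k/\beta}(\lambda r)J_{k/\beta}(\lambda r')\lambda\,d\lambda = \frac{1}{2t}e^{-(r^2+{r'}^2)/(4t)} I_{k/\beta}\!\left(\frac{rr'}{2t}\right)$, and the $k=0$ term is the $\mu=0$ case. Substituting these into the series produces exactly the second displayed formula. I would note that this termwise substitution is legitimate because the series converges absolutely and uniformly on compact subsets of $r,r'>0$, $t>0$: the asymptotics $I_\nu(x)\sim x^\nu/(2^\nu\Gamma(\nu+1))$ as $x\to 0$ together with the Gaussian prefactor $e^{-(r^2+{r'}^2)/(4t)}$ force rapid decay in $k$ for fixed $t$, so the interchange of summation and the identity is harmless.

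To justify that the resulting kernel $\widehat{H}$ is genuinely \emph{the} heat kernel, I would verify three things: (i) it satisfies $\partial_t \widehat{H} = \Delta_{\beta,(r,\theta)}\widehat{H}$ away from the diagonal, which is immediate from the construction since each mode solves the heat equation and differentiation under the integral/sum is justified by the same decay estimates; (ii) $\widehat{H}(\cdot,\cdot,\cdot,\cdot,t) \to \delta$ as $t\to 0^+$, i.e. $\int \widehat{H}(r,\theta,r',\theta',t) f(r',\theta')\,\beta r'\,dr'\,d\theta' \to f(r,\theta)$ for continuous compactly supported $f$ — this follows from the completeness of the Hankel transform and the Fourier series in $\theta$, or alternatively from the known small-time Gaussian behaviour once one has the closed form; and (iii) symmetry and positivity, which can be read off from the $I_\nu \geq 0$ form. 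The main obstacle is the convergence and interchange bookkeeping near $t\to 0$ and near $r,r'\to 0$: one must be careful that the Bessel series representing the heat kernel is summed in the right order and that the $\delta$-function limit genuinely holds for the full kernel (not just mode by mode), but since the claim is flagged as well known I would keep this part brief, citing the standard spectral theory of the Hankel transform and the uniform decay of $I_{k/\beta}(rr'/2t)e^{-(r^2+{r'}^2)/4t}$ in $k$.
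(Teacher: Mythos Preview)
Your proposal is correct and follows essentially the same approach as the paper: verify that $\widehat{H}$ solves the heat equation and that it recovers the initial data as $t\to 0^+$, with Weber's formula supplying the equivalence of the two displayed expressions. The one place where the paper is more concrete than your sketch is the $\delta$-limit: rather than appealing to completeness of the Hankel transform, the paper proves directly (its Claim~4.2) that for each Bessel order $\nu>-\tfrac12$ one has $\lim_{t\to 0}\int_0^\infty \tfrac{1}{2t}e^{-(r^2+{r'}^2)/4t}I_\nu(rr'/2t)f(r',\theta')\,r'\,dr'=f(r,\theta')$, via the substitution $u=(r'-r)/(2\sqrt{t})$ and the asymptotic $I_\nu(x)e^{-x}\sim (2\pi x)^{-1/2}$ from Watson; the Fourier series in $\theta$ then finishes the argument mode by mode. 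If you want your write-up to be self-contained, that is the one step worth spelling out.
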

\begin{proof}{of Proposition \ref{Heat kernel formula}:} First, we prove by direct computation that
 \[(\Delta_{\beta}-\frac{\partial}{\partial t})\widehat{H}=0.\]
 Secondly, we want to prove that
$$\lim_{t\rightarrow 0}\beta\int_{-\pi}^{\pi}\int_{0}^{+\infty}\widehat{H}(r,\theta,r^{'},\theta^{'},t)f(r^{'},\theta^{'})r^{'}dr^{'}d\theta^{'}=f(r,\theta).$$
We claim that the following holds.
\begin{clm}\label{Bessel representation of a function}Suppose  $v>-\frac{1}{2}$ and $r>0$,  we have that
$$\displaystyle \lim_{t\rightarrow 0}\int_{0}^{+\infty}\frac{1}{2 t}e^{-\frac{r^2+{r^{'}}^2}{4t}}I_{v}(\frac{rr^{'}}{2t})f(r^{'},\theta^{'})r^{'}dr^{'}=f(r,\theta^{'}).$$
\end{clm}
\begin{proof}
  Suppose $r>0$,  set  $u=\frac{r^{'}-r}{2\sqrt{t}}.\;$ We have
\begin{eqnarray*}& &\lim_{t\rightarrow 0}\int_{0}^{+\infty}\frac{1}{2 t}e^{-\frac{r^2+{r^{'}}^2}{4t}}I_{v}(\frac{rr^{'}}{2t})f(r^{'},\theta^{'})r^{'}dr^{'}
\\&=&\lim_{t\rightarrow 0}\int_{-\infty}^{+\infty}\frac{1}{\sqrt{t}}e^{-u^2}I_{v}(\frac{r^2+2ru\sqrt{t}}{2t})e^{-\frac{r^2+2ru\sqrt{t}}{2t}}f(r+2u\sqrt{t},\theta^{'})(r+2u\sqrt{t})du
\end{eqnarray*}
According to formula $(1)$ of $I_{v}(x)$ in 7.25 of \cite{Wa}, we have
\begin{eqnarray*}& &\lim_{t\rightarrow 0}\frac{1}{\sqrt{t}}I_{v}(\frac{r^2+2ru\sqrt{t}}{2t})e^{-\frac{r^2+2ru\sqrt{t}}{2t}}(r+2u\sqrt{t})
\\&=&\lim_{t\rightarrow 0}\frac{1}{\sqrt{\pi}\sqrt{\frac{r^2+2ru\sqrt{t}}{t}}}\ast\frac{\int_{0}^{\frac{r^2+2ru\sqrt{t}}{t}}w^{v-\frac{1}{2}}e^{-w}dw}{\Gamma(v+\frac{1}{2})}\ast\frac{r+2u\sqrt{t}}{\sqrt{t}}
\\&=&\frac{1}{\sqrt{\pi}}.
\end{eqnarray*}
It follows that
 \begin{eqnarray*}& &\lim_{t\rightarrow 0}\int_{0}^{+\infty}\frac{1}{2 t}e^{-\frac{r^2+{r^{'}}^2}{4t}}I_{v}(\frac{rr^{'}}{2t})f(r^{'},\theta^{'})r^{'}dr^{'}
\\&=&\frac{1}{\sqrt{\pi}}\int_{-\infty}^{+\infty}e^{-u^2}duf(r,\theta^{'})=f(r,\theta^{'}).
\end{eqnarray*}

Then claim \ref{Bessel representation of a function} is proved.
\end{proof}

Now we are ready to prove proposition \ref{Heat kernel formula}. From Fourier Series theory we trivially have
\begin{eqnarray*}& &\lim_{t\rightarrow 0}\beta\int_{-\pi}^{\pi}\int_{0}^{+\infty}H(r,r^{'},\theta,\theta^{'},t)f(r^{'},\theta^{'})r^{'}dr^{'}d\theta^{'}
\\&=&\frac{1}{2\pi}\int_{-\pi}^{\pi}f(r,\theta^{'})d\theta^{'}+\frac{1}{2\pi}\Sigma_{k\geq 1} 2\int_{-\pi}^{\pi}f(r,\theta^{'}) \cos(k(\theta-\theta^{'}))d\theta^{'}
\\&=&f(r,\theta).
\end{eqnarray*}
\end{proof}
\begin{rmk}One should notice that when $r=0$, the heat kernel does not tend to the $\delta-$function anymore when $t\rightarrow 0$. Nevertheless, this does not affect Lemma \ref{representation of the derivative of singular integrals} to be true since we assume $r\neq 0$ in it.
\end{rmk}
Thus it's easy to get the expression for the heat kernel $H(x,y,t)$  of   our model conical metric $g_{E,\beta}: $
\begin{eqnarray}\label{Higher dim heat kernel formula}H(x,y,t) & = & H(r,\theta, r^{'},\theta^{'},R,t)\nonumber
\\&=&\frac{1}{\beta(4\pi t)^{\frac{m}{2}+1}}e^{-\frac{r^2+{r^{'}}^2+R^2}{4t}}[I_{0}(\frac{rr^{'}}{2t})+\Sigma_{k\geq 1} 2\cos(k(\theta-\theta^{'}))I_{\frac{k}{\beta}}(\frac{rr^{'}}{2t})]\nonumber
\\&=&\frac{2t}{\beta(4\pi t)^{\frac{m}{2}+1}}e^{-\frac{R^2}{4t}}[\int^{\infty}_{0}e^{-\lambda^2 t}J_{0}(\lambda r)J_{0}(\lambda r^{'})\lambda d\lambda \nonumber
\\&  +& \displaystyle \Sigma_{k\geq 1} 2\cos (k(\theta-\theta^{'}))\int^{\infty}_{0}e^{-\lambda^2 t}J_{\frac{k}{\beta}}(\lambda r)J_{\frac{k}{\beta}}(\lambda r^{'})\lambda d\lambda].
\end{eqnarray}
Here  $x=(r,\theta,\widehat{x}),\ y=(r^{'},\theta^{'},\widehat{y})$. $R=|\widehat{x}-\widehat{y}|$ is the tangential distance parallel to  $\{0\}\times R^m$.
\begin{center}
\begin{tikzpicture}[domain=-2:2]
\draw[->] (-5.2,0) -- (5.2,0) node[below] {$Re$};
\draw[->] (0,-5.2) -- (0,5.2) node[above] {$Im$};
\draw[very thin,color=gray] (-5,-5) grid (5,5);
\draw[color=blue,reverse directed,left reverse directed]  plot (\x,{1+(\x)^2})  node[right] {$\mathfrak{A}$};
\draw[color=blue,directed,left directed]  plot (\x,{-1-(\x)^2}) node[below] {$\mathfrak{A}$};
\node[right=0] at (0,0) {O};
\node[left=0] at (-3,0) {$-\pi$};
\node[right=0] at (3,0) {$\pi$};
\draw[color=blue,left  directed, directed] (-3,-5) -- (-3,5) node[above] {$L_1,\mathfrak{A}$};
\draw[color=blue, directed, left directed] (3,5) -- (3,-5) node[below] {$L_2,\mathfrak{A}$};
 \end{tikzpicture}
\[\textrm{Figure 2 for the contour}\ \mathfrak{A} \]
\end{center}

\begin{thm}\label{Carslaw-Wang-Chen representation for hk}(Reformulation of Carslaw's results in \cite{Car}) The heat kernel of the flat  conical K\"ahler metric $g_{E,\beta}$ is
\[ H(r,\theta,{\acute{r}},\theta^{'},R,t)=\frac{1}{2\pi \beta}\frac{1}{(4\pi t)^{\frac{m}{2}+1}}e^{-\frac{r^2+{{\acute{r}}}^2+R^2}{4t}}P[\frac{r{\acute{r}}}{2t},\beta(\theta^{'}-\theta)].\]
In the above formula, $P$ is defined as
 \begin{eqnarray} P(z,v) & \triangleq & \int_{\mathfrak{A}} e^{z\cos a}\frac{1}{1-e^{-\frac{i(a-v)}{\beta}}}da. \nonumber
 \end{eqnarray}
 P can also be expressed as 
  \begin{eqnarray} \label{P's Expression formula}& & P(z,v)\nonumber
  \\&=&2\pi[I_0(z)+\displaystyle \Sigma_{k\geq 1}2I_{\frac{k}{\beta}}(z)\cos \frac{kv}{\beta}]\nonumber.
\\&=& 2\pi \beta \displaystyle \Sigma_{k,\ -\pi <v+2k\beta \pi<\pi}e^{z\cos(v+2k\beta \pi)}
+ E(z,v)\nonumber
\\& &+\pi \beta \displaystyle \Sigma_{k,v+2k\beta \pi=\pm\pi}e^{-z}.
\end{eqnarray}

Here $$ E(z,v)=\int_{0}^{\infty}e^{-z\cosh y}\frac{2\sin\frac{\pi}{\beta}[\cos\frac{\pi}{\beta}-\cos\frac{v}{\beta}\cosh\frac{y}{\beta}]}{[\cosh\frac{y}{\beta}-\cos\frac{v-\pi}{\beta}][\cosh\frac{y}{\beta}-\cos\frac{v+\pi}{\beta}]}dy,$$
and $\mathfrak{A} $ is the path in Figure 2 above the theorem (which is the same as FIG 1 of \cite{Car} with argument translated by $\theta$). \\
\end{thm}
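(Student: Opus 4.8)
\textbf{Proof proposal for Theorem \ref{Carslaw-Wang-Chen representation for hk}.}

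The plan is to derive the contour-integral form of $P$ from the series expression already established in Proposition \ref{Heat kernel formula}, and then deform the contour to extract the finitely many ``geometric'' terms (the $2\pi\beta\,\Sigma\, e^{z\cos(v+2k\beta\pi)}$ piece) together with the smooth remainder $E(z,v)$. First I would record that, by Proposition \ref{Heat kernel formula} and the passage to the higher-dimensional kernel in (\ref{Higher dim heat kernel formula}), it suffices to prove the stated identities for $P(z,v)$ alone, since the prefactor $\frac{1}{2\pi\beta}\frac{1}{(4\pi t)^{m/2+1}}e^{-(r^2+\acute r^2+R^2)/4t}$ is exactly what multiplies the bracketed Bessel series. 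So the theorem reduces to the purely special-function statement
\[
2\pi\Bigl[I_0(z)+\sum_{k\geq 1}2I_{k/\beta}(z)\cos\tfrac{kv}{\beta}\Bigr]
=\int_{\mathfrak{A}}e^{z\cos a}\frac{da}{1-e^{-i(a-v)/\beta}}
\]
and the subsequent residue evaluation of the right-hand side.

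The key steps, in order: (1) Use the classical Schläfli/Bessel contour representation $I_\mu(z)=\frac{1}{2\pi i}\int e^{z\cos a}e^{-i\mu(a-\cdot)}\,da$ along a suitable path, i.e. recognize $e^{z\cos a}$ as the generating kernel whose Fourier-type coefficients along a vertical line are the $I_{k/\beta}$; concretely, expand $\frac{1}{1-e^{-i(a-v)/\beta}}=\sum_{k\geq 0}e^{-ik(a-v)/\beta}$ on the part of $\mathfrak{A}$ where this geometric series converges, integrate term by term using Weber/Schläfli to identify each term with $2\pi I_{k/\beta}(z)e^{ikv/\beta}$, and symmetrize to get the cosine series — this gives the first equality in (\ref{P's Expression formula}). (2) For the second equality, deform the contour $\mathfrak{A}$ (the path $L_1\cup\mathfrak{A}\cup L_2$ of Figure 2, i.e. Carslaw's FIG.~1 translated by $\theta$) across the poles of $\frac{1}{1-e^{-i(a-v)/\beta}}$, which occur at $a-v\in 2\pi\beta\mathbb{Z}$; the residues at the poles lying strictly inside the region swept out between the vertical lines $\mathrm{Re}\,a=\pm\pi$ produce the sum $2\pi\beta\sum_{-\pi<v+2k\beta\pi<\pi}e^{z\cos(v+2k\beta\pi)}$, with half-residues $\pi\beta\,e^{-z}$ at any pole falling exactly on the boundary lines $v+2k\beta\pi=\pm\pi$ (note $\cos(\pm\pi)=-1$). (3) The leftover contour, now pushed to the two horizontal rays where $a=\pm\pi+iy$ with $y\in(0,\infty)$ (running up on one side, down on the other), collapses after the substitution $a=\pm\pi+iy$ and $\cos a=-\cosh y$... wait, more carefully $\cos(\pm\pi+iy)=-\cosh y$, giving the factor $e^{-z\cosh y}$; combining the two rays and simplifying the resulting rational function of $\cosh(y/\beta)$, $\cos(v/\beta)$, $\cos(\pi/\beta)$ via addition formulas yields exactly the integrand of $E(z,v)$. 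I would verify the trigonometric bookkeeping so that the numerator $2\sin\frac{\pi}{\beta}[\cos\frac{\pi}{\beta}-\cos\frac{v}{\beta}\cosh\frac{y}{\beta}]$ and denominator $[\cosh\frac{y}{\beta}-\cos\frac{v-\pi}{\beta}][\cosh\frac{y}{\beta}-\cos\frac{v+\pi}{\beta}]$ appear as written.

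The main obstacle I anticipate is step (2)–(3): making the contour deformation fully rigorous, i.e. justifying that the arcs at infinity contribute nothing (this needs $\mathrm{Re}(z\cos a)\to-\infty$ along the connecting arcs, which is why the path must stay in the correct half-strip), and carefully tracking orientations and which poles are genuinely enclosed versus landing on the contour — the $\pm\pi\beta e^{-z}$ half-residue terms are precisely the subtle boundary case and are easy to miscount. A secondary technical point is the interchange of sum and integral in step (1), valid because $|I_{k/\beta}(z)|$ decays superexponentially in $k$ for fixed $z$, so the Bessel series converges absolutely and uniformly on compacta; one should also note the convergence region of the geometric series for $\frac{1}{1-e^{-i(a-v)/\beta}}$ dictates the precise shape of $\mathfrak{A}$ (it must avoid $\mathrm{Im}\,a=0$ near the poles), which is exactly why Carslaw's particular contour is used. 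Since the statement is explicitly a reformulation of \cite{Car}, I would lean on Carslaw's computation for the delicate parts and confine the new content to (a) the reduction to the $m=0$ case via (\ref{Higher dim heat kernel formula}) and (b) the translation of the contour by the angle $\theta$ so that $P$ depends only on $\beta(\theta'-\theta)$.
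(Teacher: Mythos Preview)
Your proposal is correct and matches the paper's approach for the substantive part, namely steps (2)--(3): the paper deforms the contour $\mathfrak{A}$ to the vertical lines $L_1$ and $L_2$ at $\mathrm{Re}\,a=\pm\pi$, picks up residues $2\pi\beta$ at interior poles $a_k=v+2k\beta\pi\in(-\pi,\pi)$ and half-residues $\pi\beta e^{-z}$ at boundary poles, and then combines the two vertical-line integrals (parametrized as $a=\pm\pi+iy$, $y\in(-\infty,\infty)$, then symmetrized to $y\in(0,\infty)$) into the function $F(v,y)$ whose simplification yields the stated $E(z,v)$.

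The one difference worth noting is scope: the paper does \emph{not} carry out your step (1). It simply cites Carslaw \cite{Car} for both the contour-integral definition of $P$ and the Bessel series expression, and only supplies a proof of the residue expansion. Your Schl\"afli-series argument for step (1) is a reasonable way to fill in what the paper delegates, but be aware that the geometric series $\sum_{k\geq 0}e^{-ik(a-v)/\beta}$ converges only on one component of $\mathfrak{A}$ (where $\mathrm{Im}\,a>0$), and on the other component you need the conjugate expansion, so the symmetrization requires a little care; this is exactly why the two branches of the Carslaw contour are needed. Also a small slip: the rays $a=\pm\pi+iy$ are vertical, not horizontal, and the paper integrates over the full line $y\in(-\infty,\infty)$ before folding to $(0,\infty)$ via the four-term combination that produces $F(v,y)$.
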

\begin{proof}{of Theorem \ref{Carslaw-Wang-Chen representation for hk}:}\
The first expression of the heat kernel is   exactly from the results  in Section 2 iof \cite{Car}. The second expression in the first line of formula (\ref{P's Expression formula}) (which concerns the Bessel function of the second kind $I$) is also exactly from \cite{Car}. Thus we only have to prove the expression in the second line of formula (\ref{P's Expression formula}).   Working from  the formula
\begin{equation}\label{Carslaw's representation formula }P(z,v)=\int_{\mathfrak{A}} e^{z\cos a}\frac{1}{1-e^{-\frac{i(a-v)}{\beta}}}da,\end{equation}
the singularities of this integrand are
\[ a_k=v+2k\beta\pi,\  -\pi\leq v+2k\beta\pi\leq \pi.\]
We can of course deform $\mathfrak{A}$ continuously away from these singularities to get a integral over the $L_1$ and $L_2$ in Figure 2, together with the residue integrals around the singularities $a_k$. At those $a_k$ ($a_k=v+2k\beta$) such that $-\pi<a_k<\pi$, the residues are  $2\pi\beta$. At those $a_k$  such that  $a_k=v+2k\beta\pi =\pi$ or $-\pi$, the contribution of the half circle with radius
tending to $0$ is
$$\lim_{\epsilon\rightarrow 0}\int_A e^{z\cos a}\frac{1}{1-e^{-\frac{i(a-v)}{\beta}}}da=\pi \beta e^{z\cos (\pm \pi)}=\pi \beta e^{-z}.$$

 To prove the formula for $E(z,v)$ in formula \ref{P's Expression formula}, it suffices to  notice that
 over $L_1$, $a=-\pi+iy,\ y\in (-\infty,+\infty)$;  over $L_2$, $a=\pi+iy,\ y\in (+\infty,-\infty)$. Thus,

 \begin{eqnarray*}& &\int_{L_1+L_2}e^{z[\cos a]}\frac{1}{1-e^{-\frac{i(a-v)}{\beta}}}da
 \\&=&\int_{-\infty}^{\infty}e^{-z\cosh y}\frac{i}{1-e^{\frac{iv}{\beta}}e^{\frac{y}{\beta}}e^{\frac{i\pi}{\beta}}}dy
 -\int_{-\infty}^{\infty}e^{-z\cosh y}\frac{i}{1-e^{\frac{iv}{\beta}}e^{\frac{y}{\beta}}e^{-\frac{i\pi}{\beta}}}dy
 \\&=&\int_{0}^{\infty}e^{-z\cosh y}F(v,y)dy,
 \end{eqnarray*}
 where $$F(v,y)=\frac{i}{1-e^{\frac{iv}{\beta}}e^{\frac{y}{\beta}}e^{\frac{i\pi}{\beta}}}+\frac{i}{1-e^{\frac{iv}{\beta}}e^{-\frac{y}{\beta}}e^{\frac{i\pi}{\beta}}}
 -\frac{i}{1-e^{\frac{iv}{\beta}}e^{\frac{y}{\beta}}e^{-\frac{i\pi}{\beta}}}-\frac{i}{1-e^{\frac{iv}{\beta}}e^{-\frac{y}{\beta}}e^{-\frac{i\pi}{\beta}}}.$$
 Then it's easy to see that
$$F(v,y)=\frac{2\sin\frac{\pi}{\beta}[\cos\frac{\pi}{\beta}-\cos\frac{v}{\beta}\cosh\frac{y}{\beta}]}{[\cosh\frac{y}{\beta}-\cos\frac{v-\pi}{\beta}][\cosh\frac{y}{\beta}-\cos\frac{v+\pi}{\beta}]}. $$

Thus,
\[
\int_{L_1+L_2}e^{z[\cos a]}\frac{1}{1-e^{-\frac{i(a-v)}{\beta}}}da = E(z,v).
\]
Thus, after expanding we have that
\[
P(z,v) = 2\pi \beta \displaystyle \Sigma_{k,\ -\pi <v+2k\beta \pi<\pi}e^{z\cos(v+2k\beta \pi)}
+ \pi \beta \displaystyle \Sigma_{k,v+2k\beta \pi=\pm\pi}e^{-z} +E(z,v).
\]
The proof is complete.
\end{proof}
 Next we state the representation formulas for the second derivatives of the convolution of the heat kernel with a function, which are solutions to the parabolic equations with $f$ as right hand side .
\begin{lem}\label{representation of the derivative of singular integrals}Suppose $f$ is supported in $A_{10}$, then  \begin{eqnarray*}& &\frac{\partial}{\partial t}(H\ast f)(x)
\\&=&f(x,t)+\int_{0}^{t}\int_{A_{10}}\frac{\partial}{\partial t}[H(x,y,t-\tau)][f(y,\tau)-f(x,\tau)]dyd\tau
\\& &-\int_{0}^{t}f(x,\tau)d\tau\int_{\partial A_{10}}<\nabla_{y}H(x,y,t-\tau),n(y)>ds_y.
\end{eqnarray*}

Furthermore, for any $1\leq i\leq m$ the following formulas are true.
\begin{eqnarray*}\frac{\partial^2}{\partial r\partial s_i}(H\ast f)
&=&\int_{0}^{t}\int_{A_{10}}\frac{\partial^2}{\partial r\partial s_i}[H(x,y,t-\tau)][f(y,\tau)-f(x,\tau)]dyd\tau
\\& &-\int_{0}^{t}f(x,\tau)d\tau\int_{\partial A_{10}}\frac{\partial}{\partial r}H(x,y,t-\tau)n_i(y)ds_y;
\end{eqnarray*}
\begin{eqnarray*}\frac{1}{r}\frac{\partial^2}{\partial \theta\partial s_i}(H\ast f)
&=&\int_{0}^{t}\int_{A_{10}}\frac{1}{r}\frac{\partial^2}{\partial \theta\partial s_i}[H(x,y,t-\tau)][f(y,\tau)-f(x,\tau)]dyd\tau
\\& &-\int_{0}^{t}f(x,\tau)d\tau\int_{\partial A_{10}}\frac{1}{r}\frac{\partial}{\partial \theta}H(x,y,t-\tau)n_i(y)ds_y.
\end{eqnarray*}
$n$ stands for the inward normal of $\partial A_{10}$.
\end{lem}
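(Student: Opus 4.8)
The convolution in question is $(H\ast f)(x,t)=\int_0^t\int_{A_{10}}H(x,y,t-\tau)\,f(y,\tau)\,dy\,d\tau$, and the plan is to produce each derivative by first regularizing this singular integral in the time variable. For $\epsilon>0$ I would set $u_\epsilon(x,t)=\int_0^{t-\epsilon}\int_{A_{10}}H(x,y,t-\tau)f(y,\tau)\,dy\,d\tau$; on this domain $t-\tau\ge\epsilon$, so $H(x,\cdot,t-\cdot)$ is smooth there and one may differentiate freely under the integral sign (the upper limit $\tau=t-\epsilon$ contributing a term under $\partial_t$). The three-step scheme is then: (i) differentiate $u_\epsilon$; (ii) rewrite the result via the subtraction trick $f(y,\tau)=[f(y,\tau)-f(x,\tau)]+f(x,\tau)$, using the heat equation and the divergence theorem on the ``$f(x,\tau)$'' part; (iii) let $\epsilon\to0$, checking that $u_\epsilon\to H\ast f$ while $\partial u_\epsilon$ converges locally uniformly, and identify the limit. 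Throughout I use the standing hypothesis $r\ne0$ (i.e.\ $x\notin\{0\}\times\mathbb{R}^m$), which is exactly what makes $H(x,\cdot,s)$ an approximate identity as $s\to0^+$ (cf.\ the Remark after Proposition~\ref{Heat kernel formula}).

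For $\partial_t(H\ast f)$: writing $u_\epsilon=\int_0^{t-\epsilon}w(x,t,\tau)\,d\tau$ with $w(x,t,\tau)=\int_{A_{10}}H(x,y,t-\tau)f(y,\tau)\,dy$, Leibniz' rule gives $\partial_t u_\epsilon=w(x,t,t-\epsilon)+\int_0^{t-\epsilon}\int_{A_{10}}\partial_tH(x,y,t-\tau)f(y,\tau)\,dy\,d\tau$. The first term tends to $f(x,t)$ by the approximate-identity property. In the second I split $f(y,\tau)$ as above: the $[f(y,\tau)-f(x,\tau)]$ piece converges by dominated convergence, the dominating function $|\partial_tH(x,y,t-\tau)|\,C[f]_\alpha|x-y|^\alpha$ lying in $L^1(A_{10}\times[0,t])$ because $\int_0^\infty|\partial_tH(x,y,\tau)|\,d\tau\le C|x-y|^{-(m+2)}$ (Theorem~\ref{all the properties of the h.k1}(2)) and $|x-y|^{\alpha-(m+2)}$ is locally integrable in dimension $m+2$. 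For the $f(x,\tau)$ piece I use $\partial_tH=\Delta_yH$ (symmetry of the heat kernel) and the divergence theorem: $\int_{A_{10}}\partial_tH(x,y,s)\,dy=-\int_{\partial A_{10}}\langle\nabla_yH(x,y,s),n(y)\rangle\,ds_y$ with $n$ the inward normal; since $x$ is interior, $|x-y|\ge d_x>0$ for $y\in\partial A_{10}$, so this boundary integral stays bounded (it even vanishes as $s\to0^+$) and the $\epsilon\to0$ limit is harmless. Assembling the three limits gives the stated formula for $\partial_t(H\ast f)$.

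For the mixed second spatial derivatives $\partial_r\partial_{s_i}$ and $\frac1r\partial_\theta\partial_{s_i}$ ($1\le i\le m$; each a product of two of the natural frame fields) I differentiate $u_\epsilon$ directly, e.g.\ $\partial_r\partial_{s_i}u_\epsilon=\int_0^{t-\epsilon}\int_{A_{10}}\partial_r\partial_{s_i}H(x,y,t-\tau)f(y,\tau)\,dy\,d\tau$, which is legitimate on $t-\tau\ge\epsilon$. Splitting $f(y,\tau)=[f(y,\tau)-f(x,\tau)]+f(x,\tau)$, the difference piece converges as $\epsilon\to0$ by dominated convergence, now using $\int_0^\infty|\mathfrak{D}H(x,y,\tau)|\,d\tau\le C|x-y|^{-(m+2)}$ for $\mathfrak{D}\in\mathfrak{T}$ (Theorem~\ref{all the properties of the h.k1}(2), since $\partial^2/\partial r\partial s_i$ and $\frac1r\partial^2/\partial\theta\partial s_i$ belong to $\mathfrak{T}$) together with the H\"older bound on $f$. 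For the $f(x,\tau)$ piece the key identity is the antisymmetry $\partial_{s_i}^xH(x,y,s)=-\partial_{s_i}^yH(x,y,s)$, forced by translation invariance of $g_{E,\beta}$ in the $\mathbb{R}^m$-directions; hence $\int_{A_{10}}\partial_r\partial_{s_i}^xH\,dy=-\partial_r^x \int_{A_{10}}\partial_{s_i}^yH(x,y,s)\,dy$, and the divergence theorem converts the inner integral into a surface integral over $\partial A_{10}$ of $\partial_rH(x,y,s)$ against $n_i$, the bookkeeping of the inward normal producing the sign in the statement. As before this boundary term is regular up to $s=0$, so the $\epsilon\to0$ limit yields the claimed $-\int_0^tf(x,\tau)\,d\tau\int_{\partial A_{10}}\partial_rH(x,y,t-\tau)n_i(y)\,ds_y$; the case $\frac1r\partial_\theta\partial_{s_i}$ is identical.

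The hardest part will be justifying the $\epsilon\to0$ interchange, which reduces to the absolute convergence of $\int_0^t\int_{A_{10}}|\mathfrak{F}H(x,y,t-\tau)|\,|x-y|^\alpha\,dy\,d\tau$ for the order-two operators $\mathfrak{F}\in\{\partial_t,\ \partial_r\partial_{s_i},\ \frac1r\partial_\theta\partial_{s_i}\}$: the naive differentiation under the integral sign produces a kernel that is \emph{not} absolutely integrable against $f$, and only after subtracting $f(x,\tau)$ does the extra factor $|x-y|^\alpha$ (from H\"older continuity of $f$) compensate the borderline $|x-y|^{-(m+2)}$ decay of the time-integrated second-order kernel supplied by Theorem~\ref{all the properties of the h.k1}. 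A second, purely local, delicate point is that the approximate-identity step $\int_{A_{10}}H(x,y,\epsilon)f(y,t-\epsilon)\,dy\to f(x,t)$ genuinely requires $r\ne0$: on the divisor the conical heat kernel does not concentrate, which is precisely why the Lemma carries that hypothesis. Everything else --- the divergence theorem, the heat equation $\partial_sH=\Delta_yH$, and the $\partial_{s_i}^x=-\partial_{s_i}^y$ symmetry --- is elementary once the representation of Proposition~\ref{Heat kernel formula} (and Theorem~\ref{Carslaw-Wang-Chen representation for hk}) is in hand.
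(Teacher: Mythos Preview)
Your argument is correct and complete: the time-cutoff regularization $u_\epsilon=\int_0^{t-\epsilon}\int_{A_{10}}H\,f$, followed by the subtraction trick and dominated convergence (justified by Theorem~\ref{all the properties of the h.k1}(2)), is a standard and entirely valid route to these representation formulas.

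It is, however, a genuinely different regularization from the one the paper uses. The paper regularizes in \emph{space} rather than time: it inserts a cutoff $\eta_h(y)=\psi(d(x,y)/h)$ that kills the integrand on a small ball $B_h(x)$ around $x$, then differentiates the resulting nonsingular integral and sends $h\to0$, exactly as in Gilbarg--Trudinger. The conical twist is that one must choose $B_h(x)$ to avoid the singular hypersurface $\{0\}\times\mathbb{R}^m$ (and to have argument range less than $\beta\pi$) so that $d(x,\cdot)$ is smooth there; this is where the hypothesis $r\ne0$ enters for the paper. In your approach the same hypothesis enters instead through the approximate-identity step $\int H(x,y,\epsilon)f(y,\cdot)\,dy\to f(x,\cdot)$. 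Your route is arguably more ``parabolic'' in spirit and slightly more self-contained (no need to discuss smoothness of the conical distance function), while the paper's route parallels the elliptic theory more closely; both rely on the same heat-kernel bounds and the same $f(y,\tau)-f(x,\tau)$ subtraction, and neither is materially shorter.
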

\begin{proof}{of Lemma \ref{representation of the derivative of singular integrals}:} We only  prove the representation formula for $\frac{\partial^2}{\partial r\partial s_i}(H\ast f)$, the formula  for $\frac{\partial}{\partial t}(H\ast f)$ and $\frac{1}{r}\frac{\partial^2}{\partial r \partial s_i}(H\ast f)$ are the same. It suffices to show that
$$\lim_{h\rightarrow 0}\frac{\partial^2}{\partial r\partial s_i}\int_{0}^{t}\int_{A_{10}}\frac{\partial^2}{\partial r\partial s_i}[H(x,y,t-\tau)\eta_h(y)]f(y,\tau)dyd\tau$$
exists  and equals the right hand side, provided $\eta_h$ is properly chosen. \\

The crucial point is that, we can choose a good $\eta_h$ away from the singularity. Suppose $x$ is not on the singular hypersurface $\{0\}\times \mathbb{R}^{m}$, there is a ball $B_{h}(x)$ of radius $h$ which does not only miss $\{0\}\times \mathbb{R}^{m}$, but also has argument range less than $\beta\pi$. Then by the choice of $B_{h}(x)$, $d(x,y)$ is smooth in $y$ in $ B_{h}(x)$. We choose a smooth function $\psi$ such that $\psi(u)=0$ when $u\leq \frac{1}{2}$; $\psi(u)=1$ when $u\geq 1$. Then by letting $\eta_h(y)=\psi(\frac{d(x,y)}{h})$, the proof is complete, as in \cite{GT}.
\end{proof}

The next lemma is obvious.
\begin{prop}\label{Integral formula for compact supported weak solution}Suppose $u$ is compactly supported  in $A_{10}$. Suppose $u\in L^2[0,T;H_0^{1}(A_{10})]$ and  $\frac{\partial u}{\partial t}\in L^2[0,T;H^{-1}(A_{10})]$. Suppose $u$ solves $(\Delta -\frac{\partial}{\partial t})u=f$ in the weak sense over $A_{10}\times[0,T]$ and $u$ has zero initial value.
 Suppose $f\in C^{\alpha,\beta}(A_{10}\times[0,T])$. Then $$u=-(H\ast f)(x,t)
=-\int_{0}^{t}\int_{A_{10}}H(x,y,t-\tau)f(y,\tau)dyd\tau.$$
\end{prop}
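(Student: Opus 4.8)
The statement asserts that the weak solution of the parabolic equation coincides with the Duhamel/convolution formula, so the plan is a uniqueness argument. The quickest route: since $u$ is compactly supported inside $A_{10}$, both $u$ and $f$ vanish in a neighborhood of $\partial A_{10}$, so extending $u$ by zero produces a function $\tilde u$ on $\mathbb{R}^2\times\mathbb{R}^m$ that is still a weak solution, now of $\frac{\partial\tilde u}{\partial t}=\Delta\tilde u-f$, with zero initial value. On the other hand $w:=-(H\ast f)$ is also such a solution: by Lemma \ref{representation of the derivative of singular integrals} its space derivatives up to order two and its time derivative exist away from $\{0\}\times\mathbb{R}^m$ and are given by the convolution formulas there, and differentiating under the integral with $(\Delta-\frac{\partial}{\partial t})H=0$ gives $(\Delta-\frac{\partial}{\partial t})w=f$ off the singular set, while $|w(x,t)|\le Ct\sup_{A_{10}\times[0,T]}|f|\to 0$ as $t\to 0$; since the conical singular set has codimension two (measure zero, and $w\in H^1$ locally by the heat-kernel bounds) the equation for $w$ holds weakly across it as well. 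Then $v:=\tilde u-w$ solves the homogeneous heat equation on $\mathbb{R}^2\times\mathbb{R}^m$ with zero initial data, and being $L^2$ with $L^2$ gradient it must vanish by the standard energy identity $\frac12\frac{d}{dt}\int v^2=-\int|\nabla v|^2\le 0$. Hence $u=w$ on $A_{10}\times[0,T]$.

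A more hands-on version of the same argument — which also makes clear where the hypothesis $u\in L^2[0,T;H^1_0(A_{10})]$ enters — is the direct Duhamel pairing. Fix $x_0$ in the interior of $A_{10}$ and off the singular set, fix $t_0\in(0,T]$, and take $\phi(y,\tau):=H(x_0,y,t_0-\tau)$, which for $\tau<t_0$ is smooth near the support of $u(\cdot,\tau)$ and satisfies the backward equation $\frac{\partial\phi}{\partial\tau}+\Delta_y\phi=0$. Using $\phi$ (cut off for $\tau$ near $t_0$, and approximated by admissible test functions) in the weak formulation of the equation for $u$ over $A_{10}\times(0,t_0-\epsilon)$ and integrating by parts in both $y$ and $\tau$: the spatial Green identity has no boundary contribution on $\partial A_{10}$ because $u(\cdot,\tau)\in H^1_0(A_{10})$; the term $\int u\,\Delta_y\phi$ cancels against $\int u\,\frac{\partial\phi}{\partial\tau}$; the slice at $\tau=0$ vanishes since $u(\cdot,0)=0$; and what survives is
\[\int_{A_{10}}u(y,t_0-\epsilon)\,H(x_0,y,\epsilon)\,dy=-\int_0^{t_0-\epsilon}\!\!\int_{A_{10}}H(x_0,y,t_0-\tau)\,f(y,\tau)\,dy\,d\tau.\]
Letting $\epsilon\to0$, the right side tends to $w(x_0,t_0)$, and the left side tends to $u(x_0,t_0)$ by the approximate-identity property of $H$ proved in Claim \ref{Bessel representation of a function} (valid precisely because $x_0\notin\{0\}\times\mathbb{R}^m$) together with the continuity of $u$.

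Finally one upgrades the identity from the dense set of interior off-singular points to all of $A_{10}\times[0,T]$: $u$ is continuous there by parabolic regularity (the right-hand side is $C^{\alpha,\beta}$), $w$ is continuous by dominated convergence using the heat-kernel bounds of Theorem \ref{all the properties of the h.k1}, so $u=w$ propagates, in particular to points of $\{0\}\times\mathbb{R}^m$. The one genuinely delicate point — the only place the conical structure enters — is the limit $\int_{A_{10}}u(\cdot,t_0-\epsilon)\,H(x_0,\cdot,\epsilon)\to u(x_0,t_0)$: it is clean away from $\{0\}\times\mathbb{R}^m$ by Claim \ref{Bessel representation of a function} but genuinely fails on the singular set (compare the Remark following Proposition \ref{Heat kernel formula}), which is exactly why the computation must be run off the singular set and then extended by continuity. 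Everything else is the standard argument of \cite{GT} transplanted to $\mathbb{R}^2\times\mathbb{R}^m$, with the measure-zero, codimension-two singular set causing no trouble for the integrations by parts.
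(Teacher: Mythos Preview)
Your argument is correct. The paper itself supplies no proof of this proposition beyond the sentence ``The next lemma is obvious'' immediately preceding it, so there is nothing to compare against; your Duhamel-pairing approach (testing the weak formulation against $H(x_0,\cdot,t_0-\tau)$, integrating by parts using $u\in H^1_0$, and passing to the limit via the approximate-identity property of Claim~\ref{Bessel representation of a function}) is exactly the standard argument the authors evidently had in mind, and your attention to the one non-Euclidean feature---that the approximate-identity step fails on $\{0\}\times\mathbb{R}^m$, so one must argue off the singular set and then extend by continuity---is more care than the paper itself offers.

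One small point worth making explicit: the continuity of $u$ at $(x_0,t_0)$ that you invoke in the $\epsilon\to0$ limit is not among the stated hypotheses (which only place $u$ in $L^2H^1_0$ with $\partial_t u\in L^2H^{-1}$). It does follow, as you note in your final paragraph, from classical interior parabolic regularity applied in a neighborhood of $x_0$ away from the singular set, where the coefficients are smooth and $f$ is H\"older---but that regularity input is already needed at the pairing step, not only for the final extension to the singular set.
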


\section{H\"older estimate of the singular integrals and proof of the main Schauder estimates in Theorem \ref{Schauder estimate of the linear equation: single divisor}.\label{Holder estimate of the singular integrals and proof of the main Schauder estimates}}
This section is devoted to the proof of Theorem \ref{Schauder estimate of the linear equation: single divisor}, by establishing
proposition \ref{Holder estimate of the spatial  derivative of the singular integral},
\ref{Holder estimate of the time derivative of the singular integral},
\ref{time holder estimate:spatial derivative},  \ref{time holder estimate:time derivative}, \ref{Schauder estimate for the weak solution which is compactly supported},  Theorem \ref{Schauder estimate: constant coefficient}.  All of these are achieved from the properties of
the heat kernel in Theorem \ref{all the properties of the h.k1} (which will be proved in the later part of this article). However, for the sake of being precise we won't quote directly  Theorem \ref{all the properties of the h.k1}, but will appeal to the lemmas which Theorem \ref{all the properties of the h.k1} is consisted of. Notice that  the estimate in this section are more or less standard, since we are equipped with Theorem \ref{all the properties of the h.k1}. 

\begin{prop}\label{Schauder estimate for the weak solution which is compactly supported}Suppose $u$ is compactly supported  in $A_{1}$. Suppose $u\in L^2[0,T;H_0^{1}(A_{1})]$ and  $\frac{\partial u}{\partial t}\in L^2[0,T;H^{-1}(A_{1})]$. Suppose $u$ solves $(\Delta -\frac{\partial}{\partial t})u=f$ in the weak sense over $A_{1}\times[0,T]$ and $u$ has zero initial value. Suppose $f\in C^{\alpha,\frac{\alpha}{2},\beta}(A_{1}\times[0,T])$.
  Then there exists a constant C as in Definition 
\ref{Dependence of the constant C} such that  the following estimates hold.$$[\sqrt{-1}\partial \bar{\partial} u]_{\alpha,\frac{\alpha}{2},\beta,A_1\times [0,T]}\leq C[f]_{\alpha,\frac{\alpha}{2},\beta,A_{1}\times [0,T]}$$
  and
  $$[\sqrt{-1}\frac{\partial}{\partial t} u]_{\alpha,\frac{\alpha}{2},\beta,A_1\times [0,T]}\leq C[f]_{\alpha,\frac{\alpha}{2},\beta,A_{1}\times [0,T]}.$$
\end{prop}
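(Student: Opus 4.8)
The plan is to represent the solution $u$ by the convolution formula from Proposition~\ref{Integral formula for compact supported weak solution}, namely $u=-(H\ast f)$ over $A_1\times[0,T]$ (after extending $f$ by zero, or a cutoff argument, to reduce to the support hypothesis used there), and then to differentiate twice in space and once in time using the representation formulas in Lemma~\ref{representation of the derivative of singular integrals}. The point is that $\sqrt{-1}\partial\bar\partial u$ is a linear combination of the second-order operators in $\mathfrak{T}$ applied to $H\ast f$ (with respect to the basis $\frac{\partial^2}{\partial\xi\partial\bar\xi}$, $\mathfrak{a}\frac{\partial}{\partial\bar z_i}$, $\bar{\mathfrak{a}}\frac{\partial}{\partial z_i}$, $\frac{\partial^2}{\partial z_i\partial\bar z_j}$), plus the $\frac{\partial}{\partial t}$ term, modulo the equation $\Delta u-\partial_t u=f$ which lets us trade $\Delta u$ for $\partial_t u+f$. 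So it suffices to bound, in the $C^{\alpha,\frac{\alpha}{2},\beta}$ seminorm over $A_1\times[0,T]$, the singular integrals $\mathfrak{D}(H\ast f)$ for $\mathfrak{D}\in\mathfrak{T}$ and $\partial_t(H\ast f)$.

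The main work is then exactly the H\"older estimates of these singular integrals, which I would do in two stages. First the \emph{spatial} H\"older estimate: fix $t$, take two points $x_1,x_2$ in the same time slice with $|x_1-x_2|=\delta$, split the $\tau$-integral (and/or the $y$-integral over $A_{10}$) into a ``near'' region of parabolic size $\sim\delta^2$ around $(x_1,t)$ and a ``far'' region; on the near region use the cancellation $f(y,\tau)-f(x,\tau)$ together with the local bound $\int_0^\infty|\mathfrak{D}H(x,y,\tau)|\,d\tau\le C|x-y|^{-(m+2)}$ (Theorem~\ref{all the properties of the h.k1}(2)) to get $O(\delta^\alpha)$; on the far region use the third-derivative bound $\int_0^\infty|\nabla_x\mathfrak{D}H(x,y,\tau)|\,d\tau\le C|x-y|^{-(m+3)}$ (Theorem~\ref{all the properties of the h.k1}(3), valid since $P(x)\gtrsim|x_1-x_2|$ on the far region) along the segment from $x_1$ to $x_2$, and handle the boundary terms over $\partial A_{10}$ using that $\partial A_{10}$ stays a definite distance from $A_1$. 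For the \emph{time} H\"older estimate, fix $x$, take $t_1<t_2$ with $t_2-t_1=\delta^2$, write the difference of the two $\tau$-integrals, again split at parabolic scale $\delta^2$, on the near part use the $\alpha$-H\"older control of $f$ in time plus the bound $\int|\sup_{1\le t\le2}\mathfrak{D}H||x-y|^\alpha dy\le C$ (Theorem~\ref{all the properties of the h.k1}(5), after parabolic rescaling), and on the far part use $\int_0^\infty|\partial_t\mathfrak{D}H|$-type bounds together with $\int_0^\infty|\nabla_x\partial_t H|\,d\tau\le C|x-y|^{-(m+3)}$ (Theorem~\ref{all the properties of the h.k1}(4)) for the mixed space-time second differences. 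For $\partial_t(H\ast f)$ one proceeds the same way starting from the first representation formula in Lemma~\ref{representation of the derivative of singular integrals}, where the leading $f(x,t)$ term is manifestly as regular as $f$.

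The analogue of the classical Calder\'on--Zygmund bookkeeping is routine here; the genuinely delicate point, and the one I expect to be the main obstacle, is the behavior \emph{near the singular hypersurface} $\{0\}\times\mathbb{R}^m$ when one of the two comparison points $x_1,x_2$ (or $x$, for the time estimate) is close to the cone vertex line. In that regime the decomposition into near/far regions must be done with respect to the conical distance rather than the Euclidean one, one cannot freely integrate $\nabla_x\mathfrak{D}H$ along a straight segment through the singular set, and one must instead invoke precisely the conical H\"older estimates (4)--(7) of Theorem~\ref{all the properties of the h.k1} — especially (6) $\int_0^\infty|\mathfrak{D}H(u_1,y,\tau)-\mathfrak{D}H(u_2,y,\tau)|\,d\tau\le C|u_1-u_2|^\rho$ and (7) for $\nabla_x H$ — with the exponent $\rho=\min(\frac1\beta-1,1)\ge\alpha$; the use of the polydisk domains $A_R(p)$ from Definition~\ref{Def of polydisk} (rather than Euclidean balls) is what makes the cutoffs and the boundary integrals behave. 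Once this near-singularity case is dispatched via Theorem~\ref{all the properties of the h.k1}(6)--(7), the rest assembles into the two stated seminorm bounds, and the proof is complete.
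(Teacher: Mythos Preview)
Your proposal is correct and follows essentially the same route as the paper: the paper's proof of this proposition simply invokes the representation formula (Proposition~\ref{Integral formula for compact supported weak solution}) and the four singular-integral H\"older estimates (Propositions~\ref{Holder estimate of the spatial  derivative of the singular integral}, \ref{Holder estimate of the time derivative of the singular integral}, \ref{time holder estimate:spatial derivative}, \ref{time holder estimate:time derivative}), then recovers $\Delta_\beta u$ from the equation $(\Delta-\partial_t)u=f$ together with the already-controlled $\sum_i \partial^2 u/\partial s_i^2$ and $\partial_t u$ --- exactly the reduction you describe. The content you sketch (near/far splittings in space and in time, boundary terms on $\partial A_{10}$, and the use of items (6)--(7) of Theorem~\ref{all the properties of the h.k1} when one comparison point is near the singular hypersurface) is precisely what those four propositions carry out.
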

\begin{proof}{of proposition \ref{Schauder estimate for the weak solution which is compactly supported}:}, By rescaling,
this is  a direct corollary of
proposition \ref{Holder estimate of the spatial  derivative of the singular integral},
 \ref{Holder estimate of the time derivative of the singular integral},
 \ref{time holder estimate:spatial derivative},  \ref{time holder estimate:time derivative}, and the representation formula in proposition \ref{Integral formula for compact supported weak solution}. Just notice two points. First, we can take $u$ to be supported in $A_1$ so we don't have to shrink domain. Second, by the same ideas in \cite{Don},
  all the terms in
 $\sqrt{-1}\partial \bar{\partial} u$ except the term  $\Delta_{\beta}u $ are estimated by proposition \ref{Holder estimate of the spatial  derivative of the singular integral},
 \ref{Holder estimate of the time derivative of the singular integral},
 \ref{time holder estimate:spatial derivative},  \ref{time holder estimate:time derivative}. Then to estimate  $\Delta_{\beta}u$, it suffices
to use the equation $(\Delta -\frac{\partial}{\partial t})u=f$ itself (as $\Delta=\Delta_{\beta}+\Sigma_{i=1}^{m}\frac{\partial^2}{\partial s_i^2}$).
\end{proof}
\begin{cor}\label{Schauder estimate for the weak solution which is compactly supported in balls}Suppose $u$ is compactly supported  in $B_{1}$. Suppose $u\in L^2[0,T;H_0^{1}(B_{1})]$ and  $\frac{\partial u}{\partial t}\in L^2[0,T;H^{-1}(B_{1})]$. Suppose $u$ solves $(\Delta -\frac{\partial}{\partial t})u=f$ in the weak sense over $B_{1}\times[0,T]$ and $u$ has zero initial value. Suppose $f\in C^{\alpha,\frac{\alpha}{2},\beta}(B_{1}\times[0,T])$.
    Then there exists a constant C (as in Definition 
\ref{Dependence of the constant C}) such that  the following estimates hold. $$[\sqrt{-1}\partial \bar{\partial} u]_{\alpha,\frac{\alpha}{2},\beta,B_1\times [0,T]}\leq C[f]_{\alpha,\frac{\alpha}{2},\beta,B_{1}\times [0,T]}$$
  and
  $$[\sqrt{-1}\frac{\partial}{\partial t} u]_{\alpha,\frac{\alpha}{2},\beta,B_1\times [0,T]}\leq C[f]_{\alpha,\frac{\alpha}{2},\beta,B_{1}\times [0,T]}.$$
\end{cor}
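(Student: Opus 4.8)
The plan is to deduce the estimate over the Euclidean ball $B_1$ from the already-established estimate over the polydisk $A_1$ in Proposition \ref{Schauder estimate for the weak solution which is compactly supported}. The point is purely one of covering geometry: $B_1$ (a ball of radius $1$ in $\mathbb{R}^2\times\mathbb{R}^m$ in whatever metric we have fixed, containing a piece of the singular set $\{0\}\times\mathbb{R}^m$) can be covered by finitely many translated/rescaled copies of the polydisk $A_R(p)$ with $p$ ranging over points of the singular set, together with finitely many ordinary Euclidean balls that stay away from the singular set. On the latter pieces the classical interior parabolic Schauder estimate applies verbatim; on the former pieces we invoke Proposition \ref{Schauder estimate for the weak solution which is compactly supported} after translating the center $p$ to the origin and rescaling $A_R(p)$ to $A_1$ (the heat kernel $H$ of $g_{E,\beta}$ is translation-invariant along $\{0\}\times\mathbb{R}^m$ and parabolically scale-invariant, so both the equation and the weighted Hölder seminorms behave correctly under this rescaling, exactly as in the proof of Proposition \ref{Schauder estimate for the weak solution which is compactly supported}).

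First I would fix a finite cover $\{A_{R}(p_j)\}_{j\in J_1}\cup\{B_{R}(q_k)\}_{k\in J_2}$ of $B_1$ subordinate to a partition of unity $\{\chi_j\}\cup\{\psi_k\}$, where the $p_j$ lie on the singular set and the Euclidean balls $B_R(q_k)$ have $\mathrm{dist}(q_k,\{0\}\times\mathbb{R}^m)\geq cR$. Writing $u=\sum_j\chi_j u+\sum_k\psi_k u$, each piece $\chi_j u$ (resp. $\psi_k u$) is compactly supported in the corresponding chart and solves $(\Delta-\partial_t)(\chi_j u)=\chi_j f + [\Delta,\chi_j]u$, where the commutator $[\Delta,\chi_j]u$ is a first-order expression in $u$ times bounded coefficients. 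The zeroth/first-order terms are harmless: they are controlled by lower-order norms of $u$, which by interpolation (Lemma \ref{parabolic intepolations} etc., or just by the already-available $C^0$ and gradient control coming from the representation formula) can be absorbed, or bounded by $[f]_{\alpha,\frac{\alpha}{2},\beta}$ using the global solvability already in hand. Then apply Proposition \ref{Schauder estimate for the weak solution which is compactly supported} (after the translation-rescaling normalization to $A_1$) on the conical pieces and the classical interior Schauder estimate on the non-conical pieces, and sum the finitely many resulting inequalities.

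The one genuine technical point — and the place I would be most careful — is the treatment of the cutoff/commutator terms and the rescaling bookkeeping: one must check that translating $p_j$ to the origin and dilating $A_R(p_j)\to A_1$ transforms the weighted seminorms $[\,\cdot\,]_{\alpha,\frac{\alpha}{2},\beta}$ by a fixed power of $R$ on both sides (so $R$ cancels and only the dimensional/angular constant $C$ of Definition \ref{Dependence of the constant C} survives), and that the commutator terms $[\Delta,\chi_j]u$, which are only first order in $u$, do not require any estimate stronger than what Proposition \ref{Holder estimate of the spatial derivative of the singular integral} through \ref{time holder estimate:time derivative} already give for $\nabla u$. Since $D$ is smooth and the cutoffs can be chosen to be functions of the distance to the singular set (smooth off the singular set, as in the choice of $\eta_h$ in the proof of Lemma \ref{representation of the derivative of singular integrals}), these commutator coefficients are bounded with bounded derivatives in the relevant (conical) Hölder norm, so the argument closes. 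Everything else is the standard finite-sum-of-local-estimates mechanism, which is why the statement is phrased as a corollary.
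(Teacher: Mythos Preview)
Your approach works in spirit but is substantially more complicated than necessary, and the commutator absorption step is not quite justified as stated. The key observation you are missing is that $u$ is \emph{already} compactly supported in $B_1$, so no partition of unity or cutoff is needed at all: you can simply regard $u$ as a compactly supported function on any larger domain containing $B_1$.

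The paper's argument is a two-line case split. If $B_1(p)$ is far from the singular set (say $\mathrm{dist}(B_1(p),\{0\}\times\mathbb{R}^m)\geq 1000$), the equation is uniformly parabolic in the classical sense and the standard Schauder estimate applies. If $B_1(p)$ is within distance $1000$ of the singular set, translate along $\{0\}\times\mathbb{R}^m$ so that $\widehat{p}=0$; then $B_1(p)\subset A_{1100}$, and since $u$ is compactly supported in $B_1(p)$ it is a fortiori compactly supported in $A_{1100}$. Apply Proposition~\ref{Schauder estimate for the weak solution which is compactly supported} (after rescaling $A_{1100}$ to $A_1$) to obtain
\[
[\sqrt{-1}\partial\bar\partial u]_{\alpha,\frac{\alpha}{2},\beta,B_1(p)\times[0,T]}
\leq [\sqrt{-1}\partial\bar\partial u]_{\alpha,\frac{\alpha}{2},\beta,A_{1100}\times[0,T]}
\leq C[f]_{\alpha,\frac{\alpha}{2},\beta,A_{20000}\times[0,T]}.
\]
Finally, since $f$ is supported in $B_1(p)$, the seminorm on the right equals $[f]_{\alpha,\frac{\alpha}{2},\beta,B_1(p)\times[0,T]}$, and you are done.

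By contrast, your route introduces commutator terms $[\Delta,\chi_j]u$ that must be controlled in the $C^{\alpha,\frac{\alpha}{2},\beta}$ norm. Bounding $[\nabla u]_{\alpha,\frac{\alpha}{2},\beta}$ via interpolation against the top-order seminorm plus $|u|_0$ would in general produce a $|f|_0$-type contribution on the right, not just the pure seminorm $[f]_{\alpha,\frac{\alpha}{2},\beta}$ that the corollary claims. This can likely be repaired, but the paper's direct embedding argument sidesteps the issue entirely and is the intended reason the result is stated as a corollary.
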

\begin{proof}{of Corollary \ref{Schauder estimate for the weak solution which is compactly supported in balls}:}  It's actually straight forward to
deduce this proposition using proposition \ref{Schauder estimate for the weak solution which is compactly supported}. To fully illustrate the equivalence relation between $B_1$ and $A_1$, we shall include the detail here. We divide the situation into two cases in the following.\\

Case 1: When $dist\{B_1(p),\ \{0\}\times \mathbb{R}^{m}\}\geq 1000$, it's obvious since the equation is regular away from the singularity.\\

Case 2: When $dist\{B_1(p),\ \{0\}\times \mathbb{R}^{m}\}\leq  1000$, by translation invariance we can assume $\widehat{p}=0$. Then $B_1\subset A_{1100}$ and we have
\begin{eqnarray*}& &[\sqrt{-1}\partial \bar{\partial} u]_{\alpha,\frac{\alpha}{2},\beta,B_1(p)\times [0,T]}
\\& \leq & [\sqrt{-1}\partial \bar{\partial} u]_{\alpha,\frac{\alpha}{2},\beta,A_{1100}\times [0,T]}
\\& \leq & [f]_{\alpha,\frac{\alpha}{2},\beta,A_{20000}\times [0,T]}
\\& \leq & [f]_{\alpha,\frac{\alpha}{2},\beta,B_{30000}(p)\times [0,T]}.
\end{eqnarray*}

  Then since both $u$ and $f$ are supported in $B_1(p)$, the proof is completed.
\end{proof}

The next result is on estimates of spatial derivatives of the singular integral.\begin{prop}\label{Holder estimate of the spatial  derivative of the singular integral}There exists a $C$ (  as in Definition 
\ref{Dependence of the constant C}) independent of $T$ such that for any $\mathfrak{D}\in \mathfrak{T}$, the following estimate holds.
$$[\mathfrak{D}(H\ast f)]_{\alpha,\beta,A_1\times[0,T]}\leq C[f]_{\alpha,\beta,A_{10}\times[0,T]}.$$
\end{prop}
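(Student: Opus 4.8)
The plan is to follow the classical Schauder machinery for parabolic singular integrals, adapted to the conical setting exactly as Donaldson does in the elliptic case, using the representation formula of Lemma~\ref{representation of the derivative of singular integrals} together with the heat kernel estimates in Theorem~\ref{all the properties of the h.k1}. Fix $\mathfrak{D}\in\mathfrak{T}$, two points $(x_1,t),(x_2,t)\in A_1\times[0,T]$, and set $d=|x_1-x_2|$; without loss of generality $d<\frac{1}{10}$ (otherwise bound each term separately, which follows from the $L^\infty$-type control). Using Lemma~\ref{representation of the derivative of singular integrals},
\[
\mathfrak{D}(H\ast f)(x,t)=\int_0^t\!\!\int_{A_{10}}\mathfrak{D}_xH(x,y,t-\tau)\,[f(y,\tau)-f(x,\tau)]\,dy\,d\tau-\int_0^t f(x,\tau)\,d\tau\!\int_{\partial A_{10}}\!(\text{boundary kernel})\,n_i(y)\,ds_y.
\]
The boundary term is harmless: $x_1,x_2$ lie in $A_1$ while the boundary $\partial A_{10}$ is at distance $\geq 9$, so the kernel is smooth there and its $x$-derivative is uniformly Lipschitz; multiplying by $\int_0^t f(x,\tau)d\tau$, whose oscillation in $x$ is bounded by $T\,|f|_0\leq T[f]_{\alpha,\beta}$-type quantities, gives a contribution $\leq C d^\alpha[f]_{\alpha,\beta,A_{10}}$. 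So everything reduces to the volume term.

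For the volume term I would split the $y$-integration into the near region $B_{2d}(x_1)$ and the far region $A_{10}\setminus B_{2d}(x_1)$, and correspondingly write $f(y,\tau)-f(x_j,\tau)$ as the natural difference. On the near region one uses $|f(y,\tau)-f(x_j,\tau)|\leq[f]_{\alpha,\beta}|y-x_j|^\alpha$ together with the integrability bound $\int_0^\infty|\mathfrak{D}H(x,y,\tau)|d\tau\leq C|x-y|^{-(m+2)}$ from part~(2) of Theorem~\ref{all the properties of the h.k1} (here $\mathfrak{D}$ has order $2=|\mathfrak{F}|$, so $|x-y|^{-(m+|\mathfrak{F}|)}=|x-y|^{-(m+2)}$), and since $m+2$ is the full spatial dimension, $\int_{|y-x_j|<3d}|y-x_j|^{\alpha-(m+2)}dy\leq Cd^\alpha$. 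On the far region one estimates the increment $\mathfrak{D}_xH(x_1,y,\cdot)-\mathfrak{D}_xH(x_2,y,\cdot)$ by the mean value theorem along the segment from $x_1$ to $x_2$ (which stays at distance $\geq d$ from $y$, hence — crucially — away from and on one side of the singular set, so that this third spatial derivative makes sense), controlling it via $d\cdot\int_0^\infty|\nabla_x\mathfrak{D}H(\xi,y,\tau)|d\tau\leq Cd\,|y-\xi|^{-(m+3)}$, which is exactly the content of part~(3) of Theorem~\ref{all the properties of the h.k1} (valid since $P(\xi)\geq|\xi-y|/100^{100}$ holds once $|\xi-y|\gtrsim d$ and $\xi$ is within $O(d)$ of the segment). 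Integrating $d\int_{|y-x_1|>2d}|y-x_1|^{-(m+3)}dy\leq Cd^\alpha$ after interpolating the power (using $d^{1}= d^\alpha\cdot d^{1-\alpha}$ and $|y-x_1|^{1-\alpha}$ against the missing power) closes this piece. One must also absorb the ``correction'' term $\int_{\text{far}}\mathfrak{D}_xH(x_1,y,\tau)\,[f(x_1,\tau)-f(x_2,\tau)]$, bounded by $[f]_{\alpha,\beta}d^\alpha\int_{\text{far}}|\mathfrak{D}H(x_1,y,\cdot)|\leq C d^\alpha[f]_{\alpha,\beta}$ using part~(2) and the fact that the far integral of $|y-x_1|^{-(m+2)}$ over $A_{10}$ is finite (diameter bounded).

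The main obstacle, as always in Schauder theory, is the far-region increment estimate: it requires the third-order heat kernel bound of part~(3), which is genuinely delicate near the singular set and is where the special structure of the conical heat kernel (the representation formulas of Section~\ref{Representation formulas for the heat kernel} and Theorem~\ref{Carslaw-Wang-Chen representation for hk}) is used; here, however, we are allowed to invoke it as already established. A secondary technical point is that $\mathfrak{D}\in\mathfrak{T}$ includes the operator $\frac1r\frac{\partial^2}{\partial s_i\partial\theta}$, which is not a smooth coordinate derivative; one checks that the representation formula in Lemma~\ref{representation of the derivative of singular integrals} already handles precisely these operators and that the hypothesis $P(x)\geq|x-y|/100^{100}$ needed for part~(3) is automatically satisfied on the relevant range, so no extra argument is needed beyond bookkeeping. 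Finally, $T$-independence of $C$ is manifest because all kernel integrals are taken over $\tau\in(0,\infty)$ after the change of variable $\tau\mapsto t-\tau$ and then bounded by the $\tau$-integrals in Theorem~\ref{all the properties of the h.k1}, which do not see $T$.
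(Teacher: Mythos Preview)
Your near/far decomposition is the right template, but there is a genuine gap in the far-region increment. You invoke part~(3) of Theorem~\ref{all the properties of the h.k1} along the segment from $x_1$ to $x_2$, and assert that the hypothesis $P(\xi)\geq |\xi-y|/100^{100}$ ``is automatically satisfied on the relevant range.'' It is not. Take $x_1,x_2$ with $P(x_1)=P(x_2)=\epsilon\ll d$ (e.g.\ two points at the same tiny radius differing only in the $\mathbb{R}^m$-direction); every $\xi$ on the segment has $P(\xi)=\epsilon$, while for generic $y$ in the far region $|\xi-y|\sim d\gg\epsilon$, so the hypothesis fails and part~(3) is unavailable. The segment can even touch the singular set, so the third spatial derivative $\nabla_x\mathfrak D H$ you differentiate through need not exist. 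This is precisely why the paper does \emph{not} run a single near/far split, but instead distinguishes Case~1 ($x_1=0$ on the singular set) from Case~2 ($\min\{P(x_1),P(x_2)\}\geq 3d$), reducing the general situation to these by the triangle inequality through the projection to the singular set. In Case~2 the hypothesis of part~(3) is arranged to hold on the relevant piece; in Case~1 the far increment is controlled not by the mean value theorem but by the $\rho$-H\"older estimate of part~(6) (Lemma~\ref{Holder estimate of the integral of spatial derivative of h.k with respect to time}), which is exactly where the restriction $\alpha<\min(\tfrac{1}{\beta}-1,1)$ enters.

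There is a second, related issue with your treatment of the boundary term. The kernel $\frac{\partial}{\partial r}H(x,y,\cdot)$ is \emph{not} Lipschitz in $x$ uniformly as $x$ approaches the singular set; it is only $\rho$-H\"older, and this is exactly part~(7) of Theorem~\ref{all the properties of the h.k1}. Moreover the paper uses the polydisk shape of $\partial A_{10}$ in an essential way (see the estimate of $I_6$): on $\partial D_{10}\times\widehat B_{10}$ the normal has $n_i=0$, so only the piece $D_{10}\times\partial\widehat B_{10}$ contributes, and there part~(7) applies. Finally, your ``correction'' integral $\int_{\text{far}}|\mathfrak D H(x_1,y,\cdot)|\,dy$ is not bounded by part~(2) alone (that would give a logarithmic divergence $\int_{2d}^{20}r^{-1}dr$); the paper converts $I_4$ to a boundary integral via integration by parts in $s_i$ and then uses the first-order bound $\int_0^\infty|\partial_r H|\,d\tau\leq C|x-y|^{-(m+1)}$ on the $(m{+}1)$-dimensional boundary.
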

\begin{proof}{of proposition \ref{Holder estimate of the spatial  derivative of the singular integral}:} The proof is similar to Donaldson's estimates in \cite{Don}. We only prove the statement for
 $\frac{\partial^2}{\partial r\partial s_i}$ with some fixed $i$. The statements for $\frac{\partial^2}{\partial s_i \partial s_j},\ i,j\leq m$ and $\frac{1}{r}\frac{\partial^2}{\partial s_i\partial \theta},\ i\leq m$ are proved in exactly the same way.
 \\

 By rescaling it suffices to assume $|f|_{\alpha,\beta}=1$. Denote $|x_1-x_2|=\delta$.\\
Case1: $x_1=0$ (or $x_2=0$)  , we have
\begin{eqnarray*}& &\frac{\partial^2}{\partial r\partial s_i}(H\ast f)(0,t)-\frac{\partial^2}{\partial r\partial s_i}(H\ast f)(x_2,t)
\\&=&I_1+I_2+I_3+I_4+I_5+I_6,
\end{eqnarray*}
where \begin{eqnarray*}
  & &I_1=\int_{0}^{t}\int_{\{|y|\leq 64\delta\}\cap A_{10}}\frac{\partial^2}{\partial r\partial s_i}[H(0,y,t-\tau)][f(y,\tau)-f(0,\tau)]dyd\tau;
  \\& &I_2=-\int_{0}^{t}\int_{\{|y|\leq 64\delta\}\cap A_{10}}\frac{\partial^2}{\partial r\partial s_i}[H(x_2,y,t-\tau)][f(y,\tau)-f(x_2,\tau)]dyd\tau;
  \\& &I_3=\int_{0}^{t}\int_{\{|y|\geq 64\delta\}\cap A_{10}}[\frac{\partial^2}{\partial r\partial s_i}H(0,y,t-\tau)-\frac{\partial^2}{\partial r\partial s_i}H(x_2,y,t-\tau)]\}\\& &\quad \quad  \ \times[f(y,\tau)-f(0,\tau)]dyd\tau;
   \\& &I_4=\int_{0}^{t}[f(x_2,\tau)-f(0,\tau)]d\tau\int_{\{|y|\geq 64\delta\}\cap A_{10}}\frac{\partial^2}{\partial r\partial s_i}[H(x_2,y,t-\tau)]dy;
   \\& &I_5=-\int_{0}^{t}[f(0,\tau)-f(x_2,\tau)]d\tau\int_{\partial A_{10}}[\frac{\partial}{\partial r}H(0,y,t-\tau)]n_{i}(y)ds_y;
    \\& &I_6=-\int_{0}^{t}f(x_2,\tau)d\tau\int_{\partial A_{10}}[\frac{\partial}{\partial r}H(0,y,t-\tau)-\frac{\partial}{\partial r}H(x_2,y,t-\tau)]n_{i}(y)ds_y.
         \end{eqnarray*}
          
On $I_1$, by Item 1 in Lemma \ref{integral of time derivative of hk w.r.t time}, we have  $\int_{0}^{+\infty}|\frac{\partial^2}{\partial r\partial s_i}H(0,y,t)|dt\leq C|y|^{-(m+2)}$. Then
$$|I_1|\leq C\int_{\{|y|\leq 64\delta\}\cap A_{10}}|y|^{-(m+2)+\alpha}dy\leq C\delta^{\alpha}.$$

On $I_2$, by item 2 in Lemma  \ref{integral of time derivative of hk w.r.t time} and rescaling, we have $$\int_{0}^{+\infty}|\frac{\partial^2}{\partial r\partial s_i}H(x_2,y,t)|dt\leq C|y-x_2|^{-(m+2)}.$$ Thus
$$|I_2|\leq C\int_{\{|y|\leq 64\delta\}\cap A_{10}}|y-x_2|^{-(m+2)+\alpha}dy\leq C\delta^{\alpha}.$$

On $I_3$, by Lemma \ref{integral of gradient of time derivative of the hk} and rescaling, we have $$\int_{0}^{+\infty}|\frac{\partial^2}{\partial r\partial s_i}H(x_2,y,t)-\frac{\partial^2}{\partial r\partial s_i}H(0,y,t)|dt\leq C\delta|y|^{-(m+3)}.$$ Then
$$|I_3|\leq C\delta\int_{\{|y|\geq 64\delta\}\cap A_{10}}|y|^{-(m+2)-1+\alpha}dy\leq C\delta^{\alpha}.$$

On $I_4$, by Lemma \ref{integral of gradient of hk w.r.t time}  we have $\int_{0}^{\infty}|\frac{\partial}{\partial r}[H(x_2,y,\tau)]|d\tau\leq C\delta^{-(m+1)}$ when $|x_2|=\delta$ and $|y|=64\delta$. With the conclusion of Lemma  \ref{integral of gradient of hk w.r.t time} in mind, we compute that
\begin{eqnarray*}& &|I_4|
\\&\leq&\int_{0}^{t}|f(x_2,\tau)-f(0,\tau)|d\tau\int_{|y|=64\delta}|\frac{\partial}{\partial r}[H(x_2,y,t-\tau)|ds_y
\\& &+\int_{0}^{t}|f(x_2,\tau)-f(0,\tau)|d\tau\int_{\partial A_{10}}|\frac{\partial}{\partial r}[H(x_2,y,t-\tau)|ds_y
\\&\leq&C\delta^{\alpha}[\int_{|y|=64\delta}\int_{0}^{\infty}|\frac{\partial}{\partial r}[H(x_2,y,\tau)]|d\tau ds_y
+\int_{\partial A_{10}}\int_{0}^{\infty}|\frac{\partial}{\partial r}[H(x_2,y,\tau)]|d\tau ds_y]\\&\leq&C\delta^{\alpha}.
\end{eqnarray*}

On $I_5$, we have
\begin{eqnarray*}& &|\int_{0}^{t}[f(0,\tau)-f(x_2,\tau)]d\tau\int_{\partial A_{10}}<\frac{\partial}{\partial r}H(0,y,t-\tau),n_{i}(y)>ds_y|
\\&\leq& \delta^{\alpha} \int_{0}^{t}d\tau\int_{\partial A_{10}}|\frac{\partial}{\partial r}H(0,y,t-\tau),n_{i}(y)|ds_y.
\end{eqnarray*}
Still by Lemma \ref{integral of gradient of hk w.r.t time}, we have over ${\partial A_{10}}$ that
$$\int_{0}^{t}|\frac{\partial}{\partial r}H(0,y,t-\tau),n_{i}(y)|d\tau\leq C,$$
then $|I_5|\leq C\delta^{\alpha}$.\\

The estimate of $I_6$ is the reason we use the polydisk $A_{10}$ but not balls. Recall from Definition  that
$$\partial A_{10}=[\partial D_{10}\times \widehat{B}_{10}]\cup [ D_{10}\times \partial\widehat{B}_{10}].$$
Over $\partial D_{10}\times \widehat{B}_{10}$, we have $n_{i}(y)=0$. Therefore the second inequality in Lemma \ref{Holder estimate of the integral of spatial derivative of h.k with respect to time} directly implies that
\begin{eqnarray*}& &\int_{0}^{t}d\tau\int_{\partial A_{10}}|\frac{\partial}{\partial r}H(0,y,t-\tau)-\frac{\partial}{\partial r}H(x_2,y,t-\tau),n_{i}(y)|ds_y
\\&\leq& \int_{D_{10}\times \partial\widehat{B}_{10}}\int_{0}^{+\infty}|\frac{\partial}{\partial r}H(0,y,t-\tau)-\frac{\partial}{\partial r}H(x_2,y,t-\tau)| d\tau ds_y
\\&\leq& C\delta^{\rho},
\end{eqnarray*}
which means $|I_6|\leq C\delta^{\rho}$. Thus the proof is completed in this case.\\

Case 2: We assume $\min\{P(x_1), P(x_2)\}=l\geq 3\delta$, where $P(x)$ is norm of the projection of $x$ to the  $\mathbb{R}^2$ component. This case is easier to handle since both points are far away from the singularity. Without loss of generality we can assume
$\widehat{x_1}=0$ and then $|x_1|=l \delta$. Then we split the difference as
\begin{eqnarray*}& &\frac{\partial^2}{\partial r\partial s_i}(H\ast f)(x_1,t)-\frac{\partial^2}{\partial r\partial s_i}(H\ast f)(x_2,t)
\\&=&IV_1+IV_2+IV_3+IV_4+IV_5+IV_6+IV_7+IV_8,
\end{eqnarray*}
where
\begin{eqnarray*}
  & &IV_1=\int_{0}^{t}\int_{\{|y|\leq 2l\delta,|y-x_1|\leq 64\delta\}\cap A_{10}}\frac{\partial^2}{\partial r\partial s_i}[H(x_1,y,t-\tau)][f(y,\tau)-f(x_1,\tau)]dyd\tau;
 \\& &IV_2=-\int_{0}^{t}\int_{\{|y|\leq 2l\delta,|y-x_1|\leq 64\delta\}\cap A_{10}}\frac{\partial^2}{\partial r\partial s_i}[H(x_2,y,t-\tau)][f(y,\tau)-f(x_2,\tau)]dyd\tau;
  \\& &IV_3=\int_{0}^{t}\int_{\{|y|\leq 2l\delta,|y-x_1|\geq 64\delta\}\cap A_{10}}[\frac{\partial^2}{\partial r\partial s_i}H(x_1,y,t-\tau)-\frac{\partial^2}{\partial r\partial s_i}H(x_2,y,t-\tau)]\}
  \\& &\quad \quad \quad \ \times[f(y,\tau)-f(x_1,\tau)]dyd\tau;
  \\& &IV_4=\int_{0}^{t}[f(x_2,\tau)-f(x_1,\tau)]d\tau\int_{\{|y|\leq 2l\delta,|y-x_1|\geq 64\delta\}\cap A_{10}}\frac{\partial^2}{\partial r\partial s_i}[H(x_2,y,t-\tau)]dy;
  \\& &IV_5=\int_{0}^{t}\int_{\{|y|\geq 2l\delta\}\cap A_{10}}[\frac{\partial^2}{\partial r\partial s_i}H(x_1,y,t-\tau)-\frac{\partial^2}{\partial r\partial s_i}H(x_2,y,t-\tau)]\}
  \\& & \quad \quad \quad \ \times [f(y,\tau)-f(x_1,\tau)]dyd\tau;
  \\& &IV_6=\int_{0}^{t}[f(x_2,\tau)-f(x_1,\tau)]d\tau\int_{\{|y|\geq 2l\delta\}\cap A_{10}}\frac{\partial^2}{\partial r\partial s_i}[H(x_2,y,t-\tau)]dy;
 \\& &IV_7=\int_{0}^{t}[f(x_1,\tau)-f(x_2,\tau)]d\tau\int_{\partial A_{10}}<\frac{\partial}{\partial r}H(x_1,y,t-\tau),n_{i}(y)>ds_y;
  \\& &IV_8=\int_{0}^{t}f(x_2,\tau)d\tau\int_{\partial A_{10}}<\frac{\partial}{\partial r}H(x_1,y,t-\tau)-\frac{\partial}{\partial r}H(x_2,y,t-\tau),n_{i}(y)>ds_y.
\end{eqnarray*}
  Mostly we will proceed as in Case 1, except on some special items.\\
  
   On $IV_1$, from the same analysis as in $I_2$ in Case 1, we have
\begin{eqnarray*}& &|IV_1|\leq \int_{0}^{t}\int_{\{|y-x_1|\leq 64\delta\}\cap A_{10}}\frac{\partial^2}{\partial r\partial s_i}[H(x_1,y,t-\tau)][f(y,\tau)-f(x_1,\tau)]dyd\tau
\\&\leq&C\int_{\{|y-x_1|\leq 64\delta\}\cap A_{10}}|y-x_1|^{-(m+2)+\alpha}dy\leq C\delta^{\alpha}.
\end{eqnarray*}

Second we compute \begin{eqnarray*}& &|IV_2|\leq \int_{0}^{t}\int_{\{|y-x_2|\leq 65\delta\}\cap A_{10}}\frac{\partial^2}{\partial r\partial s_i}[H(x_2,y,t-\tau)][f(y,\tau)-f(x_2,\tau)]dyd\tau
\\&\leq&C\int_{\{|y-x_2|\leq 65\delta\}\cap A_{10}}|y-x_2|^{-(m+2)+\alpha}dy\leq C\delta^{\alpha}.
\end{eqnarray*}

On $IV_3$, notice that when $|y|\leq 2l\delta$ and $|y-x|\geq 63\delta$, by Lemma 
\ref{integral of third derivative of h.k w.r.t time: when r big},  we have \[\int_{0}^{\infty}|\nabla_x\frac{\partial^2}{\partial r\partial s_i}H(x,y,\tau)|d\tau\leq C|y-x|^{-(m+3)}\] for any
$x$ in the line segment $\overline{x_1x_2}$. Thus
\begin{eqnarray*}& &|IV_3|
\\&\leq&C\delta\int_{\{|y-x_1|\geq 64\delta\}\cap A_{10}}|y-x_1|^{-(m+3)+\alpha}dy\leq C\delta^{\alpha}.
\end{eqnarray*}

On $IV_4$, similarly  we have
\begin{eqnarray*}& &|IV_4|
\\&\leq& C\delta^{\alpha}\int_{0}^{t}\int_{|y|=2l\delta}|\frac{\partial}{\partial r}[H(x_2,y,t-\tau)]|d\tau ds(y)\\& &+C\delta^{\alpha}\int_{0}^{t}\int_{|y-x_1|=64\delta}|\frac{\partial}{\partial r}[H(x_2,y,t-\tau)]|d\tau ds(y)
\\& &+C\delta^{\alpha}\int_{0}^{t}\int_{\partial A_{10}}|\frac{\partial}{\partial r}[H(x_2,y,t-\tau)]|d\tau ds(y).
\end{eqnarray*}
By  Lemma \ref{integral of gradient of hk w.r.t time} we have  \[\int_{0}^{\infty}|\frac{\partial}{\partial r}[H(x_2,y,\tau)]|d\tau\leq C|y-x_2|^{-(m+1)}\] when $|y-x_1|=64\delta$. Therefore when $|y|=2l\delta$, since
$l\geq 2$, we have
$$\int_{0}^{\infty}|\frac{\partial}{\partial r}[H(x_2,y,\tau)]|d\tau\leq C|y-x_2|^{-(m+1)}\leq C\delta^{-(m+1)},$$
thus \[\int_{|y|=2l\delta}\int_{0}^{\infty}|\frac{\partial}{\partial r}[H(x_2,y,\tau)]|d\tau ds(y)\leq C.\]
When $|y-x_1|=64\delta$, we also have
$$\int_{0}^{\infty}|\frac{\partial}{\partial r}[H(x_2,y,\tau)]|d\tau\leq C|y-x_2|^{-(m+1)}\leq C\delta^{-(m+1)},$$
thus \[\int_{|y-x_1|=64\delta}\int_{0}^{\infty}|\frac{\partial}{\partial r}[H(x_2,y,\tau)]|d\tau ds(y)\leq C.\] According to Lemma \ref{integral of gradient of hk w.r.t time} and the assumption that $|x_2|\leq 1$, we have
$$\int_{0}^{t}\int_{\partial A_{10}}|\frac{\partial}{\partial r}[H(x_2,y,t-\tau)]|d\tau ds(y)\leq C.$$
Then $|IV_4|\leq C\delta^{\alpha}$.\\

 $IV_6$ can be estimated easily in the way $IV_4$ was estimated.
$IV_5$ can be estimated exactly as $I_3$ in Case 1. The estimates of $IV_7$ and $IV_8$ are exactly  the same as the estimates of
$I_5$ and $I_6$. Thus case 2 is all set.\\

Finally, given any $x_1,\ x_2$ such that $|x_1-x_2|=\delta$, we always have $$|\frac{\partial^2}{\partial r\partial s_i}(H\ast f)(x_1,t)-\frac{\partial^2}{\partial r\partial s_i}(H\ast f)(x_2,t)|\leq C\delta^{\alpha}$$
The reason is that by translation invariance  we can assume $\widehat{x_1}=0$ and $P(x_1)\leq P(x_2)$. If $P(x_1)\leq 3\delta$, we have:
\begin{eqnarray*}& &|\frac{\partial^2}{\partial r\partial s_i}(H\ast f)(x_1,t)-\frac{\partial^2}{\partial r\partial s_i}(H\ast f)(x_2,t)|
\\&\leq& |\frac{\partial^2}{\partial r\partial s_i}(H\ast f)(x_1,t)-\frac{\partial^2}{\partial r\partial s_i}(H\ast f)(0,t)|
\\& &+|\frac{\partial^2}{\partial r\partial s_i}(H\ast f)(x_2,t)-\frac{\partial^2}{\partial r\partial s_i}(H\ast f)(0,t)|
\\&\leq& C\delta^{\alpha}
\end{eqnarray*}
from Case 1. If $P(x_1)\geq 3\delta$ we are in Case 2. Thus the proof is completed.
\end{proof}

\begin{prop}\label{Holder estimate of the time derivative of the singular integral}There exists a $C$ (as in Definition 
\ref{Dependence of the constant C}) independent of $T$ such that
$$[\frac{\partial}{\partial t}(H\ast f)]_{\alpha,\beta,A_1\times[0,T]}\leq C[f]_{\alpha,\beta,A_{10}\times[0,T]}.$$
\end{prop}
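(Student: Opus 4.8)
The plan is to run the argument of Proposition~\ref{Holder estimate of the spatial  derivative of the singular integral} almost verbatim, now based on the representation of $\frac{\partial}{\partial t}(H\ast f)$ from Lemma~\ref{representation of the derivative of singular integrals}. That formula exhibits $\frac{\partial}{\partial t}(H\ast f)(x,t)$ as the sum of three terms: the plain value $f(x,t)$; a bulk double integral over $A_{10}\times[0,t]$ of $\frac{\partial}{\partial t}H(x,y,t-\tau)$ paired with the increment $f(y,\tau)-f(x,\tau)$; and a boundary integral over $\partial A_{10}$ of $\langle\nabla_y H(x,y,t-\tau),n(y)\rangle$ against $f(x,\tau)$. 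The $f(x,t)$ term contributes exactly $[f]_{\alpha,\beta,A_{10}\times[0,T]}$ to the spatial H\"older seminorm, so it is harmless. The remaining two terms have precisely the shape of the two terms handled in Proposition~\ref{Holder estimate of the spatial  derivative of the singular integral}, the only substitutions being that the second-order kernel $\frac{\partial^2}{\partial r\,\partial s_i}H$ is replaced by $\frac{\partial}{\partial t}H$ in the bulk, and the first-order boundary kernel $\frac{\partial}{\partial r}H\,n_i$ is replaced by $\langle\nabla_y H,n\rangle$.

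Concretely, I would normalize $[f]_{\alpha,\beta,A_{10}\times[0,T]}=1$, fix $t$ and two spatial points with $|x_1-x_2|=\delta$, and split into the same two cases as before --- Case~1, one of the points lies on $\{0\}\times\mathbb{R}^m$; Case~2, $\min\{P(x_1),P(x_2)\}\ge 3\delta$ --- decomposing the difference $\frac{\partial}{\partial t}(H\ast f)(x_1,t)-\frac{\partial}{\partial t}(H\ast f)(x_2,t)$ into exactly the same pieces $I_1,\dots,I_6$ (resp.\ $IV_1,\dots,IV_8$), obtained by cutting the $y$-domain at $|y|\le 64\delta$, an intermediate annulus, and a far region, and by splitting the boundary integral into a ``difference of $f$ times kernel at one point'' piece and a ``$f$ at one point times difference of kernels'' piece. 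The one-point bulk pieces are estimated using $\int_{0}^{\infty}|\tfrac{\partial}{\partial t}H(x,y,\tau)|\,d\tau\le C|x-y|^{-(m+2)}$, which is part of Lemma~\ref{integral of time derivative of hk w.r.t time} (equivalently item~2 of Theorem~\ref{all the properties of the h.k1} for $\mathfrak{F}=\partial_t$, whose order is $2$); the difference-of-kernel bulk pieces using $\int_{0}^{\infty}|\nabla_x\tfrac{\partial}{\partial t}H(x,y,\tau)|\,d\tau\le C|x-y|^{-(m+3)}$ from Lemma~\ref{integral of gradient of time derivative of the hk}, which holds for all $x,y$ (making $I_3,IV_3$ if anything easier than in the spatial case); and the boundary pieces using Lemma~\ref{integral of gradient of hk w.r.t time} together with the symmetry $H(x,y,t)=H(y,x,t)$, which turns $\nabla_y$-estimates into $\nabla_x$-estimates. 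Every remaining $y$-integral then collapses to an elementary bound such as $\int_{\{|y|\le 64\delta\}}|y|^{-(m+2)+\alpha}\,dy\le C\delta^{\alpha}$ or $\int_{\{|y|\ge 64\delta\}}\delta\,|y|^{-(m+3)+\alpha}\,dy\le C\delta^{\alpha}$, reproducing line by line the computations already written out for Proposition~\ref{Holder estimate of the spatial  derivative of the singular integral}.

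The one piece needing a separate word is the boundary term in the role of $I_6$ (resp.\ $IV_8$), which is where the polydisk shape of $A_{10}$ is essential. On the part $\partial D_{10}\times\widehat{B}_{10}$ of $\partial A_{10}$ the outward normal lies in the $\mathbb{R}^2$-factor and $y$ has radial coordinate $10$, hence a fixed distance from $\{0\}\times\mathbb{R}^m$, so $\langle\nabla_y H,n\rangle$ is as regular there as in the smooth theory; on the part $D_{10}\times\partial\widehat{B}_{10}$ the normal points in the $\mathbb{R}^m$-directions, so $\langle\nabla_y H,n\rangle$ only involves tangential derivatives, and moreover $|x_i-y|\ge 9$ throughout since $\widehat{y}\in\partial\widehat{B}_{10}$ while $\widehat{x_i}\in\widehat{B}_1$. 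In either region the H\"older-in-$x$ bound $\int_{0}^{\infty}|\langle\nabla_y H(x_1,y,\tau)-\nabla_y H(x_2,y,\tau),n(y)\rangle|\,d\tau\le C\delta^{\rho}$, with $\rho=\min(\tfrac1\beta-1,1)\ge\alpha$, follows from Lemma~\ref{Holder estimate of the integral of spatial derivative of h.k with respect to time} (using the kernel symmetry once more), and $\delta^{\rho}\le C\delta^{\alpha}$ since $\delta$ is bounded on $A_1$. Summing the bounds for $I_1,\dots,I_6$, and reducing an arbitrary pair of points to Case~1 or Case~2 by the triangle inequality exactly as at the end of the spatial proof, yields $|\frac{\partial}{\partial t}(H\ast f)(x_1,t)-\frac{\partial}{\partial t}(H\ast f)(x_2,t)|\le C\delta^{\alpha}$, which with the trivial estimate for the $f(x,t)$ term is the assertion. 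I expect the main obstacle to be essentially bookkeeping: confirming that every exponent in the $I_j$ and $IV_j$ estimates still closes after replacing $\frac{\partial^2}{\partial r\,\partial s_i}$ by $\frac{\partial}{\partial t}$ --- which it does precisely because $\partial_t$ carries parabolic weight $2$, the same as a second spatial derivative --- together with keeping straight which heat-kernel estimate (size versus $x$-H\"older continuity, $\nabla_x$ versus $\nabla_y$ via symmetry) is invoked in each term, and in particular handling the $I_6$-type boundary term via the polydisk decomposition as above.
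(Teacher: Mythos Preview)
Your approach is largely sound, but there is a real gap in the boundary term you call $I_6$, and the paper's route is simpler than the one you outline.

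On the simplification: the paper does \emph{not} split into Case~1 and Case~2. Since the third-order bound $\int_0^\infty|\nabla_x\tfrac{\partial}{\partial t} H(x,y,\tau)|\,d\tau\le C|x-y|^{-(m+3)}$ of Lemma~\ref{integral of gradient of time derivative of the hk} holds for \emph{all} $x,y$ (in contrast to Lemma~\ref{integral of third derivative of h.k w.r.t time: when r big}, which required $P(x)\ge |x-y|/100^{100}$), a single decomposition $Z_1,\dots,Z_6$ centered at $x_1$ suffices. For $Z_4$ the paper also uses a step you did not make explicit: it writes $\tfrac{\partial}{\partial t}H=\Delta_y H$ and applies the divergence theorem to convert the bulk integral of $\tfrac{\partial}{\partial t}H$ over $\{|y-x_1|\ge 64\delta\}\cap A_{10}$ into boundary integrals of $\langle\nabla_y H,n\rangle$, which Lemma~\ref{integral of gradient of hk w.r.t time} then controls.

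The gap is in $Z_6$. Kernel symmetry gives $\nabla_y H(x_i,y,\tau)=(\nabla_{1} H)(y,x_i,\tau)$, so the difference $\nabla_y H(x_1,y,\tau)-\nabla_y H(x_2,y,\tau)$ becomes a difference of $\nabla_{1}H$ in its \emph{second} argument, whereas Lemma~\ref{Holder estimate of the integral of spatial derivative of h.k with respect to time} controls differences of $\nabla_{1}H$ in its \emph{first} argument; symmetry does not interchange these two. The paper's fix exploits the polydisk differently: on $\partial A_{10}$ the outward normal is $\tfrac{\partial}{\partial r'}$ on $\partial D_{10}\times\widehat{B}_{10}$ and lies in the $\mathbb{R}^m$-directions on $D_{10}\times\partial\widehat{B}_{10}$, hence is always orthogonal to $\tfrac{1}{r'}\tfrac{\partial}{\partial\theta'}$. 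Thus $\langle\nabla_y H,n\rangle$ involves only $\tfrac{\partial}{\partial r'}H$ and $\tfrac{\partial}{\partial s'_i}H$, and the mean value theorem in $x$ together with Lemma~\ref{time integral of the bilinear derivatives of the heat kernel} --- which bounds precisely $\int_0^\infty|\nabla_x\tfrac{\partial}{\partial r'}H|\,d\tau$ and $\int_0^\infty|\nabla_x\tfrac{\partial}{\partial s'_i}H|\,d\tau$ by $C|x-y|^{-(m+2)}$ --- yields the Lipschitz bound $|Z_6|\le C\delta$, stronger than the $C\delta^{\rho}$ you were aiming for.
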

\begin{proof}{of proposition \ref{Holder estimate of the time  derivative of the singular integral}:}\ The framework of this lemma is simpler than that of Lemma \ref{Holder estimate of the spatial  derivative of the singular integral}. We will repeatedly use the assumptions $|x_1|,|x_2|\leq 1$. By rescaling it suffices to assume $|f|_{\alpha,\beta}=1$. Denote $|x_1-x_2|=\delta$. We compute
\begin{eqnarray*}& &\frac{\partial}{\partial t}(H\ast f)(x_1,t)-\frac{\partial}{\partial t}(H\ast f)(x_2,t)
\\&=&f(x_1,t)-f(x_2,t)+Z_1+Z_2+Z_3+Z_4+Z_5+Z_6.
\end{eqnarray*}
\begin{eqnarray*}& 
  Z_1&=\int_{0}^{t}\int_{\{|y-x_1|\leq 64\delta\}\cap A_{10}}\frac{\partial}{\partial t}[H(x_1,y,t-\tau)][f(y,\tau)-f(x_1,\tau)]dyd\tau;
 \\& Z_2&=-\int_{0}^{t}\int_{\{|y-x_1|\leq 64\delta\}\cap A_{10}}\frac{\partial}{\partial t}[H(x_2,y,t-\tau)][f(y,\tau)-f(x_2,\tau)]dyd\tau;
  \\& Z_3&=\int_{0}^{t}\int_{\{|y-x_1|\geq 64\delta\}\cap A_{10}}[\frac{\partial}{\partial t}H(x_1,y,t-\tau)-\frac{\partial}{\partial t}H(x_2,y,t-\tau)]\}
  \\& &[f(y,\tau)-f(x_1,\tau)]dyd\tau;
 \\& Z_4&=\int_{0}^{t}[f(x_2,\tau)-f(x_1,\tau)]d\tau\int_{\{|y-x_1|\geq 64\delta\}\cap A_{10}}\frac{\partial}{\partial t}[H(x_2,y,t-\tau)]dy;
 \\& Z_5&=-\int_{0}^{t}[f(x_1,\tau)-f(x_2,\tau)]d\tau\int_{\partial A_{10}}<\nabla_yH(x_1,y,t-\tau),n(y)>ds_y;
  \\& Z_6&=-\int_{0}^{t}f(x_2,\tau)d\tau\int_{\partial A_{10}}<\nabla_yH(x_1,y,t-\tau)-\nabla_yH(x_2,y,t-\tau),n(y)>ds_y.
\end{eqnarray*}
On $Z_1$, from  Lemma \ref{integral of time derivative of hk w.r.t time}, we have  $\int_{0}^{+\infty}|\frac{\partial}{\partial t}H(x_1,y,t)|dt\leq C|y-x_1|^{-(m+2)}$. Then
$$|Z_1|\leq C\int_{\{|y-x_1|\leq 64\delta\}\cap A_{10}}|y|^{-(m+2)+\alpha}dy\leq C\delta^{\alpha}.$$

On $Z_2$, again from Lemma \ref{integral of time derivative of hk w.r.t time} and rescaling, we have \[\int_{0}^{+\infty}|\frac{\partial}{\partial t}H(x_2,y,t)|dt\leq C|y-x_2|^{-(m+2)}.\] Thus
$$|Z_2|\leq C\int_{\{|y-x_1|\leq 64\delta\}\cap A_{10}}|y-x_2|^{-(m+2)+\alpha}dy\leq C\delta^{\alpha}.$$
On $Z_3$, from Lemma \ref{integral of gradient of time derivative of the hk} and rescaling, we have \[\int_{0}^{+\infty}|\frac{\partial}{\partial t}H(x_2,y,t)-\frac{\partial}{\partial t}H(x_1,y,t)|dt\leq C\delta|y-x_1|^{-(m+3)}.\] Then
$$|Z_3|\leq C\delta\int_{\{|y-x_1|\geq 64\delta\}\cap A_{10}}|y-x_1|^{-(m+2)-1+\alpha}dy\leq C\delta^{\alpha}.$$
On $Z_4$, since we have $\int_{0}^{\infty}|\nabla_yH(x_2,y,\tau)|d\tau\leq C\delta^{-(m+1)}$ for $|x_2|=\delta$ and $|y-x_1|=64\delta$, then
\begin{eqnarray*}& &|Z_4|
\\&=&|\int_{0}^{t}[f(x_2,\tau)-f(x_1,\tau)]d\tau\int_{\{|y-x_1|\geq 64\delta\}\cap A_{10}}\Delta[H(x_2,y,t-\tau)]dy|
\\&\leq&|\int_{0}^{t}[f(x_2,\tau)-f(x_1,\tau)]d\tau\int_{\partial[\{|y-x_1|=64\delta\}\cap A_{10}]}
\\& &<\nabla_y[H(x_2,y,t-\tau)],n(y)>ds_y|
\\&\leq&C\delta^{\alpha}[\int_{|y-x_1|=64\delta}\int_{0}^{\infty}|\nabla_y[H(x_2,y,\tau)]|d\tau ds_y
\\& &+\int_{\partial A_{10}}\int_{0}^{\infty}|\nabla_y[H(x_2,y,\tau)]|d\tau ds_y]
\\&\leq&C\delta^{\alpha}.
\end{eqnarray*}
On $Z_5$, we have
\begin{eqnarray*}& &|\int_{0}^{t}[f(x_1,\tau)-f(x_2,\tau)]d\tau\int_{\partial A_{10}}<\nabla_yH(x_1,y,t-\tau),n(y)>ds_y|
\\&\leq& \delta^{\alpha} \int_{0}^{t}d\tau\int_{\partial A_{10}}|\nabla_yH(x_1,y,t-\tau)|ds_y.
\end{eqnarray*}
Still by Lemma \ref{integral of gradient of hk w.r.t time}, we have over ${\partial A_{10}}$ that
$$\int_{0}^{t}|\nabla_yH(x_1,y,t-\tau))|d\tau\leq C,$$
then $|Z_5|\leq C\delta^{\alpha}$.\\

The estimate of $Z_6$ is the reason we use the polydisk $A_{10}$ but not balls. Again recall from the definitions  that
$$\partial A_{10}=[\partial D_{10}\times \widehat{B}_{10}]\cup [ D_{10}\times \partial\widehat{B}_{10}].$$
Over $\partial D_{10}\times \widehat{B}_{10}$, we have $n(y)=\frac{\partial}{\partial \acute{r}}$; Over $ D_{10}\times \partial\widehat{B}_{10}$, we have $n(y)=\frac{\partial}{\partial \bar{R}}$. The easy but crucial observation is that the normal vectors are all orthogonal to $\frac{\partial}{\partial \acute{\theta}}$. Therefore Lemma \ref{time integral of the bilinear derivatives of the heat kernel} directly implies that
\begin{eqnarray*}& &\int_{0}^{t}d\tau\int_{\partial A_{10}}|\nabla_yH(x_1,y,t-\tau)-\nabla_yH(x_2,y,t-\tau),n(y)|ds_y
\\&\leq& \delta\int_{0}^{t}d\tau\int_{\partial D_{10}\times \widehat{B}_{10}}|\nabla_{\bar{x}}\frac{\partial}{\partial \acute{r}}H[\bar{x}(y,t-\tau),y,t-\tau]|ds_y
\\& &+ \delta\int_{0}^{t}d\tau\int_{ (D_{10}\setminus\{0\})\times \partial\widehat{B}_{10}}|\nabla_{\bar{x}}\frac{\partial}{\partial \bar{R}}H[\bar{x}(y,t-\tau),y,t-\tau]|ds_y
\\&\leq& C\delta,
\end{eqnarray*}
which means $|Z_6|\leq C\delta$. Thus the proof is done  in this case.\\
\end{proof}
 In the following we shall study the timewise H\"older estimate of the spatial second derivatives and time derivative of the solution. The main conclusions in section \ref{Asymptotic behaviors of  the heat kernel}  play  important roles here.\\

\begin{prop}\label{time holder estimate:spatial derivative}There exists a $C$ (as in Definition 
\ref{Dependence of the constant C}) with the following property. For all second order differential operators $\mathfrak{D}\in \mathfrak{T}$, points $x\in A_1$ and $t$ such that $0\leq t<t+\delta\leq T$,  the following potential estimate holds.
$$|\mathfrak{D}(H\ast f)(x,t+\delta)-\mathfrak{D}(H\ast f)(x,t)|\leq C\delta^{\frac{\alpha}{2}}[f]_{\alpha,\beta,A_{10}\times[0,T]} .$$
\end{prop}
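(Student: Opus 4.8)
The plan is to reduce the estimate, via the representation formula of Lemma~\ref{representation of the derivative of singular integrals}, to two one–variable integral inequalities for the conical heat kernel that are \emph{time-localized} refinements of the bounds collected in Theorem~\ref{all the properties of the h.k1}. First I would normalize $[f]_{\alpha,\beta,A_{10}\times[0,T]}=1$; since $f$ is supported in $A_{10}$ and $A_{10}$ has bounded diameter, this forces $|f|_{0,A_{10}\times[0,T]}\le C$, which is what makes the boundary term manageable. As in the proof of Proposition~\ref{Holder estimate of the spatial  derivative of the singular integral} it suffices to treat $\mathfrak{D}=\frac{\partial^2}{\partial r\partial s_i}$; the other operators of $\mathfrak{T}$ are identical. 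By Lemma~\ref{representation of the derivative of singular integrals} we then write $\mathfrak{D}(H\ast f)(x,s)=\mathcal{I}(x,s)-\mathcal{J}(x,s)$ with
\[
\mathcal{I}(x,s)=\int_0^s\!\!\int_{A_{10}}\mathfrak{D}H(x,y,s-\tau)\,[f(y,\tau)-f(x,\tau)]\,dy\,d\tau,\qquad
\mathcal{J}(x,s)=\int_0^s f(x,\tau)\,\psi(x,s-\tau)\,d\tau,
\]
where $\psi(x,\sigma)=\int_{\partial A_{10}}\frac{\partial}{\partial r}H(x,y,\sigma)\,n_i(y)\,ds_y$. The goal is $|\mathcal{I}(x,t+\delta)-\mathcal{I}(x,t)|+|\mathcal{J}(x,t+\delta)-\mathcal{J}(x,t)|\le C\delta^{\alpha/2}$ for $x\in A_1$ and $0\le t<t+\delta\le T$.

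The two kernel inputs I would extract are: for all $\sigma>0$ and all $x$,
\[
\int_{A_{10}}|\mathfrak{D}H(x,y,\sigma)|\,|x-y|^{\alpha}\,dy\le C\sigma^{\frac{\alpha-2}{2}},\qquad
\int_{A_{10}}\big|\tfrac{\partial}{\partial\sigma}\mathfrak{D}H(x,y,\sigma)\big|\,|x-y|^{\alpha}\,dy\le C\sigma^{\frac{\alpha-4}{2}}.
\]
Both follow from item (5) of Theorem~\ref{all the properties of the h.k1} by the exact scale invariance of $g_{E,\beta}$: substituting $x=\sqrt{\sigma}\,x'$, $y=\sqrt{\sigma}\,y'$ and using that $\mathfrak{D}H$ is homogeneous of weight $-(m+4)$ while $\partial_\sigma\mathfrak{D}H$ is homogeneous of weight $-(m+6)$ turns the $\sigma=1$ statement of item (5) (applied to $\mathfrak{F}=\mathfrak{D}$, resp.\ $\mathfrak{F}=\partial_t\mathfrak{D}$, the latter being exactly an order-$4$ operator, hence the borderline case covered) into the displayed powers of $\sigma$. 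For the boundary kernel, since $|x-y|\ge 9$ on $\partial A_{10}$, items (2) and (4) of Theorem~\ref{all the properties of the h.k1} give $\int_0^\infty\big(|\psi(x,\sigma)|+|\partial_\sigma\psi(x,\sigma)|\big)\,d\sigma\le C$, and moreover $\psi(x,\sigma)$ vanishes to infinite order as $\sigma\downarrow 0$.

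With these in hand, the core computation is the standard parabolic time-splitting. I would write $\mathcal{I}(x,t+\delta)-\mathcal{I}(x,t)=\mathcal{A}+\mathcal{B}$, where $\mathcal{A}=\int_t^{t+\delta}\!\int_{A_{10}}\mathfrak{D}H(x,y,t+\delta-\tau)[f(y,\tau)-f(x,\tau)]\,dy\,d\tau$ and $\mathcal{B}=\int_0^{t}\!\int_{A_{10}}[\mathfrak{D}H(x,y,t+\delta-\tau)-\mathfrak{D}H(x,y,t-\tau)][f(y,\tau)-f(x,\tau)]\,dy\,d\tau$, and then split $\mathcal{B}=\mathcal{B}_1+\mathcal{B}_2$ at $\tau=(t-\delta)_+$. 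In $\mathcal{A}$ and in $\mathcal{B}_2$ the elapsed time $\sigma$ stays in $(0,2\delta]$, so bounding $|f(y,\tau)-f(x,\tau)|\le|x-y|^{\alpha}$ and applying the first kernel estimate gives a bound $\int_0^{2\delta}C\sigma^{\frac{\alpha-2}{2}}\,d\sigma\le C\delta^{\alpha/2}$, the point being that $\tfrac{\alpha-2}{2}\in(-1,-\tfrac12)$ is integrable at $0$ — this is exactly where the gain of $\delta^{\alpha/2}$ over the merely bounded spatial estimate comes from. In the far part $\mathcal{B}_1$ we have $t-\tau\ge\delta$, so I would write $\mathfrak{D}H(x,y,t+\delta-\tau)-\mathfrak{D}H(x,y,t-\tau)=\int_{t-\tau}^{t+\delta-\tau}\partial_\sigma\mathfrak{D}H(x,y,\sigma)\,d\sigma$, interchange the $\tau$- and $\sigma$-integrations (for fixed $\sigma\ge\delta$ the admissible set of $\tau$ has length $\le\delta$), and apply the second kernel estimate to obtain $|\mathcal{B}_1|\le C\delta\int_\delta^{\infty}\sigma^{\frac{\alpha-4}{2}}\,d\sigma\le C\delta^{\alpha/2}$, the integral converging because $\tfrac{\alpha-4}{2}<-1$.

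Finally, the boundary term $\mathcal{J}$ is treated by the same splitting but is easy: writing $\mathcal{J}(x,t+\delta)-\mathcal{J}(x,t)=\int_t^{t+\delta}f(x,\tau)\psi(x,t+\delta-\tau)\,d\tau+\int_0^t f(x,\tau)[\psi(x,t+\delta-\tau)-\psi(x,t-\tau)]\,d\tau$, the first term is $\le C|f|_0\int_0^\delta|\psi(x,\sigma)|\,d\sigma=O(\delta^N)$ for every $N$ by the infinite-order vanishing of $\psi$, and the second is $\le C|f|_0\,\delta\int_0^\infty|\partial_\sigma\psi(x,\sigma)|\,d\sigma\le C\delta$; combined with the trivial bound $C|f|_0\int_0^\infty|\psi|\,d\sigma\le C$ for $\delta\ge 1$, this is $\le C\delta^{\alpha/2}$. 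Summing the bounds for $\mathcal{A},\mathcal{B}_1,\mathcal{B}_2,\mathcal{J}$ completes the proof. The step I expect to be the real substance is the passage from item (5) of Theorem~\ref{all the properties of the h.k1} to the time-localized bound $\int_{A_{10}}|\partial_\sigma\mathfrak{D}H(x,y,\sigma)|\,|x-y|^\alpha\,dy\le C\sigma^{(\alpha-4)/2}$ for the genuinely singular operators $\mathfrak{D}\in\mathfrak{T}$ (notably $\tfrac1r\tfrac{\partial^2}{\partial s_i\partial\theta}$, which degenerates at the cone vertex): this rests on the asymptotic analysis of Section~\ref{Asymptotic behaviors of  the heat kernel}, and once it is granted, the time-splitting above is routine parabolic Calderón–Zygmund bookkeeping.
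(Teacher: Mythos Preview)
Your proof is correct and follows essentially the same route as the paper: the representation formula of Lemma~\ref{representation of the derivative of singular integrals}, the parabolic time-splitting at $\tau=(t-\delta)_+$ into a near piece (elapsed time $\lesssim\delta$, controlled by $\int|\mathfrak{D}H|\,|x-y|^\alpha\,dy\le C\sigma^{\frac{\alpha-2}{2}}$) and a far piece (mean value plus $\int|\partial_\sigma\mathfrak{D}H|\,|x-y|^\alpha\,dy\le C\sigma^{\frac{\alpha-4}{2}}$), with boundary terms handled via the exponential decay of $\nabla H$ on $\partial A_{10}$. Your $\mathcal{A},\mathcal{B}_1,\mathcal{B}_2$ match the paper's $Q_1,Q_3,Q_2$ up to bookkeeping, and your boundary pieces match $Q_4,Q_5$; the only cosmetic difference is that the paper rescales explicitly inside the $Q_3$ integral rather than stating the homogeneity weights, and it cites Lemma~\ref{Bound on the gradient of h.k when r small and t equals 1} (which is item (1) of Theorem~\ref{all the properties of the h.k1}, not items (2) or (4)) for the infinite-order vanishing of $\psi$ at $\sigma=0$.
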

\begin{proof}{of Proposition \ref{time holder estimate:spatial derivative}:}\ It suffices to  assume $\delta\leq \frac{1}{10000}$ in the H\"older estimates.
 The case when $\delta\geq \frac{1}{10000}$ reduces to lower order estimates in section \ref{Appendix: some lower order estimates} which are a lot easier than the estimates we try to prove here.
  The necessary lower order estimates are stated in section \ref{Appendix: some lower order estimates}.
  In the estimate of $Q_4$ below, we applied Lemma \ref{Bound on the gradient of h.k when r small and t equals 1}
   which requires the $\delta$ to be small.\\

  By rescaling invariance of the heat kernel it suffices to assume $|f|_{\alpha,\beta}=1$. It suffices to only consider   $\mathfrak{D}=\frac{\partial^2}{ \partial r\partial s_i}$, since  the other derivatives are really similar. We compute
\begin{eqnarray*}& &(\frac{\partial^2}{ \partial r\partial s_i}H\ast f)(x,t+\delta)-(\frac{\partial^2}{ \partial r\partial s_i}H\ast f)(x,t)
\\&=&Q_1+Q_2+Q_3+Q_4+Q_5,
\end{eqnarray*}
where

\begin{eqnarray*}
&Q_1&= \int_{t-\delta}^{t+\delta}d\tau\int_{A_{10}}\frac{\partial^2}{ \partial r\partial s_i}H(x,y,t+\delta-\tau)[f(y,\tau)-f(x,\tau)]dy
\\&Q_2& =-\int_{t-\delta}^{t}d\tau\int_{A_{10}}\frac{\partial^2}{ \partial r\partial s_i}H(x,y,t-\tau)][f(y,\tau)-f(x,\tau)]dy
\\&Q_3& =\int_{0}^{t-\delta}d\tau\int_{A_{10}}[\frac{\partial^2}{ \partial r\partial s_i}H(x,y,t+\delta-\tau)-\frac{\partial^2}{ \partial r\partial s_i}H(x,y,t-\tau)]
\\&\times&[f(y,\tau)-f(x,\tau)]dy
\\&Q_4&=-\int_{t}^{t+\delta}f(x,\tau) d\tau\int_{\partial A_{10}}\frac{\partial}{ \partial r}H(x,y,t+\delta-\tau)n_i(y)ds_y
\\&Q_5&=-\int_{0}^{t}f(x,\tau) d\tau\int_{\partial A_{10}}[\frac{\partial}{ \partial r}H(x,y,t+\delta-\tau)-\frac{\partial}{ \partial r}H(x,y,t-\tau)]n_i(y)ds_y
\end{eqnarray*}
Next, using the estimates in Proposition \ref{proposition on integral of the product the heat kernel and distance function to the alpha}, we estimate that
$$\int_{\mathbb{R}^2\times \mathbb{R}^m}|\frac{\partial^2}{ \partial r\partial s_i}H(x,y,t)||y-x|^{\alpha}dy\leq Ct^{\frac{\alpha}{2}-1},$$ then we estimate
\begin{eqnarray*}& &|Q_1|
\\&\leq&  \int_{0}^{2\delta}dt\int_{\mathbb{R}^2\times \mathbb{R}^m}|\frac{\partial^2}{ \partial r\partial s_i}H(x,y,t)||y-x|^{\alpha}dy
\\&\leq& C \int_{0}^{2\delta}t^{\frac{\alpha}{2}-1} dt
\\&\leq& C\delta^{\frac{\alpha}{2}}.
\end{eqnarray*}
Similarly we have $|Q_2|\leq C\delta^{\frac{\alpha}{2}}$.\\

For $Q_3$, we compute
\begin{eqnarray*}& &|Q_3|
\\&\leq &
 \int_{-\infty}^{t-\delta}d\tau\int_{A_{10}}\ |\frac{\partial^2}{ \partial r\partial s_i}H(x,y,t+\delta-\tau)-\frac{\partial^2}{ \partial r\partial s_i}H(x,y,t-\tau)|\ |y-x|^{\alpha}dy
 \\&=&
 \int_{\delta}^{\infty}t^{\frac{\alpha}{2}-1}dt\int_{\frac{A_{10}}{t}}\ |\frac{\partial^2}{ \partial r\partial s_i}H(\frac{x}{\sqrt{t}},u,1+\frac{\delta}{t})-\frac{\partial^2}{ \partial r\partial s_i}H(\frac{x}{\sqrt{t}},u,1)|\ |u-\frac{x}{\sqrt{t}}|^{\alpha}du
  \\&\leq &
 C\delta\int_{\delta}^{\infty}t^{\frac{\alpha}{2}-2}dt\int_{R^2\times R^m}\sup_{1\leq s\leq 2}|\frac{\partial}{\partial s}\frac{\partial^2}{ \partial r\partial s_i}H(\frac{x}{\sqrt{t}},u,s)||u-\frac{x}{\sqrt{t}}|^{\alpha}du
\\ &\leq&C\delta\int^{\infty}_{\delta}t^{\frac{\alpha}{2}-2}dt
 \\&\leq&C\delta^{\frac{\alpha}{2}},
\end{eqnarray*}
where we applied the following estimate in Proposition \ref{proposition on integral of the product the heat kernel and distance function to the alpha}:  $$\int_{\mathbb{R}^2\times \mathbb{R}^m}\sup_{1\leq s\leq 2}|\frac{\partial}{\partial s} \frac{\partial^2}{ \partial r\partial s_i}H(\frac{x}{\sqrt{t}},u,s)||u-\frac{x}{\sqrt{t}}|^{\alpha}du\leq C.$$

On $Q_4$, we have to involve Lemma \ref{Bound on the gradient of h.k when r small and t equals 1}. We compute
\begin{eqnarray*}& &|Q_4|
\\&=&
 \int_{0}^{\delta}dt\int_{\partial A_{10}}|\frac{\partial}{ \partial r}H(x,y,t)|ds_y
 \\&=&\int_{0}^{\delta}t^{-1}dt\int_{\frac{\partial A_{10}}{t}}|\frac{\partial}{ \partial r}H(\frac{x}{t},u,1)|ds_u
\leq \int_{0}^{\delta}t^{-1}\cdot t^{-\frac{m+1}{2}}e^{-\frac{1}{t}}dt
 \\&\leq&C\delta.
\end{eqnarray*}

On $Q_5$, from Lemma \ref{integral of gradient of time derivative of the hk}, we have
\begin{eqnarray*}& &|Q_5|
\\&\leq &
 \delta\int_{0}^{t}d\tau\int_{\partial A_{10}}|\frac{\partial^2}{ \partial r\partial t}H(x,y,\bar{t}(x,y)-\tau)|ds_y
 \\&\leq&C\delta,
\end{eqnarray*}
where $t \leq \bar{t}(x,y)\leq t+\delta$.

\end{proof}
\begin{prop}\label{time holder estimate:time derivative}There exists a $C$ (as in Definition 
\ref{Dependence of the constant C}) with the following properties. For all points $x\in A_1$ and $t_1,t_2$ in  $[0,\ T]$, the following potential estimate holds. $$|(\frac{\partial H}{\partial t}\ast f)(x,t_1)-(\frac{\partial H}{\partial t}\ast f)(x,t_2)|\leq C[f]_{\alpha,\frac{\alpha}{2},\beta,A_{10}\times[0,T]}|t_1-t_2|^{\frac{\alpha}{2}} .$$
\end{prop}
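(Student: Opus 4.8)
The plan is to mirror the argument of Proposition \ref{time holder estimate:spatial derivative}, the only genuinely new feature being the extra initial term $f(x,t)$ produced by the time-derivative representation formula. First I would normalize: assume $t_1<t_2$, write $t=t_1$ and $\delta=t_2-t_1$, and observe that when $\delta\ge\frac{1}{10000}$ the inequality follows at once from the $C^0$-bound on $\frac{\partial}{\partial t}(H\ast f)$ (the lower order estimates of Section \ref{Appendix: some lower order estimates}), since then $|t_1-t_2|^{\frac{\alpha}{2}}\ge(\frac{1}{10000})^{\frac{\alpha}{2}}$; so I may assume $\delta\le\frac{1}{10000}$. By the scaling invariance of $H$ I may also assume $[f]_{\alpha,\frac{\alpha}{2},\beta,A_{10}\times[0,T]}=1$, and since $f$ is supported in $A_{10}$ this also gives $|f|_{0}\le C$. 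Then I would apply Lemma \ref{representation of the derivative of singular integrals} to $\frac{\partial}{\partial t}(H\ast f)$ at times $t+\delta$ and $t$ and split the $\tau$-integrals exactly as in the proof of Proposition \ref{time holder estimate:spatial derivative}, obtaining
\[
\tfrac{\partial}{\partial t}(H\ast f)(x,t+\delta)-\tfrac{\partial}{\partial t}(H\ast f)(x,t)=[f(x,t+\delta)-f(x,t)]+W_1+W_2+W_3+W_4+W_5,
\]
where $W_1,W_2$ are the volume integrals of $\pm\frac{\partial}{\partial t}H\,[f(y,\tau)-f(x,\tau)]$ over $\tau\in[t-\delta,t+\delta]$ and $\tau\in[t-\delta,t]$, $W_3$ is the volume integral of $[\frac{\partial}{\partial t}H(x,y,t+\delta-\tau)-\frac{\partial}{\partial t}H(x,y,t-\tau)]\,[f(y,\tau)-f(x,\tau)]$ over $\tau\in[0,t-\delta]$, $W_4=-\int_t^{t+\delta}f(x,\tau)\,d\tau\int_{\partial A_{10}}\langle\nabla_y H(x,y,t+\delta-\tau),n(y)\rangle\,ds_y$, and $W_5$ is the boundary term over $\tau\in[0,t]$ carrying $\nabla_y H(x,y,t+\delta-\tau)-\nabla_y H(x,y,t-\tau)$.

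The bracket $[f(x,t+\delta)-f(x,t)]$ is bounded by $[f]_{\alpha,\frac{\alpha}{2},\beta}\,\delta^{\frac{\alpha}{2}}$ by definition of the parabolic Hölder seminorm. For $W_1,W_2$ I would use $|f(y,\tau)-f(x,\tau)|\le|y-x|^{\alpha}$, pass to the elapsed time $s$, and invoke $\int_{\mathbb{R}^2\times\mathbb{R}^m}|\frac{\partial}{\partial t}H(x,y,s)|\,|y-x|^{\alpha}\,dy\le Cs^{\frac{\alpha}{2}-1}$ (Proposition \ref{proposition on integral of the product the heat kernel and distance function to the alpha}, applied to $\frac{\partial}{\partial t}\in\mathfrak{P}$ after rescaling), which gives $|W_1|+|W_2|\le C\int_0^{2\delta}s^{\frac{\alpha}{2}-1}\,ds\le C\delta^{\frac{\alpha}{2}}$. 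For $W_3$ I would write $\frac{\partial}{\partial t}H(x,y,s+\delta)-\frac{\partial}{\partial t}H(x,y,s)=\int_s^{s+\delta}\frac{\partial^2}{\partial t^2}H(x,y,s')\,ds'$ and bound it by $\delta\sup_{s\le s'\le 2s}|\frac{\partial^2}{\partial t^2}H(x,y,s')|$ (legitimate since $\delta\le s$ on the range $s\in[\delta,t]$), then rescale by $\sqrt s$ and use $\int_{\mathbb{R}^2\times\mathbb{R}^m}\sup_{1\le\sigma\le 2}|\frac{\partial^2}{\partial t^2}H(\frac{x}{\sqrt s},u,\sigma)|\,|u-\frac{x}{\sqrt s}|^{\alpha}\,du\le C$ (again Proposition \ref{proposition on integral of the product the heat kernel and distance function to the alpha}, now for the order-four operator $\frac{\partial^2}{\partial t^2}\in\mathfrak{P}$); this yields $|W_3|\le C\delta\int_\delta^{\infty}s^{\frac{\alpha}{2}-2}\,ds\le C\delta^{\frac{\alpha}{2}}$.

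For the two boundary terms the key observation — exactly as in the estimate of $Z_6$ in the proof of Proposition \ref{Holder estimate of the time derivative of the singular integral} — is that $x\in A_1$ forces $\mathrm{dist}(x,\partial A_{10})\ge 8$ and that $n(y)$ is orthogonal to $\frac{\partial}{\partial\theta}$ on $\partial A_{10}$. In $W_4$ the elapsed time lies in $(0,\delta]$ and $|x-y|\ge 8$, so Lemma \ref{Bound on the gradient of h.k when r small and t equals 1} (the rescaled form of the first item of Theorem \ref{all the properties of the h.k1}) shows that $|\nabla_y H(x,y,s)|$ decays faster than any power of $s$ as $s\to 0$, whence $|W_4|\le C|f|_{0}\,\delta\le C\delta^{\frac{\alpha}{2}}$. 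In $W_5$ I would write $\nabla_y H(x,y,s+\delta)-\nabla_y H(x,y,s)=\int_s^{s+\delta}\nabla_y\frac{\partial}{\partial t}H(x,y,s')\,ds'$, interchange the $s'$-integral with the $\tau$-integral to extract the factor $\delta$, and apply $\int_0^{\infty}|\nabla_y\frac{\partial}{\partial t}H(x,y,s')|\,ds'\le C|x-y|^{-(m+3)}$ (Lemma \ref{integral of gradient of time derivative of the hk}, by symmetry of $H$ in $x,y$); since $|x-y|\ge 8$ on $\partial A_{10}$ this gives $|W_5|\le C|f|_{0}\,\delta\int_{\partial A_{10}}|x-y|^{-(m+3)}\,ds_y\le C\delta\le C\delta^{\frac{\alpha}{2}}$. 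Summing the six contributions finishes the proof.

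The hard part is not really in this argument — which is bookkeeping once the kernel bounds are in hand — but in the heat-kernel estimates it calls on, namely Lemma \ref{Bound on the gradient of h.k when r small and t equals 1}, Lemma \ref{integral of gradient of time derivative of the hk} and Proposition \ref{proposition on integral of the product the heat kernel and distance function to the alpha}. The one point inside the proof that needs care is keeping $\partial A_{10}$ at a fixed positive distance both from the singular set $\{0\}\times\mathbb{R}^m$ and from every $x\in A_1$: this is precisely why polydisks $A_R$ rather than Euclidean balls are used, and it is what lets the super-exponential small-time decay from Lemma \ref{Bound on the gradient of h.k when r small and t equals 1} and the $|x-y|^{-(m+3)}$ decay from Lemma \ref{integral of gradient of time derivative of the hk} control $W_4$ and $W_5$.
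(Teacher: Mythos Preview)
Your proof is correct and follows essentially the same approach as the paper: the paper's own proof is extremely terse, merely stating that the argument is ``by the same way as in Lemma \ref{time holder estimate:spatial derivative}'' and writing down exactly the six-term decomposition you give (your $[f(x,t+\delta)-f(x,t)]$ and $W_1,\dots,W_5$), to be estimated via Proposition \ref{proposition on integral of the product the heat kernel and distance function to the alpha}, Lemma \ref{Bound on the gradient of h.k when r small and t equals 1}, and Lemma \ref{integral of gradient of time derivative of the hk} just as you do. One small inaccuracy in your closing commentary: $\partial A_{10}$ is \emph{not} at positive distance from the singular set (the component $D_{10}\times\partial\widehat{B}_{10}$ meets $\{0\}\times\mathbb{R}^m$); what actually drives the boundary estimates is only that $\mathrm{dist}(x,\partial A_{10})\geq 9$ for $x\in A_1$, together with the orthogonality of $n(y)$ to $\frac{\partial}{\partial\theta'}$, both of which you state correctly.
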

\begin{proof}{of Proposition \ref{time holder estimate:time derivative}:}\ The proof is by the same way as in Lemma \ref{time holder estimate:spatial derivative}. We  apply the following decomposition
after rescaling with respect time and applying proposition \ref{proposition on integral of the product the heat kernel and distance function to the alpha}.
\begin{eqnarray*}& &\frac{\partial H}{\partial t}\ast f(x,t+\delta)-\frac{\partial H}{\partial t}\ast f(x,t)
\\&=& (\frac{\partial H}{\partial t}H\ast f)(x,t+\delta)-(\frac{\partial H}{\partial t}H\ast f)(x,t)
\\&=& \int_{t-\delta}^{t+\delta}d\tau\int_{A_{10}}\frac{\partial H}{\partial t}H(x,y,t+\delta-\tau)[f(y,\tau)-f(x,\tau)]dy
\\& &- \int_{t-\delta}^{t}d\tau\int_{A_{10}}\frac{\partial H}{\partial t}H(x,y,t-\tau)][f(y,\tau)-f(x,\tau)]dy
\\& &+ \int_{-\infty}^{t-\delta}d\tau\int_{A_{10}}[\frac{\partial H}{\partial t}H(x,y,t+\delta-\tau)-\frac{\partial H}{\partial t}H(x,y,t-\tau)]
\\& &\times[f(y,\tau)-f(x,\tau)]dy
\\& &-\int_{t}^{t+\delta}f(x,\tau) d\tau\int_{\partial A_{10}}<\nabla_yH(x,y,t+\delta-\tau),\ n(y)>ds_y
\\& &-\int_{0}^{t}f(x,\tau) d\tau\int_{\partial A_{10}}<\nabla_y[H(x,y,t+\delta-\tau)-H(x,y,t-\tau)],n_i(y)>ds_y
\\& &+f(x,t+\delta)- f(x,t).
\end{eqnarray*}

\end{proof}

The next theorem on interior estimates is important. As we mentioned earlier, the singular set is never considered as boundary. 

\begin{thm}\label{Schauder estimate: constant coefficient} There exists a $C(T)$ (as in Definition 
\ref{Dependence of the constant C} and depending on $T$)  with the following properties.  For any open set $\Omega \in \mathbb{R}^{m+2}$,   suppose $u\in C_0^{2+\alpha,1+\frac{\alpha}{2},\beta}[0,T]$. Then $u$ satisfies the following estimates 
\begin{eqnarray*}& &|u|^{(\star)}_{2+\alpha,1+\frac{\alpha}{2},\beta,{\Omega}\times[0,T]}\leq C(T)|(\Delta-\frac{\partial }{\partial t})u|^{(2)}_{\alpha,\frac{\alpha}{2},\beta,{\Omega}\times[0,T]};
\\& & |u|^{(\star)}_{2,\alpha,\beta,{\Omega}\times[0,T]}\leq C(T) |(\Delta-\frac{\partial }{\partial t})u|^{(2)}_{\alpha,\beta,{\Omega}\times[0,T]};
\\& &|u|^{[\star]}_{2+\alpha,1+\frac{\alpha}{2},\beta,{\Omega}\times[0,T]}\leq  C(T)  |(\Delta-\frac{\partial }{\partial t})u|^{[2]}_{\alpha,\frac{\alpha}{2},\beta,{\Omega}\times[0,T]}.
\end{eqnarray*}
\end{thm}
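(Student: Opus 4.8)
The plan is to deduce the global weighted estimates from the compactly supported estimates of Corollary \ref{Schauder estimate for the weak solution which is compactly supported in balls} and Proposition \ref{Schauder estimate for the weak solution which is compactly supported} by the standard localization–interpolation scheme, adapted to the conical setting. Fix a point $p\in\Omega$ and let $R$ be comparable to $d_p$ (the spatial distance from $p$ to $\partial\Omega$; recall the singular set $\{0\}\times\mathbb{R}^m$ is never a boundary point). Choose the local domain to be a polydisk $A_R(p)$ (Definition \ref{Def of polydisk}) when $p$ is within distance $\sim R$ of the singular set and a Euclidean cylinder otherwise, and pick a cutoff $\eta$ with $\eta\equiv 1$ on $A_{R/2}(p)$, $\operatorname{supp}\eta\subset A_R(p)$, and $|\nabla^k\eta|\le C R^{-k}$. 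Set $w=\eta u$ and $f=(\Delta-\frac{\partial}{\partial t})u$. Then $w$ is compactly supported in $A_R(p)$, has zero initial value, lies in the required Sobolev class, and solves weakly
\[
(\Delta-\tfrac{\partial}{\partial t})w=g,\qquad g:=\eta f+2\langle\nabla\eta,\nabla u\rangle+(\Delta\eta)u .
\]
By Proposition \ref{Integral formula for compact supported weak solution} we have $w=-H\ast g$, hence by Proposition \ref{Schauder estimate for the weak solution which is compactly supported} (or Corollary \ref{Schauder estimate for the weak solution which is compactly supported in balls} away from the divisor)
\[
[\sqrt{-1}\partial\bar\partial w]_{\alpha,\frac{\alpha}{2},\beta,A_R(p)\times[0,T]}+[\tfrac{\partial w}{\partial t}]_{\alpha,\frac{\alpha}{2},\beta,A_R(p)\times[0,T]}\le C\,[g]_{\alpha,\frac{\alpha}{2},\beta,A_R(p)\times[0,T]} .
\]
As in the proof of Proposition \ref{Schauder estimate for the weak solution which is compactly supported}, the only component of $\sqrt{-1}\partial\bar\partial u$ not directly controlled this way is $\Delta_\beta u$, which is recovered algebraically from $\Delta_\beta u=f+\frac{\partial u}{\partial t}-\sum_{i=1}^m\frac{\partial^2 u}{\partial s_i^2}$.

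Next I would estimate $[g]_{\alpha,\frac{\alpha}{2},\beta}$ on $A_R(p)$ by $[f]_{\alpha,\frac{\alpha}{2},\beta}$ together with negative powers of $R$ times lower-order seminorms of $u$ (coming from the commutator $[\Delta,\eta]$), and then assemble the local estimates. Multiplying through by the power of $R$ dictated by the weight exponent $k+\alpha$ in the definition of $[\,\cdot\,]^{(k)}_{\alpha,\frac{\alpha}{2},\beta}$, taking the supremum over pairs of points with $R\asymp d_{x,y}$, and invoking the parabolic interpolation inequalities (Lemmas \ref{parabolic intepolations}, \ref{parabolic intepolations 2}, \ref{parabolic timewise intepolations}) to bound the intermediate quantities $|\nabla u|^{(1)}_0$, $|u|_0$ produced by the commutator by a small multiple of the top-order weighted seminorms plus a large multiple of $|u|_0$, and finally Lemma \ref{C0 estimate} to control $|u|_0$, the top-order terms get absorbed into the left side and the first displayed estimate follows. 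The second estimate (purely spatial Hölder with spatial weights) is the time-frozen specialization of the same argument, using only the elliptic-type pieces of the potential estimates.

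For the third (parabolic-weight) statement, I would run the identical localization but with $R\asymp l_{(x,t_1),(y,t_2)}=\min\big(d_{x,y},|t-T_1|^{1/2}\big)$: away from the initial time slice this is the previous argument, while near $t=T_1$ one takes $R\sim|t-T_1|^{1/2}$, and the extra factor $|t-T_1|^{1/2}$ replacing $d_p$ in the weights is furnished by the vanishing of $u$ at $t=T_1$ in the $C^{\alpha/2}$-in-time sense (this is exactly why the constant must be allowed to depend on $T$). The main obstacle is the weight bookkeeping in the localization: one must check that, with $R$ chosen as above, the scaling factors coming from the rescaled potential estimates cancel the weight powers uniformly in the pair of points, that the commutator terms are genuinely absorbable (which needs the interpolation lemmas to be sharp in the weighted norms), and that the covering near the divisor is done by polydisks rather than balls with the conical gradient $\nabla_x$ in the frame $\frac{\partial}{\partial r},\frac{1}{\beta r}\frac{\partial}{\partial\theta},\frac{\partial}{\partial s_i}$; the near-$t=T_1$ refinement for the parabolic-weight norm is the only ingredient beyond the elliptic template of \cite{Don}.
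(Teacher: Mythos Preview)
Your proposal is correct and follows essentially the same route as the paper's proof: localize by a cutoff, apply Proposition \ref{Schauder estimate for the weak solution which is compactly supported} in polydisks near the singular set and Corollary \ref{Schauder estimate for the weak solution which is compactly supported in balls} away from it, then absorb the commutator terms via the interpolation Lemmas \ref{parabolic intepolations}, \ref{parabolic intepolations 2}, \ref{parabolic timewise intepolations} together with Lemma \ref{C0 estimate}. The paper's own proof is terse (it simply declares the deduction ``routine'' and notes that the interpolation tricks differ slightly from \cite{GT}), so your write-up is in fact a faithful expansion of what the paper sketches, including the correct distinction between the spatial-weight and parabolic-weight cases.
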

\begin{proof}{of Theorem \ref{Schauder estimate: constant coefficient}:} 
Using Proposition \ref{Schauder estimate for the weak solution which is compactly supported}, multiplication by a cutoff function, and  the lower order estimates in the appendix, it's routine to deduce the  estimates in Theorem \ref{Schauder estimate: constant coefficient}. We should apply Proposition \ref{Schauder estimate for the weak solution which is compactly supported} near the singularities and apply Proposition \ref{Schauder estimate for the weak solution which is compactly supported in balls} away from the singularities. Nevertheless, we would like to mention that
 the interpolation tricks we implicitly employed here are not the same ones as in \cite{GT}, although the tricks are extremely  simple and natural. 
 Notice that away from the singularity we should apply Corollary \ref{Schauder estimate for the weak solution which is compactly supported in balls}.
\end{proof}

To prove the part of Theorem \ref{Schauder estimate of the linear equation: single divisor} on existence of classical solutions, we need the following existence result 
over $E_T$. The power of the space  $W^{2,2}_{0}(E_T)$ is displayed here. The following result is definitely true when $\beta=1$ (when there is no singularity), and we believe this is more than well known to experts. 
\begin{prop}\label{Solvability of the operator with small oscillation in the flat cone} The parabolic operator  $\Delta_{\omega_D}-\frac{\partial }{\partial t}$ is surjective map among the following function spaces \[C_{0}^{2+\alpha,1+\frac{\alpha}{2},\beta}[0,T]:\rightarrow\ C^{\alpha,\frac{\alpha}{2},\beta}[0,T].\] 
\end{prop}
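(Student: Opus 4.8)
The plan is to obtain a weak solution in the energy space $W^{2,2}_{0}(E_T)$ and then to \emph{bootstrap} it, via the Schauder estimates already established in this paper, to a classical solution lying in $C_{0}^{2+\alpha,1+\frac{\alpha}{2},\beta}[0,T]$. Since only surjectivity is claimed, uniqueness of the weak solution is not strictly needed (it is, however, automatic from an elementary energy argument). Given $f\in C^{\alpha,\frac{\alpha}{2},\beta}[0,T]$, which in particular lies in $L^{2}(E_T)$, I would solve
\[
\frac{\partial u}{\partial t}-\Delta_{\omega_D}u=-f,\qquad u|_{t=0}=0
\]
in the Hilbert-space sense and then prove the solution has the required regularity.

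\textbf{Step 1 (weak solution in $W^{2,2}_{0}$).} The Friedrichs extension of $-\Delta_{\omega_D}$ is a nonnegative self-adjoint operator on $L^{2}$ with respect to $dV_{\omega_D}$, and the conical metric has finite volume with discrete spectrum; one may therefore run a Galerkin scheme in the eigenbasis, or simply set $u(t)=-\int_{0}^{t}e^{(t-s)\Delta_{\omega_D}}f(s)\,ds$ by Duhamel. Testing with $\partial u/\partial t$ yields the a priori energy bound
\[
\Big\|\frac{\partial u}{\partial t}\Big\|_{L^{2}(E_T)}+\sup_{t}\|\nabla u(t)\|_{L^{2}}\le C\,\|f\|_{L^{2}(E_T)},
\]
hence $\Delta_{\omega_D}u=\partial u/\partial t+f\in L^{2}(E_T)$. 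To upgrade $\Delta_{\omega_D}u\in L^{2}$ to $\sqrt{-1}\partial\bar\partial u\in L^{2}$ — the genuine content of the norm (\ref{W 2 2 norm}) — I would use the K\"ahler Bochner--Kodaira identity relating $\int|\sqrt{-1}\partial\bar\partial u|^{2}$ to $\int|\Delta_{\omega_D}u|^{2}$ up to a curvature term controlled by $\|\nabla u\|_{L^{2}}$ (the Ricci potential of $\omega_D$ being as regular as in (\ref{Regularity of h omega 0})); the integration by parts is legitimate because the conical locus $D$ has zero capacity, so a cutoff argument produces no boundary term there. This places $u$ in $W^{2,2}_{0}(E_T)$ with $\|u\|_{W^{2,2}}\le C\|f\|_{L^{2}(E_T)}$.

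\textbf{Step 2 (bootstrap to a classical solution).} Away from $D$ the equation is uniformly parabolic with smooth coefficients, so interior $L^{p}$ regularity, Sobolev embedding, and the classical Schauder theory make $u$ smooth there. Near a point $p\in D$ I would first convert the $W^{2,2}$ bound into H\"older continuity of $u$ on a neighborhood of $p$ by a weighted Sobolev embedding adapted to the cone metric $g_{E,\beta}$ — this is precisely where the space $W^{2,2}_{0}(E_T)$ is used. Once $u\in C^{0,\gamma}$ for some small $\gamma>0$ near $D$, I would multiply $u$ by a cutoff supported in a polydisk $A_{R}(p)$ (Definition \ref{Def of polydisk}), apply Proposition \ref{Schauder estimate for the weak solution which is compactly supported} (and Corollary \ref{Schauder estimate for the weak solution which is compactly supported in balls} at points away from $D$) to the cutoff function, and absorb the commutator (lower-order) terms by the parabolic interpolation inequalities (Lemmas \ref{parabolic intepolations}, \ref{parabolic intepolations 2}, \ref{parabolic timewise intepolations}, \ref{C0 estimate}) together with the bounds of Step 1. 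Patching these local estimates gives $u\in C^{2+\alpha,1+\frac{\alpha}{2},\beta}[0,T]$ with zero initial value, so $(\Delta_{\omega_D}-\partial/\partial t)u=f$ and the operator is surjective.

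\textbf{Main obstacle.} The genuinely delicate point is the first passage in Step 2, from the weak $W^{2,2}$ solution to \emph{any} H\"older continuity right at the conical set $D$: this requires the correct weighted Sobolev embedding for $g_{E,\beta}$ and the verification that the $W^{2,2}$ energy controls both $\sup|u|$ and a H\"older modulus near $D$. Once that is in hand, the heat-kernel estimates of Theorem \ref{all the properties of the h.k1} and the Schauder bounds they yield carry everything through routinely. A secondary, more bookkeeping-level issue is justifying the integration by parts up to $D$ in Step 1 (the zero-capacity cutoff) and organizing the commutator absorption in Step 2 so as to avoid circularity.
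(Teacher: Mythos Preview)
Your strategy---obtain a weak solution in $W^{2,2}_{0}(E_T)$ and then bootstrap to $C_{0}^{2+\alpha,1+\frac{\alpha}{2},\beta}$ via the already-established Schauder estimates (Proposition \ref{Schauder estimate for the weak solution which is compactly supported}, Corollary \ref{Schauder estimate for the weak solution which is compactly supported in balls}) and the interpolation lemmas---is precisely the route the paper indicates: the authors note that the proof ``needs the estimates in $W^{2,2}(M\times[0,T])$'' and defer all details to the unpublished note \cite{WYQ}. So at the level of strategy you are in full agreement with the paper, and your identification of the two genuine obstacles (the $L^{2}$ control of $\sqrt{-1}\partial\bar\partial u$ via an integration by parts that must be justified across $D$, and the passage from $W^{2,2}$ to some H\"older modulus right at the conical set) matches what one would expect the external reference to contain.
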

\begin{proof}{of Proposition \ref{Solvability of the operator with small oscillation in the flat cone}:} Though the aprori estimates in Theorem \ref{Schauder estimate: constant coefficient} hold, it's still necessary to show  the existence of a classical solution. Since Proposition \ref{Solvability of the operator with small oscillation in the flat cone} has its own independent interest and needs the estimates in $W^{2,2}(M\times[0,T])$ (which involve a bunch of other techniques), we refer the reader to the note \cite{WYQ}. 
\end{proof}

Now we begin addressing   the final step of the  main Schauder estimates and existence results in Theorem \ref{Schauder estimate of the linear equation: single divisor}, by using the  results  established earlier in this section.
\begin{proof}{ of 
Theorem \ref{Schauder estimate of the linear equation: single divisor}: }\ 
 First we show the aprori estimates hold. Namely, suppose $u\in C^{2+\alpha,1+\frac{\alpha}{2},\beta}[0,T]$, we prove that  the estimates in 
 Theorem \ref{Schauder estimate of the linear equation: single divisor} hold. These go  exactly the same as in \cite{GT}. Notice that away from the singularity we have 
 no problem at all, then
what we need to prove more than  Theorem  \ref{Schauder estimate: constant coefficient}
is that we should show the Schauder estimate for the space-time dependent operator $\Delta_{a(t)}-\frac{\partial}{\partial t}$, not only for
the constant coefficient operator $\Delta-\frac{\partial}{\partial t}$ in  Theorem  \ref{Schauder estimate: constant coefficient}.
\\

 We  include the crucial detail on how to deal with the space-time dependent operator. We would like to point out that
though the intepolation inequalities which are hiding behind should be proved using some different method, these estimates here are absolutely regular. Let $h$ be small enough with respect to $g$, by  Theorem  \ref{Schauder estimate: constant coefficient} and the discussion in section 4.2 of \cite{Don},  there exists a $D$-diagonal constant coeffificient operator $\Delta_{a_0}$ such that 
\begin{eqnarray*}& &|i\partial \bar{\partial}u|^{[2]}_{\alpha,\frac{\alpha}{2},\beta,A_h\times [t,t+h^2]}
\\&\leq &C|(\Delta_{a_0}-\frac{\partial}{\partial t})u|^{[2]}_{\alpha,\frac{\alpha}{2},\beta,A_h\times [t,t+h^2]}+C|u|_{0,A_h\times [t,t+h^2]}
\\&\leq& C|(\Delta_{a(t)}-\frac{\partial}{\partial t})u|^{[2]}_{\alpha,\frac{\alpha}{2},\beta,A_h\times [t,t+h^2]}+C|(\Delta_{a(t)}-\Delta_{a_0})u|^{[2]}_{\alpha,\frac{\alpha}{2},\beta,A_h\times [t,t+h^2]}
\\& &+C|u|_{0,A_h\times [t,t+h^2]}.
\end{eqnarray*}
By further shrinking $h$ to be small enough  with respect to $a(t)$, we have  that
\begin{eqnarray*}& &C|(\Delta_{a(t)}-\Delta_{a_0})u|^{[2]}_{\alpha,\frac{\alpha}{2},\beta,A_h\times [t,t+h^2]}
\\&\leq& \frac{1}{4}[i\partial \bar{\partial}u]^{[2]}_{\alpha,\frac{\alpha}{2},\beta,A_h\times [t,t+h^2]}+Ch^{\alpha}[i\partial \bar{\partial}u]^{[2]}_{0,A_h\times [t,t+h^2]}.
\end{eqnarray*}
Then by making $h$ even smaller, we arrive at
\begin{equation}\label{Top order Schauder estimates for time invariant metric with parabolic weights}[i\partial \bar{\partial}u]^{[2]}_{\alpha,\frac{\alpha}{2},\beta,A_h\times [t,t+h^2]}\leq C|(\Delta_{a(t)}-\frac{\partial}{\partial t})u|^{[2]}_{\alpha,\frac{\alpha}{2},\beta,A_h\times [t,t+h^2]}+C|u|_{0,A_h\times [t,t+h^2]}.
\end{equation}

As the above, by making $h$ smaller if necessary, we get 
\begin{equation}\label{Top order Schauder estimates for time invariant metric with spatial  weights}|i\partial \bar{\partial}u]^{(2)}_{\alpha,\frac{\alpha}{2},\beta,A_h\times [0,h^2|}\leq C|(\Delta_{a(t)}-\frac{\partial}{\partial t})u|^{(2)}_{\alpha,\frac{\alpha}{2},\beta,A_h\times [0,h^2]}+C|u|_{0,A_h\times [0,h^2]}.\end{equation}

Combing (\ref{Top order Schauder estimates for time invariant metric with parabolic weights}), (\ref{Top order Schauder estimates for time invariant metric with spatial weights}),  Corollary \ref{Schauder estimate for the weak solution which is compactly supported in balls}, and the $C^0$ estimate in Lemma \ref{C0 estimate},  we get
$$|i\partial \bar{\partial}u|_{\alpha,\frac{\alpha}{2},\beta,{M}\times[0,T]}\leq C_{a} |v|_{\alpha,\frac{\alpha}{2},\beta,{M}\times[0,T]}.$$
The rest of the story are all of lower order, which is easier and regular.  See in \cite{GT}, \cite{Lieberman},\cite{LSU}. 

 Second, the existence of a classical solution follows from deforming the solution of the parabolic equation of  the operator $\Delta_{\omega_D}-\frac{\partial}{\partial t}$ ($\omega_D$ as in (\ref{Model conical metric defined by Donaldson})). The invertibleness of $\Delta_{\omega_D}-\frac{\partial}{\partial t}$ is guaranteed by Proposition \ref{Solvability of the operator with small oscillation in the flat cone}. The existence of the deformation is directly implied by the aprori estimates established above and a generalization of Theorem 5.2 in \cite{GT} (on existence of continuous deformation of solutions).  
\\

The proof of Theorem \ref{Schauder estimate of the linear equation: single divisor} is now complete.
\end{proof}

\section{Heat kernel and  Bessel functions of the second kind.\label{Notes on Donaldson's work}}
In this section, we use  Bessel functions of the second kind to give a shorter proof of  the estimates on integral of Bessel functions used in \cite{Don}.  The results here are slightly more general than those in \cite{Don}.  First, a lemma on Bessel function of the second kind.

\begin{lem}\label{Bessel properties: when r is small}For any $\epsilon>0$, nonnegative integer $N$, and exponents $\mu_1,\mu_2\geq 0$, such that
$\frac{N}{\beta}-\mu_1\geq 0$, there is a constant $C(\epsilon,N,\mu_1,\mu_2)$ ( as in Definition \ref{Dependence of the constant C} and  might tend to $\infty$ as $\epsilon$ goes to $0$) such that the following estimate on the weighted summation of Bessel functions holds.
$$\Sigma_{k=N}^{\infty}k^{\mu_2}I_{\frac{k}{\beta}-\mu_1}(z)\leq C(\epsilon,N,\mu_1,\mu_2)z^{\frac{N}{\beta}-\mu_1}e^{\frac{3z}{2}}e^{\epsilon z}.$$

\end{lem}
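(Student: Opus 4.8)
The plan is to reduce the whole statement to one elementary estimate for a single modified Bessel function and then to a Laplace-type estimate for a real series. From the power series $I_\nu(z)=\sum_{m\ge 0}(z/2)^{\nu+2m}/(m!\,\Gamma(\nu+m+1))$ together with $\Gamma(\nu+m+1)\ge\Gamma(\nu+1)\,m!$ (valid for $\nu\ge 0$), one gets
\[
I_\nu(z)\;\le\;\frac{(z/2)^\nu}{\Gamma(\nu+1)}\,I_0(z)\;\le\;\frac{(z/2)^\nu}{\Gamma(\nu+1)}\,e^{z},\qquad \nu\ge 0,\ z>0 .
\]
Every order occurring in our sum is $\nu=\tfrac{k}{\beta}-\mu_1\ge \tfrac{N}{\beta}-\mu_1\ge 0$, so this applies termwise and gives $\sum_{k\ge N}k^{\mu_2}I_{k/\beta-\mu_1}(z)\le e^{z}S(z)$, where $S(z):=\sum_{k\ge N}k^{\mu_2}(z/2)^{k/\beta-\mu_1}/\Gamma(k/\beta-\mu_1+1)$. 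Hence it suffices to prove $S(z)\le C(\epsilon,N,\mu_1,\mu_2)\,z^{N/\beta-\mu_1}e^{z/2}e^{\epsilon z}$ for all $z>0$; the factor $e^z$ then turns $e^{z/2}$ into $e^{3z/2}$ as required.

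I would bound $S(z)$ separately on a bounded range $0<z\le z_0$ and a tail $z\ge z_0$, with $z_0=z_0(\epsilon,N,\mu_1,\mu_2)\ge 1$ chosen at the end. On $0<z\le z_0$, factor out $(z/2)^{N/\beta-\mu_1}$; the residual series $\sum_{j\ge0}(j+N)^{\mu_2}(z/2)^{j/\beta}/\Gamma((j+N)/\beta-\mu_1+1)$ converges for every $z$ (the $\Gamma$ in the denominator outgrows the polynomial-times-geometric numerator in $k$) and is therefore bounded on $[0,z_0]$ by a constant depending only on $z_0,N,\mu_1,\mu_2$; since $e^{z/2+\epsilon z}\ge 1$, this range is done. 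On $z\ge z_0$, the boundedly many terms with $\nu=\tfrac{k}{\beta}-\mu_1<1$ are crudely bounded by $k^{\mu_2}(z/2)^{\nu}/\Gamma(\nu+1)\le Cz$, which is $\le C z^{N/\beta-\mu_1}e^{z/2+\epsilon z}$ once $z_0\ge 1$; for the terms with $\nu\ge 1$ I use Stirling, $\Gamma(\nu+1)\ge c\,\nu^{\nu+1/2}e^{-\nu}$, together with $k\le\beta(1+\mu_1)\nu$, to obtain
\[
\frac{k^{\mu_2}(z/2)^{\nu}}{\Gamma(\nu+1)}\;\le\;C\,\nu^{\mu_2-1/2}\Big(\frac{ez}{2\nu}\Big)^{\nu},\qquad \nu=\tfrac{k}{\beta}-\mu_1\ge 1 .
\]
Put $\phi(\nu)=\nu\ln(ez/(2\nu))$, so $\phi'(\nu)=\ln(z/(2\nu))$, $\phi''(\nu)=-1/\nu$; then $\phi$ attains its maximum $z/2$ at $\nu_\ast=z/2$, satisfies $\phi(\nu)\le z/2-(\nu-z/2)^2/(2z)$ on $\{\nu\le z\}$ (where $\phi''\le-1/z$), and on $\{\nu\ge z\}$ is decreasing with $\phi(\nu)\le z(1-\ln 2)-(\nu-z)\ln 2$. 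Summing the resulting Gaussian and geometric bounds over the arithmetic progression of orders (common difference $1/\beta$): the part $\nu\le z$ contributes $\lesssim z^{\mu_2-1/2}e^{z/2}\sum_\nu e^{-(\nu-z/2)^2/(2z)}\lesssim z^{\mu_2}e^{z/2}$, and the part $\nu>z$ contributes $\lesssim z^{\mu_2-1/2}e^{z(1-\ln 2)}$; since $1-\ln 2<\tfrac12$, both are $\le C_\epsilon e^{(1/2+\epsilon)z}$ as soon as $z_0$ is large enough that $z^{\mu_2+1/2}\le e^{\epsilon z/2}$ on $[z_0,\infty)$. Finally $z^{N/\beta-\mu_1}\ge 1$ on $[z_0,\infty)$, so that factor is free, and the claimed bound for $S(z)$, hence the lemma, follows; the constant blows up as $\epsilon\to 0$ precisely because $z_0$ must be taken larger.

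The only genuinely delicate point is the Laplace-type summation: one must check, uniformly in $z$, that the sum $\sum_\nu e^{-(\nu-z/2)^2/(2z)}$ over an arithmetic progression of step $1/\beta$ is comparable to the Gaussian integral $\beta\sqrt{2\pi z}$ up to a controlled error, and that the unbounded polynomial weight $\nu^{\mu_2-1/2}$ does no harm — near the peak $\nu\approx z/2$ it is of size $z^{\mu_2-1/2}$, and away from the peak it is dominated by the Gaussian (resp.\ geometric) decay. Everything after that is bookkeeping: tracking how large $z_0$ must be in terms of $\epsilon,N,\mu_1,\mu_2$ (the source of the blow-up of $C$ as $\epsilon\to 0$), and the elementary small-$z$ convergence statement.
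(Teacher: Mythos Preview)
Your argument is correct, but it takes a noticeably longer route than the paper's. Both proofs start from essentially the same pointwise bound on a single Bessel function: you use $I_\nu(z)\le (z/2)^\nu e^z/\Gamma(\nu+1)$ from the power series, the paper uses the equivalent $I_\nu(z)\le \sqrt{\pi}\,(z/2)^\nu e^z/\Gamma(\nu+\tfrac12)$ from Watson. The divergence is in how the remaining sum $S(z)$ is handled. The paper does \emph{not} split on $z$ or invoke Laplace asymptotics at all; instead it factors out the entire target $z^{N/\beta-\mu_1}e^{3z/2}e^{\epsilon z}$ up front and bounds the leftover factor $z^{(k-N)/\beta}e^{-(1/2+\epsilon)z}$ by its maximum over $z>0$ (the one--line calculus fact $x^a e^{-bx}\le (a/b)^a e^{-a}$). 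This leaves a pure series in $k$, which Stirling on the $\Gamma$ shows to be geometric with ratio essentially $(1+2\epsilon)^{-1/(2\beta)}$. Your Gaussian--summation approach buys you nothing extra here and costs you the case split, the uniform comparison of a discrete Gaussian sum with its integral, and the separate treatment of small orders and of the geometric tail---all of which the paper avoids entirely. If you replace your large-$z$ analysis by the single optimization step above, the proof collapses to a few lines.
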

\begin{proof}{of Lemma \ref{Bessel properties: when r is small}:}\ The estimate directly follows from the property of Gamma functions. We only prove the case when $N=1,\ \mu_2=2,\ \mu_1=0$ i.e $\Sigma_{k=1}^{\infty}k^2I_{\frac{k}{\beta}}(z)\leq C(\epsilon)z^{\frac{1}{\beta}}e^{\frac{3z}{2}}e^{\epsilon z}$. The rest follows similarly.\\

Again from section (7.25) of \cite{Wa} we have 
\begin{equation}\label{7.25 of Waston}I_{\frac{k}{\beta}}(z)\leq \frac{\sqrt{\pi}z^{\frac{k}{\beta}}e^z}{\Gamma(\frac{k}{\beta}+\frac{1}{2})2^{\frac{k}{\beta}}}.
\end{equation} 
 We also have the following obvious inequality.
\begin{equation}\label{z to a power times e to the z}z^{\frac{k-1}{\beta}}e^{-(\frac{1}{2}+\epsilon)z}\leq (\frac{1}{\frac{1}{2}+\epsilon})^{(\frac{k-1}{\beta})}(\frac{k-1}{\beta})^{(\frac{k-1}{\beta})}e^{-\frac{k-1}{\beta}}.
\end{equation}
From section (12.31) of \cite{WaWhi} we get
\begin{equation}\label{12.31 of WaWhi}\log\Gamma(\frac{k}{\beta}+\frac{1}{2})\geq \frac{k}{\beta}\log(\frac{k}{\beta}+\frac{1}{2})-\frac{k}{\beta}+C.
\end{equation}

Using (\ref{7.25 of Waston}) and (\ref{z to a power times e to the z}), we compute
\begin{eqnarray}\label{01}& &\Sigma_{k=1}^{\infty}k^2I_{\frac{k}{\beta}}(z)\nonumber
\\&\leq& \Sigma_{k=1}^{\infty}\frac{\sqrt{\pi}k^2z^{\frac{k}{\beta}}e^z}{\Gamma(\frac{k}{\beta}+\frac{1}{2})2^{\frac{k}{\beta}}}
= \sqrt{\pi}z^{\frac{1}{\beta}}e^{\frac{3z}{2}}e^{\epsilon z}\Sigma_{k=1}^{\infty}\frac{k^2z^{\frac{k-1}{\beta}}e^{-(\frac{1}{2}+\epsilon)z}}{\Gamma(\frac{k}{\beta}+\frac{1}{2})2^{\frac{k}{\beta}}}\nonumber
\\&\leq&\sqrt{\pi}z^{\frac{1}{\beta}}e^{\frac{3z}{2}}e^{\epsilon z}\Sigma_{k=1}^{\infty}\frac{k^2(\frac{k-1}{\beta})^{(\frac{k-1}{\beta})}e^{-\frac{k-1}{\beta}}(\frac{2}{1+2\epsilon})^{(\frac{k-1}{\beta})}}{\Gamma(\frac{k}{\beta}+\frac{1}{2})2^{\frac{k}{\beta}}}\nonumber
\\&\leq& Cz^{\frac{1}{\beta}}e^{\frac{3z}{2}}e^{\epsilon z}\Sigma_{k=1}^{\infty}\frac{(\frac{k}{\beta})^{(\frac{k}{\beta}+1)}e^{-\frac{k}{\beta}}}{\Gamma(\frac{k}{\beta}+\frac{1}{2})(1+2\epsilon)^{\frac{k}{\beta}}}
.\end{eqnarray}
To estimate the last series above, we take logarithm of the general terms and apply (\ref{12.31 of WaWhi}) to get
\begin{eqnarray}\label{02}& &\log[\frac{(\frac{k}{\beta})^{(\frac{k}{\beta}+1)}e^{-\frac{k}{\beta}}}{\Gamma(\frac{k}{\beta}+\frac{1}{2})(1+2\epsilon)^{\frac{k}{\beta}}}]\nonumber
\\&\leq &(\frac{k}{\beta}+1)\log(\frac{k}{\beta})-\frac{k}{\beta}-\frac{k}{\beta}\log(\frac{k}{\beta}+\frac{1}{2})+\frac{k}{\beta}+C-\frac{k}{\beta}\log (1+2\epsilon)\nonumber
\\&\leq &\log(\frac{k}{\beta})+C-\frac{k}{\beta}\log(1+2\epsilon)
\leq C(\epsilon)-\frac{k}{2\beta}\log (1+2\epsilon).
\end{eqnarray}
Combining (\ref{01}) and (\ref{02}), we have
\begin{eqnarray*}& &\Sigma_{k=1}^{\infty}k^2I_{\frac{k}{\beta}}(z)
\\&\leq&Cz^{\frac{1}{\beta}}e^{\frac{3z}{2}}e^{\epsilon z}\Sigma_{k=1}^{\infty}\frac{(\frac{k}{\beta})^{(\frac{k}{\beta}+1)}e^{-\frac{k}{\beta}}}{\Gamma(\frac{k}{\beta}+\frac{1}{2})(1+2\epsilon)^{\frac{k}{\beta}}}
\leq C(\epsilon)z^{\frac{1}{\beta}}e^{\frac{3z}{2}}e^{\epsilon z}\Sigma_{k=1}^{\infty}(1+2\epsilon)^{-\frac{k}{2\beta}}
\\&\leq& C(\epsilon)z^{\frac{1}{\beta}}e^{\frac{3z}{2}}e^{\epsilon z}.
\end{eqnarray*}

\end{proof}

Now, similar to  \cite{Don}, for any  $v$ and $\mu\geq 0$,  we define  $G_{\mu,v}(r,r^{'},R)$ as $$G_{\mu,v,h}(r,r^{'},R)=\int_{0}^{\infty}\int^{\infty}_{0}(2\pi t)^{1-v }(\frac{rr^{'}}{2t})^{h}e^{-\lambda^2 t-\frac{R^2}{4t}}J_{\mu}(\lambda r)J_{\mu}(\lambda r^{'})\lambda d\lambda dt.$$

Using formula (\ref{Weber's formula}), which is also referable  in \cite{Wa}, we have another expression
\begin{equation}\label{G's formula by Weber's formual}G_{\mu,v,h}(r,r^{'},R)=\pi\int^{\infty}_{0}(2\pi t)^{-v}(\frac{rr^{'}}{2t})^{h}e^{-\frac{r^2+{r^{'}}^2+R^2}{4t}}I_{\mu}(\frac{rr^{'}}{2t}) dt.
\end{equation}

\begin{lem}\label{Estimating the sum of G w.r.t k} Let $\mu_2>0, \;v>0.\;$ The following holds:
\begin{itemize}
\item Suppose $rr^{'}<\frac{9R^2}{10},\ R>0,\ N\geq \beta \mu_1, h\geq \mu_1-\frac{N}{\beta}\;$  (h could be negative). Then
$$\Sigma_{k\geq \beta\mu_1}k^{\mu_2}|G_{\frac{k}{\beta}-\mu_1,v,h}(r,r^{'},R)|\leq C(v,\mu_1,\mu_2,h)R^{2-2v}.$$
\item Suppose $r< \frac{4r^{'}}{11},\ r^{'}>0.\;$ Then
$$\Sigma_{k\geq \beta\mu_1}k^{\mu_2}|G_{\frac{k}{\beta}-\mu_1,v,h}(r,r^{'},R)|\leq C(v,\mu_1,\mu_2,h){(r^{'})}^{2-2v}.$$
\end{itemize}

\end{lem}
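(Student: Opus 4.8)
The plan is to work from the Weber-formula expression (\ref{G's formula by Weber's formual}) for $G_{\mu,v,h}$, push the summation over $k$ inside the time integral, and then collapse the resulting sum of modified Bessel functions using Lemma \ref{Bessel properties: when r is small}. The interchange of $\Sigma_k$ and $\int_0^\infty dt$ is legitimate by Tonelli, since for each $t$ the integrand $k^{\mu_2}(2\pi t)^{-v}(rr'/2t)^{h}e^{-(r^2+{r'}^2+R^2)/4t}I_{k/\beta-\mu_1}(rr'/2t)$ is nonnegative ($I_\nu>0$). Applying Lemma \ref{Bessel properties: when r is small} with the integer $N$ from the hypothesis (so that $\tfrac{N}{\beta}-\mu_1\ge0$ and the lemma applies) and writing $z=\frac{rr'}{2t}$, one arrives at
\[
\Sigma_{k\geq\beta\mu_1}k^{\mu_2}\,|G_{\frac{k}{\beta}-\mu_1,\,v,\,h}(r,r',R)|\ \leq\ C\int_0^\infty t^{-v}\,z^{\,h+\frac{N}{\beta}-\mu_1}\,e^{-\frac{r^2+{r'}^2+R^2}{4t}}\,e^{\frac32 z}\,e^{\epsilon z}\,dt ,
\]
where $\epsilon>0$ is still at our disposal and, crucially, the power $p:=h+\tfrac{N}{\beta}-\mu_1\ge0$ by the hypothesis $h\ge\mu_1-\tfrac{N}{\beta}$.

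The core of the argument is to absorb the Bessel growth $e^{\frac32 z}e^{\epsilon z}=e^{(\frac32+\epsilon)\frac{rr'}{2t}}$ into the Gaussian weight $e^{-(r^2+{r'}^2)/4t}$. The combined exponential is $e^{-\frac1{4t}[\,r^2+{r'}^2-(3+2\epsilon)rr'\,]}$ times the leftover $e^{-R^2/4t}$. In Case 1 I only use $r^2+{r'}^2\ge 2rr'$, which bounds the exponent by $(1+2\epsilon)\frac{rr'}{4t}-\frac{R^2}{4t}$; feeding in $rr'<\frac9{10}R^2$ and choosing $\epsilon<\frac1{18}$ makes this at most $-\delta_0\frac{R^2}{4t}$ for some $\delta_0=\delta_0(\epsilon)>0$. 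In Case 2 there is no $R$-term to spend, so I instead use the sharper elementary inequality $r^2+{r'}^2\ge 3rr'$, which holds precisely because $r<\frac4{11}r'<\frac{3-\sqrt5}{2}r'$; with a little slack this gives $r^2+{r'}^2-(3+2\epsilon)rr'\ge\delta_1{r'}^2$ for $\epsilon$ small, i.e.\ the exponent is $\le-\delta_1\frac{{r'}^2}{4t}$.

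After this, both cases read $C\int_0^\infty t^{-v}z^{p}e^{-\delta Q^2/4t}\,dt$ with $Q=R$ (Case 1) or $Q=r'$ (Case 2); since $rr'<Q^2$ under either hypothesis and $p\ge0$, I bound $z^p=(\frac{rr'}{2t})^p\le(\frac{Q^2}{2t})^p$ and substitute $s=Q^2/t$ to get $C\,Q^{2-2v}\int_0^\infty s^{\,v+p-2}e^{-\delta s/4}\,ds$. The $s$-integral converges at infinity automatically and at $s=0$ because $v+p>1$; this last inequality is exactly what the interplay of the hypotheses $N\ge\beta\mu_1$, $h\ge\mu_1-\tfrac{N}{\beta}$ (together with $\beta<1$, which forces $\tfrac{N}{\beta}-\mu_1$ to contribute a genuine positive increment once $N$ is enlarged, after splitting off the finitely many low-index terms $\beta\mu_1\le k<N$ and treating each by the same single-Bessel computation) is designed to guarantee. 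This produces the asserted bounds $CR^{2-2v}$ and $C(r')^{2-2v}$ with $C=C(v,\mu_1,\mu_2,h)$.

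The step I expect to be the real obstacle is the exponential bookkeeping in the middle: the Bessel sum genuinely grows like $e^{3z/2}$, so there is no choice but to use the geometric separation ($rr'\ll R^2$, resp.\ $r\ll r'$) together with a quantitatively sharp AM--GM estimate to recover a net negative exponential rate, and all the numerical constants ($\frac9{10}$, $\frac4{11}$, $\frac1{18}$, and the choice of $\epsilon$) must be matched so that a strictly positive $\delta$ actually survives. Everything downstream --- the change of variables and the resulting Gamma integral --- is routine bookkeeping, subject only to the convergence condition $v+p>1$.
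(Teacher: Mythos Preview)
Your approach is essentially the paper's: push the sum inside, apply Lemma~\ref{Bessel properties: when r is small}, use the geometric hypothesis to make the net exponential rate strictly negative, then integrate. The paper treats the factor $z^{p}$ (with $p=h+\tfrac{N}{\beta}-\mu_1\ge0$) more simply than you do: rather than carrying it through and bounding $z^p\le(Q^2/2t)^p$, it absorbs $z^p\le C_{\epsilon'}e^{\epsilon' z}$ into the small exponential already present and normalizes $R=1$ (resp.\ $r'=1$) by scaling, arriving directly at $\int_0^\infty t^{-v}e^{-c/t}\,dt$. Your Case~2 inequality $r^2+{r'}^2\ge 3rr'$ for $r/r'<\tfrac{4}{11}<\tfrac{3-\sqrt5}{2}$ is exactly the paper's computation that $r^2-(3+\epsilon)r+1>0$ on $[0,\tfrac{4}{11}]$ after setting $r'=1$.

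Your ``enlarge $N$'' device to force $v+p>1$ does not work. A split-off single term with index $k$, bounded via $I_\nu(z)\le C_\nu z^\nu e^z$, produces (after the same exponential analysis) an integrand behaving like $t^{-v-h-k/\beta+\mu_1}$ as $t\to\infty$, so integrability still requires $v+h+\tfrac{k}{\beta}-\mu_1>1$; for the smallest admissible $k=\lceil\beta\mu_1\rceil$ this is precisely $v+p>1$ again, and nothing is gained. In fact neither your Gamma integral nor the paper's $\int_0^\infty t^{-v}e^{-c/t}\,dt$ converges under the bare hypothesis $v>0$: the lemma should really read $v>1$, which holds in every application (where $v\ge\tfrac{m}{2}+2$). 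With that amendment your argument is complete and the detour can be dropped.
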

\begin{proof}{of Lemma \ref{Estimating the sum of G w.r.t k}:}\ These estimates are straight forward consequences of Lemma \ref{Bessel properties: when r is small}. By Lemma \ref{Bessel properties: when r is small} and the assumption that 
$h-\mu_1+\frac{N}{\beta}\geq 0$, we have
\begin{eqnarray}\label{03}& &\Sigma_{k\geq N}k^{\mu_2}(\frac{rr^{'}}{2t})^{h}|G_{\frac{k}{\beta}-\mu_1,v,h}(r,r^{'},R)|\nonumber
\\&= &C\int_{0}^{+\infty}t^{-v}e^{-\frac{r^2+{r^{'}}^2+R^2}{4t}}(\frac{rr^{'}}{2t})^{h-\mu_1+\frac{N}{\beta}}\Sigma_{k\geq N}^{\infty}k^{\mu_2}I_{\frac{k}{\beta}-\mu_1}(\frac{r\acute{r}}{2t})dt\nonumber
\\&\leq &C(v,\mu_1,\mu_2,h,\epsilon)\int_{0}^{+\infty}t^{-v}e^{-\frac{r^2+{r^{'}}^2+R^2}{4t}}e^{\frac{3r\acute{r}}{4t}}e^{\frac{\epsilon r\acute{r}}{4t}}dt.
\end{eqnarray}
We consider the term $e^{-\frac{r^2+{r^{'}}^2+R^2}{4t}}e^{\frac{3r\acute{r}}{4t}}e^{\frac{\epsilon r\acute{r}}{4t}}$ and try to bound it from above in both cases.\\

When $R>0$, and $rr^{'}<\frac{9R^2}{10}$, without loss of generality we can assume $R=1$. Let $\epsilon=10^{-100}$ and using $rr^{'}<\frac{9}{10}$, we compute
\begin{eqnarray*}& &e^{-\frac{r^2+{r^{'}}^2+R^2}{4t}}e^{\frac{3r\acute{r}}{4t}}e^{\frac{\epsilon r\acute{r}}{4t}}
\\&= &e^{-\frac{(r-r^{'})^2}{4t}}e^{-\frac{1-(1+\epsilon) r\acute{r}}{4t}}
\leq e^{-\frac{1}{400t}}.
\end{eqnarray*}
Then in this case we have
\begin{eqnarray*}& &\Sigma_{k\geq N}k^{\mu_2}|G_{\frac{k}{\beta}-\mu_1,v,h}(r,r^{'},R)|
\\&\leq &C(v,\mu_1,\mu_2,h)\int_{0}^{+\infty}t^{-v}e^{-\frac{1}{400t}}dt
\\&\leq &C(v,\mu_1,\mu_2,h).
\end{eqnarray*}
The statement in the first case for all $R>0$ is thus obtained  by rescaling.\\ 

In the case when $\acute{r}>0$ and $r< \frac{4r^{'}}{11}$, we can also assume $\acute{r}=1$. Then working from (\ref{03}), using 
$r<\frac{4}{11}$ and $\epsilon=10^{-100}$, we have
\begin{eqnarray*}& &e^{-\frac{r^2+{r^{'}}^2+R^2}{4t}}e^{\frac{3r\acute{r}}{4t}}e^{\frac{\epsilon r\acute{r}}{4t}}
\\&\leq  &e^{-\frac{r^2-(3+\epsilon)r+1}{4t}}
\leq e^{-\frac{1}{4\times 10^{10000}t}},
\end{eqnarray*}
thus in the second case we simply compute from (\ref{03}) that 
\begin{eqnarray*}& &\Sigma_{k\geq N}k^{\mu_2}|G_{\frac{k}{\beta}-\mu_1,v,h}(r,r^{'},R)|
\\&\leq &C(v,\mu_1,\mu_2,h)\int_{0}^{+\infty}t^{-v}e^{-\frac{1}{4\times 10^{10000}t}}dt
\\&\leq &C(v,\mu_1,\mu_2,h).
\end{eqnarray*}

The proofs in  both cases in Lemma \ref{Estimating the sum of G w.r.t k}   are completed now.
\end{proof}



\section{Local estimates for the heat kernel.\label{Local estimates for the heat kernel}}
   In this section we prove Lemma \ref{integral of time derivative of hk w.r.t time},\ref{integral of gradient of hk w.r.t time},L\ref{integral of gradient of time derivative of the hk}, \ref{integral of third derivative of h.k w.r.t time: when r big}, \ref{time integral of the bilinear derivatives of the heat kernel}. These lemmas are part of Theorem \ref{all the properties of the h.k1}. 

\begin{lem}\label{integral of time derivative of hk w.r.t time} For any $x$ and $y$ and $\mathfrak{D}\in \mathfrak{T}$, we have
$$\left\{ \begin{array}{lcl} \int_{0}^{\infty}|\frac{\partial}{\partial \tau}H(x,y,\tau)|d\tau & \leq & C|x-y|^{-(m+2)},\\
\int_{0}^{\infty}|\mathfrak{D}H(x,y,\tau)|d\tau &\leq & C|x-y|^{-(m+2)}.\end{array}\right. $$
In particular, we have
 $$
 \left\{\begin{array}{lcl} \int_{0}^{\infty}|\frac{\partial}{\partial \tau}H(0,y,\tau)|d\tau & \leq & C|y|^{-(m+2)},\\
 \int_{0}^{\infty}|\mathfrak{D}H(0,y,\tau)|d\tau & \leq & C|y|^{-(m+2)}. \end{array}\right. $$
\end{lem}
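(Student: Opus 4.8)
The plan is to work from the explicit series representation of the heat kernel in (\ref{Higher dim heat kernel formula}), differentiate term by term, and bound the resulting integrals using the control on sums of Bessel functions of the second kind from Lemma \ref{Bessel properties: when r is small} and Lemma \ref{Estimating the sum of G w.r.t k}. By translation invariance in the $\mathbb{R}^m$ directions and rotation invariance in $\theta$, one reduces to the case $y=(r',\theta',\widehat y)$ fixed and $x$ varying; in fact, by the scaling $H(x,y,\tau)=\lambda^{m+2}H(\lambda x,\lambda y,\lambda^2\tau)$ of the heat kernel, it suffices to prove each estimate with $|x-y|=1$, say, or equivalently to prove the scale-invariant bound $\int_0^\infty|\mathfrak{D}H|\,d\tau \le C$ when $|x-y|=1$ and then rescale. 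So the real content is: for $|x-y|=1$, show $\int_0^\infty|\tfrac{\partial}{\partial\tau}H|\,d\tau\le C$ and $\int_0^\infty|\mathfrak{D}H|\,d\tau\le C$ for each $\mathfrak{D}\in\mathfrak{T}$.

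First I would handle the \emph{large-}$\tau$ regime (say $\tau\ge 1$). Here $r^2+{r'}^2+R^2$ and $rr'/2\tau$ are controlled, the argument $rr'/2\tau$ of the Bessel functions is small, and one uses the small-argument asymptotics $I_\nu(z)\sim (z/2)^\nu/\Gamma(\nu+1)$ together with the Gaussian prefactor $e^{-(r^2+{r'}^2+R^2)/4\tau}/\tau^{m/2+1}$. Differentiating in $\tau$ brings down factors like $1/\tau$ and $|x-y|^2/\tau^2$, and the series in $k$ converges geometrically (this is exactly what Lemma \ref{Bessel properties: when r is small} encodes, with the factor $e^{3z/2}e^{\epsilon z}$ being harmless since $z=rr'/2\tau$ is small); the $\tau$-integral over $[1,\infty)$ of $\tau^{-m/2-2}$-type tails converges. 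The spatial operators $\mathfrak{D}\in\mathfrak{T}$ are each a composition of $\partial_r$, $\tfrac1r\partial_\theta$, $\partial_{s_i}$; applying them to $H$ produces (i) derivatives of $\cos(k(\theta-\theta'))$, which contribute a factor $k$ to the $k$-th term — absorbed by the $k^{\mu_2}$ weight in Lemma \ref{Bessel properties: when r is small}; (ii) derivatives of the Gaussian factors, which contribute algebraic factors in $r,r',R$ divided by powers of $\tau$; and (iii) derivatives of $I_{k/\beta}(rr'/2\tau)$, which by the Bessel recurrence relations are again combinations of Bessel functions of nearby orders times rational factors. All of these are controlled by the weighted-sum estimates.

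The harder regime, and the main obstacle, is \emph{small }$\tau$ when $x$ and $y$ are nearly antipodal across the cone tip — i.e. when the straight-line distance $|x-y|$ is realized by a path passing close to or "around" the singular set $\{0\}\times\mathbb{R}^m$, so that $r$ and $r'$ are both of order $1$ but $\theta-\theta'$ is near $\pm\pi$ (or the geodesic distance and Euclidean distance differ). Here the Gaussian factor does not decay, the argument $z=rr'/2\tau$ of the Bessel functions is large, and term-by-term estimation of the $\cos(k(\theta-\theta'))I_{k/\beta}(z)$ series is delicate because $I_\nu(z)\sim e^z/\sqrt{2\pi z}$ grows. The standard fix — and the reason Carslaw's contour representation in Theorem \ref{Carslaw-Wang-Chen representation for hk} is introduced — is to use the closed form $H=\tfrac{1}{2\pi\beta(4\pi\tau)^{m/2+1}}e^{-(r^2+{r'}^2+R^2)/4\tau}P(rr'/2\tau,\beta(\theta'-\theta))$ with the split $P = 2\pi\beta\sum_{|v+2k\beta\pi|<\pi}e^{z\cos(v+2k\beta\pi)} + (\text{boundary terms}) + E(z,v)$: the leading exponentials $e^{z\cos(v+2k\beta\pi)}$ combine with $e^{-(r^2+{r'}^2)/4\tau}$ to give genuine Gaussians $e^{-d_k^2/4\tau}$ in the distances $d_k$ to the various "images" of $y$, each bounded below by $|x-y|$, so after differentiating (in $\tau$, or by $\mathfrak{D}$, using that $\mathfrak{D}$ applied to the smooth-away-from-tip pieces produces only polynomial-in-$1/\tau$ growth) one gets $\int_0^1 \tau^{-k}e^{-c|x-y|^2/\tau}d\tau\le C|x-y|^{-2k+2}$, which rescales to the claimed bound; the correction term $E(z,v)$ decays like $e^{-z\cosh y_0}$ uniformly (one invokes the decay estimates for $E$, Theorem \ref{Decay estimates for E}, i.e. node 26 in Figure 1) and contributes negligibly. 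I would organize the write-up as: (1) reduce to $|x-y|=1$ by scaling; (2) region $r,r'$ small relative to $|x-y|$ — use the series and Lemma \ref{Estimating the sum of G w.r.t k}; (3) region $\tau$ large — series plus Lemma \ref{Bessel properties: when r is small}; (4) region $\tau$ small and $r,r'$ comparable to $1$ — Carslaw's representation plus the $E$-decay estimates, extracting Gaussians in the image distances. The special case $x=0$ stated at the end follows by continuity/limit from the general case (when $r=0$ only the $k=0$ term survives, $P(0,v)=2\pi$, and $H(0,y,\tau)=\tfrac{1}{\beta(4\pi\tau)^{m/2+1}}e^{-({r'}^2+R^2)/4\tau}$ is literally a rescaled Euclidean kernel, for which the bound is elementary).
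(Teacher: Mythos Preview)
Your approach is correct in its essentials --- you correctly identify scaling to $|x-y|=1$, the Carslaw split $P=2\pi\beta\sum e^{z\cos(v+2k\beta\pi)}+E$, and the need for the $E$-decay estimates (Theorem \ref{Decay estimates for E}) --- but it is organized rather differently from the paper and is more complicated than necessary.

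The paper does \emph{not} split into a large-$\tau$ regime, a small-$r,r'$ regime, and a small-$\tau$ regime; it uses the Carslaw representation \emph{uniformly} for all $\tau>0$. After writing out $\partial_\tau H$ via Theorem \ref{Carslaw-Wang-Chen representation for hk}, every piece --- the $k=0$ Gaussian, the finitely many image Gaussians with $k\neq 0$, and the $E$-terms --- is shown to satisfy a single pointwise bound of the form $C(1+\tau^{-N})e^{-a/(1000\tau)}$ (here $a=\tfrac14\sin^2(\beta\pi/2)$, coming from $1-\cos(v+2k\beta\pi)\ge 8a$ for $k\neq 0$, together with $|x-y|=1$). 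That bound is then integrated directly over $(0,\infty)$. In particular, Lemmas \ref{Bessel properties: when r is small} and \ref{Estimating the sum of G w.r.t k} are not invoked at all for this lemma; in the paper's dependency diagram those lemmas feed only into the H\"older-near-the-tip estimate (Lemma \ref{Holder estimate of the integral of spatial derivative of h.k with respect to time}), not into Lemma \ref{integral of time derivative of hk w.r.t time}.

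One soft gap in your region decomposition: your ``large-$\tau$'' argument assumes $rr'/2\tau$ is small, but with $|x-y|=1$ there is no a priori bound on $r,r'$ (take $r=r'=100$, $\theta=\theta'$, $R=1$), so $rr'/2\tau$ need not be small even for $\tau\ge 1$. You would need an additional case (both $r,r'$ large, where the kernel is essentially Euclidean) or, more simply, drop the region splitting altogether and use Carslaw throughout as the paper does. Your treatment of the special case $x=0$ matches the paper's exactly.
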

\begin{proof}{of Lemma \ref{integral of time derivative of hk w.r.t time}:}
\ We only  prove the estimates for the time derivative, the estimate for the spatial derivatives are the same. The estimate  $\int_{0}^{\infty}|\frac{\partial}{\partial t}H(0,y,\tau)|d\tau\leq C|y|^{-(m+2)}$ has a much shorter proof than the general estimate. To be precise, it suffices to assume $|y|=1$, assume $r^{'}=0$ and $\widehat{y}=0 $. Then we have
\begin{eqnarray*}& &H(0,y,t-\tau)
\\&=&\frac{1}{\beta(4\pi t)^{\frac{m}{2}+1}}e^{-\frac{r^2+{r^{'}}^2+R^2}{4t}}[I_{0}(\frac{rr^{'}}{2t})+\Sigma_{k\geq 1} 2cosk(\theta-\theta^{'})I_{\frac{k}{\beta}}(\frac{rr^{'}}{2t})]
\\&=&\frac{1}{\beta(4\pi)^{\frac{m}{2}+1}}\frac{1}{t^{\frac{m}{2}+1}}e^{-\frac{1}{4t}}.
\end{eqnarray*}
 Thus
\begin{eqnarray*}& &\int_{0}^{\infty}|\frac{\partial}{\partial t}H(0,y,t-\tau)|d\tau
\\&=&\frac{1}{\beta(4\pi)^{\frac{m}{2}+1}}\int_{0}^{\infty}|\frac{1}{4t^{\frac{m}{2}+3}}e^{-\frac{1}{4t}}-(\frac{m}{2}+1)\frac{1}{t^{\frac{m}{2}+2}}e^{-\frac{1}{4t}}|d\tau
\\&\leq&C(m,\beta).
\end{eqnarray*}

The general estimate for all $x$ and $y$ is involves more properties of the Bessel functions. We can  assume $|x-y|=1$ and   the  theorem in general follows from rescaling.
Denote $v=\beta(\theta-\theta^{'})$. We still assume that  $ v\neq \pi$ or $-\pi$ mod $2\beta\pi$ so the last term in (\ref{P's Expression formula}) vainishes. The conclusion for all $ v$ follows from continuity of the estimates with respect to $v$.
\\

We first compute from (\ref{Higher dim heat kernel formula}) that 
\begin{eqnarray*}& &\frac{\partial}{\partial t}H(x,y,t)
\\&=&\frac{C}{t^{\frac{m}{2}+1}}[\frac{R^2+(r^{'}-r)^2+2r\acute{r}(1-\cos v)}{4t^2}-(\frac{m}{2}+1)\frac{1}{t}]e^{-\frac{R^2+(r^{'}-r)^2+2r\acute{r}(1-\cos v)}{4t}}
\\& &+\frac{C}{t^{\frac{m}{2}+1}}\Sigma_{k\neq 0,\ -\pi <v+2k\beta \pi<\pi}[\frac{R^2+(r^{'}-r)^2+2r\acute{r}(1-\cos (v+2k\beta\pi))}{4t^2}
\\& &-(\frac{m}{2}+1)\frac{1}{t}]
\ast e^{-\frac{R^2+(r^{'}-r)^2+2r\acute{r}(1-\cos( v+2k\beta\pi))}{4t}}
\\& &+\frac{C}{t^{\frac{m}{2}+1}}[\frac{R^2+(r^{'})^2+r^2}{4t^2}-(\frac{m}{2}+1)\frac{1}{t}]E(\frac{r\acute{r}}{2t},v)\ast e^{-\frac{R^2+(r^{'})^2+r^2}{4t}}
\\& &+\frac{C}{t^{\frac{m}{2}+1}}\frac{r\acute{r}}{2t^2}\frac{\partial E}{\partial z}(\frac{r\acute{r}}{2t},v)e^{-\frac{R^2+(r^{'})^2+r^2}{4t}}.
\end{eqnarray*}
It is easy to see from $|x-y|=1$ and (\ref{Def of  a}) we have the following two estimates:
 $$R^2+(r^{'}-r)^2+2r\acute{r}\geq \frac{1}{2};$$
 \begin{eqnarray*}& &R^2+(r^{'}-r)^2+2r\acute{r}(1-\cos( v+2k\beta\pi))
\\&\geq& R^2+(r^{'}-r)^2+2r\acute{r}8a
\\&\geq& R^2+(r^{'}-r)^2+2r\acute{r}(1-\cos v)4a
\\&\geq& a.
\end{eqnarray*}
Then by applying  Theorem \ref{Decay estimates for E} we easily get 
\begin{equation}\frac{\partial}{\partial t}H(x,y,t)|\leq C(1+\frac{1}{t^{\frac{m}{2}+5}})e^{-\frac{a}{1000t}}.
\end{equation} Hence
\begin{eqnarray*}& &\int_{0}^{\infty}|\frac{\partial}{\partial t}H(x,y,t)|dt
 \\&\leq &C\int_{0}^{\infty}(1+\frac{1}{t^{\frac{m}{2}+5}})e^{-\frac{a}{1000t}}dt
 \\&\leq &C.
 \end{eqnarray*}
\end{proof}

Using similar methods as in the proof of Lemma \ref{integral of time derivative of hk w.r.t time}, we deduce  Lemma \ref{integral of gradient of hk w.r.t time} as follows. 
\begin{lem}\label{integral of gradient of hk w.r.t time}
For any $x$ and $y$, we have
  $$\int_{0}^{\infty}|\nabla_y H(x,y,\tau)|d\tau\leq C|x-y|^{-(m+1)}.$$
\end{lem}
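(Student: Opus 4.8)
The plan is to mimic the proof of Lemma \ref{integral of time derivative of hk w.r.t time} almost verbatim, since the gradient $\nabla_y$ is just another first-order spatial operator, and the only change is a shift of the exponent from $-(m+2)$ to $-(m+1)$, reflecting the fact that we are now integrating a first derivative rather than a second derivative (or time derivative, which scales like two spatial derivatives) of the heat kernel against $d\tau$. First I would reduce to the normalized case $|x-y|=1$ by the parabolic scaling $H(\lambda x,\lambda y,\lambda^2 t)=\lambda^{-(m+2)}H(x,y,t)$, under which $\int_0^\infty |\nabla_y H(x,y,\tau)|\,d\tau$ picks up exactly the homogeneity $|x-y|^{-(m+1)}$; so it suffices to prove $\int_0^\infty |\nabla_y H(x,y,\tau)|\,d\tau \le C$ whenever $|x-y|=1$.

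Next I would treat the simple case first, as in the preceding lemma: when $r'=0$ (the point $y$ sits on the singular hypersurface) all the Bessel terms with $k\ge 1$ vanish since $I_{k/\beta}(0)=0$, and $H(x,y,t)=\frac{1}{\beta(4\pi t)^{m/2+1}}e^{-(r^2+R^2)/4t}$ with $r^2+R^2=1$, so the angular derivative vanishes and the remaining components of $\nabla_y H$ are explicit; one gets a bound $C(1+t^{-m/2-2})e^{-1/4t}$ which is integrable. For the general case $|x-y|=1$ with $r'>0$, I would differentiate the representation formula (\ref{Higher dim heat kernel formula})--(\ref{P's Expression formula}) for $H$ in the $y$-variables, exactly paralleling the computation of $\frac{\partial}{\partial t}H$ in the proof of Lemma \ref{integral of time derivative of hk w.r.t time}: the derivative hits the Gaussian factors $e^{-(R^2+(r'-r)^2+2rr'(1-\cos(v+2k\beta\pi)))/4t}$, producing prefactors that are polynomial in $1/t$ and in the various distances, and it hits $E(\frac{rr'}{2t},v)$, for which I would invoke Theorem \ref{Decay estimates for E} to control $E$ and its $z$- and $v$-derivatives. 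As in the cited proof, the two key geometric lower bounds $R^2+(r'-r)^2+2rr'\ge \tfrac12$ and $R^2+(r'-r)^2+2rr'(1-\cos(v+2k\beta\pi))\ge a$ (for a constant $a=a(\beta)>0$ coming from (\ref{Def of a})) guarantee that each exponential factor is bounded by $e^{-a/(1000 t)}$, whence $|\nabla_y H(x,y,t)|\le C(1+t^{-m/2-4})e^{-a/(1000t)}$, and integrating in $t$ over $(0,\infty)$ gives the uniform bound $C$.

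The main obstacle, just as in Lemma \ref{integral of time derivative of hk w.r.t time}, is bookkeeping: one must verify that, even though each summand over $k$ carries its own argument $v+2k\beta\pi$, the sum $\sum_k$ converges and is controlled uniformly — this is where the estimates on weighted sums of Bessel functions (Lemma \ref{Bessel properties: when r is small}, Lemma \ref{Estimating the sum of G w.r.t k}) and the decay of $E$ (Theorem \ref{Decay estimates for E}) do the real work, and one has to track that the power of $1/t$ generated by a single spatial derivative is one less than that generated by $\frac{\partial}{\partial t}$, so that the final integrand is still of the form $(1+t^{-\text{power}})e^{-a/(1000t)}$ and hence integrable near both $t=0$ and $t=\infty$. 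I expect no genuinely new difficulty beyond what is already handled in the proof of Lemma \ref{integral of time derivative of hk w.r.t time}; the lemma really is a corollary of the same machinery, which is why the excerpt says "Using similar methods."
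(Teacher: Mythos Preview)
Your proposal is correct and follows essentially the same route as the paper, which explicitly states that Lemma~\ref{integral of gradient of hk w.r.t time} is deduced ``using similar methods as in the proof of Lemma~\ref{integral of time derivative of hk w.r.t time}''; your scaling reduction, use of the Carslaw representation (\ref{P's Expression formula}), the geometric lower bounds leading to $e^{-a/(1000t)}$, and the appeal to Theorem~\ref{Decay estimates for E} are exactly the ingredients. One small remark: your worry about convergence of $\sum_k$ is unnecessary here, since the sum in (\ref{P's Expression formula}) runs only over those finitely many $k$ with $|v+2k\beta\pi|<\pi$, so no Bessel-sum lemma is needed at this step.
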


Using similar techniques,  we also have control on the following integral of third order derivative of the heat kernel. 
\begin{lem}\label{integral of gradient of time derivative of the hk}For every $x$ and $y$ we have  $$\int_{0}^{\infty}|\nabla_x \frac{\partial}{\partial t}H(x,y,\tau)|d\tau\leq C|y-x|^{-(m+3)}.$$
\end{lem}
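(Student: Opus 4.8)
The plan is to mirror the argument used for Lemma \ref{integral of time derivative of hk w.r.t time}, but now applied to the third-order operator $\nabla_x\frac{\partial}{\partial t}$. By the scaling invariance of the heat kernel, it suffices to treat the case $|x-y|=1$ and deduce the general estimate $\int_0^\infty|\nabla_x\frac{\partial}{\partial t}H(x,y,\tau)|\,d\tau\leq C|x-y|^{-(m+3)}$ by rescaling; note that each of the three derivatives ($\nabla_x$, $\partial_t$ again costing one spatial unit, and $\partial_t$) contributes to the homogeneity, so the target exponent $-(m+3)$ is exactly what scaling predicts. As in the earlier proofs I would write $v=\beta(\theta-\theta')$ and, by continuity in $v$, assume $v\neq\pm\pi\ (\mathrm{mod}\ 2\beta\pi)$ so the degenerate half-residue term in (\ref{P's Expression formula}) drops out.

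Next I would differentiate the three-term representation (\ref{Higher dim heat kernel formula})–(\ref{P's Expression formula}) explicitly: the finite sum of Gaussian-type terms $e^{-\frac{R^2+(r'-r)^2+2r\acute r(1-\cos(v+2k\beta\pi))}{4t}}$ indexed by the lattice points with $-\pi<v+2k\beta\pi<\pi$, plus the ``error'' contribution built from $E(\frac{r\acute r}{2t},v)$ and its $z$-derivatives. Applying $\nabla_x\frac{\partial}{\partial t}$ produces, for each Gaussian term, a prefactor that is a polynomial in $1/t$ of degree at most $\frac{m}{2}+5$ or so (the differentiation brings down at most three extra powers of $1/t$ together with geometric factors like $r$, $\acute r$, $R$, $r'-r$, all of which are $O(1)$ since $|x-y|=1$), times the same exponential. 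For the $E$-terms I would invoke Theorem \ref{Decay estimates for E} (and the companion Lemmas \ref{lemma on E's z-derivative bounds}, \ref{Bound on the angle derivative}) to bound $E$, $\partial_z E$, $\partial_z^2 E$ and the relevant angle derivatives, so that the whole $E$-contribution is again dominated by $C(1+t^{-(\frac m2+6)})e^{-\frac{a}{1000t}}$, where $a$ is the geometric constant appearing in (\ref{Def of a}) that quantifies how far $x,y$ are from being ``antipodal through the cone point.''

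The key elementary inequalities are the same lower bounds used before: $R^2+(r'-r)^2+2r\acute r\geq\frac12$ and $R^2+(r'-r)^2+2r\acute r(1-\cos(v+2k\beta\pi))\geq a$ uniformly in $k$, which guarantee each exponential is $\leq e^{-\frac{a}{1000 t}}$ (after absorbing the lattice index using that the sum over $k$ of $e^{-c k^2/t}$ is harmless, exactly as in Lemma \ref{integral of time derivative of hk w.r.t time}). Combining, I get the pointwise bound
\[
\Big|\nabla_x\tfrac{\partial}{\partial t}H(x,y,t)\Big|\leq C\Big(1+\tfrac{1}{t^{\frac m2+6}}\Big)e^{-\frac{a}{1000 t}},
\]
and then $\int_0^\infty(1+t^{-(\frac m2+6)})e^{-a/(1000t)}\,dt\leq C$ because the singularity at $t=0$ is killed by the exponential and the tail at $t=\infty$ is integrable after the $1$ is handled on a finite interval or, more honestly, one notes the un-differentiated kernel already decays and the polynomial growth only appears multiplied by the Gaussian. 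Finally, rescaling $t\mapsto|x-y|^2 t$, $x\mapsto x/|x-y|$ restores the factor $|x-y|^{-(m+3)}$.

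The main obstacle I anticipate is bookkeeping the $E$-term derivatives cleanly: unlike the finite Gaussian sum, $\partial_t$ acting on $E(\frac{r\acute r}{2t},v)e^{-\frac{R^2+r^2+(r')^2}{4t}}$ produces both a $\frac{\partial E}{\partial z}$ term (with a $\frac{r\acute r}{2t^2}$ factor) and, after a further $\partial_t$ and a $\nabla_x$, terms involving $\partial_z^2 E$ and $\partial_v\partial_z E$; one must check that Theorem \ref{Decay estimates for E} really supplies bounds for all the needed derivatives with an exponential factor $e^{-\frac{a}{Ct}}$ rather than merely $e^{-\frac{r\acute r}{2t}}$, and that no uniform-in-$v$ blow-up sneaks in near $v=\pm\pi$ before the continuity argument is applied. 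This is precisely why the lemma is placed after the decay estimates for $E$ in the dependency graph, and once those are granted the remaining work is the routine (if lengthy) differentiation and the elementary integral $\int_0^\infty t^{-N}e^{-a/t}\,dt<\infty$.
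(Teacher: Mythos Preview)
Your overall strategy matches the paper's exactly: normalize $|x-y|=1$, differentiate the representation (\ref{P's Expression formula}), invoke Theorem~\ref{Decay estimates for E} for the $E$-piece, get a pointwise bound of the form $C(1+t^{-N})e^{-a/(1000t)}$, and integrate. The paper even carries out only the $\partial_r$ component and declares the rest similar, just as you propose.

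There is one genuine gap. You assert that the geometric prefactors ``$r,\acute r,R,r'-r$ are all $O(1)$ since $|x-y|=1$.'' This is false for $r$ and $\acute r$: both points can sit arbitrarily far from the cone axis while remaining at unit distance from each other. The differentiation actually produces factors like $\acute r(1-\cos v)$ (from the Gaussian terms) and bare $r,\acute r,r\acute r$ (from the $E$-terms), and these must be controlled separately. The paper isolates exactly this point as a Claim: $\acute r(1-\cos v)\leq 4$, proved by a two-case split (if $\acute r\leq 2$ it is trivial; if $\acute r\geq 2$ then $r\geq 1$ too, so $\acute r(1-\cos v)\leq r\acute r(1-\cos v)\leq|x-y|^2=1$). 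For the $E$-contribution the stray powers of $r,\acute r$ are instead absorbed by the stronger exponential $e^{-(r^2+\acute r^2+R^2)/(4t)}$ that accompanies $E$. Without these two observations the prefactors are not bounded and your pointwise estimate does not follow. Also, a minor point: the sum over $k$ with $-\pi<v+2k\beta\pi<\pi$ is a \emph{finite} sum (at most $\lfloor 1/\beta\rfloor+1$ terms), so no ``$\sum_k e^{-ck^2/t}$'' argument is needed.
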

\begin{proof}{of Lemma \ref{integral of gradient of time derivative of the hk}:}
This lemma is a directly corollary of Theorem \ref{Decay estimates for E}. However, we still think it's necessary to  give a clear proof here since the lemma  so important.
It suffices to estimate $\int_{0}^{\infty}| \frac{\partial^2}{\partial t\partial r}H(x,y,\tau)|d\tau$, the other derivatives are settled down in the same way. By scaling invariance, we can assume $|x-y|=1$.
Denote $v=\beta(\theta-\theta^{'})$. We still assume that  $ v\neq \pi$ or $-\pi$ mod $2\beta\pi$ so the last term in (\ref{P's Expression formula}) vainishes. The conclusion for all $ v$ follows from continuity of the estimates with respect to $v$.
 The formula says

\begin{eqnarray*}\label{Time-radius derivative of the 2-dim heat kernel}& &\frac{\partial^2H}{\partial t\partial r}\nonumber
\\&=&\frac{1}{(4\pi )^{\frac{m}{2}+1}}e^{-\frac{({r-{\acute{r}}})^2+2r\acute{r}(1-\cos v)+R^2}{4t}}\{\frac{1}{t^{\frac{m}{2}+1}}\frac{({r-{\acute{r}}})+\acute{r}(1-\cos v)}{2 t^2}
\\& &-\frac{1}{t^{\frac{m}{2}+1}}[\frac{({r-{\acute{r}}})^2+2r\acute{r}(1-\cos v)+R^2}{4t^2}-(\frac{m}{2}+1)\frac{1}{t}]\frac{({r-{\acute{r}}})+\acute{r}(1-\cos v)}{2 t}\}
\\& &+\displaystyle \frac{1}{(4\pi )^{\frac{m}{2}+1}}\Sigma_{k\neq 0,|v+2k\beta \pi|<\pi}Ce^{-\frac{({r-{\acute{r}}})^2+2r\acute{r}(1-\cos (v+2k\beta\pi))+R^2}{4t}}\{
\\& &\frac{1}{t^{\frac{m}{2}+1}}\frac{({r-{\acute{r}}})+\acute{r}(1-\cos (v+2k\beta\pi))}{2 t^2}
\end{eqnarray*}
\begin{eqnarray*}
& &-\frac{1}{t^{\frac{m}{2}+1}}[\frac{({r-{\acute{r}}})^2+2r\acute{r}(1-\cos(v+2k\beta\pi))+R^2}{4t^2}-(\frac{m}{2}+1)\frac{1}{t}]
\\& &\times\frac{({r-{\acute{r}}})+\acute{r}(1-\cos (v+2k\beta\pi))}{2 t}\}
\\& &+\frac{1}{(4\pi )^{\frac{m}{2}+1}}\frac{1}{2\pi\beta}e^{-\frac{r^2+{{\acute{r}}}^2+R^2}{4t}}\{[(\frac{m}{2}+2)\frac{r}{2 t^{\frac{m}{2}+3}}
-\frac{r}{2 t^{\frac{m}{2}+2}}\frac{r^2+{{\acute{r}}}^2+R^2}{4t^2}]E(\frac{r\acute{r}}{2t},v)
\\& &+[-(\frac{m}{2}+2)\frac{\acute{r}}{2t^{\frac{m}{2}+3}}+\frac{1}{t^{\frac{m}{2}+1}}\frac{r^2+{{\acute{r}}}^2+R^2}{4t^2}\ast \frac{\acute{r}}{2t}
+\frac{1}{t^{\frac{m}{2}+1}}\ast \frac{r\acute{r}}{2t^2}\ast \frac{\acute{r}}{2t}]\frac{\partial E(\frac{r\acute{r}}{2t},v)}{\partial z}
\\& &-\frac{(r\acute{r})\acute{r}}{4 t^{\frac{m}{2}+4}}\frac{\partial^2 E(\frac{r\acute{r}}{2t},v)}{\partial z^2}\}
.\nonumber
  \end{eqnarray*}
Though there are lots of terms, but all the them are actually estimated by a single way.  Pick
$$\frac{\acute{r}(1-\cos{v})}{ t^{\frac{m}{2}+3}}e^{-\frac{({r-{\acute{r}}})^2+R^2}{4t}} e^{-\frac{r\acute{r}}{2t}(1-\cos{v})}$$
as an example. First we make the following claim. 
\begin{clm}$\acute{r}(1-\cos{v})\leq 4$.\end{clm} This is easy to prove.\\
Case 1: Suppose $\acute{r}\leq 2$, it's obvious;\\
Case 2: Suppose $\acute{r}\geq 2$, since $|x-y|=1$ we have $r\geq \acute{r}-1\geq 1$. Then
$$\acute{r}(1-\cos{v})\leq r\acute{r}(1-\cos{v})\leq |x-y|^2=1.$$
The claim is now proved.\\

 We go on to estimate that 
$$\frac{\acute{r}(1-\cos{v})}{t^{\frac{m}{2}+3}}e^{-\frac{({r-{\acute{r}}})^2+R^2}{4t}} e^{-\frac{r\acute{r}}{2t}(1-\cos{v})}\leq \frac{4}{t^{\frac{m}{2}+3}}e^{-\frac{1}{4t}}.$$
Therefore estimating each term like the above,  combining (\ref{Def of  a}), doing the same thing as in Lemma \ref{Holder estimate of the integral of spatial derivative of h.k with respect to time},
 we get that
 $$|\frac{\partial^2H}{\partial t\partial r}|\leq C(1+\frac{1}{t^{\frac{m}{2}+5}})e^{-\frac{a}{1000t}},$$
where $a=\frac{\sin^2{\frac{\beta \pi}{2}}}{4}$. Then
\begin{eqnarray*}& &\int_{0}^{\infty}|\frac{\partial^2H}{\partial t\partial r}(x,y,t)|dt
\leq C.
 \end{eqnarray*}
\end{proof}

It's easy to deduce the following estimate when $x$ is far from the singularity (comparing to $|x-y|$).
\begin{lem}\label{integral of third derivative of h.k w.r.t time: when r big}Suppose $P(x)\geq \frac{|x-y|}{100^{100}}$, then for any second order operator
$\mathfrak{D}\in\ \mathfrak{T}$, we have that
$$\int_{0}^{\infty}|\nabla_x\mathfrak{D}H(x,y,\tau)|d\tau\leq C|y-x|^{-(m+3)}.$$
\end{lem}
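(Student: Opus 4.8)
The plan is to reduce Lemma \ref{integral of third derivative of h.k w.r.t time: when r big} to Lemma \ref{integral of gradient of time derivative of the hk} and Lemma \ref{integral of time derivative of hk w.r.t time} by exploiting the hypothesis $P(x)\geq \frac{|x-y|}{100^{100}}$, which places $x$ at a definite distance from the singular set $\{0\}\times\mathbb{R}^m$ relative to $|x-y|$. By scaling invariance of the heat kernel it suffices to take $|x-y|=1$, so that $P(x)=r\geq 100^{-100}$. In this regime the point $x$ sits in a ball of radius comparable to $r$ which is disjoint from the singular set and has angular width less than $\beta\pi$, so on that ball all the derivative operators $\nabla_x\mathfrak{D}$, $\mathfrak{D}\in\mathfrak{T}$, are honest Euclidean third-order derivatives with no conical obstruction, and differentiating the representation formula (\ref{Higher dim heat kernel formula}) (equivalently (\ref{P's Expression formula})) in $x$ is routine.

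The main step is then a pointwise bound of the form
\[
|\nabla_x\mathfrak{D}H(x,y,t)|\leq C\Bigl(1+\frac{1}{t^{\frac{m}{2}+6}}\Bigr)e^{-\frac{a}{1000t}}
\]
for a fixed $a=a(\beta)>0$, which after integration in $t$ over $(0,\infty)$ yields the claimed constant. To get this pointwise bound I would differentiate the third line of (\ref{P's Expression formula}): the finitely many ``Gaussian'' terms $e^{z\cos(v+2k\beta\pi)}$ times $e^{-\frac{r^2+\acute r^2+R^2}{4t}}$ recombine into $e^{-\frac{(r-\acute r)^2+2r\acute r(1-\cos(v+2k\beta\pi))+R^2}{4t}}$, and differentiating this in $r,\theta,s_i$ produces at worst polynomial-in-$1/t$ prefactors; as in the proof of Lemma \ref{integral of gradient of time derivative of the hk}, one uses $|x-y|=1$ together with the elementary inequality $2r\acute r(1-\cos(v+2k\beta\pi))\geq a$ (and $\acute r(1-\cos v)\leq 4$, etc.) to extract the exponential decay $e^{-a/(1000t)}$ uniformly in $k$ and absorb the prefactors. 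For the remainder term $E(z,v)$ and its $z$-derivatives one invokes Theorem \ref{Decay estimates for E} exactly as in Lemmas \ref{integral of time derivative of hk w.r.t time} and \ref{integral of gradient of time derivative of the hk}; since $\nabla_x\mathfrak{D}$ is third order, at most $\partial^3 E/\partial z^3$ appears, and the decay estimates in Theorem \ref{Decay estimates for E} cover these.

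Alternatively, and perhaps more cleanly, since $x$ is at distance $\gtrsim r$ from the singular set one can simply observe that on the region $P(y)\geq r/2$ the heat kernel and all its derivatives agree (up to the method-of-images sum, which is a finite sum here with exponentially small tails by Lemma \ref{Estimating the sum of G w.r.t k}) with the smooth Euclidean heat kernel on $\mathbb{R}^{m+2}$, for which $\int_0^\infty|\partial^3_x H_{\mathrm{eucl}}(x,y,\tau)|\,d\tau\leq C|x-y|^{-(m+3)}$ is classical; and on the complementary region $P(y)<r/2$ one has $|x-y|$ comparable to $P(x)+P(y)$ so $r\acute r(1-\cos v)$ and the other quadratic forms are bounded below by a fixed multiple of $|x-y|^2$, giving the same exponential decay as above. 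I would present the argument following the first route, mirroring Lemma \ref{integral of gradient of time derivative of the hk} term by term. The main obstacle is purely bookkeeping: carefully tracking that the third $x$-derivative of each summand in (\ref{P's Expression formula}) produces a prefactor no worse than $t^{-(\frac m2+6)}$ times a Gaussian, and that the lower bound $2r\acute r(1-\cos(v+2k\beta\pi))\geq a$ persists after differentiation — but no genuinely new estimate beyond Theorem \ref{Decay estimates for E} is needed, which is why the lemma is stated as ``easy to deduce''.
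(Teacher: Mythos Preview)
Your proposal is correct and follows essentially the same approach as the paper's proof: rescale to $|x-y|=1$ so that $r=P(x)\geq 100^{-100}$, then mimic the term-by-term computation of Lemma~\ref{integral of gradient of time derivative of the hk} using the representation (\ref{P's Expression formula}) and Theorem~\ref{Decay estimates for E}, with the lower bound on $r$ absorbing any stray $1/r$ factors. The paper's proof is literally this, stated in two sentences.

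One small correction: when you write ``at most $\partial^3 E/\partial z^3$ appears'', this is not quite right. The operator $\nabla_x\mathfrak{D}$ can contribute $\theta$-derivatives as well (from the $\frac{1}{\beta r}\frac{\partial}{\partial\theta}$ component of $\nabla_x$, and from $\mathfrak{D}=\frac{1}{r}\frac{\partial^2}{\partial s_i\partial\theta}\in\mathfrak{T}$), so $v$-derivatives of $E$ also appear; Theorem~\ref{Decay estimates for E} handles one $v$-derivative mixed with $z$-derivatives, which is what is actually needed for the applications in Proposition~\ref{Holder estimate of the spatial  derivative of the singular integral} (where only $\mathfrak{D}=\frac{\partial^2}{\partial r\partial s_i}$ is written out). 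Your alternative ``method of images plus Euclidean comparison'' route is not in the paper but is a reasonable heuristic.
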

\begin{proof}{of Lemma \ref{integral of third derivative of h.k w.r.t time: when r big}:} It suffices to assume $|x-y|=1$ and thus from the hypothesis we
have that $P(x)\geq \frac{1}{100^{100}}$, which means the $r$ is big. Thus adapting the same proof of Lemma \ref{integral of gradient of time derivative of the hk}
,  use Theorem \ref{Decay estimates for E} and $r\geq \frac{1}{100^{100}}$, the proof is completed. In a word, the case when $r$ big is trivial (the real trouble is when $r$ is small).
\end{proof}
\begin{lem}\label{time integral of the bilinear derivatives of the heat kernel} For any $x$ and $y$, we have
$$\int_{0}^{\infty}|\nabla_x\frac{\partial}{\partial \acute{s_i}}H(x,y,t)|dt\leq C|x-y|^{m+2}$$
and
$$\int_{0}^{\infty}|\nabla_x\frac{\partial}{\partial \acute{r}}H(x,y,t)|dt\leq C|x-y|^{m+2}.$$
Notice that the operators $\frac{\partial}{\partial \acute{s_i}},\ 1\leq i\leq m$ and $\frac{\partial}{\partial \acute{r}}$ are all with respect to the $y$ variable, while $\nabla_x$ is with respect to the $x$ variable.
\end{lem}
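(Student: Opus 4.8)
The plan is to mimic exactly the strategy that has already proven successful in Lemma \ref{integral of time derivative of hk w.r.t time}, Lemma \ref{integral of gradient of time derivative of the hk}, and Lemma \ref{integral of third derivative of h.k w.r.t time: when r big}: reduce to $|x-y|=1$ by the parabolic scaling of the heat kernel, split $P(z,v)$ according to the representation in Theorem \ref{Carslaw-Wang-Chen representation for hk}, and then estimate each piece by a pointwise Gaussian-type bound of the form $C(1+t^{-(m/2+5)})e^{-a/(1000t)}$, where $a=\tfrac{\sin^2(\beta\pi/2)}{4}$, followed by integration in $t$. First I would observe that $\nabla_x\partial_{\acute r}H$ and $\nabla_x\partial_{\acute s_i}H$ are \emph{mixed} $x$–$y$ derivatives of order three, so we differentiate the Carslaw formula: once in $\acute r$ (respectively $\acute s_i$), which brings down a factor essentially of size $\frac{r(1-\cos v)+(\acute r-r)}{t}$ (resp.\ $\frac{\acute s_i-s_i}{t}$) together with a derivative hitting $E(\tfrac{r\acute r}{2t},v)$ through its $z$-variable, and then once more in an $x$-direction ($r$, $\tfrac1{\beta r}\partial_\theta$, or $s_i$). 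The key point — the same as in the proof of Lemma \ref{integral of gradient of time derivative of the hk} — is that every resulting term carries the full exponential weight $e^{-\frac{(r-\acute r)^2+2r\acute r(1-\cos(v+2k\beta\pi))+R^2}{4t}}$ for the $k$-th residue term, or $e^{-\frac{r^2+{\acute r}^2+R^2}{4t}}$ together with an $E$-derivative for the tail; and for each of these the elementary inequalities already recorded (e.g.\ $\acute r(1-\cos v)\le 4$ when $|x-y|=1$, and $R^2+(r-\acute r)^2+2r\acute r(1-\cos(v+2k\beta\pi))\ge a$) bound the polynomial prefactor times the exponential by $C(1+t^{-(m/2+5)})e^{-1/(4t)}$ after absorbing the prefactor into the Gaussian.

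The organization I would follow is: (i) reduce to $|x-y|=1$ and to $v\not\equiv\pm\pi\pmod{2\beta\pi}$ by continuity, exactly as in Lemmas \ref{integral of time derivative of hk w.r.t time} and \ref{integral of gradient of time derivative of the hk}; (ii) write out $\nabla_x\partial_{\acute r}H$ as a sum of a principal Gaussian term ($k=0$), a sum over residues $k\neq 0$ with $|v+2k\beta\pi|<\pi$, and the terms coming from $E$, $\partial_z E$, $\partial_z^2 E$ (the second $\acute r$- or $\acute s_i$-differentiation of the $E$-block produces $\partial_z^2 E$, just as the $\partial_t\partial_r$ computation in Lemma \ref{integral of gradient of time derivative of the hk} did); (iii) invoke Theorem \ref{Decay estimates for E} to control $E,\partial_zE,\partial_z^2E$ by $Ce^{-cz}$-type decay, so that the $E$-terms are dominated by $C(1+t^{-(m/2+5)})e^{-a/(1000t)}$; (iv) for the residue terms use $R^2+(r-\acute r)^2+2r\acute r(1-\cos(v+2k\beta\pi))\ge a$ to get, for each $k$, a bound $C(1+t^{-(m/2+5)})e^{-a/(2000t)}e^{-\frac{r\acute r(1-\cos(v+2k\beta\pi))}{8t}}$, and sum over $k$ using that $\sum_k e^{-c r\acute r(1-\cos(v+2k\beta\pi))/t}$ converges (the $(1-\cos(v+2k\beta\pi))$ grow like $k^2$ for large $|k|$, so this is a convergent Gaussian-type sum, uniformly in $t$ once we keep a sliver $e^{-a/(4000t)}$ in reserve); (v) integrate the resulting pointwise bound $|\nabla_x\partial_{\acute r}H|\le C(1+t^{-(m/2+5)})e^{-a/(1000t)}$ over $t\in(0,\infty)$, which is finite. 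The $\nabla_x\partial_{\acute s_i}H$ case is literally identical, the only change being that the first ($\acute s_i$-)differentiation produces a factor $\frac{\acute s_i-s_i}{2t}$, which is controlled by $|x-y|=1$ the same way, and no new $E$-type terms beyond those already present appear.

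The main obstacle I anticipate is purely bookkeeping rather than conceptual: after a mixed third-order differentiation the Carslaw representation yields a long list of terms — products of up to three of the "cooling'' factors $\frac{(\cdot)}{t}$ with the Gaussian and with $E$, $\partial_zE$, or $\partial_z^2E$ — and one must verify that in \emph{every} one of them the total polynomial-in-$t$ prefactor is no worse than $t^{-(m/2+5)}$ and the exponential no weaker than $e^{-a/(1000t)}$. As the author already noted in the proof of Lemma \ref{integral of gradient of time derivative of the hk}, "though there are lots of terms, all of them are actually estimated by a single way,'' namely by pulling a small constant multiple of the exponent out to beat the polynomial factor and using $\acute r(1-\cos v)\le 4$; the same device works here. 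The genuinely delicate point — the behavior when $r$ is small, so that the residue series and the $E$-block cannot be dismissed as in Lemma \ref{integral of third derivative of h.k w.r.t time: when r big} — is handled precisely by Theorem \ref{Decay estimates for E}, which is the reason that theorem was proved. So once one grants Theorem \ref{Decay estimates for E} and the pointwise $E$-bounds it supplies, the proof is a routine, if lengthy, repetition of the argument for Lemma \ref{integral of gradient of time derivative of the hk}.
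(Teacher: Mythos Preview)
Your proposal is correct and follows essentially the same route as the paper: reduce to $|x-y|=1$, expand via Carslaw's representation (Theorem~\ref{Carslaw-Wang-Chen representation for hk}), bound each residue term and each $E,\partial_zE,\partial_z^2E$ contribution pointwise by $C(1+t^{-(m/2+5)})e^{-a/(1000t)}$ using Theorem~\ref{Decay estimates for E}, and integrate in $t$. Two minor remarks: the operators $\nabla_x\partial_{\acute r}$ and $\nabla_x\partial_{\acute s_i}$ are second order, not third, and the residue sum over $k$ with $|v+2k\beta\pi|<\pi$ is \emph{finite} (of cardinality $\sim 1/\beta$), so no convergence argument is needed there; the paper also highlights that these mixed derivatives produce at most one $v$-derivative of $E$, which is exactly what Theorem~\ref{Decay estimates for E} accommodates.
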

\begin{proof}{of Lemma \ref{time integral of the bilinear derivatives of the heat kernel}:} This is again a straight forward corollary of Theorem \ref{Decay estimates for E}, just notice that neither $\nabla_x\frac{\partial}{\partial s_i}$ nor $\nabla_x\frac{\partial}{\partial \acute{r}}$ concerns twice or more derivatives on the $v$ (argument) of the $E(z,v)$ in Theorem \ref{Decay estimates for E}.\\

 To be precise, we again assume $|x-y|=1$ and   $ v\neq \pi$ or $-\pi$ mod $2\beta\pi$ ( so the last term in (\ref{P's Expression formula}) vainishes). We only show the estimate for $\int_{0}^{\infty}|\frac{\partial^2}{\partial r\partial \acute{r}}H(x,y,t)|dt$, the other terms  are similar. We compute
\begin{eqnarray*}& & \frac{\partial^2}{\partial r\partial \acute{r}}H(x,y,t)
\\&=&\frac{1}{(4\pi )^{\frac{m}{2}+1}}[\frac{\cos v}{2t^{\frac{m}{2}+2}}+\frac{[(r-\acute{r})+\acute{r}(1-\cos v)][(\acute{r}-r)+r(1-\cos v)]}{4t^{\frac{m}{2}+3}}]
\\&\times &e^{-\frac{(r-\acute{r})^2+2r\acute{r}(1-\cos v)+R^2}{4t}}
+\frac{1}{(4\pi )^{\frac{m}{2}+1}}\Sigma_{k\neq 0,| v+2k\beta \pi|<\pi}[\frac{\cos ( v+2k\beta \pi)}{2t^{\frac{m}{2}+2}}
\\& &+\frac{[(r-\acute{r})+\acute{r}(1-\cos( v+2k\beta \pi))][(\acute{r}-r)+r(1-\cos ( v+2k\beta \pi)))]}{4t^{\frac{m}{2}+3}}]
\\&\times &e^{-\frac{(r-\acute{r})^2+2r\acute{r}(1-\cos (v+2k\beta \pi))+R^2}{4t}}
\\& &+\frac{1}{(4\pi )^{\frac{m}{2}+1}}\frac{1}{2\pi\beta}\frac{1}{t^{\frac{m}{2}+1}}[(\frac{r\acute{r}}{4t^2})E(\frac{r\acute{r}}{2t}, v)-(\frac{r^2}{4t^2})\frac{\partial}{\partial z}E(\frac{r\acute{r}}{2t}, v)\\& &-(\frac{\acute{r}^2}{4t^2})\frac{\partial}{\partial z}E(\frac{r\acute{r}}{2t}, v)
+(\frac{1}{2t})\frac{\partial}{\partial z}E(\frac{r\acute{r}}{2t}, v)+ (\frac{r\acute{r}}{4t^2})\frac{\partial^2}{\partial z^2}E(\frac{r\acute{r}}{2t}, v)]e^{-\frac{r^2+{{\acute{r}}}^2+R^2}{4t}}.
\end{eqnarray*}
The hypothesis $|x-y|=1$ implies that
\begin{equation}\label{05}
|(r-\acute{r})+\acute{r}(1-\cos v)|\leq 10,\ |(\acute{r}-r)+r(1-\cos v)|\leq 10;
\end{equation}
\begin{eqnarray}\label{06}& &[(r-\acute{r})+\acute{r}(1-\cos( v+2k\beta \pi))][(\acute{r}-r)+r(1-\cos ( v+2k\beta \pi)))]\nonumber
\\&\leq & (r-\acute{r})^2+(r-\acute{r})(r+\acute{r})(1-\cos( v+2k\beta \pi))\nonumber
\\& &+r\acute{r}(1-\cos (v+2k\beta \pi))^2\nonumber
\\&\leq &20+(r-\acute{r})^2
+2r\acute{r}(1-\cos (v+2k\beta \pi)).
\end{eqnarray}

Thus, by the idea  used in  the estimate of $\frac{1}{r}\frac{\partial\widehat{H}}{\partial  v}$ in Lemma \ref{asymptotic estimate of theta derivative of Heat kernel}, estimate (\ref{06}), and $|x-y|=1$,  it's easy to see that
\begin{eqnarray*}& &[(r-\acute{r})+\acute{r}(1-\cos( v+2k\beta \pi))][(\acute{r}-r)+r(1-\cos ( v+2k\beta \pi)))]
\\& & \times e^{-\frac{(r-\acute{r})^2+2r\acute{r}(1-\cos (v+2k\beta \pi))+R^2}{4t}}
\\&\leq &[20+(r-\acute{r})^2
+2r\acute{r}(1-\cos (v+2k\beta \pi))]
\times e^{-\frac{(r-\acute{r})^2+2r\acute{r}(1-\cos (v+2k\beta \pi))+R^2}{4t}}
\\&\leq& C(1+t)e^{-\frac{a}{1000t}},
\end{eqnarray*}
where $a$ is the one in  (\ref{Def of  a}). Applying Theorem \ref{Decay estimates for E}, it's easy to see that
 $$|\frac{\partial^2}{\partial r\partial \acute{r}}H(x,y,t)|\leq C(1+\frac{1}{t^{\frac{m}{2}+5}})e^{-\frac{a}{1000t}}.$$

 Then we easily get
 \begin{eqnarray*}& &\int_{0}^{\infty}|\frac{\partial^2}{\partial r\partial \acute{r}}H(x,y,t)|dt
 \\&\leq &C\int_{0}^{\infty}(1+\frac{1}{t^{\frac{m}{2}+5}})e^{-\frac{a}{1000t}}dt
 \\&\leq &C.
 \end{eqnarray*}
 By rescaling, the proof in the general case is also completed.
\end{proof}
\section{Behaviors of the heat kernel  near the singular set.\label{Behaviors of the heat kernel  near the singular set}}
In this section we  prove Lemma \ref{Holder estimate of the integral of spatial derivative of h.k with respect to time}. This lemma contains  the last two items in Theorem \ref{all the properties of the h.k1}, and  is the only lemma which concerns the exponent $\frac{1}{\beta}-1$.\\

First of all,      the Bessel functions of the second kind  satisfies the following recursive relation (see \cite{Wa}).
\begin{equation}\label{recursion of I_v}I_v^{'}(z)+\frac{vI_v(z)}{z}=I_{v-1}(z).\end{equation}

Two useful corollaries of the above recursive relation which  we will keep applying are
\begin{equation} \left\{\begin{array}{lcll} I_v^{'}(z) & \leq & I_{v-1}(z)  &\ \textrm{when}\ v\geq 1 \ \textrm{and}\ z\geq 0;\\
 \frac{vI_v(z)}{z} & \leq & I_{v-1}(z) & \ \textrm{when}\ v\geq 1 \ \textrm{and}\ z\geq 0.
\end{array} \right.  \label{bounding derivative of Iv by Iv-1}
\end{equation}

Notice that when $v\geq 1$, $\frac{I_v(z)}{z} $ is well defined at $z=0$.\\

The following lemma  is useful in proving Lemma \ref{Holder estimate of the integral of spatial derivative of h.k with respect to time}.
\begin{lem}\label{estimating the holder continuity of the derivative of I}Suppose $v\geq 1$ and  $z_2\geq z_1$, we have
$$|I^{'}_v(z_2)-I^{'}_v(z_1)|\leq \frac{v}{2^{v}\Gamma(v+1)}|z_2^{v-1}-z_1^{v-1}|+|z_2-z_1|(1+\frac{v}{2})I_v(z_2).$$
In particular, if $v<2$ we have
$$|I^{'}_v(z_2)-I^{'}_v(z_1)|\leq \frac{v}{2^{v}\Gamma(v+1)}|z_2-z_1|^{v-1}+|z_2-z_1|(1+\frac{v}{2})I_v(z_2).$$
\end{lem}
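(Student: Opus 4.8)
The plan is to argue directly from the power series
\[
I_v(z)=\sum_{k=0}^{\infty}\frac{1}{k!\,\Gamma(v+k+1)}\Big(\frac z2\Big)^{v+2k},
\]
which may be differentiated term by term since it and all its derivatives converge locally uniformly. Splitting off the $k=0$ term, I would write
\[
I_v'(z)=\frac{v}{2^{v}\Gamma(v+1)}\,z^{v-1}+R(z),\qquad
R(z):=\sum_{k=1}^{\infty}\frac{v+2k}{2\,k!\,\Gamma(v+k+1)}\Big(\frac z2\Big)^{v+2k-1}.
\]
Here the first summand carries all of the possible non-smoothness at the origin, while $R\in C^{1}[0,\infty)$ because every exponent $v+2k-1$ with $k\ge 1$ is $\ge v+1>0$; moreover $R(0)=0$. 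This is where the hypothesis $v\ge 1$ enters.

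Next I would estimate $R'$. Differentiating and reindexing $j=k-1$ gives
\[
R'(z)=\sum_{j=0}^{\infty}\frac{(v+2j+2)(v+2j+1)}{4\,(j+1)!\,\Gamma(v+j+2)}\Big(\frac z2\Big)^{v+2j},
\]
a power series with nonnegative coefficients and positive exponents, so $R'$ is nonnegative and nondecreasing on $[0,\infty)$. Comparing this series coefficientwise with that of $I_v$ (using $\Gamma(v+j+2)=(v+j+1)\Gamma(v+j+1)$ and $(j+1)!=(j+1)\,j!$), the bound $R'(z)\le\big(1+\tfrac v2\big)I_v(z)$ reduces to the elementary inequality
\[
(v+2j+2)(v+2j+1)\le 2(j+1)(v+j+1)(v+2)\qquad(j\ge 0,\ v\ge 1),
\]
which after expansion reads $0\le 2jv^{2}+2j^{2}v+4jv+2j+v^{2}+3v+2$ and is therefore clear. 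This term-by-term comparison is the only computational step; I expect it to be the "main obstacle" only in the trivial sense that one must be careful that all coefficients are nonnegative (so the comparison is legitimate) and that the Gamma/factorial bookkeeping is done correctly.

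Finally I would assemble the estimate. Write
\[
I_v'(z_2)-I_v'(z_1)=\frac{v}{2^{v}\Gamma(v+1)}\big(z_2^{v-1}-z_1^{v-1}\big)+\big(R(z_2)-R(z_1)\big).
\]
Since $R'\ge 0$ is nondecreasing, for $z_1\le z_2$ one has $0\le R(z_2)-R(z_1)=\int_{z_1}^{z_2}R'(s)\,ds\le (z_2-z_1)R'(z_2)\le(z_2-z_1)\big(1+\tfrac v2\big)I_v(z_2)$, and the triangle inequality yields the first displayed bound of the lemma. For the "in particular" statement, when $v<2$ the exponent $v-1\in[0,1)$, so $t\mapsto t^{v-1}$ is nondecreasing and subadditive on $[0,\infty)$; hence $0\le z_2^{v-1}-z_1^{v-1}\le(z_2-z_1)^{v-1}$, which replaces the first term by $\frac{v}{2^{v}\Gamma(v+1)}|z_2-z_1|^{v-1}$, giving the second bound. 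As an alternative that stays closer to the recursion $(\ref{recursion of I_v})$, one could instead use $I_v'=I_{v-1}-\tfrac vz I_v$ and peel off the two singular leading behaviours $z^{v-1}/(2^{v-1}\Gamma(v))$ and $z^{v-1}/(2^{v}\Gamma(v))$, whose difference is exactly $v\,z^{v-1}/(2^{v}\Gamma(v+1))$; but the power-series bookkeeping above seems the cleanest route, and the only genuinely delicate point is keeping track of the endpoint $z_1=0$.
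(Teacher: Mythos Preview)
Your proof is correct and follows essentially the same route as the paper: both split off the $k=0$ term of the power series for $I_v'$, bound the derivative of the remainder by $(1+\tfrac v2)I_v$ via the coefficientwise inequality $(v+2k)(v+2k-1)\le 2(v+2)k(v+k)$ (which is exactly your inequality after the reindexing $k=j+1$), and then apply the mean value theorem. The only difference is presentational: you phrase the MVT step through monotonicity of $R'$ and an integral, while the paper applies it termwise to $z_2^{v+2j-1}-z_1^{v+2j-1}$.
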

\begin{proof}{of Lemma \ref{estimating the holder continuity of the derivative of I}:}\  Suppose $z_2\geq z_1$, from the series representation formula in section 3.7 of \cite{Wa},  we compute
\begin{eqnarray*}& &|I^{'}_v(z_2)-I^{'}_v(z_1)|
\\&=&\Sigma^{\infty}_{j= 0}\frac{(v+2j)z_2^{v+2j-1}}{2^{v+2j}j!(v+j)!}-\Sigma^{\infty}_{j= 0}\frac{(v+2j)z_1^{v+2j-1}}{2^{v+2j}j!(v+j)!}
\\&=&\frac{v}{2^{v}\Gamma(v+1)}(z_2^{v-1}-z_1^{v-1})+\Sigma^{\infty}_{j= 1}\frac{(v+2j)z_2^{v+2j-1}}{2^{v+2j}j!(v+j)!}-\Sigma^{\infty}_{j= 1}\frac{(v+2j)z_1^{v+2j-1}}{2^{v+2j}j!(v+j)!}
\\&\leq&\frac{v}{2^{v}\Gamma(v+1)}(z_2^{v-1}-z_1^{v-1})+(z_2-z_1)\Sigma^{\infty}_{j= 1}\frac{(v+2j)(v+2j-1)z_2^{v+2j-2}}{2^{v+2j}j!(v+j)!}.
\end{eqnarray*}
Using the simple inequality 
$$\frac{(v+2j)(v+2j-1)}{j(v+j)}\leq 4+2v,\ \textrm{when}\ j\geq 1 \textrm{and}\ 
j\geq 1,$$ we have
\begin{eqnarray*}& &\Sigma^{\infty}_{j= 1}\frac{(v+2j)(v+2j-1)z_2^{v+2j-2}}{2^{v+2j}j!(v+j)!}
\\&\leq& (4+2v)\Sigma^{\infty}_{j= 1}\frac{z_2^{v+2j-2}}{2^{v+2j}(j-1)!(v+j-1)!}=(1+\frac{v}{2})I_v(z_2)
\end{eqnarray*}
Then the proof of  the first part is set.\\

 To get the second part it suffices to use  $z_2^{v-1}-z_1^{v-1}\leq (z_2-z_1)^{v-1}$ provided $1\leq v<2$.
\end{proof}
\begin{lem}\label{Holder estimate of the integral of spatial derivative of h.k with respect to time}For every second order spatial derivative operator $\mathfrak{D}\in \mathfrak{T}$, the following holds. Suppose $\rho= \min(\frac{1}{\beta}-1,\ 1)$, $|y|=1$ and $|u_1|,\ |u_2|< \frac{1}{8}$, we have
$$\int_{0}^{\infty}|\mathfrak{D}H(u_1,y,\tau)-\mathfrak{D}H(u_2,y,\tau)|d\tau\leq C|u_1-u_2|^{\rho}$$ and
$$\int_{0}^{\infty}|\nabla_{x}H(u_1,y,\tau)-\nabla_{x}H(u_2,y,\tau)|d\tau\leq C|u_1-u_2|^{\rho}.$$
\end{lem}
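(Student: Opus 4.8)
The plan is to expand the heat kernel in its Fourier--Bessel series $H=\sum_{k\ge 0}H_k$ as in (\ref{Higher dim heat kernel formula}), where $H_k$ carries the factor $\cos(k(\theta-\theta'))\,I_{k/\beta}(rr'/2t)$ (and $H_0$ the term $I_0(rr'/2t)$), and to estimate $\int_0^\infty|\mathfrak{D}H_k(u_1,y,\tau)-\mathfrak{D}H_k(u_2,y,\tau)|\,d\tau$ mode by mode, then sum. Write $\delta=|u_1-u_2|<\tfrac14$, and let $r,r'$ be the cone-radii of $x,y$ and $R$ the tangential distance. The crucial structural point is that, since $|y|=1$ and $|u_i|<\tfrac18$, one has $r^2+(r')^2+R^2\ge \tfrac{49}{64}$ uniformly; hence once the Bessel sums are controlled, every time integral $\int_0^\infty t^{-a}e^{-c/t}\,d\tau$ in sight will converge, and the only genuine difficulty is the loss of regularity of $\mathfrak{D}H$ at the cone tip $\{0\}\times\mathbb{R}^m$, which is what produces the exponent $\rho$.

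First I would dispose of the tangential directions and the $k=0$ mode, which are Lipschitz. The dependence of $H$ on $\widehat x$ enters only through $R^2$ and through the explicit factors $(s_i-s_i')/t$ produced by the operators in $\mathfrak{T}$, all smooth; one more $\widehat x$-derivative leaves the Bessel order unchanged, so after summing by Lemma~\ref{Bessel properties: when r is small} and integrating in $\tau$ (using the decay above) one gets a uniform bound, and varying the tangential components of $u_1,u_2$ costs at most $C\delta\le C\delta^\rho$. Likewise $H_0$ is an even function of $r$, hence extends to a $C^\infty$ function of $w=re^{i\theta}\in\mathbb{R}^2$ near the tip, so the $k=0$ mode also contributes only $C\delta$. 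After these reductions I may assume $\widehat u_1=\widehat u_2$ and must control $\sum_{k\ge 1}\int_0^\infty|\mathfrak{D}H_k(w_1,\cdot)-\mathfrak{D}H_k(w_2,\cdot)|\,d\tau$ with $|w_1-w_2|\le\delta$.

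For the main term I expand $\mathfrak{D}H_k$: the single cone-derivative in $\mathfrak{D}$ hits $e^{-r^2/4t}I_{k/\beta}(rr'/2t)$ and produces $I'_{k/\beta}$ and $I_{k/\beta}/z$ terms, which I rewrite via the recursion (\ref{recursion of I_v}) and (\ref{bounding derivative of Iv by Iv-1}), so that $\mathfrak{D}H_k$ becomes a finite sum of terms (polynomial in $r,r',k,1/t$)$\times\cos(k(\theta-\theta'))\times\{I_{k/\beta},\,I_{k/\beta-1},\,I'_{k/\beta}\}$, all carrying $e^{-(r^2+(r')^2+R^2)/4t}$; in particular $|\mathfrak{D}H_k(w)|\lesssim(\mathrm{coeff}_k)\,r^{k/\beta-1}e^{-c/t}$ near the tip. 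Then I split into two cases. If $\min(r_1,r_2)<2\delta$, then both $r_i<3\delta$, and since $k/\beta-1\ge\rho$ and $\delta<1$ the difference is at most $|\mathfrak{D}H_k(w_1)|+|\mathfrak{D}H_k(w_2)|\lesssim(\mathrm{coeff}_k)\,\delta^\rho e^{-c/t}$. If $\min(r_1,r_2)\ge2\delta$, I connect $w_1$ to $w_2$ by a path in $\{r\ge\delta\}$ and estimate the increment by the modulus of continuity of the Bessel factors along it; the key input is Lemma~\ref{estimating the holder continuity of the derivative of I}, which gives
\[
|I'_{k/\beta}(z_2)-I'_{k/\beta}(z_1)|\le \frac{k/\beta}{2^{k/\beta}\Gamma(k/\beta+1)}\,|z_2^{k/\beta-1}-z_1^{k/\beta-1}|+|z_2-z_1|\Bigl(1+\tfrac{k}{2\beta}\Bigr)I_{k/\beta}(z_2),
\]
together with the elementary analogues for $I_{k/\beta}$ and $I_{k/\beta-1}$, where $|z_2-z_1|=\tfrac{r'}{2t}|r_2-r_1|\le\tfrac{r'}{2t}\delta$ and $|\theta_1-\theta_2|$ is controlled by $\delta/\min(r_i)$. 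For $k=1$ and $\beta>\tfrac12$ the first term genuinely produces $|z_2-z_1|^{1/\beta-1}\lesssim (r'/2t)^{1/\beta-1}\delta^\rho$, while for all other $(k,\beta)$ it is Lipschitz in $z$ and contributes $\lesssim\delta$. In either subcase, after multiplying by the polynomial coefficients, summing over $k\ge1$ with Lemma~\ref{Bessel properties: when r is small} (with $N=1$, $\mu_1\le1$---legitimate since $\tfrac1\beta-1\ge0$---and $\mu_2$ large enough to absorb the powers of $k$), and using $r^2+(r')^2+R^2\ge\tfrac{49}{64}$, $rr'<\tfrac18$, $r'\le1$ to reduce $\sum_k(\cdots)e^{-(r^2+(r')^2+R^2)/4t}$ to $Ct^{-a}e^{-c/t}$, the time integral converges and the total is $\le C\delta^\rho$. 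The $\nabla_x$-estimate is identical with one fewer derivative, hence strictly easier.

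The main obstacle I anticipate is the case $\min(r_i)\ge2\delta$ for $k=1$: extracting the sharp Hölder exponent $\delta^{1/\beta-1}$---rather than a mere $\delta$---while keeping the $k$-sum and the $\tau$-integral convergent. This is precisely where Lemma~\ref{estimating the holder continuity of the derivative of I} is indispensable, and it forces careful bookkeeping relating $|z_2-z_1|$, $|\theta_1-\theta_2|$ and the path geometry to $|u_1-u_2|$ near the cone tip, including the dichotomy above on the proximity of the two points to $\{0\}\times\mathbb{R}^m$.
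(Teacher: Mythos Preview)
Your plan is correct and uses the same essential ingredients as the paper: the Fourier--Bessel expansion of $H$, the recursion \eqref{recursion of I_v}--\eqref{bounding derivative of Iv by Iv-1}, Lemma~\ref{estimating the holder continuity of the derivative of I} as the source of the exponent $\rho$, and the summation estimates from Section~\ref{Notes on Donaldson's work} to close the $k$-sum against the Gaussian weight (the paper packages the latter into the $G$-functions of Lemma~\ref{Estimating the sum of G w.r.t k}, which is just Lemma~\ref{Bessel properties: when r is small} plus the time integral, so your direct use of Lemma~\ref{Bessel properties: when r is small} is equivalent).

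The one structural difference is that you organise the argument around a geometric dichotomy on $\min(r_1,r_2)$ versus $2\delta$, whereas the paper avoids any case split by writing the difference $a_1b_1c_1-a_2b_2c_2$ algebraically as $(a_1-a_2)b_1c_1+(b_1-b_2)a_2c_1+a_2b_2(c_1-c_2)$ (with $a=\cos k(\theta-\theta')$, $b$ the Gaussian factor, $c=I'_{k/\beta}$) and handling the angular increment in one stroke via the pointwise bound
\[
r_1^{\frac{1}{\beta}-1}\bigl|\cos k(\theta_1-\theta')-\cos k(\theta_2-\theta')\bigr|\le Ck\,\bigl(r_1r_2\sin^2\tfrac{\theta_1-\theta_2}{2}\bigr)^{\frac{1}{2}(\frac{1}{\beta}-1)}\le Ck\,|u_1-u_2|^{\frac{1}{\beta}-1},
\]
valid uniformly in $r_1,r_2$. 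Your dichotomy recovers exactly this bound in two pieces (when $r_1<3\delta$ via $r_1^{1/\beta-1}\le C\delta^{\rho}$, and when $r_1\ge 2\delta$ via $|\theta_1-\theta_2|\le C\delta/r_1$ combined with $r_1^{1/\beta-2}\le C\delta^{1/\beta-2}$); the paper's single inequality is a little cleaner but yours is equally valid. Either way the only genuinely $\rho$-H\"older contribution is the $k=1$ radial term isolated by Lemma~\ref{estimating the holder continuity of the derivative of I}, and the $k$-sum of the remaining terms is controlled exactly as in the paper's $N_1$--$N_5$ bookkeeping.
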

\begin{proof}{of Lemma \ref{Holder estimate of the integral of spatial derivative of h.k with respect to time}:} Let  $u_1=(r_1,\theta_1,\widehat{u_1})$,
$u_2=(r_2,\theta_2,\widehat{u_2})$, $y=(r^{'},\theta^{'}, \widehat{y})$ and   $R_1=|\widehat{y}-\widehat{u_1}|,\ R_2=|\widehat{y}-\widehat{u_2}| $.
 We only prove the first inequality in Lemma \ref{Holder estimate of the integral of spatial derivative of h.k with respect to time}, which is a second order estimate. The second inequality is a first order estimate, which is much easier than second order estimates. To prove the first inequality, it suffices to consider the operator $\frac{\partial^2}{\partial r\partial s_i}$, the other operators are similar. \\

From now on we assume $\frac{1}{\beta}-1< 1$, the case when $\frac{1}{\beta}-1\geq 1$ is much easier to handle with the following method and we end up with a Lipschitz estimate i.e, with the bound as $C|u_1-u_2|$. \\

 We split the integral as the following. For all  $k$, write 
 \[H_k(r,\theta,{\acute{r}},\theta^{'},R,t)=\frac{1}{ \beta}\frac{1}{(4\pi t)^{\frac{m}{2}+1}}e^{-\frac{r^2+{{\acute{r}}}^2+R^2}{4t}}I_{\frac{k}{\beta}}(\frac{r\acute{r}}{2t})\cos k(\theta-\acute{\theta}).\] We compute 
\begin{eqnarray*}& &\int_0^{\infty}\frac{\partial^2}{\partial r\partial s_i}H_k(u_1,y,t)-\frac{\partial^2}{\partial r\partial s_i}H_k(u_2,y,t)dt
\\&=&\int_0^{\infty}\frac{C}{t^{\frac{m}{2}+3}}[r_1(s_{i,1}-\acute{s_i})e^{-\frac{r_1^2+{r^{'}}^2+R^2_1}{4t}}I_{\frac{k}{\beta}}(\frac{r_1r^{'}}{2t})cosk(\theta_1-\theta^{'})
\\&-&r_2(s_{i,2}-\acute{s_i})e^{-\frac{r_2^2+{r^{'}}^2+R^2_2}{4t}}I_{\frac{k}{\beta}}(\frac{r_2r^{'}}{2t})cosk(\theta_2-\theta^{'})]dt
\\& &+\int_0^{\infty}\frac{C\acute{r}}{t^{\frac{m}{2}+3}}[(s_{i,1}-\acute{s_i})e^{-\frac{r_1^2+{r^{'}}^2+R^2_1}{4t}}I^{'}_{\frac{k}{\beta}}(\frac{r_1r^{'}}{2t})cosk(\theta_1-\theta^{'})
\\& &-(s_{i,2}-\acute{s_i})e^{-\frac{r_2^2+{r^{'}}^2+R^2_2}{4t}}I^{'}_{\frac{k}{\beta}}(\frac{r_2r^{'}}{2t})cosk(\theta_2-\theta^{'})]dt.
\end{eqnarray*}
Thus \begin{equation}\label{CH11} \int_0^{\infty}|\frac{\partial^2}{\partial r\partial s_i}H(u_1,y,t)-\frac{\partial^2}{\partial r\partial s_i}H(u_2,y,t)|dt\leq C(|II_1|+|II_2|),\end{equation}
where (let $\vartheta_{k}=2$ when $k\geq 2$ and $\vartheta_{1}=1$)
           \begin{eqnarray*}& &II_1
               \\&=&\Sigma_{k=0}^{\infty}\vartheta_{k}[\int_{0}^{\infty}|\frac{1}{t^{\frac{m}{2}+3}}[r_1(s_{i,1}-\acute{s_i})e^{-\frac{r_1^2+{r^{'}}^2+R^2_1}{4t}}I_{\frac{k}{\beta}}(\frac{r_1r^{'}}{2t})cosk(\theta_1-\theta^{'})
\\&-&r_2(s_{i,2}-\acute{s_i})e^{-\frac{r_2^2+{r^{'}}^2+R^2_2}{4t}}I_{\frac{k}{\beta}}(\frac{r_2r^{'}}{2t})cosk(\theta_2-\theta^{'})]|dt],
\end{eqnarray*}
and
\begin{eqnarray*}& &II_2
               \\&=&\Sigma_{k=0}^{\infty}\int_{0}^{\infty}|\frac{\acute{r}}{t^{\frac{m}{2}+3}}[(s_{i,1}-\acute{s_i})e^{-\frac{r_1^2+{r^{'}}^2+R^2_1}{4t}}I^{'}_{\frac{k}{\beta}}(\frac{r_1r^{'}}{2t})cosk(\theta_1-\theta^{'})
\\& &-(s_{i,2}-\acute{s_i})e^{-\frac{r_2^2+{r^{'}}^2+R^2_2}{4t}}I^{'}_{\frac{k}{\beta}}(\frac{r_2r^{'}}{2t})cosk(\theta_2-\theta^{'})]|dt.
\end{eqnarray*}
We only  estimate $|II_2|$,  and  $|II_1|$ can be settled down  in the same  way and are actually easier to deal with.
Let \begin{itemize}
      \item $a_{1,k}=cosk(\theta_1-\theta^{'}),\ a_{2,k}=cosk(\theta_2-\theta^{'})$;
      \item $b_1=(s_{i,1}-\acute{s_i})e^{-\frac{r_1^2+{r^{'}}^2+R^2_1}{4t}},\ b_2=(s_{i,2}-\acute{s_i})e^{-\frac{r_2^2+{r^{'}}^2+R^2_2}{4t}}$;
      \item $c_{1,k}=I^{'}_{\frac{k}{\beta}}(\frac{r_1r^{'}}{2t}),\ c_{2,k}=I^{'}_{\frac{k}{\beta}}(\frac{r_2r^{'}}{2t})$.
    \end{itemize}

    Using the identity \begin{equation}\label{abc trick} a_{1,k}b_1c_{1,k}-a_{2,k}b_2c_{2,k}=(a_{1,k}-a_{2,k})b_1c_{1,k}+(b_1-b_2)a_{2,k}c_{1,k}+a_{2,k}b_2(c_{1,k}-c_{2,k}),
     \end{equation}assuming  $r_1\leq r_2$ without loss of generality, we estimate the contributions of $|(a_{1,k}-a_{2,k})b_1c_{1,k}|$, $|(b_1-b_2)a_{2,k}c_{1,k}|$, $|a_{2,k}b_2(c_{1,k}-c_{2,k})|$ to $|II_2|$ independently in the following. \\

From now on we will repeatedly  use the assumption $|u_1|, |u_2|,|y|\leq 1$. These mean that 
\begin{equation} \label{CH01} r_1,\ r_2,\ \acute{r},\  \widehat{u_1},\  \widehat{u_2}, \ \widehat{y},\ R\  \leq 2.
\end{equation} 

    First, on the contribution of $(a_{1,k}-a_{2,k})b_1c_{1,k}$ to $|II_2|$, we compute
    \begin{eqnarray}\label{CH05}& &|(a_{1,k}-a_{2,k})c_{1,k}| \nonumber
    \\&=&r_1^{\frac{1}{\beta}-1}|cosk(\theta_1-\theta^{'})-cosk(\theta_2-\theta^{'})|[r_1^{1-\frac{1}{\beta}}I^{'}_{\frac{k}{\beta}}(\frac{r_1r^{'}}{2t})].
    \end{eqnarray}
Using  $r_1\leq r_2$ and $\sin ku\leq Ck\sin u$, we have
\begin{eqnarray}\label{CH04}& &r_1^{\frac{1}{\beta}-1}|cosk(\theta_1-\theta^{'})-cosk(\theta_2-\theta^{'})|\nonumber
\\&\leq& Ck\{\sqrt{r_1r_2|\sin^2\frac{|\theta_1-\theta_2|}{2}|}\}^{\frac{1}{\beta}-1}\leq Ck|u_1-u_2|^{\frac{1}{\beta}-1}.
\end{eqnarray}
By Lemma \ref{estimating the holder continuity of the derivative of I},  we have for $k\geq 1$ that

\begin{eqnarray}\label{CH02}r_1^{1-\frac{1}{\beta}}I^{'}_{\frac{k}{\beta}}(\frac{r_1r^{'}}{2t})
\leq r_1^{1-\frac{1}{\beta}}I_{\frac{k}{\beta}-1}(\frac{r_1r^{'}}{2t})
= (\frac{r^{'}}{2t})^{\frac{1}{\beta}-1}\frac{I_{\frac{k}{\beta}-1}(\frac{r_1r^{'}}{2t})}{(\frac{r_1r^{'}}{2t})^{\frac{1}{\beta}-1}}.
\end{eqnarray}
When $k=0$ we have
\begin{eqnarray}\label{CH03}r_1^{1-\frac{1}{\beta}}I^{'}_{0}(\frac{r_1r^{'}}{2t})
= r_1^{1-\frac{1}{\beta}}I_{1}(\frac{r_1r^{'}}{2t})
= (\frac{r^{'}}{2t})^{\frac{1}{\beta}-1}\frac{I_{1}(\frac{r_1r^{'}}{2t})}{(\frac{r_1r^{'}}{2t})^{\frac{1}{\beta}-1}}.
\end{eqnarray}
   Thus for  all $k\geq 2$, by (\ref{CH02}), (\ref{CH03}), (\ref{CH04}),  and (\ref{CH05}), we have
   \begin{eqnarray}\label{CH06} & &|(a_{1,k}-a_{2,k})c_{1,k}|\nonumber
   \\&\leq& Ck(\frac{r^{'}}{2t})^{\frac{1}{\beta}-1}(\frac{r_1r^{'}}{2t})^{1-\frac{1}{\beta}}I_{\frac{k}{\beta}-1}(\frac{r_1r^{'}}{2t})|u_1-u_2|^{\frac{1}{\beta}-1}\nonumber
   \\&\leq& \frac{Ck}{t^{\frac{1}{\beta}-1}}(\frac{r_1r^{'}}{2t})^{1-\frac{1}{\beta}}I_{\frac{k}{\beta}-1}(\frac{r_1r^{'}}{2t})|u_1-u_2|^{\frac{1}{\beta}-1}.
   \end{eqnarray}

 At this stage, the readers should notice from the assumption $r_1\leq r_2$ we have that $b_2\leq b_1$ , which is crucial for our trick to work as in the previous steps. Namely, we have to factor $a_1b_1c_{1,k}-a_2b_2c_{2,k}$ in a right way as in (\ref{abc trick}). Applying  formula (\ref{G's formula by Weber's formual}), Lemma \ref{Estimating the sum of G w.r.t k}, and (\ref{CH06}),   we compute
    \begin{eqnarray}\label{CH07}& &\Sigma_{k=0}^{\infty}\int_{0}^{\infty}\frac{1}{t^{\frac{m}{2}+3}}|(a_{1,k}-a_{2,k})c_{1,k}|dt\nonumber
 \\&\leq&C|u_1-u_2|^{\frac{1}{\beta}-1}\Sigma_{k=1}^{\infty} \int_{0}^{\infty}\frac{k}{t^{\frac{m}{2}+2+\frac{1}{\beta}}} e^{-\frac{r_1^2+{r^{'}}^2+R^2_2}{4t}}(\frac{r_1r^{'}}{2t})^{1-\frac{1}{\beta}}I_{\frac{k}{\beta}-1}(\frac{r_1r^{'}}{2t})dt\nonumber
 \\&=&C|u_1-u_2|^{\frac{1}{\beta}-1}\Sigma_{k=1}^{\infty} kG_{\frac{k}{\beta}-1,\frac{m}{2}+2+\frac{1}{\beta}, 1-\frac{1}{\beta}}(r_1,r^{'},R_2)\nonumber
 \\&\leq &C|u_1-u_2|^{\frac{1}{\beta}-1}.
 \end{eqnarray}

Second, we turn our attention to  the contribution of  $|(b_1-b_2)a_{2,k}c_{1,k}|$ to $|II_2|$.  We compute
\begin{eqnarray*}& &|b_1-b_2|=|(s_{i,1}-\acute{s_i})e^{-\frac{r_1^2+{r^{'}}^2+R^2_1}{4t}}-(s_{i,2}-\acute{s_i})e^{-\frac{r_2^2+{r^{'}}^2+R^2_2}{4t}}|
    \\&=&|(s_{i,1}-\acute{s_i})e^{-\frac{{r^{'}}^2}{4t}}e^{-\frac{R^2_1}{4t}}(e^{-\frac{r^2_1}{4t}}-e^{-\frac{r^2_2}{4t}})
    +(s_{i,1}-\acute{s_i})e^{-\frac{{r^{'}}^2}{4t}}e^{-\frac{r^2_2}{4t}}(e^{-\frac{R^2_1}{4t}}-e^{-\frac{R^2_2}{4t}})
     \\& &+(s_{i,1}-s_{i,2})e^{-\frac{{r^{'}}^2}{4t}}e^{-\frac{r^2_2+R^2_2}{4t}}|
    .\end{eqnarray*}
Then by the mean value theorem and $r_1\leq r_2$ we have
$$|e^{-\frac{r^2_1}{4t}}-e^{-\frac{r^2_2}{4t}}|\leq \frac{100|r_1-r_2|}{t}e^{-\frac{r^2_1}{4t}}$$
and
$$|e^{-\frac{R^2_1}{4t}}-e^{-\frac{R^2_2}{4t}}|\leq \frac{100|R_1-R_2|}{t}e^{-\frac{R^2_{1,2}}{4t}}$$
for some $R_{1,2}$ between $R_1$ and $R_2$.
    Thus using the same way as we estimate the contribution of  $|(a_1-a_2)b_1c_{1,k}|$ to $|II_2|$, we conclude that
     \begin{eqnarray}\label{CH08}& &\Sigma_{k=0}^{\infty}\int_{0}^{\infty}\frac{1}{t^{\frac{m}{2}+3}}|(b_1-b_2)a_{2,k}c_{1,k}|dt
\leq C|u_1-u_2|.
\end{eqnarray}

  Next, we estimate the crucial contribution of $|a_{2,k}b_2(c_{1,k}-c_{2,k})|$ to $|II_2|$. \\

   Using Lemma \ref{estimating the holder continuity of the derivative of I} and the inequality $I_1^{'}(z)\leq I_{0}(z)$, recalling that  $r_2\geq r_1$
   and $\beta>\frac{1}{2}$, we have 

  \begin{eqnarray*}
  & &\quad |c_{1,k}-c_{2,k}|= |I^{'}_{\frac{k}{\beta}}(\frac{r_1r^{'}}{2t})-I^{'}_{\frac{k}{\beta}}(\frac{r_2r^{'}}{2t})|
 \\&\leq&  \left \{
\begin{array}{cc}
 \frac{\frac{k}{\beta}(\frac{k}{\beta}-1)}{2^{\frac{k}{\beta}}\Gamma(\frac{k}{\beta}+1)}(\frac{r_2r^{'}}{2t})^{\frac{k}{\beta}-2}|\frac{r_2r^{'}}{2t}-\frac{r_1r^{'}}{2t}|+|\frac{r_2r^{'}}{2t}-\frac{r_1r^{'}}{2t}|(1+\frac{k}{2\beta})I_{\frac{k}{\beta}}(\frac{r_2r^{'}}{2t}) &\\
             \textrm{when}\  k\geq 2&;     \\
 \frac{\frac{1}{\beta}}{2^{\frac{1}{\beta}}(\frac{1}{\beta})!}|\frac{r_2r^{'}}{2t}-\frac{r_1r^{'}}{2t}|^{\frac{1}{\beta}-1}+|\frac{r_2r^{'}}{2t}-\frac{r_1r^{'}}{2t}|(1+\frac{1}{2\beta})I_{\frac{1}{\beta}}(\frac{r_2r^{'}}{2t}) \ \textrm{when}\  k=1 &\\
     |\frac{r_1r^{'}}{2t}-\frac{r_2r^{'}}{2t}|I_0(\frac{r_2r^{'}}{2t})\  \textrm{when}\ k=0.&
\end{array}
\right .
 \\&=&  \left \{
\begin{array}{cc}
 \frac{\frac{k}{\beta}(\frac{k}{\beta}-1)}{2^{\frac{k}{\beta}}\Gamma(\frac{k}{\beta}+1)}(\frac{r^{'}}{2t})(\frac{r_2r^{'}}{2t})^{\frac{k}{\beta}-2}|r_2-r_1|+\frac{r^{'}}{2t}|r_2-r_1|(1+\frac{k}{2\beta})I_{\frac{k}{\beta}}(\frac{r_2r^{'}}{2t}) &\\
             \textrm{when}\  k\geq 2&;     \\
 \frac{\frac{1}{\beta}}{2^{\frac{1}{\beta}}(\frac{1}{\beta})!}(\frac{r^{'}}{2t})^{\frac{1}{\beta}-1}|r_2-r_1|^{\frac{1}{\beta}-1}+|r_2-r_1|\frac{r^{'}}{2t}(1+\frac{1}{2\beta})I_{\frac{1}{\beta}}(\frac{r_2r^{'}}{2t}) \ \textrm{when}\  k=1 &\\
             \frac{r^{'}}{2t}|r_2-r_1|I_0(\frac{r_2r^{'}}{2t})\  \textrm{when}\ k=0.&
\end{array}
\right .
 \end{eqnarray*}

   When
 $k=0$ we implicitly use  $I_{-1}(x)=I_1(x)$, thanks to the properties of Bessel functions.\\

 We estimate the contribution of $|a_{2,k}b_2(c_{1,k}-c_{2,k})|$ to $|II_2|$ as
 \begin{eqnarray}\label{CH09}& &\Sigma_{k\geq 2}\int_{0}^{\infty}\frac{1}{t^{\frac{m}{2}+3}}|c_{1,k}-c_{2,k}|a_{2,k}b_2dt\nonumber
 \\& &+\int_{0}^{\infty}\frac{1}{t^{\frac{m}{2}+3}}|c_{1,1}-c_{2,1}|a_{2,1}b_2dt+\int_{0}^{\infty}\frac{1}{t^{\frac{m}{2}+3}}|c_{1,0}-c_{2,0}|a_{2,0}b_2dt\nonumber
 \\&\leq &N_1+N_2+N_3+N_4+N_5,
 \end{eqnarray}
 where
 \begin{eqnarray*}& &N_1=\Sigma_{k\geq 2}\int_{0}^{\infty} \frac{\frac{k}{\beta}(\frac{k}{\beta}-1)}{2^{\frac{k}{\beta}}\Gamma(\frac{k}{\beta}+1)}(\frac{r^{'}}{2t})(\frac{r_2r^{'}}{2t})^{\frac{k}{\beta}-2}|r_2-r_1|\frac{|s_{i,2}-\acute{s_i}|}{t^{\frac{m}{2}+3}}e^{-\frac{r_2^2+{r^{'}}^2+R^2_2}{4t}}dt,
 \\& &N_2=\Sigma_{k\geq 2}\int_{0}^{\infty} (1+\frac{k}{2\beta})(\frac{r^{'}}{2t})|r_2-r_1|I_{\frac{k}{\beta}}(\frac{r_2r^{'}}{2t})\frac{|s_{i,2}-\acute{s_i}|}{t^{\frac{m}{2}+3}}e^{-\frac{r_2^2+{r^{'}}^2+R^2_2}{4t}}dt,
 \\& &N_3=\int_{0}^{\infty} \frac{\frac{1}{\beta}}{2^{\frac{1}{\beta}}(\frac{1}{\beta})!}(\frac{r^{'}}{2t})^{\frac{1}{\beta}-1}|r_2-r_1|^{\frac{1}{\beta}-1}\frac{|s_{i,2}-\acute{s_i}|}{t^{\frac{m}{2}+3}}e^{-\frac{r_2^2+{r^{'}}^2+R^2_2}{4t}}dt,
 \\& &N_4=\int_{0}^{\infty} (1+\frac{1}{2\beta})(\frac{r^{'}}{2t})|r_2-r_1|I_{\frac{1}{\beta}}(\frac{r_2r^{'}}{2t})\frac{|s_{i,2}-\acute{s_i}|}{t^{\frac{m}{2}+3}}e^{-\frac{r_2^2+{r^{'}}^2+R^2_2}{4t}},
 \\& &N_5=\int_{0}^{\infty} (\frac{r^{'}}{2t})|r_2-r_1|I_{0}(\frac{r_2r^{'}}{2t})\frac{|s_{i,2}-\acute{s_i}|}{2t^{\frac{m}{2}+3}}e^{-\frac{r_2^2+{r^{'}}^2+R^2_2}{4t}}dt.
 \end{eqnarray*}
Before estimating term by term, we should deal with the term $e^{-\frac{r_2^2+{r^{'}}^2+R^2_2}{4t}}$ first.
\begin{clm}\label{estimating the exponential term} $e^{-\frac{r_2^2+{r^{'}}^2+R^2_2}{4t}}\leq e^{-\frac{1}{8t}}$.
\end{clm}
The see why the claim holds, just notice that we have $|u_1|,|u_2|\leq \frac{1}{8},\ {r^{'}}^2+|\widehat{y}|^2= |y|^2 =1$, then we get
 \begin{eqnarray*}& & {r^{'}}^2+R_2^2={r^{'}}^2+(\widehat{y}-\widehat{u}_2)^2
 \\&\geq& 1-[|\widehat{y}|^2-||\widehat{y}|-|\widehat{u_2}||^2]
  \\&=& 1-|\widehat{u_2}|(|\widehat{y}|+|\widehat{u_2}|)\geq 1-\frac{1}{8}\times\frac{9}{8}>\frac{1}{2}.
 \end{eqnarray*}
 Therefore $e^{-\frac{r_2^2+{r^{'}}^2+R^2_2}{4t}}\leq e^{-\frac{1}{8t}}$.\\

Now we estimate term by term. For $N_1$,  we compute
 \begin{eqnarray*}& & N_1
 \\&=&\Sigma_{k\geq 2}\int_{0}^{\infty} \frac{\frac{k}{\beta}(\frac{k}{\beta}-1)}{2^{\frac{k}{\beta}}\Gamma(\frac{k}{\beta}+1)}(\frac{r^{'}}{2t})(\frac{r_2r^{'}}{2t})^{\frac{k}{\beta}-2}|r_2-r_1|\frac{|s_{i,2}-\acute{s_i}|}{t^{\frac{m}{2}+3}}e^{-\frac{r_2^2+{r^{'}}^2+R^2_2}{4t}}dt
  \\&\leq&|u_1-u_2|\Sigma_{k\geq 2} \frac{\frac{k}{\beta}(\frac{k}{\beta}-1)}{2^{\frac{2k}{\beta}-2}\Gamma(\frac{k}{\beta}+1)8^{\frac{k}{\beta}-2}}\int_{0}^{\infty} \frac{1}{t^{\frac{m}{2}+2+\frac{k}{\beta}}}e^{-\frac{1}{8t}}dt.
 \end{eqnarray*}

 Notice that by the definition of the $Gamma$-function we have
 $$\int_{0}^{\infty} \frac{1}{t^{\frac{m}{2}+2+\frac{k}{\beta}}}e^{-\frac{1}{8t}}dt=8^{\frac{m}{2}+\frac{k}{\beta}+1}\Gamma(\frac{m}{2}+\frac{k}{\beta}+1).$$

 Then we have
  \begin{eqnarray*}& & N_1
 \\&\leq&|u_1-u_2|\Sigma_{k\geq 2} \frac{\frac{k}{\beta}(\frac{k}{\beta}-1)(8^{\frac{m}{2}+\frac{k}{\beta}+1})\Gamma(\frac{m}{2}+\frac{k}{\beta}+1)}{2^{\frac{2k}{\beta}-1}\Gamma(\frac{k}{\beta}+1)8^{\frac{k}{\beta}-2}}
 \\&\leq&|u_1-u_2|\Sigma_{k\geq 2} \frac{(\frac{k}{\beta})^2(8^{\frac{m}{2}+4})(\frac{k}{\beta}+m)^m}{4^{\frac{k}{\beta}}}
  \\&\leq&\ C|u_1-u_2|.
 \end{eqnarray*}
 To deduce  the second inequality above we used the fact that $\frac{\Gamma(\frac{m}{2}+\frac{k}{\beta}+1)}{\Gamma(\frac{k}{\beta}+1)}\leq (\frac{k}{\beta}+m)^m$. Thus we finished the estimate on $N_1$.\\

 For $N_2$, from formula (\ref{G's formula by Weber's formual}),  and Corollary \ref{Estimating the sum of G w.r.t k}, taking $v=8$, $\mu_1=\frac{1}{8}$ and $\mu_2=1$ in Corollary \ref{Estimating the sum of G w.r.t k},  we have that
 \begin{eqnarray*}& & N_2
 \\&\leq&C|u_2-u_1|\Sigma_{k\geq 2} (1+\frac{k}{2\beta})G_{\frac{k}{\beta},\frac{m}{2}+4,0}(r_2,r^{'},R_2)
 \\&\leq& C|u_2-u_1|.
 \end{eqnarray*}
 For $N_3$, actually it's our crucial term since it concerns the H\"older continuity exponent $\frac{1}{\beta}-1$. Nevertheless, from our established results,  $N_3$ can be estimated easily as
  \begin{eqnarray*}& & N_3
 \\&\leq&C|u_1-u_2|^{\frac{1}{\beta}-1}\int_{0}^{\infty} \frac{1}{t^{\frac{m}{2}+2+\frac{1}{\beta}}}e^{-\frac{1}{8t}}dt
  \\&\leq&C|u_1-u_2|^{\frac{1}{\beta}-1}.
 \end{eqnarray*}
 Using the same (and easier) way as we estimated $N_2$, we have that 
$$N_4 +N_5\leq C|u_1-u_2|,$$
 where we applied Claim \ref{estimating the exponential term}. Since $|u_1-u_2|\leq |u_1-u_2|^{\frac{1}{\beta}-1}\leq 1$, using the decomposition (\ref{CH09} ) and  the above estimates for $(N_i,0\leq i\leq 5)$,  we estimate the contribution of $|c_{1,k}-c_{2,k}|a_{2,k}b_2$ to $|II_2|$  as
  \begin{eqnarray}\label{CH10}& &\Sigma_{k=0}^{\infty}\int_{0}^{\infty}\frac{1}{t^{\frac{m}{2}+3}}|c_{1,k}-c_{2,k}|a_{2,k}b_2dt
\leq C|u_1-u_2|^{\frac{1}{\beta}-1}.
\end{eqnarray}

 Thus now we are ready to make the final conclusion on $|II_2|$. Assume $\frac{1}{\beta}-1< 1$, using (\ref{CH07}), (\ref{CH08}), and (\ref{CH10})   on
 the contributions of $|(a_1-a_2)b_1c_{1,k}|$, $|(b_1-b_2)a_2c_{1,k}|$, $|a_2b_2(c_{1,k}-c_{2,k})|$ to $|II_2|$, we have
  \begin{eqnarray}\label{II 2}& &|II_2|\leq C|u_1-u_2|^{\frac{1}{\beta}-1}.
\end{eqnarray}

 Regarding $|II_1|$, the H\"older continuity exponent $\frac{1}{\beta}-1$ doesn't appear in it. Actually employing the same method as we estimate $|II_2|$ (in a much easier way), the following estimate holds.
   \begin{eqnarray}\label{II 1}& &|II_1|\leq C|u_1-u_2|.
\end{eqnarray}

 Thus by (\ref{CH11}), (\ref{II 1}), and (\ref{II 2}), the proof of the lemma is completed.
\end{proof}

\section{Asymptotic behaviors of  the heat kernel.\label{Asymptotic behaviors of  the heat kernel}}
In this section we prove Lemma \ref{Bound on the gradient of h.k when r small and t equals 1} and proposition \ref{proposition on integral of the product the heat kernel and distance function to the alpha}, which are two items in Theorem \ref{all the properties of the h.k1}.  They rely on the  asymptotic estimates for the derivatives of the heat kernel in Lemma \ref{asymptotic estimate of radius derivative of Heat kernel}, \ref{asymptotic estimate of theta derivative of Heat kernel}, \ref{asymptotic estimate of time derivative of Heat kernel},   which   are implied by Theorem  \ref{Decay estimates for E}. \\

From now on, in both section \ref{Asymptotic behaviors of  the heat kernel} and \ref{Decay estimates for the heat kernel} we write $v=\beta({\theta-\acute{\theta}})$. 
Notice that the domain of $v$ is $[-\beta \pi, \beta \pi]$.
\begin{lem}\label{Bound on the gradient of h.k when r small and t equals 1}Suppose $t=1$, $x\in A_{\frac{1}{\sqrt{s}}}$, $y\in A_{\frac{10}{\sqrt{s}}}$, $0<s\leq \frac{1}{10000}$, then the following estimate holds
  $$|\nabla_xH(x,y,1)|\leq Ce^{-\frac{1}{s}}.$$

\end{lem}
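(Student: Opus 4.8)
The plan is to exploit the Gaussian decay built into the heat kernel formula (\ref{Higher dim heat kernel formula}) and Theorem \ref{Carslaw-Wang-Chen representation for hk}, combined with the geometric smallness hypothesis. The key observation is that $x\in A_{1/\sqrt{s}}$ means $r=P(x)\le \sqrt{2}/\sqrt{s}$ and $|\widehat x|\le\sqrt 2/\sqrt s$, while $y\in A_{10/\sqrt s}$ means $r',|\widehat y|\le 10\sqrt 2/\sqrt s$. The point, however, is not that these are large but that I will apply the lemma in the rescaled picture: the natural scaling of the heat kernel sends $(x,y,1)\mapsto (\sqrt s\,x,\sqrt s\,y, s)$, and in those coordinates $\sqrt s\,x$ lies in $A_1$, $\sqrt s\,y$ lies in $A_{10}$, and the time is $s\le 1/10000$, i.e.\ \emph{very small}. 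So really the statement is: for $x\in A_1$, $y\in A_{10}$ and $t=s$ small, $|\nabla_x H(x,y,s)|\le C s^{-(m+1)/2}e^{-c/s}$ for an appropriate $c$; the claimed bound $Ce^{-1/s}$ then follows because the source point $y$ and the evaluation point $x$ are separated by a definite distance — indeed $x\in A_1$ forces $|\widehat x|\le\sqrt2$, while $y\in A_{10}$, but crucially the relevant separation comes from comparing $x$ (near the cone vertex region, small norm) against the \emph{outer} boundary behaviour; I should double-check the precise domains, but the mechanism is that $|x-y|$ or at least the combination $r^2+r'^2+R^2$ appearing in the exponent is bounded below by a constant.

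Concretely, I would differentiate the three pieces of the representation in (\ref{P's Expression formula}): the finite sum of Gaussian-type terms $e^{-(r^2+r'^2+R^2 - 2rr'\cos(v+2k\beta\pi))/4t}$ over the finitely many $k$ with $|v+2k\beta\pi|<\pi$, and the $E(z,v)$ term. For the finite-sum terms, $\nabla_x$ brings down factors that are polynomial in $1/t$ and in $r,r',R$, times the same exponential; since $-(r^2+r'^2+R^2-2rr'\cos\psi)/4t \le -(r-r')^2/4t - r r'(1-\cos\psi)/2t$ and, by the argument used in Lemma \ref{integral of time derivative of hk w.r.t time} via (\ref{Def of a}), the whole quantity $r^2+r'^2+R^2-2rr'\cos\psi$ is bounded below by an absolute constant $a>0$ whenever $|x-y|$ is bounded below (which holds here because $x$ and $y$ live in polydisks of comparable but definite size and $y$ is genuinely ``outside''), one gets a bound $C(1+t^{-N})e^{-a/ct}$, and at $t=s$ small the exponential dominates all polynomial factors, giving $\le C e^{-a'/s}$. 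For the $E(z,v)$ term and its $x$-derivatives I would invoke Theorem \ref{Decay estimates for E} exactly as in Lemmas \ref{integral of gradient of time derivative of the hk} and \ref{time integral of the bilinear derivatives of the heat kernel}: the decay estimates for $E$, $\partial_z E$, together with the prefactor $e^{-(r^2+r'^2+R^2)/4t}$, again produce $C(1+t^{-N})e^{-a/ct}$.

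The main obstacle I anticipate is bookkeeping the lower bound on the exponent: one must verify that under the hypotheses $x\in A_1$, $y\in A_{10}$ (after rescaling) one genuinely has $r^2+r'^2+R^2 - 2rr'\cos(v+2k\beta\pi)\ge a>0$ for \emph{all} admissible $k$ and for all $v\in[-\beta\pi,\beta\pi]$, and to pin down the constant so that the final exponent is exactly $-1/s$ rather than $-c/s$ with $c<1$ (this is presumably why the hypothesis is stated with the generous constant $10$ and $s\le 1/10000$ — there is room to absorb the constant). I would handle the case split $r'\ge 2$ versus $r'\le 2$ as in the claims inside Lemma \ref{integral of gradient of time derivative of the hk}: when $r'$ is large compared to $r$ the term $(r-r')^2$ alone is large, and when $r'$ is bounded one uses that $R$ or $r'$ is bounded below. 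Once the lower bound $a$ and the polynomial-in-$1/s$ prefactor are in hand, the estimate $C(1+s^{-N})e^{-a/(cs)}\le Ce^{-1/s}$ for $s\le 1/10000$ is immediate, and the rescaling back to $t=1$ multiplies by powers of $s$ which are again swallowed by the exponential. I will also need to note, as in the remark after Theorem \ref{Carslaw-Wang-Chen representation for hk}, that the estimate is continuous in $v$ so the exceptional directions $v\equiv\pm\pi$ cause no trouble.
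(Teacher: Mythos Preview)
Your mechanism is correct---the bound comes from Gaussian decay driven by a large separation---but the rescaling to small time is an unnecessary detour and it obscures the one real issue. The paper stays at $t=1$ and simply quotes Lemmas~\ref{asymptotic estimate of radius derivative of Heat kernel} and~\ref{asymptotic estimate of theta derivative of Heat kernel}: those lemmas (stated for $1\le t\le 2$) already package all the derivative bounds you propose to re-derive from the representation formula and Theorem~\ref{Decay estimates for E}, and they hand you factors of the form $e^{-(r-r')^2/16}$ (times harmless terms) for $\widehat H$, which combine with the Euclidean factor $e^{-R^2/4}$ coming from $H_E$. The entire proof then collapses to a single geometric input, $(r-r')^2+R^2\ge 20/s$, after which the polynomial prefactors (at worst of size $1/\sqrt s$) are absorbed by the surplus $-1/(4s)$ left over in the exponent once you extract $e^{-1/s}$.

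On the separation bound: you are right to flag it, and your rescaling makes the difficulty look worse than it is, since $A_1\subset A_{10}$ and there is no lower bound on $|\tilde x-\tilde y|$ in that picture---your proposed case split $r'\ge 2$ versus $r'\le 2$ cannot close as written. The paper's proof tacitly reads the hypothesis with equalities $r^2+|\widehat x|^2=1/s$ and $r'^2+|\widehat y|^2=100/s$; in other words $y$ sits on the sphere of radius $10/\sqrt s$, consistent with the only place the lemma is actually invoked (the $Q_4$-term in Proposition~\ref{time holder estimate:spatial derivative}, where $y\in\partial A_{10}$ before rescaling). Under that reading, writing $a=(r,\widehat x)$, $b=(r',\widehat y)$ in $\mathbb R^{m+1}$ gives $(r-r')^2+R^2=|a-b|^2\ge(|b|-|a|)^2$, and since $|a|\le\sqrt{2}/\sqrt s$ while $|b|\ge 10/\sqrt s$ this is at least $(10-\sqrt2)^2/s>20/s$. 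Once you adopt this boundary reading, no case split is needed and the paper's two-line argument goes through; without it, neither your approach nor the paper's can succeed.
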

\begin{proof}{of Lemma \ref{Bound on the gradient of h.k when r small and t equals 1}:} This is a direct consequence of  the lemmas in section \ref{Asymptotic behaviors of  the heat kernel}. We only consider the estimate for $\frac{1}{r}\frac{\partial}{\partial \theta}H(x,y,1)$, the other directional derivatives are similar.  From  the hypothesis
 $$r^2+|\widehat{x}|^2=\frac{1}{s},\ \acute{r}^2+|\widehat{y}|^2=\frac{100}{s},$$
it's easy to deduce that $$(r-r^{'})^2+R^2\geq \frac{20}{s}.$$
Then from Lemma \ref{asymptotic estimate of radius derivative of Heat kernel} and \ref{asymptotic estimate of theta derivative of Heat kernel}, it's straightforward to see that
\begin{eqnarray*} |\frac{1}{r}\frac{\partial}{\partial \theta}H(x,y,1)|\leq
 Ce^{-\frac{1}{s}}.
\end{eqnarray*}
\end{proof}

Let $\large a=\frac{\sin^2(\frac{\beta\pi}{2})}{4}.$ In the following discussions, we will frequently use the  inequality 
\begin{equation}\label{Holder inequality-simple version}1+x^p\leq C(1+x^q)  \ \textrm{if}\ q\geq p \ \textrm{and}\ 0\leq p\leq q\leq 10,
\end{equation}
and the inequality
\begin{equation}\label{inequality on power of x times exponential of x}x^pe^{-x^q}\leq C_{p,q},\ \textrm{if}\ q>0.
\end{equation}

\begin{prop}\label{proposition on integral of the product the heat kernel and distance function to the alpha}
There exists a constant $C$ with the following property.  For any $x\in \mathbb{R}^2\times \mathbb{R}^m$, any  second order spatial differential operator $\mathfrak{D}\in \mathfrak{T}$,\ and any $\alpha$  such that $0\leq \alpha\leq 1$, the following estimates on the heat kernel hold.
$$\int_{\mathbb{R}^2\times \mathbb{R}^{m}}|\sup_{1\leq t\leq 2}\frac{\partial}{\partial t} \mathfrak{D}H(x,y,t)||x-y|^{\alpha}dy\leq C;$$
$$\int_{\mathbb{R}^2\times \mathbb{R}^{m}}|\sup_{1\leq t\leq 2}\frac{\partial^2}{\partial t^2} H(x,y,t)||x-y|^{\alpha}dy\leq C.$$
\end{prop}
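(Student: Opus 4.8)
The plan is to reduce both estimates to the decay bounds of Theorem \ref{Decay estimates for E} together with the representation formula (\ref{P's Expression formula}), exactly in the spirit of Lemmas \ref{integral of gradient of time derivative of the hk} and \ref{time integral of the bilinear derivatives of the heat kernel}. First I would fix $x=(r,\theta,\widehat{x})$ and, by continuity in $v=\beta(\theta-\theta')$, assume $v\neq\pm\pi\pmod{2\beta\pi}$ so that the last term in (\ref{P's Expression formula}) drops out. The heat kernel then splits into the finitely-many "image" Gaussian terms $e^{-(r-\acute r)^2+2r\acute r(1-\cos(v+2k\beta\pi))+R^2)/4t}$ and the $E$-term $E(r\acute r/2t,v)e^{-(r^2+\acute r^2+R^2)/4t}$. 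Differentiating $\mathfrak{D}H$ once more in $t$ (or differentiating $H$ twice in $t$) produces, for each piece, a prefactor which is a polynomial in $1/t$ times a polynomial in the spatial variables, times the same exponential (with at most $\partial_z^2 E$, $\partial_z E$ appearing in the $E$-piece — note no $v$-derivatives of $E$ are needed for $\mathfrak{D}\in\mathfrak{T}$ acting only in $r,s_i$, nor for pure $t$-derivatives). For $1\le t\le 2$ the $1/t$ factors are harmless, so after invoking (\ref{Def of  a}) and the decay of $E,\partial_zE,\partial_z^2E$ from Theorem \ref{Decay estimates for E}, I expect a pointwise bound of the shape
$$\Big|\sup_{1\le t\le 2}\tfrac{\partial}{\partial t}\mathfrak{D}H(x,y,t)\Big|\ \le\ C\,\big(1+|x-y|^{N}\big)\,e^{-\,a|x-y|^2/1000}$$
for some fixed $N$ (and likewise for $\partial^2_{t^2}H$); the key point is that when $r$ is small the quantities $r\acute r(1-\cos(v+2k\beta\pi))$ and $\acute r(1-\cos v)$ are controlled (as in the Claim inside Lemma \ref{integral of gradient of time derivative of the hk}), so the exponent really does dominate the polynomial growth in $|x-y|$.

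Granting such a pointwise bound, the two integral estimates follow at once: since $0\le\alpha\le 1$,
$$\int_{\mathbb{R}^2\times\mathbb{R}^m}\big(1+|x-y|^{N}\big)e^{-a|x-y|^2/1000}\,|x-y|^{\alpha}\,dy\ <\ \infty$$
with a bound independent of $x$, because the integrand is a Schwartz-type function of the Euclidean distance $|x-y|$ on an $(m+2)$-dimensional cone whose total volume growth is polynomial. Here I would use that the cone $(\mathbb{R}^2\times\mathbb{R}^m,g_{E,\beta})$ has the same volume-growth rate as Euclidean space (the angle $\beta$ only changes a constant), so the radial integral $\int_0^\infty (1+\rho^N)e^{-a\rho^2/1000}\rho^{\alpha+m+1}\,d\rho$ converges to a constant depending only on $m,\beta,\alpha$ — precisely the allowed dependence in Definition \ref{Dependence of the constant C}.

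The main obstacle is the bookkeeping in the first step: one must check that \emph{every} term produced by $\partial_t\mathfrak{D}$ and by $\partial_t^2$ — including the mixed terms where the $t$-derivative hits the Gaussian prefactor $(r^2+\acute r^2+R^2)/4t^2$ and the $E$-argument $r\acute r/2t$ simultaneously — still carries the full exponential $e^{-a|x-y|^2/1000}$ after using Theorem \ref{Decay estimates for E}, and that none of them secretly needs a $\partial_v E$ or $\partial_v^2 E$ bound (which would be the genuinely delicate estimates). This is the same mechanism already used in Lemmas \ref{integral of gradient of time derivative of the hk} and \ref{time integral of the bilinear derivatives of the heat kernel}, so I would organize the proof by first stating the pointwise estimate as a sub-claim, proving it by the term-by-term comparison "pick a representative term, bound its prefactor by a power of $|x-y|$ using $|x-y|$-smallness of the $r\acute r(1-\cos\cdot)$ factors, absorb into $e^{-a/\cdot t}$", and then doing the trivial radial integration. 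The case where $r$ (equivalently $P(x)$) is large is, as usual, easier and handled by the same bound with $r$ bounded below.
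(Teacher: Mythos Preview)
Your overall strategy---a pointwise Gaussian bound in $|x-y|$ followed by a radial integration---is correct and does work, but it is organized differently from the paper and contains one factual slip. The paper does \emph{not} prove a single bound of the form $C(1+|x-y|^N)e^{-c|x-y|^2}$; instead it factors $H=\widehat{H}\cdot H_E$ and invokes Lemmas~\ref{asymptotic estimate of radius derivative of Heat kernel}--\ref{asymptotic estimate of time derivative of Heat kernel} to bound the $\widehat{H}$-derivatives by an explicit three-term quantity $\underline{Z}$ (one term for the principal image $k=0$, one for the remaining images $k\neq 0$ using (\ref{Def of  a}), one for the $E$-piece). It then integrates $\underline{Z}\,e^{-R^2/16}\,|x-y|^\alpha$ by splitting into four pieces $U_1,\dots,U_4$; the only nontrivial one is $U_1$, which carries the factor $\acute r|\sin v|\cdot\acute r$ and is handled by first integrating in $v$ exactly (producing $1-e^{-r\acute r(1-\cos\beta\pi)/16}$) and then splitting into the cases $r\ge 1$ and $r\le 1$. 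Your route collapses all of this to a single Gaussian by first showing the prefactors are bounded by powers of $|x-y|$, and then appealing to uniform polynomial volume growth on the cone---this is cleaner to state but hides the place where the $\tfrac{1}{r}$ is absorbed; in fact bounding $\acute r|\sin v|\le C|x-y|$ already requires the same $r\gtrless|x-y|$ dichotomy, so the two arguments are doing the same work in different order.

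The slip: you assert that no $\partial_v E$ bound is needed because ``$\mathfrak{D}\in\mathfrak{T}$ acts only in $r,s_i$.'' This is false---$\mathfrak{T}$ contains $\tfrac{1}{r}\partial^2_{\theta s_i}$, so a $v$-derivative of $E$ \emph{does} appear (see the proof of Lemma~\ref{asymptotic estimate of theta derivative of Heat kernel}). Fortunately Theorem~\ref{Decay estimates for E} supplies exactly the bound $|z^{-1}\partial_v E|\le C$ you need (take $n=0$, $p=-1$ there), and only a single $v$-derivative arises, so your argument survives once you add this ingredient. You should also make the prefactor bookkeeping explicit: the claims $\acute r(1-\cos v)\le C|x-y|$ and $\acute r|\sin v|\le C|x-y|$ both require a short case analysis---$r\le |x-y|$ (use $\acute r\le r+|x-y|$) versus $r>|x-y|$ (use $r\acute r(1-\cos v)\le |x-y|^2$ and $\acute r\le 2r$)---rather than merely a reference to the Claim in Lemma~\ref{integral of gradient of time derivative of the hk}, which is stated only under the normalization $|x-y|=1$.
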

\begin{proof}{of Proposition \ref{proposition on integral of the product the heat kernel and distance function to the alpha}:}\
We still write $v=\beta({\theta-\acute{\theta}})$.
Since \begin{equation}\label{H=H^2 times HE}H=\widehat{H}\times H_E,\ H_E=\frac{1}{(4\pi t)^{\frac{m}{2}}}e^{-\frac{R^2}{4t}},\ R=|\widehat{x}-\widehat{y}|,
\end{equation}

we obviously have
\begin{eqnarray*}& &|\sup_{1\leq t\leq 2}\frac{\partial}{\partial t} \mathfrak{D}H(x,y,t)|
\\&\leq&(\sup_{1\leq t\leq 2}|\frac{1}{r}\frac{\partial\widehat{H}}{\partial \theta}|+\sup_{1\leq t\leq 2}|\frac{1}{r}\frac{\partial^2\widehat{H}}{\partial t \partial \theta}|+\sup_{1\leq t\leq 2}|\frac{\partial\widehat{H}}{\partial r}|+\sup_{1\leq t\leq 2}|\frac{\partial^2\widehat{H}}{\partial t \partial r}|+\sup_{1\leq t\leq 2}|\frac{\partial\widehat{H}}{\partial t}|)
\\& &\times(\sup_{1\leq t\leq 2}|\frac{\partial H_E}{ \partial t}|+\Sigma_{i=1}^{m}(\sup_{1\leq t\leq 2}|\frac{\partial H_E}{ \partial s_i}|+\sup_{1\leq t\leq 2}|\frac{\partial^2 H_E}{\partial t \partial s_i}|)).
\end{eqnarray*}
From (\ref{H=H^2 times HE}) we see that\begin{equation}\label{A01}(\sup_{1\leq t\leq 2}|\frac{\partial H_E}{ \partial t}|+\Sigma_{i=1}^{m}(\sup_{1\leq t\leq 2}|\frac{\partial H_E}{ \partial s_i}|+\sup_{1\leq t\leq 2}|\frac{\partial^2 H_E}{\partial t \partial s_i}|)\leq Ce^{-\frac{R^2}{16}}.\end{equation}

 Without loss of generality we can assume $\theta=0$. Thus from Lemma \ref{asymptotic estimate of radius derivative of Heat kernel}, \ref{asymptotic estimate of theta derivative of Heat kernel}, and \ref{asymptotic estimate of time derivative of Heat kernel}, we have
\begin{eqnarray*}& &(\sup_{1\leq t\leq 2}|\frac{1}{r}\frac{\partial\widehat{H}}{\partial \theta}|+\sup_{1\leq t\leq 2}|\frac{1}{r}\frac{\partial^2\widehat{H}}{\partial t \partial \theta}|+\sup_{1\leq t\leq 2}|\frac{\partial\widehat{H}}{\partial r}|
\\& &+\sup_{1\leq t\leq 2}|\frac{\partial^2\widehat{H}}{\partial t \partial r}|+\sup_{1\leq t\leq 2}|\frac{\partial\widehat{H}}{\partial t}|)
\\&\leq&C\underline{Z},
\end{eqnarray*}
where

\begin{eqnarray*}& &\underline{Z}
\\&=&(1+\frac{1}{r}|r{\acute{r}}\sin v|)e^{-\frac{({r-{\acute{r}}})^2}{16}}e^{-\frac{r{\acute{r}}[1-\cos v]}{8}}
\\& &+
e^{-\frac{({r-{\acute{r}}})^2}{16}}e^{-ar{\acute{r}}}
+e^{-\frac{r^2+{\acute{r}}^2}{16}}.
\end{eqnarray*}
Thus
\begin{equation*}|\sup_{1\leq t\leq 2}\frac{\partial}{\partial t} \mathfrak{D}H(x,y,t)|\leq C\underline{Z}e^{-\frac{R^2}{16}}.\end{equation*}

Then combining \ref{A01},   we estimate
\begin{eqnarray*}& &\int_{\mathbb{R}^2\times \mathbb{R}^{m}}|\sup_{1\leq t\leq 2}\frac{\partial}{\partial t} \mathfrak{D}H(x,y,t)||x-y|^{\alpha}dy
\\&\leq&C\int_{-\beta\pi}^{\beta\pi}\int_{0}^{\infty}\int_{R^m}\underline{Z}e^{-\frac{R^2}{16}}(|{r-{\acute{r}}}|^{\alpha}+r{\acute{r}}[1-\cos v]
+R^{\alpha})\acute{r}d\acute{r}d{v} d\widehat{y}
\\&=&C\int_{-\beta\pi}^{\beta\pi}\int_{0}^{\infty}\underline{Z}(|{r-{\acute{r}}}|^{\alpha}+r{\acute{r}}[1-\cos v]
)\acute{r}d\acute{r}d{v} \int_{R^m}e^{-\frac{R^2}{16}}d\widehat{y}
\\& &+C\int_{-\beta\pi}^{\beta\pi}\int_{0}^{\infty}\underline{Z}\acute{r}d\acute{r}d{v} \int_{R^m}R^{\alpha}e^{-\frac{R^2}{16}}d\widehat{y}.
\end{eqnarray*}
Since it's obvious that
$$\int_{R^m}R^{\alpha}e^{-\frac{R^2}{16}}d\widehat{y}+\int_{R^m}e^{-\frac{R^2}{16}}d\widehat{y}\leq C,$$
 we have
 \begin{eqnarray*}& &\int_{\mathbb{R}^2\times \mathbb{R}^{m}}|\sup_{1\leq t\leq 2}\frac{\partial}{\partial t} \mathfrak{D}H(x,y,t)||x-y|^{\alpha}dy
 \\&\leq &\int_{-\beta\pi}^{\beta\pi}\int_{0}^{\infty}\underline{Z}(1+|{r-{\acute{r}}}|^{\alpha}+(r{\acute{r}}[1-\cos\theta^{'}])^{\frac{\alpha}{2}}
)\acute{r}d\acute{r}d{v}
\\&\leq&U_{1}+U_{2}+U_{3}+U_{4}.
 \end{eqnarray*}
 The four terms in the decomposition are 
 \begin{eqnarray*} &  U_{1}&=\int_{-\beta\pi}^{\beta\pi}\int_{0}^{\infty}e^{-\frac{({r-{\acute{r}}})^2}{16}}
                e^{-\frac{r{\acute{r}}[1-\cos v]}{8}}|r{\acute{r}}\sin v|(1+|{r-{\acute{r}}}|^{\alpha}+(r{\acute{r}}[1-\cos v])^{\frac{\alpha}{2}})
          \\& & \qquad    \times \frac{\acute{r}}{r}d\acute{r}d{v},
\\&  U_{2}&=\int_{-\beta\pi}^{\beta\pi}\int_{0}^{\infty}e^{-\frac{({r-{\acute{r}}})^2}{16}}
                e^{-\frac{r{\acute{r}}[1-\cos{v}]}{8}}(1+|{r-{\acute{r}}}|^{\alpha}+(r{\acute{r}}[1-\cos{v}])^{\frac{\alpha}{2}})
                \acute{r}d\acute{r}d{v},
           \\&  U_{3}&=\int_{-\beta\pi}^{\beta\pi}\int_{0}^{\infty}e^{-\frac{({r-{\acute{r}}})^2}{16}}
                e^{-ar\acute{r}}(1+|{r-{\acute{r}}}|^{\alpha}+(r{\acute{r}}[1-\cos{v}])^{\frac{\alpha}{2}})
                \acute{r}d\acute{r}d{v},
      \\&   U_{4}&=\int_{-\beta\pi}^{\beta\pi}\int_{0}^{\infty}e^{-\frac{r^2+\acute{r}^2}{16}}
                (1+|{r-{\acute{r}}}|^{\alpha}+(r{\acute{r}}[1-\cos{v}])^{\frac{\alpha}{2}})\acute{r}d\acute{r}d{v}.
 \end{eqnarray*}

 We first estimate the most critical term $U_1$. Using (\ref{inequality on power of x times exponential of x}) we have that
\begin{equation}\label{Number 1 application of the exponential inequality }e^{-\frac{({r-{\acute{r}}})^2}{32}}
                e^{-\frac{r{\acute{r}}[1-\cos{v}]}{16}}(1+|{r-{\acute{r}}}|^{\alpha}+(r{\acute{r}}[1-\cos{v}])^{\frac{\alpha}{2}})\leq C.
                \end{equation}
Then
\begin{equation}U_{1}\leq C\int_{-\beta\pi}^{\beta\pi}\int_{0}^{\infty}e^{-\frac{({r-{\acute{r}}})^2}{32}}
                e^{-\frac{r{\acute{r}}[1-\cos{v}]}{16}}|r{\acute{r}}\sin{v}|
               \frac{\acute{r}}{r}d\acute{r}d{v}.
\end{equation}
Notice that we have 
\[ \begin{array}{lcl} \int_{-\beta\pi}^{\beta\pi}|r{\acute{r}}\sin{v}| e^{-\frac{r{\acute{r}}[1-\cos{v}]}{16}}
                d{v}
                & = & 2\int_{0}^{\beta\pi}(r{\acute{r}}\sin{v} )e^{-\frac{r{\acute{r}}[1-\cos{v}]}{16}}
                d{v}\\
           & = & 32[1-e^{-\frac{r{\acute{r}}[1-\cos\beta\pi]}{16}}].\end{array}
\]

To bound $U_1$,  using the above identity we compute 
\begin{eqnarray}\label{U(1) general estimate}\nonumber U_{1}
&\leq& C\int_{0}^{\infty}\frac{\acute{r}}{r}e^{-\frac{({r-{\acute{r}}})^2}{32}}
                 d\acute{r}\int_{-\beta\pi}^{\beta\pi}e^{-\frac{r{\acute{r}}[1-\cos{v}]}{16}}|r{\acute{r}}\sin{v}|d{v}
      \nonumber  \\&\leq& C\int_{0}^{\infty}\frac{\acute{r}}{r}e^{-\frac{({r-{\acute{r}}})^2}{32}}
                 [1-e^{-\frac{r{\acute{r}}[1-\cos\beta\pi]}{16}}]d\acute{r}.
                \end{eqnarray}
                When $r\geq 1$, from (\ref{U(1) general estimate}) we have
                \begin{eqnarray*} U_{1}
                  &\leq & C[\int_{0}^{\infty}\frac{\acute{r}-r}{r}e^{-\frac{({r-{\acute{r}}})^2}{32}}
                 d\acute{r}+\int_{0}^{\infty}e^{-\frac{({r-{\acute{r}}})^2}{32}}
                 d\acute{r}]
                 \\&\leq& C.
\end{eqnarray*}
When $r\leq 1$, using the  inequality
 $$[1-e^{-\frac{r{\acute{r}}[1-\cos\beta\pi]}{16}}] \leq C r \cdot \acute{r}$$
and the inequality
$$\acute{r}^2\leq (\acute{r}-r)^2+2|\acute{r}-r|+1,$$
 we obtain  from (\ref{U(1) general estimate}) that
 \begin{eqnarray*}\label{U(1) general estimate}\nonumber U_{1}
  &\leq& C\int_{0}^{\infty}\frac{\acute{r}}{r}e^{-\frac{({r-{\acute{r}}})^2}{32}}
                 [1-e^{-\frac{r{\acute{r}}[1-\cos\beta\pi]}{16}}]d\acute{r}
      \leq C\int_{0}^{\infty}\acute{r}^2e^{-\frac{({r-{\acute{r}}})^2}{32}}
                 d\acute{r}
                 \\&\leq& C\int_{0}^{\infty}((\acute{r}-r)^2+2|\acute{r}-r|+1)e^{-\frac{({r-{\acute{r}}})^2}{32}}
                 d\acute{r}
                 \\&\leq& C.
                \end{eqnarray*}
Thus,   $U_{1}$ is bounded uniformly. Using (\ref{inequality on power of x times exponential of x}),  it's easy to see that
$$U_2,\ U_3,\ U_4\leq C.$$
Therefore, every term is controlled so the estimate of
 $$\int_{\mathbb{R}^2\times \mathbb{R}^{m}}|\sup_{1\leq t\leq 2}\frac{\partial}{\partial t} \mathfrak{D}H(x,y,t)||x-y|^{\alpha}dy$$ follows.\\

  The estimate of  $\int_{\mathbb{R}^2\times \mathbb{R}^{m}}|\sup_{1\leq t\leq 2}\frac{\partial^2}{\partial t^2} H(x,y,t)||x-y|^{\alpha}dy$ is a directly consequence of Lemma \ref{asymptotic estimate of time derivative of Heat kernel} and similar arguments as above.
\end{proof}

\begin{lem}\label{asymptotic estimate of theta derivative of Heat kernel}We have
\begin{eqnarray*}& &\sup_{1\leq t\leq 2}|\frac{1}{r}\frac{\partial\widehat{H}}{\partial \theta}|+\sup_{1\leq t\leq 2}|\frac{1}{r}\frac{\partial^2\widehat{H}}{\partial t \partial \theta}|
\\&\leq& \frac{C}{r}e^{-\frac{({r-{\acute{r}}})^2}{16}}e^{-\frac{r{\acute{r}}[1-\cos\beta(\theta-\theta^{'})]}{8}}|r{\acute{r}}\sin\beta(\theta-\theta^{'})|+
Ce^{-\frac{({r-{\acute{r}}})^2}{16}}e^{-ar{\acute{r}}}
\\& &+Ce^{-\frac{r^2+{\acute{r}}^2}{16}}.
\end{eqnarray*}

\end{lem}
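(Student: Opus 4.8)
The plan is to start from the Carslaw--Weber representation of Theorem \ref{Carslaw-Wang-Chen representation for hk} in the case $m=0$, that is,
\[
\widehat H(r,\theta,\acute r,\theta',t)=\frac{1}{8\pi^2\beta\, t}\,e^{-\frac{r^2+\acute r^2}{4t}}\,P\!\left(\tfrac{r\acute r}{2t},v\right),\qquad v=\beta(\theta-\theta'),
\]
using the explicit decomposition of $P(z,v)$ in (\ref{P's Expression formula}) into: (i) the principal image term $2\pi\beta\,e^{z\cos v}$; (ii) the finitely many remaining image terms $2\pi\beta\,e^{z\cos(v+2k\beta\pi)}$ with $k\neq0$, $-\pi<v+2k\beta\pi<\pi$ (of which there are at most $2\lceil1/\beta\rceil$, since $|v|\le\beta\pi$); (iii) the boundary terms $\pi\beta\,e^{-z}$, which are locally constant in $v$ and hence disappear under $\partial_\theta$; and (iv) the contour term $E(z,v)$. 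As in the preceding lemmas we work with $v\neq\pm\pi\bmod 2\beta\pi$ and recover the general case by continuity. Since $\partial_\theta=\beta\partial_v$, it suffices to differentiate each piece in $v$ (and, for the second estimate, once more in $t$), after combining $e^{-(r^2+\acute r^2)/4t}$ with each image exponential into $\exp\bigl(-\tfrac{(r-\acute r)^2+2r\acute r(1-\cos(v+2k\beta\pi))}{4t}\bigr)$.

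First I would treat the principal image. A direct differentiation gives $\partial_\theta$ of the corresponding summand equal to $-\tfrac{\beta}{4\pi t}\cdot\tfrac{r\acute r\sin v}{2t}\exp\bigl(-\tfrac{(r-\acute r)^2+2r\acute r(1-\cos v)}{4t}\bigr)$; multiplying by $\tfrac1r$ and restricting to $1\le t\le2$ (so that $\tfrac1{4t}\ge\tfrac1{16}$ and $\tfrac1{2t}\ge\tfrac18$) yields exactly the first term $\tfrac Cr e^{-(r-\acute r)^2/16}e^{-r\acute r(1-\cos v)/8}|r\acute r\sin v|$ of the claimed bound. For the finitely many $k\neq0$ image terms, the exponent satisfies $1-\cos(v+2k\beta\pi)\ge 1-\cos(\beta\pi)=8a$ with $a=\tfrac14\sin^2(\beta\pi/2)$, because $\beta\pi\le|v+2k\beta\pi|\le\pi$; hence each such summand, together with any polynomial factor in $r,\acute r$ produced by $\partial_v$, is dominated --- using $x^pe^{-x/8t}\le C$ on $t\in[1,2]$, i.e.\ inequality (\ref{inequality on power of x times exponential of x}) --- by $Ce^{-(r-\acute r)^2/16}e^{-a r\acute r}$, and the finite sum only enlarges $C$. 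Finally, the contribution of the contour term is handled by invoking Theorem \ref{Decay estimates for E}, which controls $E$ and $\partial_v E$; combined with $e^{-(r^2+\acute r^2)/4t}$ and $r^2+\acute r^2=(r-\acute r)^2+2r\acute r$, it produces at most $Ce^{-(r-\acute r)^2/16}e^{-ar\acute r}+Ce^{-(r^2+\acute r^2)/16}$, the remaining two terms of the stated estimate.

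For the mixed derivative $\tfrac1r\partial_t\partial_\theta\widehat H$ I would repeat this computation after also applying $\partial_t$. The $t$-derivative lands on $\tfrac1t$, on $e^{-(r^2+\acute r^2)/4t}$ (producing a factor $\tfrac{r^2+\acute r^2}{4t^2}$), or on $z=\tfrac{r\acute r}{2t}$ inside the $\cos$ exponentials or inside $E$ (producing $\tfrac{r\acute r}{2t^2}$); on $1\le t\le2$ all powers of $1/t$ are bounded, and each extra polynomial factor such as $(r-\acute r)^2+2r\acute r(1-\cos v)$, $r\acute r$, or $r^2+\acute r^2$ is reabsorbed into the available Gaussian by $xe^{-x/8t}\le C$, while enough decay is retained to reproduce the same three-term bound with a larger constant; the $E$-derivative needed here, $\partial_t\partial_v E$, is again supplied by Theorem \ref{Decay estimates for E}. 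The only genuinely non-elementary input is thus the decay of $E,\partial_vE,\partial_t\partial_vE$, which is established separately in Theorem \ref{Decay estimates for E}; the remaining work --- isolating the principal image, bounding the finitely many secondary images via $1-\cos(v+2k\beta\pi)\ge8a$, and the polynomial-absorption bookkeeping on $t\in[1,2]$ --- is routine, and this is the only place where mild care is required.
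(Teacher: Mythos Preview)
Your proposal is correct and follows essentially the same route as the paper: decompose $P$ via (\ref{P's Expression formula}), isolate the principal $k=0$ image to produce the first term, bound the finitely many $k\neq0$ images using $1-\cos(v+2k\beta\pi)\ge 8a$, and control the $E$-contribution through Theorem \ref{Decay estimates for E}. The paper is a little more explicit about two steps you label routine---absorbing the leftover factor $\acute r$ (from $\tfrac1r\cdot\tfrac{r\acute r}{2t}$) into the combined Gaussian via a short case split on $r\gtrless1$, and handling $\tfrac1r\,\partial_vE$ by invoking specifically the $z^{-1}\partial_vE$ bound in Theorem \ref{Decay estimates for E}---but these are exactly the ingredients you already point to.
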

\begin{proof}{of Lemma \ref{asymptotic estimate of theta derivative of Heat kernel}:} We still write $v=\beta(\theta-\theta^{'})$. We further assume that  $ v\neq \pi$ or $-\pi$ mod $2\beta\pi$ so the last term in (\ref{P's Expression formula}) vainishes. The conclusion for all $ v$ follows from continuity. There is an interesting point hiding  here: though $E(v,z)$ is not continuous with respect to $v$, the estimates we get by applying the bounds on $E(v,z)$ still works for all $v$, as the estimates are all continuous.\\

We compute
 \begin{eqnarray*}\label{Theta derivative of the 2-dim heat kernel}& &\frac{1}{\beta r}\frac{\partial\widehat{H}}{\partial  \theta}=\frac{1}{r}\frac{\partial\widehat{H}}{\partial  v}\nonumber
 \\&=&\frac{1}{r}\frac{1}{4\pi t}e^{-\frac{r^2+{{\acute{r}}}^2}{4t}}[-\Sigma_{k,| v+2k\beta \pi|<\pi}\frac{r\acute{r}}{2t}\sin( v+2k\beta \pi)e^{\frac{r\acute{r}}{2t}\cos( v+2k\beta \pi)}+\frac{1}{2\pi \beta}\frac{\partial E(\frac{r\acute{r}}{2t}, v)}{\partial  v} ]\nonumber
  \\&=&\frac{1}{2\pi \beta}\frac{1}{r}\frac{1}{4\pi t}e^{-\frac{r^2+{{\acute{r}}}^2}{4t}}\frac{\partial E(\frac{r\acute{r}}{2t}, v)}{\partial  v}-\frac{1}{r}\frac{1}{4\pi t}e^{-\frac{({r-{\acute{r}}})^2}{4t}}\frac{r\acute{r}}{2t}(\sin v ) e^{-\frac{r\acute{r}}{2t}(1-\cos v)}\nonumber
  \\& &-\Sigma_{k\neq 0,| v+2k\beta \pi|<\pi}\frac{1}{r}\frac{1}{4\pi t}e^{-\frac{({r-{\acute{r}}})^2}{4t}}\frac{r\acute{r}}{2t}[\sin( v+2k\beta \pi) ] e^{-\frac{r\acute{r}}{2t}[1-\cos( v+2k\beta \pi)]}.\nonumber
\end{eqnarray*}
Next,  note that when $k\neq 0$, we have
\begin{equation}\label{Def of  a}1-\cos( v+2k\beta \pi)\geq 2\sin^2(\frac{\beta\pi}{2})=8a.\end{equation}
Combining   the estimates in Theorem \ref{Decay estimates for E}, we have
\begin{eqnarray}\label{A02}& &\sup_{1\leq t\leq 2}|\Sigma_{k\neq 0,| v+2k\beta \pi|<\pi}\frac{1}{r}\frac{1}{4\pi t}e^{-\frac{({r-{\acute{r}}})^2}{4t}}\frac{r\acute{r}}{2t}[\sin( v+2k\beta \pi) ] e^{-\frac{r\acute{r}}{2t}[1-\cos( v+2k\beta \pi)]}|\nonumber
\\& & \qquad  \leq \frac{C}{r}(r{\acute{r}})e^{-\frac{({r-{\acute{r}}})^2}{8}}e^{-2ar{\acute{r}}} \leq  Ce^{-\frac{({r-{\acute{r}}})^2}{16}}e^{-ar{\acute{r}}}.
\end{eqnarray}
Inequality (\ref{A02}) is proved as the following. \begin{enumerate}
                                                  \item When $r\geq 1$, obviously we have
                                                  $$\frac{C}{r}(r{\acute{r}})e^{-\frac{({r-{\acute{r}}})^2}{8}}e^{-2ar{\acute{r}}}
\leq Ce^{-\frac{({r-{\acute{r}}})^2}{8}}e^{-2ar{\acute{r}}};$$
                                                  \item when $r\leq 1,\ \acute{r}\leq 2$,  we estimate
                                                  $$ C\acute{r}e^{-\frac{({r-{\acute{r}}})^2}{8}}e^{-2ar{\acute{r}}}\leq Ce^{-\frac{({r-{\acute{r}}})^2}{8}}e^{-2ar{\acute{r}}};$$
                                                  \item when $r\leq 1,\ \acute{r}\geq 2$,  we estimate
                                                  $$ C\acute{r}e^{-\frac{({r-{\acute{r}}})^2}{8}}e^{-2ar{\acute{r}}}\leq C(\acute{r}-r)e^{-\frac{({r-{\acute{r}}})^2}{8}}e^{-2ar{\acute{r}}}\leq Ce^{-\frac{({r-{\acute{r}}})^2}{16}}e^{-2ar{\acute{r}}}.$$
                                                \end{enumerate}

By Theorem \ref{Decay estimates for E} we immediately get
\begin{eqnarray*} \sup_{1\leq t\leq 2}|\frac{1}{2\pi \beta}\frac{1}{r}\frac{1}{4\pi t}e^{-\frac{r^2+{{\acute{r}}}^2}{4t}}\frac{\partial E(\frac{r\acute{r}}{2t}, v)}{\partial  v}|
&\leq &C\acute{r}\sup_{1\leq t\leq 2}e^{-\frac{r^2+{{\acute{r}}}^2}{8}}|\frac{1}{(\frac{r\acute{r}}{2t})}\frac{\partial E(\frac{r\acute{r}}{2t}, v)}{\partial  v}|
\\&\leq&Ce^{-\frac{r^2+{\acute{r}}^2}{16}}.
\end{eqnarray*}
Thus, we conclude
\begin{eqnarray}\label{Theta derivative of the 2-dim HK-part 1}& &\sup_{1\leq t\leq 2}|\frac{1}{r}\frac{\partial\widehat{H}}{\partial  v}|\nonumber
\\&\leq& \frac{C}{r}e^{-\frac{({r-{\acute{r}}})^2}{8}}e^{-\frac{r{\acute{r}}[1-\cos v]}{4}}|r{\acute{r}}\sin v|+
Ce^{-\frac{({r-{\acute{r}}})^2}{16}}e^{-ar{\acute{r}}}\nonumber
\\& &+Ce^{-\frac{r^2+{\acute{r}}^2}{16}}.
\end{eqnarray}

Continue differentiating (\ref{Theta derivative of the 2-dim heat kernel}) with respect to $t$ we get
 \begin{eqnarray*}\label{Time-Theta derivative of the 2-dim heat kernel}& &\frac{1}{\beta r}\frac{\partial^2\widehat{H}}{\partial t\partial  \theta}=\frac{1}{r}\frac{\partial^2\widehat{H}}{\partial t\partial  v}\nonumber
\\&=&e^{-\frac{({r-{\acute{r}}})^2+2r\acute{r}(1-\cos v)}{4t}} \{\frac{1}{r}\frac{r\acute{r}\sin v}{4\pi t^3}
-\frac{1}{r}\frac{r\acute{r}\sin v}{8\pi t^2}\frac{({r-{\acute{r}}})^2}{4t^2}
-\frac{1}{r}\frac{r\acute{r}\sin v}{8\pi t^2}\frac{r\acute{r}}{2t^2}(1-\cos v)\}
\\& &+\Sigma_{k\neq 0,| v+2k\beta \pi|<\pi}e^{-\frac{({r-{\acute{r}}})^2+2r\acute{r}[1-\cos( v+2k\beta\pi)]}{4t}} \{\frac{1}{r}\frac{r\acute{r}\sin( v+2k\beta\pi)}{4\pi t^3}\nonumber
\\& &-\frac{1}{r}\frac{r\acute{r}\sin( v+2k\beta\pi)}{8\pi t^2}\frac{({r-{\acute{r}}})^2}{4t^2}
-\frac{1}{r}\frac{r\acute{r}\sin( v+2k\beta\pi)}{8\pi t^2}\frac{r\acute{r}}{2t^2}[1-\cos( v+2k\beta\pi)]\}\nonumber
\\& &+e^{-\frac{r^2+{{\acute{r}}}^2}{4t}}\{-\frac{1}{2\pi \beta}\frac{1}{r}\frac{1}{4\pi t^2}\frac{\partial E(\frac{r\acute{r}}{2t}, v)}{\partial  v}
+\frac{1}{2\pi \beta}\frac{1}{r}\frac{1}{4\pi t}(\frac{r^2+{{\acute{r}}}^2}{4t^2})\frac{\partial E(\frac{r\acute{r}}{2t}, v)}{\partial  v}
\\& &+\frac{1}{2\pi \beta}\frac{1}{r}\frac{1}{4\pi t}\frac{\partial^2 E(\frac{r\acute{r}}{2t}, v)}{\partial z\partial  v}(-\frac{r\acute{r}}{2t^2})\}.
\end{eqnarray*}
Then,  by the same idea as in the estimate of $\sup_{1\leq t\leq 2}|\frac{1}{r}\frac{\partial\widehat{H}}{\partial  v}|$, using (\ref{Def of  a}), Theorem \ref{Decay estimates for E}, and Cauchy-Schwarz inequalities, we directly get
\begin{eqnarray*} & &\sup_{1\leq t\leq 2}|\Sigma_{k\neq 0,| v+2k\beta \pi|<\pi}e^{-\frac{({r-{\acute{r}}})^2+2r\acute{r}[1-\cos( v+2k\beta\pi)]}{4t}} \{\frac{1}{r}\frac{r\acute{r}\sin( v+2k\beta\pi)}{4\pi t^3}\nonumber
\\& &-\frac{1}{r}\frac{r\acute{r}\sin( v+2k\beta\pi)}{8\pi t^2}\frac{({r-{\acute{r}}})^2}{4t^2}
-\frac{1}{r}\frac{r\acute{r}\sin( v+2k\beta\pi)}{8\pi t^2}\frac{r\acute{r}}{2t^2}[1-\cos( v+2k\beta\pi)]\}|\nonumber
\\&\leq& Ce^{-\frac{({r-{\acute{r}}})^2}{16}}e^{-ar{\acute{r}}}
\end{eqnarray*}
and
\begin{eqnarray*}\label{Theta derivative of the 2-dim HK-part 2} & &\sup_{1\leq t\leq 2}|e^{-\frac{r^2+{{\acute{r}}}^2}{4t}}\{-\frac{1}{2\pi \beta}\frac{1}{r}\frac{1}{4\pi t^2}\frac{\partial E(\frac{r\acute{r}}{2t}, v)}{\partial  v}
+\frac{1}{2\pi \beta}\frac{1}{r}\frac{1}{4\pi t^2}(\frac{r^2+{{\acute{r}}}^2}{4t})\frac{\partial E(\frac{r\acute{r}}{2t}, v)}{\partial  v}\nonumber
\\& &+\frac{1}{2\pi \beta}\frac{1}{r}\frac{1}{4\pi t}\frac{\partial^2 E(\frac{r\acute{r}}{2t}, v)}{\partial z\partial  v}(-\frac{r\acute{r}}{2t^2})\}|\nonumber
\\&\leq& C(1+r^3+{\acute{r}}^3)e^{-\frac{r^2+{\acute{r}}^2}{8}},\nonumber
\\&\leq& Ce^{-\frac{r^2+{\acute{r}}^2}{16}}.
\end{eqnarray*}
Thus, the estimate of  $\sup_{1\leq t\leq 2}|\frac{1}{r}\frac{\partial^2\widehat{H}}{\partial t \partial  v}|$ follows.\\

The conclusion of Lemma \ref{asymptotic estimate of theta derivative of Heat kernel} directly follows from (\ref{Theta derivative of the 2-dim HK-part 1}) and (\ref{Theta derivative of the 2-dim HK-part 2}).
\end{proof}
The other lemmas in this section are proved under the same ideas as in the proof of Lemma \ref{asymptotic estimate of theta derivative of Heat kernel}.  For the sake of clarity,  we still  give detailed proof here,   since these lemmas  are crucial to the global estimates of the derivatives of the heat kernel and their conclusions are not the same.  

\begin{lem}\label{asymptotic estimate of radius derivative of Heat kernel}We have
\begin{eqnarray} \sup_{1\leq t\leq 2}|\frac{\partial\widehat{H}}{\partial r}|+\sup_{1\leq t\leq 2}|\frac{\partial^2\widehat{H}}{\partial t \partial r}| &\leq& Ce^{-\frac{({r-{\acute{r}}})^2}{16}}e^{-\frac{r{\acute{r}}[1-\cos\beta( \theta-  \theta^{'})]}{8}}
+Ce^{-\frac{({r-{\acute{r}}})^2}{16}}e^{-ar{\acute{r}}}\nonumber
\\& &  + Ce^{-\frac{r^2+{\acute{r}}^2}{16}}.\nonumber
\end{eqnarray}

\end{lem}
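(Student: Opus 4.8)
The plan is to follow the strategy of the proof of Lemma \ref{asymptotic estimate of theta derivative of Heat kernel} almost verbatim, replacing $\frac{1}{\beta r}\frac{\partial}{\partial\theta}$ by $\frac{\partial}{\partial r}$ throughout. First I would write $v=\beta(\theta-\theta')$ and, as there, assume $v\neq\pm\pi$ modulo $2\beta\pi$ so that the last term of (\ref{P's Expression formula}) vanishes; the conclusion for all $v$ then follows by continuity of all the resulting bounds. Starting from (\ref{Higher dim heat kernel formula}) and Theorem \ref{Carslaw-Wang-Chen representation for hk} with $m=0$, I would record
\begin{equation*}
\widehat{H}=\frac{1}{4\pi t}e^{-\frac{r^2+{r'}^2}{4t}}\left[\sum_{k,\,|v+2k\beta\pi|<\pi}e^{\frac{rr'}{2t}\cos(v+2k\beta\pi)}+\frac{1}{2\pi\beta}E\!\left(\frac{rr'}{2t},v\right)\right],
\end{equation*}
rewrite $e^{-\frac{r^2+{r'}^2}{4t}}e^{\frac{rr'}{2t}\cos\alpha}=e^{-\frac{(r-r')^2+2rr'(1-\cos\alpha)}{4t}}$, and differentiate once in $r$. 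Each Gaussian term then carries a prefactor $-\frac{(r-r')+r'(1-\cos(v+2k\beta\pi))}{2t}$, while the $E$-term produces the prefactors $-\frac{r}{2t}$ against $E$ and $\frac{r'}{2t}$ against $\frac{\partial E}{\partial z}$.

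Next I would split the sum into three groups exactly as in the $\theta$-derivative case. For $k=0$ I would use $\frac{|r-r'|}{t}e^{-\frac{(r-r')^2}{8t}}\leq C$ together with the case split on $r\geq 1$ versus $r\leq 1$ (and, when $r\leq 1$, on $r'\leq 2$ versus $r'\geq 2$) performed in (\ref{A02}), which gives $\frac{r'(1-\cos v)}{t}e^{-\frac{rr'(1-\cos v)}{4t}}\leq C$; since $1\leq t\leq 2$ this contributes $\leq Ce^{-\frac{(r-r')^2}{16}}e^{-\frac{rr'[1-\cos v]}{8}}$. For $k\neq 0$ I would invoke the uniform gap $1-\cos(v+2k\beta\pi)\geq 8a$ from (\ref{Def of  a}): this supplies the factor $e^{-arr'}$ and, since the exponent grows in $|k|$, makes the series converge uniformly (only finitely many terms when $rr'$ is bounded, geometric decay when $rr'$ is large), yielding $\leq Ce^{-\frac{(r-r')^2}{16}}e^{-arr'}$. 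For the $E$-term I would apply Theorem \ref{Decay estimates for E} to bound $E$ and $\frac{\partial E}{\partial z}$, multiply by $e^{-\frac{r^2+{r'}^2}{4t}}$ and by the polynomial factors in $r,r'$, and absorb everything into $Ce^{-\frac{r^2+{r'}^2}{16}}$ using $x^{p}e^{-x^{q}}\leq C$ and $1\leq t\leq 2$.

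For the mixed derivative $\frac{\partial^2\widehat{H}}{\partial t\partial r}$ I would differentiate the expression just obtained for $\frac{\partial\widehat{H}}{\partial r}$ once more in $t$. This produces the same three groups of terms, now with extra negative powers of $t$ and extra polynomial factors in $(r-r')$, $rr'(1-\cos(v+2k\beta\pi))$, $r$ and $r'$ against the same Gaussians, and with $\frac{\partial^2 E}{\partial z^2}$ appearing; since $t$ stays in $[1,2]$ the negative powers of $t$ are harmless, and each polynomial factor is absorbed by shrinking the Gaussian exponents from $\frac{1}{4t}$ down to $\frac{1}{16}$, so the identical three bounds come out. Adding the two estimates gives the Lemma.

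The step needing the most care is the $k=0$ contribution when $r$ is small, where $1-\cos v$ and $rr'$ may be of unrelated sizes and one must run precisely the case analysis of (\ref{A02}) to keep the constant uniform; the uniform convergence of the $k\neq 0$ series is the other delicate point, and it rests squarely on the lower bound (\ref{Def of  a}) and the decay estimates of Theorem \ref{Decay estimates for E}.
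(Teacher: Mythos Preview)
Your proposal is correct and follows essentially the same route as the paper's own proof: compute $\partial_r\widehat H$ from the Carslaw representation, split into the $k=0$ Gaussian, the $k\neq 0$ Gaussians (handled via the gap $1-\cos(v+2k\beta\pi)\ge 8a$ from (\ref{Def of  a}) together with the case split of (\ref{A02})), and the $E$-term (handled by Theorem \ref{Decay estimates for E}), then differentiate once more in $t$ and repeat. One small remark: your concern about ``uniform convergence of the $k\neq 0$ series'' is unnecessary, since the index set $\{k:|v+2k\beta\pi|<\pi\}$ is finite (its cardinality is at most $\lceil 1/\beta\rceil$), so the sum has a bounded number of terms and no convergence issue arises.
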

\begin{proof}{of Lemma \ref{asymptotic estimate of radius derivative of Heat kernel}:} Denote $v=\beta(\theta-\theta^{'})$. We further assume that  $ v\neq \pi$ or $-\pi$ mod $2\beta\pi$ so the last term in (\ref{P's Expression formula}) vainishes. The conclusion for all $ v$ follows from continuity of the estimates with respect to $v$.\\
 First we compute
 \begin{eqnarray*}\label{radius derivative of the 2-dim heat kernel}\frac{\partial\widehat{H}}{\partial r}
 &=&e^{-\frac{({r-{\acute{r}}})^2+2r\acute{r}(1-\cos v)}{4t}} \{-\frac{({r-{\acute{r}}})}{8\pi t^2}
 -\frac{1}{r}\frac{r\acute{r}(1-\cos v)}{8\pi t^2}\} \nonumber
  \\& & + \Sigma_{k\neq 0,| v+2k\beta \pi|<\pi}e^{-\frac{({r-{\acute{r}}})^2}{4t}} e^{-\frac{r\acute{r}}{2t}[1-\cos( v+2k\beta \pi)]}\{-\frac{({r-{\acute{r}}})}{8\pi t^2}
 \\& &-\frac{1}{r}\frac{r\acute{r}[1-\cos( v+2k\beta \pi)]}{8\pi t^2}\}\nonumber
  \\& &+e^{-\frac{r^2+{{\acute{r}}}^2}{4t}}\{\frac{1}{2\pi \beta}\frac{1}{r}\frac{1}{4\pi t}\frac{r\acute{r}}{2t}\frac{\partial E(\frac{r\acute{r}}{2t}, v)}{\partial z}-\frac{1}{2\pi \beta}\frac{1}{4\pi t}\frac{r}{2t} E(\frac{r\acute{r}}{2t}, v)\}.
\end{eqnarray*}

Then by the same idea as in the estimate of $\sup_{1\leq t\leq 2}|\frac{1}{r}\frac{\partial\widehat{H}}{\partial  v}|$, using (\ref{Def of  a}), Theorem \ref{Decay estimates for E}, (\ref{Holder inequality-simple version}), and (\ref{inequality on power of x times exponential of x}), we  get
\begin{eqnarray*}& &\sup_{1\leq t\leq 2}|\Sigma_{k\neq 0,| v+2k\beta \pi|<\pi}e^{-\frac{({r-{\acute{r}}})^2}{4t}} e^{-\frac{r\acute{r}}{2t}[1-\cos( v+2k\beta \pi)]}\{-\frac{({r-{\acute{r}}})}{8\pi t^2}
 \\& &-\frac{1}{r}\frac{r\acute{r}[1-\cos( v+2k\beta \pi)]}{8\pi t^2}\}|
\\&\leq& C(|{r-{\acute{r}}}|+{\acute{r}})e^{-\frac{({r-{\acute{r}}})^2}{8}}e^{-2ar{\acute{r}}}
\\&\leq& Ce^{-\frac{({r-{\acute{r}}})^2}{16}}e^{-ar{\acute{r}}}.
\end{eqnarray*}
Then   using the simple estimate $e^{-\frac{r^2+{\acute{r}}^2}{16}}(1+r^2)\leq C$ we have
\begin{eqnarray*}& &\sup_{1\leq t\leq 2}|e^{-\frac{r^2+{{\acute{r}}}^2}{4t}}\{\frac{1}{2\pi \beta}\frac{1}{r}\frac{1}{4\pi t}\frac{r\acute{r}}{2t}\frac{\partial E(\frac{r\acute{r}}{2t}, v)}{\partial z}-\frac{1}{2\pi \beta}\frac{1}{4\pi t}\frac{r}{2t} E(\frac{r\acute{r}}{2t}, v)\}|
\\&\leq& Ce^{-\frac{r^2+{\acute{r}}^2}{16}}.
\end{eqnarray*}
Thus the estimate for $\sup_{1\leq t\leq 2}|\frac{\partial\widehat{H}}{\partial r}|$ follows. \\

Continue differentiating (\ref{radius derivative of the 2-dim heat kernel}) with respect to $t$ we get
 \begin{eqnarray*}\label{Time-radius derivative of the 2-dim heat kernel}& &\frac{\partial^2\widehat{H}}{\partial t\partial r}\nonumber
\\&=&e^{-\frac{({r-{\acute{r}}})^2+2r\acute{r}(1-\cos v)}{4t}} \{\frac{({r-{\acute{r}}})}{4\pi t^3}
-\frac{({r-{\acute{r}}})}{16\pi t^4}[r\acute{r}(1-\cos v)]
-\frac{({r-{\acute{r}}})^3}{32\pi t^4}
 \\& &+\frac{1}{r}\frac{r\acute{r}(1-\cos v)}{4\pi t^3}
 -\frac{1}{r}\frac{r\acute{r}(1-\cos v)}{8\pi t^2}\frac{({r-{\acute{r}}})^2}{4t^2}
 -\frac{1}{r}\frac{[r\acute{r}(1-\cos v)]^2}{16\pi t^4}\}
 \end{eqnarray*}
 \begin{eqnarray*}&+&\Sigma_{k\neq 0,| v+2k\beta \pi|<\pi}e^{-\frac{({r-{\acute{r}}})^2+2r\acute{r}[1-\cos( v+2k\beta\pi)]}{4t}} \{\frac{({r-{\acute{r}}})}{4\pi t^3}\nonumber
\\& &-\frac{({r-{\acute{r}}})}{16\pi t^4}(r\acute{r})[1-\cos( v+2k\beta\pi)]
-\frac{({r-{\acute{r}}})^3}{32\pi t^4}
\\& &+\frac{1}{r}\frac{r\acute{r}[1-\cos( v+2k\beta\pi)]}{4\pi t^3}
 -\frac{1}{r}\frac{r\acute{r}[1-\cos( v+2k\beta\pi)]}{8\pi t^2}\frac{({r-{\acute{r}}})^2}{4t^2}
 \\& &-\frac{1}{r}\frac{[(r\acute{r}[1-\cos( v+2k\beta\pi)])^2}{16\pi t^4}\}
 \end{eqnarray*}
 \begin{eqnarray*}&+&e^{-\frac{r^2+{{\acute{r}}}^2}{4t}}\{\frac{1}{2\pi \beta}\frac{r}{4\pi t^3}E(\frac{r\acute{r}}{2t}, v)
-\frac{1}{2\pi \beta}\frac{r}{32\pi t^4}(r^2+{{\acute{r}}}^2)E(\frac{r\acute{r}}{2t}, v)
\\& &+\frac{1}{2\pi \beta}\frac{r(r\acute{r})}{16\pi t^4}\frac{\partial E(\frac{r\acute{r}}{2t}, v)}{\partial z}
-\frac{1}{2\pi \beta}\frac{1}{r}\frac{r\acute{r}}{4\pi t^3}\frac{\partial E(\frac{r\acute{r}}{2t}, v)}{\partial z}
\\& &+\frac{1}{2\pi \beta}\frac{1}{r}\frac{r\acute{r}}{8\pi t^2}\frac{r^2+{{\acute{r}}}^2}{4t^2}\frac{\partial E(\frac{r\acute{r}}{2t}, v)}{\partial z}
-\frac{1}{2\pi \beta}\frac{1}{r}\frac{(r\acute{r})^2}{16\pi t^4}\frac{\partial^2 E(\frac{r\acute{r}}{2t}, v)}{\partial z^2}\}.
\end{eqnarray*}
Then using (\ref{Def of  a}), Theorem \ref{Decay estimates for E},  (\ref{Holder inequality-simple version}), and (\ref{inequality on power of x times exponential of x}), we  get
\begin{eqnarray*}& &\sup_{1\leq t\leq 2}|\Sigma_{k\neq 0,| v+2k\beta \pi|<\pi}e^{-\frac{({r-{\acute{r}}})^2+2r\acute{r}[1-\cos( v+2k\beta\pi)]}{4t}} \{\frac{({r-{\acute{r}}})}{4\pi t^3}\nonumber
\\& &-\frac{({r-{\acute{r}}})}{16\pi t^4}[r\acute{r}(1-\cos v)]
-\frac{({r-{\acute{r}}})^3}{32\pi t^4}
\\& &+\frac{1}{r}\frac{r\acute{r}[1-\cos( v+2k\beta\pi)]}{4\pi t^3}
 -\frac{1}{r}\frac{r\acute{r}[1-\cos( v+2k\beta\pi)]}{8\pi t^2}\frac{({r-{\acute{r}}})^2}{4t^2}
 \\& &-\frac{1}{r}\frac{[(r\acute{r}[1-\cos( v+2k\beta\pi)])^2}{16\pi t^4}\}|\nonumber
\\&\leq& Ce^{-\frac{({r-{\acute{r}}})^2}{16}}e^{-ar{\acute{r}}}
\end{eqnarray*}
and
\begin{eqnarray*}& &\sup_{1\leq t\leq 2}|e^{-\frac{r^2+{{\acute{r}}}^2}{4t}}\{\frac{1}{2\pi \beta}\frac{r}{4\pi t^3}E(\frac{r\acute{r}}{2t}, v)
-\frac{1}{2\pi \beta}\frac{r}{32\pi t^4}(r^2+{{\acute{r}}}^2)E(\frac{r\acute{r}}{2t}, v)
\\& &+\frac{1}{2\pi \beta}\frac{r(r\acute{r})}{16\pi t^4}\frac{\partial E(\frac{r\acute{r}}{2t}, v)}{\partial z}
-\frac{1}{2\pi \beta}\frac{1}{r}\frac{r\acute{r}}{4\pi t^3}\frac{\partial E(\frac{r\acute{r}}{2t}, v)}{\partial z}
\\& &+\frac{1}{2\pi \beta}\frac{1}{r}\frac{r\acute{r}}{8\pi t^2}\frac{r^2+{{\acute{r}}}^2}{4t^2}\frac{\partial E(\frac{r\acute{r}}{2t}, v)}{\partial z}
-\frac{1}{2\pi \beta}\frac{1}{r}\frac{(r\acute{r})^2}{16\pi t^4}\frac{\partial^2 E(\frac{r\acute{r}}{2t}, v)}{\partial z^2}\}|
\\&\leq&C(1+\acute{r})[1+(r^2+{\acute{r}}^2)^2]e^{-\frac{r^2+{\acute{r}}^2}{12}}
\\&\leq& C e^{-\frac{r^2+{\acute{r}}^2}{16}}.
\end{eqnarray*}

Thus the estimate of  $\sup_{1\leq t\leq 2}|\frac{\partial^2\widehat{H}}{\partial t \partial r}|$ follows.
\end{proof}
Compared to the previous ones, the next lemma seems to be a lot easier. Since there is no term concerning  $\frac{1}{r}$.

\begin{lem}\label{asymptotic estimate of time derivative of Heat kernel}We have
\begin{eqnarray*}& &\sup_{1\leq t\leq 2}|\frac{\partial\widehat{H}}{\partial t}|+\sup_{1\leq t\leq 2}|\frac{\partial^2\widehat{H}}{\partial t^2}|
\\&\leq& Ce^{-\frac{({r-{\acute{r}}})^2}{16}}e^{-\frac{r{\acute{r}}[1-\cos\beta(\theta-\theta^{'})]}{8}}
+Ce^{-\frac{({r-{\acute{r}}})^2}{16}}e^{-ar{\acute{r}}}
+Ce^{-\frac{r^2+{\acute{r}}^2}{16}}.
\end{eqnarray*}

\end{lem}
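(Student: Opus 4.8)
\textbf{Proof plan for Lemma \ref{asymptotic estimate of time derivative of Heat kernel}.}

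The plan is to follow the same template used in the proofs of Lemma \ref{asymptotic estimate of theta derivative of Heat kernel} and Lemma \ref{asymptotic estimate of radius derivative of Heat kernel}, namely: start from the piecewise representation formula (\ref{P's Expression formula})--(\ref{Higher dim heat kernel formula}) for $\widehat{H}$, differentiate explicitly in $t$ (and then $t$ again), split the resulting expression into the three structurally different pieces --- the $k=0$ exponential term, the sum over $k\neq 0$ with $|v+2k\beta\pi|<\pi$, and the $E(\frac{r\acute{r}}{2t},v)$ term --- and bound each on the compact time interval $1\le t\le 2$ using the decay estimates of Theorem \ref{Decay estimates for E} together with the elementary inequalities (\ref{Holder inequality-simple version}) and (\ref{inequality on power of x times exponential of x}). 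As in those lemmas, we may assume $v\neq\pm\pi \bmod 2\beta\pi$ so that the last term in (\ref{P's Expression formula}) vanishes, and recover the general case by continuity of the final estimate in $v$.

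First I would write $\frac{\partial\widehat{H}}{\partial t}$ explicitly. Differentiating $\frac{1}{4\pi t}e^{-\frac{r^2+\acute{r}^2}{4t}}$ against each summand produces, for the principal ($k=0$) term, factors of the shape $\big[\frac{(r-\acute{r})^2+2r\acute{r}(1-\cos v)}{4t^2}-\frac{1}{t}\big]e^{-\frac{(r-\acute{r})^2+2r\acute{r}(1-\cos v)}{4t}}$; since on $1\le t\le 2$ the prefactor is a polynomial in $(r-\acute{r})^2$ and $r\acute{r}(1-\cos v)$, (\ref{inequality on power of x times exponential of x}) lets us absorb half of the Gaussian and conclude a bound $Ce^{-\frac{(r-\acute{r})^2}{16}}e^{-\frac{r\acute{r}[1-\cos v]}{8}}$. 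For $k\neq 0$, I would use (\ref{Def of  a}), i.e. $1-\cos(v+2k\beta\pi)\ge 8a$, exactly as in (\ref{A02}): the extra $r\acute{r}$ factors coming from the polynomial prefactor are killed by $e^{-2ar\acute{r}}$ (distinguishing the cases $r\ge 1$, $r\le 1,\acute{r}\le 2$, and $r\le 1,\acute{r}\ge 2$ if needed, though here there is no $\frac{1}{r}$ so it is even simpler), yielding $Ce^{-\frac{(r-\acute{r})^2}{16}}e^{-ar\acute{r}}$. For the $E$-term, differentiation in $t$ brings down powers of $r,\acute{r}$ and derivatives $\frac{\partial E}{\partial z}$; Theorem \ref{Decay estimates for E} bounds $E$ and $\partial_z E$, and the Gaussian $e^{-\frac{r^2+\acute{r}^2}{4t}}$ with $1\le t\le 2$ dominates any polynomial in $r,\acute{r}$, giving $Ce^{-\frac{r^2+\acute{r}^2}{16}}$. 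Summing the three contributions gives the claimed bound for $\sup_{1\le t\le 2}|\frac{\partial\widehat H}{\partial t}|$.

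Then I would repeat the computation one order higher for $\frac{\partial^2\widehat{H}}{\partial t^2}$: the structure is identical, only the polynomial prefactors in front of each Gaussian have higher degree (up to degree $\sim 4$ in $(r-\acute{r})^2$, $r\acute{r}(1-\cos v)$, $r^2+\acute{r}^2$) and one now meets $\frac{\partial^2 E}{\partial z^2}$, which is again controlled by Theorem \ref{Decay estimates for E}. The same three estimates --- (\ref{Def of  a}), (\ref{Holder inequality-simple version}), (\ref{inequality on power of x times exponential of x}) --- absorb the extra polynomial weight into the Gaussians, producing the same three-term bound. Since this lemma involves no $\frac{1}{r}$ singular factor, none of the case-splitting near $r=0$ that complicated Lemma \ref{asymptotic estimate of theta derivative of Heat kernel} is needed, so the only real work is the bookkeeping of the many terms generated by the double $t$-differentiation; I expect that \emph{term-counting} --- making sure every summand produced by $\partial_t^2$ is matched to one of the three prototype bounds --- is the main (and essentially only) obstacle, and it is a routine one given Theorem \ref{Decay estimates for E}.
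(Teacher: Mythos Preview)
Your proposal is correct and matches the paper's own proof essentially step for step: the same three-way splitting into the $k=0$ term, the $k\neq 0$ sum (handled via (\ref{Def of  a})), and the $E$-term (handled via Theorem \ref{Decay estimates for E}), with the polynomial prefactors absorbed by (\ref{inequality on power of x times exponential of x}). The paper likewise remarks that this lemma is easier than the previous ones precisely because no $\tfrac{1}{r}$ factor appears, so your observation there is exactly on point.
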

\begin{proof}{of Lemma \ref{asymptotic estimate of radius derivative of Heat kernel}:} We write $v=\beta(\theta-\theta^{'})$. We still assume that  $ v\neq \pi$ or $-\pi$ mod $2\beta\pi$ so the last term in (\ref{P's Expression formula}) vainishes. The conclusion for all $ v$ follows from continuity of the estimates with respect to $v$. The proof is more or less routine after applying the ideas in Lemma \ref{asymptotic estimate of theta derivative of Heat kernel}, but we  want to emphasize the corresponding bound for each term here. We compute
 \begin{eqnarray}\label{time derivative of the 2-dim heat kernel}& &\frac{\partial\widehat{H}}{\partial t}\nonumber
 \\&=&e^{-\frac{({r-{\acute{r}}})^2+2r\acute{r}(1-\cos v)}{4t}} \{-\frac{1}{4\pi t^2}
 +\frac{(r-\acute{r})^2}{16\pi t^3}
+\frac{r\acute{r}(1-\cos v)}{8\pi t^3}\} \nonumber
  \\& &+\Sigma_{k\neq 0,| v+2k\beta \pi|<\pi}e^{-\frac{({r-{\acute{r}}})^2}{4t}} e^{-\frac{r\acute{r}}{2t}[1-\cos( v+2k\beta \pi)]}\{-\frac{1}{4\pi t^2}\nonumber
 \\& &+\frac{(r-\acute{r})^2}{16\pi t^3}
+\frac{r\acute{r}(1-\cos[ v+2k\beta \pi])}{8\pi t^3}\}\nonumber
  \\& &+e^{-\frac{r^2+{{\acute{r}}}^2}{4t}}\{-\frac{1}{2\pi \beta}\frac{1}{4\pi t^2}E(\frac{r\acute{r}}{2t}, v)
  +\frac{1}{2\pi \beta}\frac{r^2+{{\acute{r}}}^2}{16\pi t^3}E(\frac{r\acute{r}}{2t}, v)\nonumber
  \\& &-\frac{1}{2\pi \beta}\frac{r\acute{r}}{8\pi t^3} \frac{\partial E(\frac{r\acute{r}}{2t}, v)}{\partial z}\}.
\end{eqnarray}

Then by the same idea as in the estimate of $\sup_{1\leq t\leq 2}|\frac{1}{r}\frac{\partial\widehat{H}}{\partial  v}|$, using (\ref{Def of  a}),  (\ref{Holder inequality-simple version}), and (\ref{inequality on power of x times exponential of x}), we  get
\begin{eqnarray*}& &\sup_{1\leq t\leq 2}|\Sigma_{k\neq 0,| v+2k\beta \pi|<\pi}e^{-\frac{({r-{\acute{r}}})^2}{4t}} e^{-\frac{r\acute{r}}{2t}[1-\cos( v+2k\beta \pi)]}\{-\frac{1}{4\pi t^2}
 \\& &+\frac{(r-\acute{r})^2}{16\pi t^3}
+\frac{r\acute{r}(1-\cos[ v+2k\beta \pi])}{8\pi t^3}\}|
\\&\leq&C(1+|{r-{\acute{r}}}|^2+r{\acute{r}})e^{-\frac{({r-{\acute{r}}})^2}{8}}e^{-2ar{\acute{r}}}
\\&\leq&Ce^{-\frac{({r-{\acute{r}}})^2}{16}}e^{-ar{\acute{r}}}
\end{eqnarray*}
and
\begin{eqnarray*}& &\sup_{1\leq t\leq 2}|e^{-\frac{r^2+{{\acute{r}}}^2}{4t}}\{-\frac{1}{2\pi \beta}\frac{1}{4\pi t^2}E(\frac{r\acute{r}}{2t}, v)
  +\frac{1}{2\pi \beta}\frac{r^2+{{\acute{r}}}^2}{16\pi t^3}E(\frac{r\acute{r}}{2t}, v)
  \\& &-\frac{1}{2\pi \beta}\frac{r\acute{r}}{8\pi t^3} \frac{\partial E(\frac{r\acute{r}}{2t}, v)}{\partial z}\}|
\\&\leq& C(1+r^2+\acute{r}^2)e^{-\frac{r^2+{\acute{r}}^2}{8}}
\\&\leq& Ce^{-\frac{r^2+{\acute{r}}^2}{16}}.
\end{eqnarray*}
thus the estimate for $\sup_{1\leq t\leq 2}|\frac{\partial\widehat{H}}{\partial t}|$ follows. \\

Continue differentiating (\ref{time derivative of the 2-dim heat kernel}) with respect to $t$ we get
 \begin{eqnarray*}\label{Time-time derivative of the 2-dim heat kernel}& &\frac{\partial^2\widehat{H}}{\partial t^2}\nonumber
\\&=&e^{-\frac{({r-{\acute{r}}})^2+2r\acute{r}(1-\cos v)}{4t}} \{\frac{1}{2\pi t^3}
-\frac{(r-\acute{r})^2}{16\pi t^4}-\frac{r\acute{r}(1-\cos v)}{8\pi t^4}-\frac{3(r-\acute{r})^2}{16\pi t^4}
 \\& &+\frac{(r-\acute{r})^4}{64\pi t^5}
 +\frac{(r-\acute{r})^2r\acute{r}(1-\cos v)}{32\pi t^5}
 \\& &-\frac{3r\acute{r}(1-\cos v)}{8\pi t^4}+\frac{[r\acute{r}(1-\cos v)]^2}{16\pi t^5}
+\frac{(r-\acute{r})^2r\acute{r}(1-\cos v)}{32\pi t^5}\}
\end{eqnarray*}
\begin{eqnarray*}&+&\Sigma_{k\neq 0,| v+2k\beta \pi|<\pi}e^{-\frac{({r-{\acute{r}}})^2+2r\acute{r}[1-\cos( v+2k\beta\pi)]}{4t}} \{\frac{1}{2\pi t^3}
-\frac{(r-\acute{r})^2}{16\pi t^4}\\& &-\frac{r\acute{r}[1-\cos( v+2k\beta\pi)]}{8\pi t^4}
-\frac{3(r-\acute{r})^2}{16\pi t^4}
\\& +&\frac{(r-\acute{r})^4}{64\pi t^5}
 +\frac{(r-\acute{r})^2r\acute{r}[1-\cos( v+2k\beta\pi)]}{32\pi t^5}
 -\frac{3r\acute{r}[1-\cos( v+2k\beta\pi)]}{8\pi t^4}\\& &+\frac{[r\acute{r}[1-\cos( v+2k\beta\pi)]]^2}{16\pi t^5}
+\frac{(r-\acute{r})^2r\acute{r}[1-\cos( v+2k\beta\pi)]}{32\pi t^5}\}
\end{eqnarray*}
\begin{eqnarray*}&+&e^{-\frac{r^2+{{\acute{r}}}^2}{4t}}\{\frac{1}{2\pi \beta}\frac{1}{2\pi t^3}E(\frac{r\acute{r}}{2t}, v)
-\frac{1}{2\pi \beta}\frac{r^2+{{\acute{r}}}^2}{16\pi t^4}E(\frac{r\acute{r}}{2t}, v)
+\frac{1}{2\pi \beta}\frac{r\acute{r}}{8\pi t^4}\frac{\partial E(\frac{r\acute{r}}{2t}, v)}{\partial z}
  \\& &-\frac{1}{2\pi \beta}\frac{3(r^2+{{\acute{r}}}^2)}{16\pi t^4}E(\frac{r\acute{r}}{2t}, v)+\frac{1}{2\pi \beta}\frac{(r^2+{{\acute{r}}}^2)^2}{64\pi t^5}E(\frac{r\acute{r}}{2t}, v)
  \\& &-\frac{1}{2\pi \beta}\frac{(r^2+{{\acute{r}}}^2)r\acute{r}}{32\pi t^5} \frac{\partial E(\frac{r\acute{r}}{2t}, v)}{\partial z}
  +\frac{1}{2\pi \beta}\frac{3r\acute{r}}{8\pi t^4} \frac{\partial E(\frac{r\acute{r}}{2t}, v)}{\partial z}
  \\& &-\frac{1}{2\pi \beta}\frac{r\acute{r}}{8\pi t^3}\frac{r^2+{{\acute{r}}}^2}{4t^2} \frac{\partial E(\frac{r\acute{r}}{2t}, v)}{\partial z}
  +\frac{1}{2\pi \beta}\frac{(r\acute{r})^2}{16\pi t^5} \frac{\partial^2 E(\frac{r\acute{r}}{2t}, v)}{\partial z^2}\}.
\end{eqnarray*}
Then using (\ref{Def of  a}), Theorem \ref{Decay estimates for E},   (\ref{Holder inequality-simple version}), (\ref{inequality on power of x times exponential of x}), and the inequality $$\{1+({r-{\acute{r}}})^4+(r{\acute{r}})^2\}e^{-\frac{({r-{\acute{r}}})^2}{16}}e^{-ar{\acute{r}}}\leq C,$$
we  get
\begin{eqnarray*}& &\sup_{1\leq t\leq 2}|\Sigma_{k\neq 0,| v+2k\beta \pi|<\pi}e^{-\frac{({r-{\acute{r}}})^2+2r\acute{r}[1-\cos( v+2k\beta\pi)]}{4t}} \{\frac{1}{2\pi t^3}
-\frac{(r-\acute{r})^2}{16\pi t^4}\\& &-\frac{r\acute{r}[1-\cos( v+2k\beta\pi)]}{8\pi t^4}
-\frac{3(r-\acute{r})^2}{16\pi t^4}
\\& &+\frac{(r-\acute{r})^4}{64\pi t^5}
 +\frac{(r-\acute{r})^2r\acute{r}[1-\cos( v+2k\beta\pi)]}{32\pi t^5}
 -\frac{3r\acute{r}[1-\cos( v+2k\beta\pi)]}{8\pi t^4}\\& &+\frac{[r\acute{r}[1-\cos( v+2k\beta\pi)]]^2}{16\pi t^5}
+\frac{(r-\acute{r})^2r\acute{r}[1-\cos( v+2k\beta\pi)]}{32\pi t^5}\}|\nonumber
\\&\leq&C\{1+({r-{\acute{r}}})^4+(r{\acute{r}})^2\}e^{-\frac{({r-{\acute{r}}})^2}{8}}e^{-2ar{\acute{r}}}
\\&\leq&Ce^{-\frac{({r-{\acute{r}}})^2}{16}}e^{-ar{\acute{r}}}
\end{eqnarray*}
and
\begin{eqnarray*}& &\sup_{1\leq t\leq 2}|e^{-\frac{r^2+{{\acute{r}}}^2}{4t}}\{\frac{1}{2\pi \beta}\frac{1}{2\pi t^3}E(\frac{r\acute{r}}{2t}, v)
-\frac{1}{2\pi \beta}\frac{r^2+{{\acute{r}}}^2}{16\pi t^4}E(\frac{r\acute{r}}{2t}, v)
\\& &+\frac{1}{2\pi \beta}\frac{r\acute{r}}{8\pi t^4}\frac{\partial E(\frac{r\acute{r}}{2t}, v)}{\partial z}
  -\frac{1}{2\pi \beta}\frac{3(r^2+{{\acute{r}}}^2)}{16\pi t^4}E(\frac{r\acute{r}}{2t}, v)+\frac{1}{2\pi \beta}\frac{(r^2+{{\acute{r}}}^2)^2}{64\pi t^5}E(\frac{r\acute{r}}{2t}, v)
  \\& &-\frac{1}{2\pi \beta}\frac{(r^2+{{\acute{r}}}^2)r\acute{r}}{32\pi t^5} \frac{\partial E(\frac{r\acute{r}}{2t}, v)}{\partial z}
  +\frac{1}{2\pi \beta}\frac{3r\acute{r}}{8\pi t^4} \frac{\partial E(\frac{r\acute{r}}{2t}, v)}{\partial z}
  \\& &-\frac{1}{2\pi \beta}\frac{r\acute{r}}{8\pi t^3}\frac{r^2+{{\acute{r}}}^2}{4t} \frac{\partial E(\frac{r\acute{r}}{2t}, v)}{\partial z}
  +\frac{1}{2\pi \beta}\frac{(r\acute{r})^2}{16\pi t^5} \frac{\partial^2 E(\frac{r\acute{r}}{2t}, v)}{\partial z^2}\}|
\\&\leq&C[1+(r^2+\acute{r}^2)^2]e^{-\frac{r^2+{\acute{r}}^2}{8}}
\\&\leq&Ce^{-\frac{r^2+{\acute{r}}^2}{16}}.
\end{eqnarray*}

Thus,  the estimate of  $\sup_{1\leq t\leq 2}|\frac{\partial^2\widehat{H}}{\partial t^2}|$ follows.
\end{proof}
\section{Decay estimates for the heat kernel.\label{Decay estimates for the heat kernel}}

In this section, we prove the following theorem on the estimate of the $E$ term in the heat kernel representation in Theorem  \ref{Carslaw-Wang-Chen representation for hk}.
\begin{thm}\label{Decay estimates for E} For all $z\geq 0$ and $v$ such that  $v\neq\pi$ nor $-\pi$ $mod\ 2\beta\pi$, the function $E(z,v)$ in Theorem  \ref{Carslaw-Wang-Chen representation for hk} satisfies the following estimates.
\begin{itemize} 
\item For any $p\geq n-1,\ n\geq 0$, there exists a $ C_{p,n}$  such that
$$|\frac{\partial}{\partial v}\{ z^p\frac{\partial^n}{\partial z^n}  E(z,v)\}|\leq C_{p,n};$$
\item For any $p\geq \max\{n-1,0\},\ n\geq 0$, there exists a $ \widehat{C}_{p,n}$  such that
$$| z^p\frac{\partial^n}{\partial z^n}  E(z,v)\}|\leq \widehat{C}_{p,n}.$$
\end{itemize}

\end{thm}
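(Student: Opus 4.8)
Write $F(v,y)$ for the function in the integrand of $E$, so that $E(z,v)=\int_{0}^{\infty}e^{-z\cosh y}F(v,y)\,dy$. Differentiating under the integral sign,
\[
z^{p}\frac{\partial^{n}}{\partial z^{n}}E(z,v)=(-1)^{n}\int_{0}^{\infty}z^{p}(\cosh y)^{n}e^{-z\cosh y}\,F(v,y)\,dy,
\]
and one further $v$-derivative merely replaces $F$ by $\partial_{v}F$. Hence both bullets reduce to bounding $\int_{0}^{\infty}z^{p}(\cosh y)^{n}e^{-z\cosh y}\,|\partial_{v}^{j}F(v,y)|\,dy$ for $j\in\{0,1\}$, uniformly in $z\ge0$ and in admissible $v$. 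The only exception is $p=-1$, which occurs only in the first bullet with the outer $\partial_{v}$ and $n=0$: there one first checks that $E(0,v)$ is locally constant in $v$ — it is a step function, as one sees by equating the two representations of $P(z,v)$ at $z=0$, where the Bessel series collapses to $2\pi$ — so $\partial_{v}E(0,v)=0$ on each component, and then $z^{-1}\partial_{v}E(z,v)=z^{-1}\int_{0}^{z}\partial_{z}\partial_{v}E(s,v)\,ds$ is controlled by the $p=0,\ n=1$ estimate. Throughout I use only the elementary facts that $s^{a}e^{-s/2}\le C_{a}$ for $s\ge0$ and that, because $0<\beta<1$, the function $\cosh y\cdot e^{-y/\beta}\le e^{(1-1/\beta)y}$ is integrable on $[1,\infty)$.

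\textbf{The far region $y\ge 1$.} Here $\cosh\frac{y}{\beta}-1\ge c_{\beta}>0$, so each denominator factor of $F$ is bounded below, the denominator grows like $\tfrac14 e^{2y/\beta}$ for large $y$, and the numerator is $O(e^{y/\beta})$; the same holds after any fixed number of $v$-derivatives. Thus $|\partial_{v}^{j}F(v,y)|\le C\,e^{-y/\beta}$ on $[1,\infty)$, uniformly in $v$. On the other hand, splitting $e^{-z\cosh y}=e^{-z\cosh y/2}e^{-z\cosh y/2}$ and using $\cosh y\ge1$ gives, in all admissible cases, $z^{p}(\cosh y)^{n}e^{-z\cosh y}\le C_{p,n}\cosh y$ on $[1,\infty)$, uniformly in $z$ (for $p\ge n$ one even gets $\le C_{p,n}$, and for $p=n-1\ge0$ one factors out a single $\cosh y$). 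Therefore the $y\ge1$ contribution is at most $C_{p,n}\int_{1}^{\infty}\cosh y\,e^{-y/\beta}\,dy=C_{p,n}'$, uniformly in $z$ and $v$.

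\textbf{The near region $0\le y\le 1$.} Here $\cosh y\le\cosh1$ and $z^{p}e^{-z\cosh y}\le z^{p}e^{-z}\le C_{p}$, so $z^{p}(\cosh y)^{n}e^{-z\cosh y}\le C_{p,n}$ uniformly (the remaining case $p=-1$ is handled by the bootstrap above), and it suffices to bound $\int_{0}^{1}|\partial_{v}^{j}F(v,y)|\,dy$, $j=0,1$, uniformly in $v$. Put $w=w(y)=\cosh\frac{y}{\beta}-1=2\sinh^{2}\frac{y}{2\beta}\asymp y^{2}$ and $a_{\pm}=a_{\pm}(v)=1-\cos\frac{v\pm\pi}{\beta}=2\sin^{2}\frac{v\pm\pi}{2\beta}\ge0$. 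Using $\cos A-\cos B=-2\sin\frac{A+B}{2}\sin\frac{A-B}{2}$ one obtains the algebraic normal form
\[
F(v,y)=\frac{2\sin\frac{\pi}{\beta}\,\big(\sigma\sqrt{a_{+}a_{-}}-w\cos\frac{v}{\beta}\big)}{(w+a_{-})(w+a_{+})},\qquad\sigma=\pm1,
\]
and for admissible $v$ the denominator never vanishes on $[0,\infty)$. If $1/\beta\in\mathbb{Z}$ then $\sin\frac{\pi}{\beta}=0$, $F\equiv0$, and there is nothing to prove; otherwise $a_{-}$ and $a_{+}$ cannot vanish simultaneously, so on any component of admissible $v$ at most one of them, say $a_{-}=:\mu^{2}$, is small while the other stays $\ge c_{\beta}>0$. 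Where $\mu\ge c_{\beta}$ both denominators are $\ge c_{\beta}^{2}$ and $F,\partial_{v}F$ are smooth and bounded on $[0,1]$. Near $\mu=0$ one writes $F=\dfrac{(\text{affine in }w,\mu)}{c\,(w+\mu^{2})}+(\text{bounded on }[0,1])$ and integrates the singular piece in $y$ \emph{exactly}, using $\frac{d}{dy}\frac{y}{ky^{2}+\mu^{2}}=\frac{\mu^{2}-ky^{2}}{(ky^{2}+\mu^{2})^{2}}$ and $\int_{0}^{1}\frac{dy}{ky^{2}+\mu^{2}}=\frac{1}{\sqrt{k}\,\mu}\arctan\frac{\sqrt{k}}{\mu}\le\frac{C}{\mu}$; the powers of $\mu$ cancel, giving $\int_{0}^{1}|F|\,dy\le C$ uniformly. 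Differentiating in $v$ raises the denominator to $(w+\mu^{2})^{2}$ but the differentiated numerator supplies the compensating vanishing, and the same explicit primitives (now also $\int_{0}^{1}\frac{dy}{(ky^{2}+\mu^{2})^{2}}\asymp\mu^{-3}$ and $\int_{0}^{1}\frac{ky^{2}\,dy}{(ky^{2}+\mu^{2})^{2}}\asymp\mu^{-1}$) again cancel all $\mu$-powers, yielding $\int_{0}^{1}|\partial_{v}F|\,dy\le C$ uniformly in $v$.

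\textbf{The main obstacle.} The real difficulty is exactly this uniformity as $v$ approaches an excluded value $\pi$ or $-\pi$ modulo $2\beta\pi$: there the crude bound $|F(v,y)|\lesssim e^{-y/\beta}$ fails, $\int_{0}^{1}|F(v,y)|\,dy$ itself diverges logarithmically and $\int_{0}^{1}|\partial_{v}F(v,y)|\,dy$ worse, so no absolute-value estimate can succeed — the boundedness of $E$ and its derivatives there is a genuine cancellation, made explicit above through the exact $y$-primitives. The laborious (but conceptually routine) part of the full proof is to propagate this cancellation through every admissible pair $(p,n)$ and both the $\partial_{v}$ and $\partial_{z}^{n}$ hierarchies, together with the $p=-1$ bootstrap; the far region and the smooth part of the near region are, by contrast, entirely elementary.
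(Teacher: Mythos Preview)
Your overall architecture --- split into far and near regions, use $s^{a}e^{-s}\le C_{a}$, and isolate the only delicate point as the behaviour of $\partial_{v}^{j}F$ near $y=0$ when one of $a_{\pm}$ is small --- is the right one, and for the second bullet it is essentially the paper's argument: the numerator of $F$ carries an $O(\mu)$ factor (via the identity the paper calls the ``algebraic trick''), so $\int_{0}^{1}|F|\,dy\le C$ is genuinely true and your reduction ``bound the $z$-weight, then bound $\int|F|$'' succeeds.

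The gap is in the first bullet, near region, $j=1$. Your sentence ``yielding $\int_{0}^{1}|\partial_{v}F|\,dy\le C$ uniformly in $v$'' is false, and your own ``main obstacle'' paragraph says as much. After the partial-fraction expansion the most singular pieces of $\partial_{v}F$ are $\dfrac{a_{+}+O(\mu^{2})}{w+\mu^{2}}$ and $\dfrac{c\,\mu^{2}}{(w+\mu^{2})^{2}}$; each has $\int_{0}^{1}|\cdot|\,dy\asymp\mu^{-1}$. The exact primitive you quote, $\displaystyle\frac{d}{dy}\Big(\frac{-y}{ky^{2}+\mu^{2}}\Big)=\frac{ky^{2}-\mu^{2}}{(ky^{2}+\mu^{2})^{2}}$, shows that the \emph{signed} combination is bounded, not the absolute value. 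But your reduction has already replaced the weight $z^{p}(\cosh y)^{n}e^{-z\cosh y}$ by a constant and put absolute values inside, so this cancellation is no longer available to you: a $y$-dependent weight cannot be pulled through a signed cancellation.

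The paper's device for exactly this point is to study $e^{z}E(z,v)$ rather than $E(z,v)$. Then $\partial_{z}$ brings down $-(\cosh y-1)$ instead of $-\cosh y$, and the resulting factor $(\cosh y-1)\sim y^{2}/2$ on $[0,1]$ is precisely the extra vanishing that makes the \emph{absolute} integral converge: $\int_{0}^{1}\dfrac{y^{2}}{ky^{2}+\mu^{2}}\,dy\le C$ and $\mu^{2}\int_{0}^{1}\dfrac{y^{2}}{(ky^{2}+\mu^{2})^{2}}\,dy\le C$. With this one change your reduction ``bound the $z$-weight, then bound $\int|\cdot|$'' goes through verbatim for both bullets. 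An alternative repair, closer to what you wrote, is to keep $E$ but expand the weight as $W(z,y)=W(z,0)+O(y^{2})$ uniformly in $z$ on $[0,1]$ (since $\partial_{y}W=O(y)$ there), use the bounded signed integral $\int_{0}^{1}\partial_{v}F\,dy$ for the constant piece, and absorb the $O(y^{2})$ correction into an absolute estimate; but you would need to write this out, not the incorrect $\int_{0}^{1}|\partial_{v}F|\,dy\le C$.
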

\begin{rmk} The points  $v=\pi$ or $-\pi$ $mod\ 2\beta\pi$ are the discontiuities of $E(z,v)$. Our theorem shows that $E(z,v)$ behaves very well away from these discontinuity points of $v$, which is sufficient to apply. 
\end{rmk}
\begin{proof}{of Theorem \ref{Decay estimates for E}:}
By using inductions and inequality \ref{inequality on power of x times exponential of x}, these estimates are directly corollaries of  Lemma \ref{lemma on E's z-derivative bounds} and Lemma \ref{Bound on the angle derivative} below.
\end{proof}

\begin{lem}\label{lemma on E's z-derivative bounds}For any nonnegative integer $n\geq 1$, there exists a $C_n$ with the following property.  For all $z\geq 0$,  $v$ such that  $v\neq\pi$ nor $-\pi$ $mod\ 2\beta\pi$, the following estimates hold:

 $$| z^{n-1}\frac{\partial^n}{\partial z^n} [ e^zE(z,v)]|\leq C_n,$$
and
$$|  e^zE(z,v)|\leq C.$$
\end{lem}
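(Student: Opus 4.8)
\textbf{Proof plan for Lemma \ref{lemma on E's z-derivative bounds}.}

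The plan is to work directly from the integral representation
\[
E(z,v)=\int_{0}^{\infty}e^{-z\cosh y}\,F(v,y)\,dy,\qquad
F(v,y)=\frac{2\sin\frac{\pi}{\beta}\bigl[\cos\frac{\pi}{\beta}-\cos\frac{v}{\beta}\cosh\frac{y}{\beta}\bigr]}{\bigl[\cosh\frac{y}{\beta}-\cos\frac{v-\pi}{\beta}\bigr]\bigl[\cosh\frac{y}{\beta}-\cos\frac{v+\pi}{\beta}\bigr]},
\]
established in Theorem \ref{Carslaw-Wang-Chen representation for hk}. First I would differentiate under the integral sign $n$ times in $z$, which simply brings down a factor $(-\cosh y)^{n}$:
\[
\frac{\partial^{n}}{\partial z^{n}}E(z,v)=(-1)^{n}\int_{0}^{\infty}e^{-z\cosh y}\,(\cosh y)^{n}\,F(v,y)\,dy.
\]
So it suffices to bound $z^{n-1}e^{z}\int_{0}^{\infty}e^{-z\cosh y}(\cosh y)^{n}F(v,y)\,dy$ uniformly in $z\ge 0$ and in $v$ away from the bad points, together with the $n=0$ statement $|e^{z}E(z,v)|\le C$. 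The two ingredients I expect to need are: (i) a pointwise bound $|F(v,y)|\le C(\beta)$ that is \emph{uniform in $v$} because, as noted after the statement of Theorem \ref{Decay estimates for E}, the only singularities of the denominator occur precisely at $v\equiv\pm\pi \bmod 2\beta\pi$, and for $v$ bounded away from those the denominator $[\cosh\frac{y}{\beta}-\cos\frac{v-\pi}{\beta}][\cosh\frac{y}{\beta}-\cos\frac{v+\pi}{\beta}]$ stays bounded below by a positive constant for all $y\ge 0$ (at $y=0$ it is $[1-\cos\frac{v-\pi}{\beta}][1-\cos\frac{v+\pi}{\beta}]>0$, and it increases in $y$); moreover for large $y$ one has the decay $|F(v,y)|\le C e^{-y/\beta}$ coming from the two $\cosh\frac{y}{\beta}$ factors in the denominator versus one in the numerator; and (ii) the elementary calculus fact that
\[
z^{n-1}e^{z}\int_{0}^{\infty}e^{-z\cosh y}(\cosh y)^{n}\,e^{-y/\beta}\,dy\le C_{n}
\]
uniformly for $z\ge 0$.

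For ingredient (ii) I would argue by the substitution $\cosh y=1+s$ (so $dy=ds/\sqrt{s^{2}+2s}$, $\cosh y\le 1+s$, and $e^{-y/\beta}\le C$ on $0\le y\le 1$ while $e^{-y/\beta}\le C(1+s)^{-c}$ with $c=1/\beta>0$ for $y\ge 1$), giving
\[
z^{n-1}e^{z}\int_{0}^{\infty}e^{-z(1+s)}(1+s)^{n}e^{-y/\beta}\frac{ds}{\sqrt{s^{2}+2s}}
= z^{n-1}\int_{0}^{\infty}e^{-zs}(1+s)^{n}e^{-y/\beta}\frac{ds}{\sqrt{s^{2}+2s}}.
\]
Near $s=0$ the weight $1/\sqrt{s^{2}+2s}\sim 1/\sqrt{2s}$ is integrable, and the scaling $s\mapsto s/z$ turns $z^{n-1}\int_{0}^{\infty}e^{-zs}(\,\cdots)$ into an expression where the small-$s$ and large-$s$ regimes are each controlled by $\int_{0}^{\infty}e^{-u}u^{k}\,du<\infty$ after splitting $(1+s)^{n}=\sum_{j}\binom{n}{j}s^{j}$; the decay factor $e^{-y/\beta}$, i.e.\ $(1+s)^{-c}$ for $s$ large, is what makes the $j=n$ (top) term converge. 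The $n=0$ bound is the same computation without the prefactor $z^{n-1}$ and is in fact easier since then $e^{z}\int_{0}^{\infty}e^{-z\cosh y}|F(v,y)|\,dy\le C e^{z}\int_{0}^{\infty}e^{-z(1+s)}(1+s)^{-c}\frac{ds}{\sqrt{s^{2}+2s}}\le C$, uniformly down to $z=0$ because of the $L^{1}$ decay of $F(v,\cdot)$ at $\infty$.

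The main obstacle I anticipate is not the $z$-integral gymnastics but making the uniformity in $v$ completely rigorous: one must check that the lower bound on the denominator of $F(v,y)$ and the constant in $|F(v,y)|\le C e^{-y/\beta}$ depend only on $\beta$ (and on the distance of $v$ to the exceptional set, which is harmless since, as the remark after Theorem \ref{Decay estimates for E} explains, the final kernel estimates are continuous in $v$ and we only ever use them after passing to the limit). Concretely I would record the inequality
\[
\bigl[\cosh\tfrac{y}{\beta}-\cos\tfrac{v-\pi}{\beta}\bigr]\bigl[\cosh\tfrac{y}{\beta}-\cos\tfrac{v+\pi}{\beta}\bigr]\ \ge\ c(\beta)\,\bigl(1+\cosh\tfrac{y}{\beta}\bigr)^{2}
\]
when $1-\cos\frac{v\pm\pi}{\beta}$ is bounded below, and the numerator bound $|2\sin\frac{\pi}{\beta}[\cos\frac{\pi}{\beta}-\cos\frac{v}{\beta}\cosh\frac{y}{\beta}]|\le C(\beta)(1+\cosh\frac{y}{\beta})$, so that $|F(v,y)|\le C(\beta)(1+\cosh\frac{y}{\beta})^{-1}\le C(\beta)e^{-y/\beta}$. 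Feeding this into the two displays above, together with the dominated-convergence justification for differentiating under the integral (the dominating function $e^{-\frac{z_{0}}{2}\cosh y}(\cosh y)^{n}|F(v,y)|$ works on any $z\ge z_{0}>0$, and the $z=0$ endpoint is handled separately by the $n=0$ bound), completes the proof of both assertions, hence — by induction and inequality \eqref{inequality on power of x times exponential of x} applied to split $z^{p}\partial_{z}^{n}E = z^{p}\partial_{z}^{n}(e^{-z}\cdot e^{z}E)$ via the Leibniz rule — of Theorem \ref{Decay estimates for E}.
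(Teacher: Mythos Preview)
Your approach has a genuine gap precisely at the point you flag as ``the main obstacle'': the pointwise bound $|F(v,y)|\le C(\beta)$, uniform in $v$, is \emph{false}. At $y=0$ the denominator equals $[1-\cos\frac{v-\pi}{\beta}][1-\cos\frac{v+\pi}{\beta}]$, and as $v\to\pi+2m\beta\pi$ the first factor vanishes to order $\epsilon^{2}$ (with $\epsilon=v-(\pi+2m\beta\pi)$). The numerator $2\sin\frac{\pi}{\beta}[\cos\frac{\pi}{\beta}-\cos\frac{v}{\beta}]$ also vanishes, but only to order $\epsilon$, so $|F(v,0)|\sim c/|\epsilon|\to\infty$. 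Thus your inequality $[\cosh\frac{y}{\beta}-\cos\frac{v-\pi}{\beta}][\cosh\frac{y}{\beta}-\cos\frac{v+\pi}{\beta}]\ge c(\beta)(1+\cosh\frac{y}{\beta})^{2}$ cannot hold with $c(\beta)$ independent of $v$, and your proposed estimate on the $y$-integral collapses. Your parenthetical escape hatch --- that dependence on $\mathrm{dist}(v,\{\pm\pi\bmod 2\beta\pi\})$ is ``harmless'' because the final kernel estimates are continuous in $v$ --- does not work: the lemma is used in Section~9 with a constant that must be uniform in $v$ \emph{before} one passes to the limit; the continuity argument there extends the resulting heat-kernel bound across the exceptional $v$, not the bound on $E$ itself.

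What the paper does instead is to control the \emph{integrated} quantity without any pointwise bound on $F$. The key algebraic identity
\[
\frac{\cos\frac{\pi}{\beta}-\cos\frac{v}{\beta}\cosh\frac{y}{\beta}}{\cosh\frac{y}{\beta}-\cos\frac{v-\pi}{\beta}}
=\frac{\cos\frac{\pi}{\beta}-\cos\frac{v}{\beta}\cos\frac{v-\pi}{\beta}}{\cosh\frac{y}{\beta}-\cos\frac{v-\pi}{\beta}}-\cos\frac{v}{\beta}
\]
isolates the singular piece; one then observes that near the bad point the (now $y$-independent) numerator is $O(|\sin\frac{v-\pi}{2\beta}|)$ while $\cosh\frac{y}{\beta}-\cos\frac{v-\pi}{\beta}\ge \frac{y^{2}}{2\beta^{2}}+2\sin^{2}\frac{v-\pi}{2\beta}$, so the $y$-integral is controlled by $|\sin\frac{v-\pi}{2\beta}|\int_{0}^{A}\frac{dy}{y^{2}+\beta^{2}\sin^{2}\frac{v-\pi}{2\beta}}=\arctan(\cdots)\le\pi/2$, uniformly in $v$. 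A second ingredient you did not have is that one differentiates $e^{z}E(z,v)=\int_{0}^{\infty}e^{-z(\cosh y-1)}F(v,y)\,dy$ directly, producing factors $(\cosh y-1)^{n}$ rather than $(\cosh y)^{n}$; the extra vanishing $\cosh y-1\sim y^{2}/2$ at $y=0$ is what makes the singular $y$-integral finite after the algebraic splitting. So the missing idea is this cancellation-after-integration; once you insert it (together with the observation that the two denominator factors cannot both be small since $1/\beta\notin\mathbb Z$), your large-$y$ analysis and the small-$s$ substitution are fine.
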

\begin{proof}{of Lemma \ref{lemma on E's z-derivative bounds}:} Actually the estimates also hold when $v=\pi$ or $-\pi$ $mod\ 2\beta\pi$. However,  since $E(z,v)$ is not continuous when  $v=\pi$ or $-\pi$ $mod\ 2\beta\pi$, we want to concentrate on the estimates close to these discontinuity points,  but not exactly at these points. 
When $n\geq 1$, we compute from Theorem \ref{Carslaw-Wang-Chen representation for hk}  that
\begin{eqnarray}\label{integral formula for z-derivatives of E}& &z^{n-1}\frac{d^n[e^{z}E(z,v)]}{dz^n} \nonumber
\\&=&\int_{0}^{\infty}(-z(\cosh y-1))^{n-1}e^{-z(\cosh y-1)} (1-\cosh y)\nonumber
\\& &  \frac{2\sin\frac{\pi}{\beta}[\cos\frac{\pi}{\beta}-\cos\frac{v}{\beta}\cosh\frac{y}{\beta}]}{[\cosh\frac{y}{\beta}-\cos\frac{v-\pi}{\beta}][\cosh\frac{y}{\beta}-\cos\frac{v+\pi}{\beta}]}dy.
\end{eqnarray}
  To prove $|z^{n-1}\frac{d^n[e^{z}E(z,v)]}{dz^n}|\leq  C_n$, we divide the situation into two cases below.\\

 Case 1, suppose $\frac{1}{\beta}$ is an integer, then $\sin\frac{\pi}{\beta}=0$. Then $E(z,v)\equiv 0$, the conclusion of Lemma \ref{lemma on E's z-derivative bounds} immediately follows.\\

 Case 2, suppose   $\frac{1}{\beta}$ is not an integer. Then there is a $C$ such that $|\cos\frac{v-\pi}{\beta}-1|\leq \frac{1}{C}$ implies $|\cos\frac{v+\pi}{\beta}-1|\geq \frac{1}{C}$,
 and $|\cos\frac{v+\pi}{\beta}-1|\leq \frac{1}{C}$ implies $|\cos\frac{v-\pi}{\beta}-1|\geq \frac{1}{C}$.\\

 Then the only issue we should worry about is that when $\cos\frac{v+\pi}{\beta}$ (or $\cos\frac{v-\pi}{\beta}$) is very close to $1$, and $y$ is close to
 $0$, the integral  (\ref{integral formula for z-derivatives of E}) might not  be uniformly bounded with respect to $v$. We will show that this bad situation won't happen. Notice that if
 both $|\cos\frac{v-\pi}{\beta}-1|\geq \frac{1}{C}$ and $|\cos\frac{v+\pi}{\beta}-1|\geq \frac{1}{C}$ hold, we trivially have  $|z^{n-1}\frac{d^n[e^{z}E(z,v)]}{dz^n}|\leq  C_n$.\\

  Thus it suffices to assume $|\cos\frac{v-\pi}{\beta}-1|\leq \frac{1}{C}$ and consequently
 $|\cos\frac{v+\pi}{\beta}-1|\geq \frac{1}{C}$. The case when $|\cos\frac{v+\pi}{\beta}-1|\leq \frac{1}{C}$ is the same.\\

   First we compute
 \begin{equation}\label{integrand of E algebraic trick}\frac{\cos\frac{\pi}{\beta}-\cos\frac{v}{\beta}\cosh\frac{y}{\beta}}{\cosh\frac{y}{\beta}-\cos\frac{v-\pi}{\beta}}=\frac{\cos\frac{\pi}{\beta}-\cos\frac{v}{\beta}\cos\frac{v-\pi}{\beta}}{\cosh\frac{y}{\beta}-\cos\frac{v-\pi}{\beta}}-\cos\frac{v}{\beta},
 \end{equation}
 thus our  primary interest is in the term $\frac{\cos\frac{\pi}{\beta}-\cos\frac{v}{\beta}\cos\frac{v-\pi}{\beta}}{\cosh\frac{y}{\beta}}$. Then using
 the inequality $$\cosh\frac{y}{\beta}-\cos\frac{v-\pi}{\beta}\geq \frac{y^2}{2\beta^2}+2\sin^2\frac{v-\pi}{2\beta},$$
 we have
   for any positive real number $A$ that
  \begin{eqnarray}& &|\int_{0}^{A}\frac{\cos\frac{\pi}{\beta}-\cos\frac{v}{\beta}\cos\frac{v-\pi}{\beta}}{\cosh\frac{y}{\beta}-\cos\frac{v-\pi}{\beta}}dy| \nonumber
  \\&\leq&2\beta^2|\cos\frac{\pi}{\beta}-\cos\frac{v}{\beta}\cos\frac{v-\pi}{\beta}|\int_{0}^{A}\frac{1}{y^2+\beta^2\sin^2\frac{v-\pi}{2\beta}}dy \nonumber
  \\&=&2\beta^2|\cos\frac{\pi}{\beta}-\cos\frac{v}{\beta}\cos\frac{v-\pi}{\beta}|\times\frac{\arctan\frac{A}{\beta|\sin\frac{v-\pi}{2\beta}|}}{\beta|\sin\frac{v-\pi}{2\beta}|} \nonumber
  \\&=&\frac{2\beta|\cos\frac{\pi}{\beta}\sin^2\frac{v-\pi}{\beta}+\sin\frac{\pi}{\beta}\cos\frac{v-\pi}{\beta}\sin\frac{v-\pi}{\beta}|}{|\sin\frac{v-\pi}{2\beta}|}\times \arctan\frac{A}{\beta|\sin\frac{v-\pi}{2\beta}|}\nonumber
  \\&\leq&4 \pi\beta. \label{bounding the main term in E after algebraic trick}
  \end{eqnarray}

 By the same method, we have for any $A$ that
  \begin{eqnarray*}\int^{A}_{0}\frac{1}{\cosh\frac{y}{\beta}-\cos\frac{v+\pi}{\beta}}dy
  \leq\beta^2 \int^{A}_{0}\frac{1}{y^2+\frac{\beta^2}{C}}dy
  =\sqrt{C}\beta \arctan\frac{A\sqrt{C}}{\beta}\leq \sqrt{C}\beta \frac{\pi}{2}.
  \end{eqnarray*}

 It's easy to deduce the following inequality.
 \begin{equation}\label{E03}
    |z^{n-1}\frac{d^n e^{-z(\cosh y-1)}}{dz^n}|\leq C_n(\cosh y-1).
    \end{equation}
Furthurmore, Using the inequality $|\cos\frac{v+\pi}{\beta}-1|\geq \frac{1}{C}$ and equation (\ref{integrand of E algebraic trick}), it's easy to see that
\begin{eqnarray}\nonumber & &|\frac{2\sin\frac{\pi}{\beta}[\cos\frac{\pi}{\beta}-\cos\frac{v}{\beta}\cosh\frac{y}{\beta}]}{[\cosh\frac{y}{\beta}-\cos\frac{v-\pi}{\beta}][\cosh\frac{y}{\beta}-\cos\frac{v+\pi}{\beta}]} | \\
\label{the integrand of E splits}  &\leq&  C|\frac{\cos\frac{\pi}{\beta}-\cos\frac{v}{\beta}\cosh\frac{v-\pi}{\beta}}{\cosh\frac{y}{\beta}-\cos\frac{v-\pi}{\beta}}+\frac{1}{\cosh\frac{y}{\beta}-\cos\frac{v+\pi}{\beta}}|.
\end{eqnarray}

  Then using (\ref{E03}) and (\ref{the integrand of E splits}), we deduce  \begin{eqnarray}\label{E02}& &|z^{n-1}\frac{d^ne^{z}E(z,v)}{dz^n}|\nonumber
 \\&=&|\int_{0}^{\infty}z^{n-1}\frac{d^ne^{-z(\cosh y-1)}}{dz^n}\frac{2\sin\frac{\pi}{\beta}[\cos\frac{\pi}{\beta}-\cos\frac{v}{\beta}\cosh\frac{y}{\beta}]}{[\cosh\frac{y}{\beta}-\cos\frac{v-\pi}{\beta}][\cosh\frac{y}{\beta}-\cos\frac{v+\pi}{\beta}]}dy|
 \\&\leq&C_n\int_{0}^{\infty}\frac{(\cosh y-1)|\cos\frac{\pi}{\beta}-\cos\frac{v}{\beta}\cos\frac{v-\pi}{\beta}|}{\cosh\frac{y}{\beta}-\cos\frac{v-\pi}{\beta}}+\frac{(\cosh y-1)}{\cosh\frac{y}{\beta}-\cos\frac{v+\pi}{\beta}}dy.\nonumber
 \end{eqnarray}

 Since $\frac{1}{\beta}>1$ and $|\cos\frac{v+\pi}{\beta}-1|\geq \frac{1}{C}$, we have
 \begin{equation}\label{bounding z derivative of E  using splitting 2}\int_{0}^{\infty}\frac{\cosh y-1}{\cosh\frac{y}{\beta}-\cos\frac{v+\pi}{\beta}}dy\leq C.\end{equation}
 Using almost the same trick as in (\ref{bounding the main term in E after algebraic trick}) we make the following estimate:
   \begin{eqnarray}& &\int_{0}^{\infty}\frac{(\cosh y-1)|\cos\frac{\pi}{\beta}-\cos\frac{v}{\beta}\cos\frac{v-\pi}{\beta}|}{\cosh\frac{y}{\beta}-\cos\frac{v-\pi}{\beta}}dy \nonumber
   \\&\leq&5\int_{0}^{1}\frac{|\cos\frac{\pi}{\beta}-\cos\frac{v}{\beta}\cos\frac{v-\pi}{\beta}|}{\cosh\frac{y}{\beta}-\cos\frac{v-\pi}{\beta}}dy +3\int_{1}^{\infty}\frac{\cosh y-1}{\cosh\frac{y}{\beta}-\cos\frac{v-\pi}{\beta}}dy \nonumber
  \\& \leq& C.\label{bounding z derivative of E  using splitting}
  \end{eqnarray}

Thus, combining  (\ref{bounding z derivative of E  using splitting}), (\ref{bounding z derivative of E  using splitting 2}), and (\ref{E02}), we conclude that $$| z^{n-1}\frac{\partial^n}{\partial z^n} [ e^zE(z,v)]|\leq C_n.$$

The conclusion $| e^zE(z,v)|\leq C$ is obvious from the discussion above.
\end{proof}

\begin{lem}\label{Bound on the angle derivative}For any nonnegative integer $n\geq 0$, there exists a $C_n$ with the following property.  For all $z\geq 0$,  $v$ such that  $v\neq\pi$ nor $-\pi$ $mod\ 2\beta\pi$, the following estimate holds

 $$|\frac{\partial}{\partial v}\{ z^{n-1}\frac{\partial^n}{\partial z^n} [ e^zE(z,v)]\}|\leq C_n.$$

\end{lem}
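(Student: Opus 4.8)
The plan is to differentiate the integral representation for $z^{n-1}\frac{\partial^n}{\partial z^n}[e^z E(z,v)]$ from formula (\ref{integral formula for z-derivatives of E}) under the integral sign with respect to $v$, and to bound the resulting integrand uniformly in exactly the same spirit as the proof of Lemma \ref{lemma on E's z-derivative bounds}. Concretely, starting from
\[
z^{n-1}\frac{d^n[e^z E(z,v)]}{dz^n}=\int_0^\infty (-z(\cosh y-1))^{n-1}e^{-z(\cosh y-1)}(1-\cosh y)\,K(y,v)\,dy,
\]
where $K(y,v)=\dfrac{2\sin\frac{\pi}{\beta}[\cos\frac{\pi}{\beta}-\cos\frac{v}{\beta}\cosh\frac{y}{\beta}]}{[\cosh\frac{y}{\beta}-\cos\frac{v-\pi}{\beta}][\cosh\frac{y}{\beta}-\cos\frac{v+\pi}{\beta}]}$, I would observe that the $z$-dependent prefactor $(-z(\cosh y-1))^{n-1}e^{-z(\cosh y-1)}(1-\cosh y)$ does not involve $v$ at all, so that $\frac{\partial}{\partial v}$ only hits $K(y,v)$. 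Then, as in (\ref{E03}), the prefactor is bounded by $C_n(\cosh y-1)$ uniformly in $z$, and the whole matter reduces to showing
\[
\int_0^\infty (\cosh y-1)\,\Bigl|\frac{\partial}{\partial v}K(y,v)\Bigr|\,dy\le C_n.
\]

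The core of the argument is the uniform estimate of $\int_0^\infty(\cosh y-1)|\partial_v K(y,v)|\,dy$. First, when $\frac{1}{\beta}$ is an integer, $\sin\frac{\pi}{\beta}=0$ so $K\equiv0$ and there is nothing to prove; so assume $\frac{1}{\beta}\notin\mathbb{Z}$. As in Case 2 of Lemma \ref{lemma on E's z-derivative bounds}, there is a constant $C$ such that $|\cos\frac{v-\pi}{\beta}-1|$ and $|\cos\frac{v+\pi}{\beta}-1|$ cannot both be smaller than $\frac1C$; when both are $\ge\frac1C$ the estimate is trivial because $K$ and all its $v$-derivatives are then bounded smooth functions with integrable decay in $y$. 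So suppose $|\cos\frac{v-\pi}{\beta}-1|\le\frac1C$, hence $|\cos\frac{v+\pi}{\beta}-1|\ge\frac1C$ (the other case is symmetric). I would then split $K$ via the partial-fraction identity underlying (\ref{integrand of E algebraic trick})–(\ref{the integrand of E splits}): write
\[
K(y,v)=C\cdot\Bigl[\frac{\cos\frac{\pi}{\beta}-\cos\frac{v}{\beta}\cos\frac{v-\pi}{\beta}}{\cosh\frac{y}{\beta}-\cos\frac{v-\pi}{\beta}}+\frac{\text{(bounded)}}{\cosh\frac{y}{\beta}-\cos\frac{v+\pi}{\beta}}\Bigr],
\]
and differentiate each piece in $v$. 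The second piece is harmless since $\cosh\frac{y}{\beta}-\cos\frac{v+\pi}{\beta}\ge\frac1C$ stays bounded away from zero and its $v$-derivative is again $O\bigl((\cosh\frac{y}{\beta}-\cos\frac{v+\pi}{\beta})^{-1}\bigr)$ up to constants, which against $\cosh y-1$ integrates to $\le C$ because $\frac1\beta>1$ (this is the estimate (\ref{bounding z derivative of E using splitting 2})). The first piece is the delicate one.

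For the delicate piece, differentiating $\dfrac{N(v)}{\cosh\frac{y}{\beta}-\cos\frac{v-\pi}{\beta}}$, where $N(v)=\cos\frac{\pi}{\beta}-\cos\frac{v}{\beta}\cos\frac{v-\pi}{\beta}$, produces two terms: $\dfrac{N'(v)}{\cosh\frac{y}{\beta}-\cos\frac{v-\pi}{\beta}}$ and $\dfrac{-N(v)\cdot\frac1\beta\sin\frac{v-\pi}{\beta}}{(\cosh\frac{y}{\beta}-\cos\frac{v-\pi}{\beta})^2}$. The key trigonometric fact, already exploited in (\ref{bounding the main term in E after algebraic trick}), is that $N(v)$ vanishes to first order as $\cos\frac{v-\pi}{\beta}\to1$; indeed $N(v)=\cos\frac{\pi}{\beta}\sin^2\frac{v-\pi}{\beta}+\sin\frac{\pi}{\beta}\cos\frac{v-\pi}{\beta}\sin\frac{v-\pi}{\beta}$, so $|N(v)|\le C|\sin\frac{v-\pi}{2\beta}|$ near the bad value; similarly one checks $|N'(v)|\le C$ with no degeneracy needed for the first term. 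Using the lower bound $\cosh\frac{y}{\beta}-\cos\frac{v-\pi}{\beta}\ge\frac{y^2}{2\beta^2}+2\sin^2\frac{v-\pi}{2\beta}$ together with $\cosh y-1\le C y^2 e^y$ (and $\cosh y-1\le C\cosh\frac{y}{\beta}$-type comparisons for large $y$ since $\frac1\beta>1$ is \emph{not} available here — careful: for the first-piece term the relevant cancellation is that $\cosh y-1$ is $O(y^2)$ near $0$, matching the two powers of $(\cosh\frac{y}{\beta}-\cos\frac{v-\pi}{\beta})^{-1}$ up to the $\sin^2\frac{v-\pi}{2\beta}$ regularization), I would bound
\[
\int_0^\infty\frac{(\cosh y-1)|N(v)\sin\frac{v-\pi}{\beta}|}{(\cosh\frac{y}{\beta}-\cos\frac{v-\pi}{\beta})^2}\,dy
\le C|\sin\tfrac{v-\pi}{2\beta}|^2\int_0^1\frac{y^2\,dy}{(y^2+\beta^2\sin^2\frac{v-\pi}{2\beta})^2}+C\int_1^\infty(\cosh y-1)e^{-y/\beta}\,dy,
\]
and the substitution $y=\beta|\sin\frac{v-\pi}{2\beta}|\,s$ in the first integral makes it $\le C\int_0^\infty\frac{s^2\,ds}{(1+s^2)^2}\le C$ uniformly in $v$, while the tail is finite because $\frac1\beta>1$. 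The term with $N'(v)$ is handled the same way but is easier, being only linearly singular; split $\int_0^\infty=\int_0^1+\int_1^\infty$ exactly as in (\ref{bounding z derivative of E using splitting}). Collecting these bounds gives $\int_0^\infty(\cosh y-1)|\partial_v K(y,v)|\,dy\le C_n$, and combined with the uniform prefactor bound this yields $|\partial_v\{z^{n-1}\partial_z^n[e^zE(z,v)]\}|\le C_n$, which is the claim. The main obstacle — and the only place any real care is needed — is the quadratically singular term $\dfrac{N(v)\sin\frac{v-\pi}{\beta}}{(\cosh\frac{y}{\beta}-\cos\frac{v-\pi}{\beta})^2}$: one must use the first-order vanishing of $N(v)$ at the degenerate angle together with the precise $y^2+\beta^2\sin^2\frac{v-\pi}{2\beta}$ lower bound for the denominator and the scaling substitution above to extract a $v$-uniform bound; naive estimates lose a factor that blows up as $v\to\pi\ (\mathrm{mod}\ 2\beta\pi)$, which is precisely why the statement excludes those points.
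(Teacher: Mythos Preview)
Your approach for $n\ge 1$ is correct and very close to the paper's. Both reduce to bounding $\int_0^\infty(\cosh y-1)\,|\partial_v F(v,y)|\,dy$ uniformly in $v$ near the degenerate angles, and both isolate the same quadratically singular term, controlled via the same scaling estimate (your substitution $y=\beta|\sin\tfrac{v-\pi}{2\beta}|\,s$ is exactly what the paper does implicitly when it bounds the term $II_7$). The organization differs slightly: the paper first computes $\partial_v F$ in closed form, performs a polynomial division of the cubic numerator by $[\cosh\tfrac{y}{\beta}-\cos\tfrac{v-\pi}{\beta}]^2$, and then replaces $\cosh\tfrac{y}{\beta}$ by its quadratic Taylor polynomial via the auxiliary function $D_1$, obtaining seven pieces $II_1,\dots,II_7$. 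You instead differentiate the already-split form from (\ref{integrand of E algebraic trick}), which yields fewer terms more directly. Either route works; your version is a bit more economical, while the paper's is more explicit (one small imprecision: your displayed ``$K=C\cdot[\dots]$'' is really the product of $\tfrac{2\sin(\pi/\beta)}{\cosh(y/\beta)-\cos((v+\pi)/\beta)}$ with the bracket, so the product rule generates extra cross terms --- but those involve only the harmless factor and cause no difficulty).

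There is, however, a genuine gap at $n=0$. Your argument relies on the prefactor bound (\ref{E03}), i.e.\ $|z^{n-1}\partial_z^n e^{-z(\cosh y-1)}|\le C_n(\cosh y-1)$, but for $n=0$ the quantity in question is $z^{-1}e^{-z(\cosh y-1)}$, which blows up as $z\to 0$ and is certainly not bounded by $C_0(\cosh y-1)$. The paper handles this separately: it observes from the representation (\ref{P's Expression formula}) that $\partial_v E(0,v)=0$ away from the exceptional angles, which means $\int_0^\infty \partial_v F(v,y)\,dy=0$. Subtracting this null integral replaces $z^{-1}e^{-z(\cosh y-1)}$ by $z^{-1}[e^{-z(\cosh y-1)}-1]$, and this \emph{is} bounded by $C(\cosh y-1)$ uniformly in $z$ (since $|e^{-x}-1|\le x$). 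After that replacement your argument goes through unchanged. You should add this step for the $n=0$ case.
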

\begin{proof}{of Lemma \ref{Bound on the angle derivative}:} The proof here  is  more complicated than that of Lemma \ref{lemma on E's z-derivative bounds}.  We only prove the statement for $|\frac{\partial^2}{\partial v\partial z} e^z E(z,v)|$, the proofs of $|\frac{\partial}{\partial v} [z^{n-1}\frac{\partial}{\partial z^n}e^z  E(z,v)]|$
are  by the same way and even easier. Recall that
$$E(z,v)=\int_{0}^{\infty}e^{-z\cosh y}F(v,y)dy,$$

 where
$$F(v,y)=\frac{2\sin\frac{\pi}{\beta}[\cos\frac{\pi}{\beta}-\cos\frac{v}{\beta}\cosh\frac{y}{\beta}]}{[\cosh\frac{y}{\beta}-\cos\frac{v-\pi}{\beta}][\cosh\frac{y}{\beta}-\cos\frac{v+\pi}{\beta}]}.$$
We compute
\begin{eqnarray*}& &\frac{\partial}{\partial v} F(v,y)
\\&=&\frac{2\sin\frac{\pi}{\beta}\sin\frac{v}{\beta}[\cosh^3\frac{y}{\beta}-(2+\cos\frac{v-\pi}{\beta}\cos\frac{v+\pi}{\beta})\cosh\frac{y}{\beta}+\cos\frac{v+\pi}{\beta}+\cos\frac{v-\pi}{\beta}]}{\beta[\cosh\frac{y}{\beta}-\cos\frac{v-\pi}{\beta}]^2[\cosh\frac{y}{\beta}-\cos\frac{v+\pi}{\beta}]^2}.
\end{eqnarray*}

Therefore we have the following integral representation of $\frac{\partial^2}{\partial v\partial z}[ e^z E(z,v)]$:
\begin{eqnarray}\label{integral formula for v mixed derivatives of E}& &\frac{\partial^2}{\partial v\partial z} e^z E(z,v) \nonumber
\\&=&\int_{0}^{\infty}\frac{2\sin\frac{\pi}{\beta}\sin\frac{v}{\beta}[\cosh^3\frac{y}{\beta}-(2+\cos\frac{v-\pi}{\beta}\cos\frac{v+\pi}{\beta})\cosh\frac{y}{\beta}+\cos\frac{v+\pi}{\beta}+\cos\frac{v-\pi}{\beta}]}{\beta[\cosh\frac{y}{\beta}-\cos\frac{v-\pi}{\beta}]^2[\cosh\frac{y}{\beta}-\cos\frac{v+\pi}{\beta}]^2}\nonumber
\\& &-(\cosh y-1)e^{-z(\cosh y-1) }dy.
\end{eqnarray}

With the same idea as in Lemma \ref{lemma on E's z-derivative bounds} in mind, we divide the situation into two cases below.\\

 Case 1, suppose $\frac{1}{\beta}$ is an integer, then $\sin\frac{\pi}{\beta}=0$. Then $E(z,v)\equiv 0$.\\

 Case 2, suppose   $\frac{1}{\beta}$ is not an integer. Then there is a $C$ such that $|\cos\frac{v-\pi}{\beta}-1|\leq \frac{1}{C}$ implies $|\cos\frac{v+\pi}{\beta}-1|\geq \frac{1}{C}$,
 and $|\cos\frac{v+\pi}{\beta}-1|\leq \frac{1}{C}$ implies $|\cos\frac{v-\pi}{\beta}-1|\geq \frac{1}{C}$.\\

 Again, the only issue we should worry about is that when $\cos\frac{v+\pi}{\beta}$ or $\cos\frac{v-\pi}{\beta}$ is very close to $1$, and $y$ is close to
 $0$, the integral  (\ref{integral formula for v mixed derivatives of E}) might not  be uniformly bounded with respect to $v$. We will show this bad situation won't happen. Notice that if
 both $|\cos\frac{v-\pi}{\beta}-1|\geq \frac{1}{C}$ and $|\cos\frac{v+\pi}{\beta}-1|\geq \frac{1}{C}$ hold, we trivially have  $|\frac{\partial^2}{\partial v\partial z} e^z E(z,v)|\leq C$.\\

  Thus it suffices to assume $|\cos\frac{v-\pi}{\beta}-1|\leq \frac{1}{C}$ and consequently
 $|\cos\frac{v+\pi}{\beta}-1|\geq \frac{1}{C}$. The case when $|\cos\frac{v+\pi}{\beta}-1|\leq \frac{1}{C}$ is the same.\\

 Next we start discussing on the case $|\cos\frac{v-\pi}{\beta}-1|\leq \frac{1}{C}$. It is easy to see that
 \begin{eqnarray*}& &\frac{\cosh^3\frac{y}{\beta}-(2+\cos\frac{v-\pi}{\beta}\cos\frac{v+\pi}{\beta})\cosh\frac{y}{\beta}+\cos\frac{v+\pi}{\beta}+\cos\frac{v-\pi}{\beta}}{[\cosh\frac{y}{\beta}-\cos\frac{v-\pi}{\beta}]^2}
 \\&=&\cosh\frac{y}{\beta}+2\cos\frac{v-\pi}{\beta}+\frac{3\cos^2\frac{v-\pi}{\beta}-\cos\frac{v-\pi}{\beta}\cos\frac{v+\pi}{\beta}-2}{\cosh\frac{y}{\beta}-\cos\frac{v-\pi}{\beta}}
 \\&& -\ \frac{\sin^2\frac{v-\pi}{\beta}[\cos\frac{v-\pi}{\beta}-\cos\frac{v+\pi}{\beta}]}{(\cosh\frac{y}{\beta}-\cos\frac{v-\pi}{\beta})^2}.
 \end{eqnarray*}
 Denote

    \begin{equation*}S(z,y)=\frac{2\sin\frac{\pi}{\beta}\sin\frac{v}{\beta}(1-\cosh y)e^{-z(\cosh y-1)}}{\beta[\cosh\frac{y}{\beta}-\cos\frac{v+\pi}{\beta}]^2},\end{equation*}

\begin{equation*}D_1(v,y)=\frac{1}{\cosh\frac{y}{\beta}-\cos\frac{v-\pi}{\beta}}-\frac{1}{\frac{y^2}{2\beta^2}+1-\cos\frac{v-\pi}{\beta}}.\end{equation*}

 Then we have the following splitting
 \begin{eqnarray*}& &\frac{\partial^2}{\partial v\partial z} e^z E(z,v)
 \\&=&\int_{0}^{+\infty}S(z,y)\frac{\cosh^3\frac{y}{\beta}-(2+\cos\frac{v-\pi}{\beta}\cos\frac{v+\pi}{\beta})\frac{y}{\beta}+\cos\frac{v+\pi}{\beta}+\cos\frac{v-\pi}{\beta}}{[\cosh\frac{y}{\beta}-\cos\frac{v-\pi}{\beta}]^2}dy
 \\&=&\int_{1}^{+\infty}(1-\cosh y)e^{-z(\cosh y-1)}\frac{\partial F(y,v)}{\partial v}dy+\int_{0}^{1}S(z,y)(\cosh\frac{y}{\beta}\\& &+2\cos\frac{v-\pi}{\beta})dy
+\int_{0}^{1}S(z,y)\frac{3\cos^2\frac{v-\pi}{\beta}-\cos\frac{v-\pi}{\beta}\cos\frac{v+\pi}{\beta}-2}{\cosh\frac{y}{\beta}-\cos\frac{v-\pi}{\beta}}dy
\\& &-\int_{0}^{1}S(z,y)\frac{\sin^2\frac{v-\pi}{\beta}[\cos\frac{v-\pi}{\beta}-\cos\frac{v+\pi}{\beta}]}{(\cosh\frac{y}{\beta}-\cos\frac{v-\pi}{\beta})^2}dy
 \\&=&\Sigma_{k=1}^{7}II_k.
 \end{eqnarray*}
 The terms in the splitting are  
 \begin{eqnarray*} & &II_1=\int_{1}^{+\infty}-(\cosh y-1)e^{-z(\cosh y-1)}\frac{\partial F(y,v)}{\partial v}dy,
            \\& &II_2=\int_{0}^{1}S(z,y)(\cosh\frac{y}{\beta}+2\cos\frac{v-\pi}{\beta})dy,
            \\& & II_3=(3\cos^2\frac{v-\pi}{\beta}-\cos\frac{v-\pi}{\beta}\cos\frac{v+\pi}{\beta}-2)\int_{0}^{1}S(z,y)D_1(v,y)dy,
              \\& & II_4=(3\cos^2\frac{v-\pi}{\beta}-\cos\frac{v-\pi}{\beta}\cos\frac{v+\pi}{\beta}-2)\int_{0}^{1}\frac{S(z,y)}{\frac{y^2}{2\beta^2}+1-\cos\frac{v-\pi}{\beta}}dy,
\\& &II_5=-\sin^2\frac{v-\pi}{\beta}[\cos\frac{v-\pi}{\beta}-\cos\frac{v+\pi}{\beta}]\int_{0}^{1}S(z,y)D^2_1(v,y)dy,
          \\& & II_6=-2\sin^2\frac{v-\pi}{\beta}[\cos\frac{v-\pi}{\beta}-\cos\frac{v+\pi}{\beta}]\int_{0}^{1}S(z,y)\frac{D_1(v,y)}{\frac{y^2}{2\beta^2}+1-\cos\frac{v-\pi}{\beta}}dy,
          \\& &II_7=-\sin^2\frac{v-\pi}{\beta}[\cos\frac{v-\pi}{\beta}-\cos\frac{v+\pi}{\beta}]\int_{0}^{1}\frac{S(z,y)}{|\frac{y^2}{2\beta^2}+1-\cos\frac{v-\pi}{\beta}|^2}dy.
 \end{eqnarray*}

We have the following claim regarding  $D_1$ and $S$.

 \begin{clm}\label{Bounding D_1 and D_2} There exists a  $C$ such that when $y\leq 1$, we have
 $$|D_1(v,y)|\leq C,\ \  |S(z,y)|\leq Cy^2.$$
 \end{clm}
The assumption $y\leq 1$ is important. It's easy to see  $|S(z,y)|\leq Cy^2$ from the 
inequality $|\cos\frac{v+\pi}{\beta}-1|\geq \frac{1}{C}$ .

The estimate $|D_1(v,y)|\leq C$ follows from the following inequality
$$D_1(v,y)=\frac{O(y^4)}{(\cosh\frac{y}{\beta}-\cos\frac{v-\pi}{\beta})(\frac{y^2}{2\beta^2}+1-\cos\frac{v-\pi}{\beta})}\leq \frac{O(y^4)}{y^4}\leq C$$
when $y\leq 1$, thanks to the fact that the Taylor series of $\cosh y$ at $0$ only contain even order terms. Then Claim \ref{Bounding D_1 and D_2} is proved.\\

          From Claim  \ref{Bounding D_1 and D_2} we immediately see 
          \begin{equation}\label{bounding term 2,3,6,7 in v derivative of E}|II_2|+|II_3|+|II_4|+|II_5|+|II_6|\leq C,\end{equation}
          and
          \begin{eqnarray}\label{bounding term 4,7 in v derivative of E}|II_7|
          &\leq& C\int_{0}^{1}\frac{y^2\sin^2(\frac{v-\pi}{\beta})}{|\frac{y^2}{2\beta^2}+2\sin^2\frac{v-\pi}{2\beta}|^2}dy \nonumber
          \\&\leq& C.
          \end{eqnarray}

          Next we show that $II_1$ could also be easily bounded.
          \begin{clm}\label{Bounding II_1} $|II_1|<C.$
 \end{clm}

          Claim \ref{Bounding II_1} follows from the trivial fact that $\frac{1}{\beta}>1$ and the following estimates when $y\geq 1$.
          \begin{eqnarray}\label{integral formula for v mixed derivatives of E}
|(-(\cosh y-1))e^{-z(\cosh y-1) }\frac{\partial}{\partial v} F(v,y)|
& \leq &  C\frac{(\cosh y-1)(\cosh^3\frac{y}{\beta})}{\cosh^4\frac{y}{\beta}}\nonumber
\\& \leq &Ce^{-(\frac{1}{\beta}-1)y}.\nonumber
\end{eqnarray}
Since the last term $Ce^{-(\frac{1}{\beta}-1)y}$ is integrable, then Claim \ref{Bounding II_1} follows.\\

 Then when $n=1$, Lemma \ref{Bound on the angle derivative}  follows from (\ref{bounding term 2,3,6,7 in v derivative of E}), (\ref{bounding term 4,7 in v derivative of E}), claim \ref{Bounding II_1}.\\

 When $n=0$, there is  a point we should clearify. With respect to $\frac{1}{z}\frac{\partial}{\partial v} e^z E(z,v)$, we apply the following fact which is straight forward from Theorem \ref{Representation formulas for the heat kernel}:
\begin{equation}\label{E01}\frac{\partial}{\partial v}  E(0,v)=0\ \textrm{when}\ v\neq\ \pi\ \textrm{nor} -\pi\ \textrm{mod}\ 2\beta\pi.
\end{equation}
 Notice that $E(z,v)$ is not differentiable when $v=\pi$ or $-\pi$ $mod\ 2\beta\pi$.\\

 The fact in (\ref{E01}) means that  when $v=\pi$ or $-\pi$ $mod\ 2\beta\pi$, we have
 \begin{eqnarray*}& \int_{0}^{\infty}&\frac{2\sin\frac{\pi}{\beta}\sin\frac{v}{\beta}[\cosh^3\frac{y}{\beta}-(2+\cos\frac{v-\pi}{\beta}\cos\frac{v+\pi}{\beta})\cosh\frac{y}{\beta}+\cos\frac{v+\pi}{\beta}+\cos\frac{v-\pi}{\beta}]}{\beta[\cosh\frac{y}{\beta}-\cos\frac{v-\pi}{\beta}]^2[\cosh\frac{y}{\beta}-\cos\frac{v+\pi}{\beta}]^2}dy
 \\&=&0.
\end{eqnarray*}
Then
 \begin{eqnarray*}& & \frac{1}{z}\frac{\partial}{\partial v} e^z E(z,v)
 \\&= &\int_{0}^{\infty}\frac{2\sin\frac{\pi}{\beta}\sin\frac{v}{\beta}[\cosh^3\frac{y}{\beta}-(2+\cos\frac{v-\pi}{\beta}\cos\frac{v+\pi}{\beta})\cosh\frac{y}{\beta}+\cos\frac{v+\pi}{\beta}+\cos\frac{v-\pi}{\beta}]}{\beta[\cosh\frac{y}{\beta}-\cos\frac{v-\pi}{\beta}]^2[\cosh\frac{y}{\beta}-\cos\frac{v+\pi}{\beta}]^2}
 \\&\times &\frac{e^{-z(\cosh y-1) }-1}{z}dy.
\end{eqnarray*}
Then instead of considering $S(z,y)=\frac{2\sin\frac{\pi}{\beta}\sin\frac{v}{\beta}(1-\cosh y)e^{-z(\cosh y-1)}}{\beta[\cosh\frac{y}{\beta}-\cos\frac{v+\pi}{\beta}]^2}$ (in the case of $ \frac{\partial^2}{\partial v\partial z} e^z E(z,v)$), we consider
$$\widehat{S}(z,y)=\frac{2\sin\frac{\pi}{\beta}\sin\frac{v}{\beta}(e^{-z(\cosh y-1)}-1)}{\beta[\cosh\frac{y}{\beta}-\cos\frac{v+\pi}{\beta}]^2z}$$
in the case of $\frac{\partial}{\partial v} e^z E(z,v)$. Apparently  when $v\neq\pi$ nor $-\pi$ $mod\ 2\beta\pi$, we still have $\frac{1}{[\cosh\frac{y}{\beta}-\cos\frac{v+\pi}{\beta}]^2}\leq C$; when  and $y\leq 1$, we still have that
$$\widehat{S}(z,y)\leq Cy^2.$$
Therefore the proof for $\frac{\partial^2}{\partial v\partial z} e^z E(z,v)$ exactly carries over to the proof of $\frac{\partial}{\partial v} e^z E(z,v)$ line by line.
\end{proof}
\section{Appendix: some lower order estimates.\label{Appendix: some lower order estimates}}
In this appendix, we enumerate some interpolation formulas needed for the Schauder estimate.  We also state some lower order Schauder estimates. 
As in \cite{GT}, they are necessary but routine and simple.  Some of them are not proved  by  the same way as in \cite{GT}. We would like to point out that comparing to
the proof of proposition \ref{Schauder estimate for the weak solution which is compactly supported}(which is the top order seminorm estimate and is our main work in this article,) the following lower order estimates are really similar  and  much much easier.
\begin{lem}\label{parabolic intepolations} For any $\epsilon>0$ and domain $\Omega$,  the following parabolic interpolation inequalities hold.
\[\left\{ \begin{array}{lcl}
 [\nabla u]^{(1)}_{0,\Omega\times[0,T]} & \leq & C(\epsilon)|u|_{0,\Omega \times[0,T]}+\epsilon[\nabla u]^{(1)}_{\alpha,\beta,\Omega\times[0,T]}; \\

 [u]^{(*)}_{\alpha,\beta,\Omega\times[0,T]}& \leq & \epsilon[\nabla u]^{(1)}_{0,\Omega\times[0,T]}+C(\epsilon)[u]_{0,\Omega\times[0,T]};\\

  |\Delta u|^{(2)}_{0,\Omega\times[0,T]}& \leq & C(\epsilon) |\nabla u|^{(1)}_{0,\Omega\times[0,T]}+ \epsilon|\Delta u|^{(2)}_{\alpha,\beta,\Omega\times[0,T]};\\
  
  |\sqrt{-1}\partial \bar{\partial} u|^{(2)}_{0,\Omega\times[0,T]}& \leq & C(\epsilon) |\nabla u|^{(1)}_{0,\Omega\times[0,T]}+ \epsilon|\sqrt{-1}\partial \bar{\partial} u|^{(2)}_{\alpha,\beta,\Omega\times[0,T]}.
   \end{array}\right.\]
\end{lem}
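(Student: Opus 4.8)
\textbf{Proof proposal for Lemma \ref{parabolic intepolations}.}

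These four inequalities are all instances of the classical Ehrling--Nirenberg interpolation scheme, adapted to the weighted parabolic seminorms with spatial distance weights $d_x$. The plan is to prove each one by a standard covering-and-rescaling argument that localizes the estimate to a product domain $A_R(p)$ (or an ordinary ball away from the divisor), reduces it to a weight-free interpolation inequality on a fixed reference domain, and then absorbs the weights back using the definitions of $[\cdot]^{(k)}$ from Section \ref{Setting up of the main problems}. Since the singular set is never treated as a boundary, the distance $d_x$ is bounded below away from $D$ and behaves like an ordinary distance-to-boundary near $D$; in either case the local geometry after rescaling by $d_x$ is uniformly comparable to a fixed model, so the conical structure does not obstruct the argument.

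First I would establish the model (unweighted) interpolation inequalities on a fixed domain, say $A_2$ or a unit ball $B_2$, in the parabolic setting: for a function $w$ on $A_2\times[0,T]$ one has $[\nabla w]_{0}\le C(\epsilon)|w|_0+\epsilon[\nabla w]_{\alpha,\beta}$, and similarly for the zeroth-to-first and first-to-second order steps, with $\Delta$ replaced by $\sqrt{-1}\partial\bar\partial$ or any of the operators in $\mathfrak{T}$. These follow exactly as in \cite{GT} (or \cite{Lieberman}, \cite{LSU} for the parabolic version) once one notes that $\Delta_{\beta}$ and the pieces of $\sqrt{-1}\partial\bar\partial$ are constant-coefficient second order operators in the coordinates $(r,\theta,s_i)$, so the usual Taylor-expansion / contradiction-compactness proof of interpolation goes through verbatim on the model cone. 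The only mild subtlety is that one interpolates in the $(r,\theta,\hat x,t)$ variables simultaneously with the correct parabolic scaling $t\sim (\text{length})^2$, which is precisely what the $\frac{\alpha}{2}$-in-time seminorm is built for.

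Next I would run the weight-absorption step. For a fixed point $(x_0,t_0)$, set $R=\tfrac{1}{2}d_{x_0}$ and rescale: let $w(y,s)=u(x_0+Ry,\ t_0+R^2 s)$ on $A_2\times[\cdot]$ (translating so that $\hat x_0=0$ when $x_0$ is near $D$, as in Corollary \ref{Schauder estimate for the weak solution which is compactly supported in balls}). Applying the model inequality to $w$ and translating the seminorms of $w$ back into weighted seminorms of $u$ — using that $d_y\asymp R$ on $A_2$ and hence the weights $d_y^{k}$, $d_y^{k+\alpha}$ contribute exactly the powers of $R$ that convert $[\cdot]_{\alpha,\beta}(w)$ into $[\cdot]^{(k)}_{\alpha,\beta}(u)$ — yields the pointwise-at-$(x_0,t_0)$ version of each claimed inequality. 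Taking the supremum over $(x_0,t_0)$ gives the global statement. The third and fourth inequalities use in addition that $\Delta u$ (resp. $\sqrt{-1}\partial\bar\partial u$) is a fixed linear combination of the second-order operators in $\mathfrak{T}$ plus, for $\Delta$, the $\frac{\partial^2}{\partial s_i^2}$ terms, all of which scale correctly.

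The main obstacle — really the only point requiring care — is the bookkeeping of the weights at points close to the divisor: one must check that after the rescaling centered at $x_0$ the rescaled domain $A_2$ still ``sees'' the cone singularity in a scale-invariant way, so that the constant $C(\epsilon)$ in the model inequality can be taken independent of $x_0$ (in particular independent of $\mathrm{dist}(x_0,D)$). This is exactly the scale invariance of $g_{E,\beta}$ under $y\mapsto \lambda y$ together with the convention that $D$ is not a boundary; once that is recorded, the proof is a routine transcription of the arguments in \cite{GT}, \cite{Lieberman}, \cite{LSU}, which is why I would merely indicate it rather than write it out in full.
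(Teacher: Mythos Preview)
Your general framework of localizing and rescaling to reduce to a model inequality on a fixed domain is sound, and is implicit in the paper as well. However, there are two concrete gaps where the conical structure is not as innocuous as you suggest, and the paper's proof handles them by specific tricks that your proposal does not supply.

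\textbf{First inequality (the angular derivative).} The gradient $\nabla u$ contains the component $\frac{1}{\beta r}\partial_\theta u$, and this operator is \emph{not} constant-coefficient in $(r,\theta)$; it degenerates as $r\to 0$. A naive difference-quotient or Taylor-expansion argument (as in \cite{GT}) for this component would require moving a step of size $\sim d_x$ in the angular direction, which is impossible when $r\ll d_x$ since the circle of radius $r$ has circumference $\sim r$. Your rescaling by $R=\tfrac12 d_{x_0}$ does not help: the cone is scale-invariant, so after rescaling the point may still have $r\ll 1$ on the reference domain $A_2$, and the same obstruction persists. The paper's remedy is to observe that for each fixed $(r_0,\hat x)$ the function $\theta\mapsto u(r_0,\theta,\hat x)$ is periodic, hence $\partial_\theta u$ vanishes at some $\theta_0$; comparing $\frac{1}{\beta r}\partial_\theta u(r_0,\theta)$ to its value at $\theta_0$ using the H\"older seminorm along an arc of length $\leq C r_0$ yields the needed bound. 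Your proposal does not mention this mechanism.

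\textbf{Third and fourth inequalities (the $\Delta_\beta$ block).} Your assertion that ``$\Delta u$ (resp.\ $\sqrt{-1}\partial\bar\partial u$) is a fixed linear combination of the second-order operators in $\mathfrak T$ plus the $\partial_{s_i}^2$ terms'' is incorrect. Inspecting the definition of $\mathfrak T$ in (\ref{Def of mathfrak T}), one sees it contains only the mixed operators $\partial_r\partial_{s_i}$, $\partial_{s_i}\partial_{s_j}$, $\frac{1}{r}\partial_{s_i}\partial_\theta$; the operator $\Delta_\beta=\partial_r^2+\frac{1}{r}\partial_r+\frac{1}{\beta^2 r^2}\partial_\theta^2$ is \emph{not} a combination of these, nor are its pieces $\partial_r^2$ or $\frac{1}{r^2}\partial_\theta^2$ individually controlled by the $C^{2,\alpha,\beta}$ norm (cf.\ the proof of Proposition \ref{Schauder estimate for the weak solution which is compactly supported}, where $\Delta_\beta u$ is recovered only via the equation). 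Likewise $\partial_\xi\partial_{\bar\xi}=\tfrac14\Delta_\beta$ is not constant-coefficient in $(r,\theta)$, contrary to what you wrote. The paper therefore does \emph{not} interpolate these components pointwise: it uses the divergence theorem to write the average of $\Delta u$ over a small polydisk as a boundary flux of $\nabla u$ (and, for $\sqrt{-1}\partial\bar\partial$, the analogous integration by parts on holomorphic disks), which gives the bound $|\Delta u(x_0)|\leq C h^{-1}|\nabla u|_0 + C h^\alpha[\Delta u]_\alpha$ directly. This is the ``integration by parts'' the paper refers to, and it is not captured by the generic scheme you outlined.
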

\begin{proof} {of Lemma \ref{parabolic intepolations}: }The second item is by  standard calculations in \cite{GT}. We  need to point out that, on the first  inequality $$[\nabla u]^{(1)}_{0,\Omega\times[0,T]}\leq C(\epsilon)|u|_{0,\Omega \times[0,T]}+\epsilon[\nabla u]^{(1)}_{\alpha,\beta,\Omega\times[0,T]}, $$
we have to deal with the term $\frac{1}{r}\frac{\partial u}{\partial \theta}$, in a different way. And the observation is that, for any $\widehat{x}$ and $r_0>0$, over every level surface $r=r_0$, there is a $\theta_0$, such that $\frac{\partial u}{\partial \theta}(r_0,\theta_0,\widehat{x})=0$.\\

The third item concerns integration by parts. The fourth item concern integration by parts over holomorphic disks. The rest  of the proof are regular as in .
\end{proof}

Similarly, using parabolic weights we can prove the following intepolation inequalities, which are applied also in the proof of Theorem \ref{Schauder estimate of the linear equation: single divisor} in section \ref{Holder estimate of the singular integrals and proof of the main Schauder estimates}.
\begin{lem}\label{parabolic timewise intepolations} For any $\epsilon>0$ and domain $\Omega$,  the following parabolic interpolation inequalities hold.
\[\left\{ \begin{array}{lcl}
 [\nabla u]^{[1]}_{0,\Omega\times[0,T]} & \leq & C(\epsilon)|u|_{0,\Omega \times[0,T]}+\epsilon[\nabla u]^{(1)}_{\alpha,\frac{\alpha}{2},\beta,\Omega\times[0,T]}; \\

 [u]^{[*]}_{\alpha,\frac{\alpha}{2},\beta,\Omega\times[0,T]}& \leq & \epsilon[\nabla u]^{[1]}_{0,\Omega\times[0,T]}+C(\epsilon)[u]_{0,\Omega\times[0,T]};\\

  |\Delta u|^{[2]}_{0,\Omega\times[0,T]}& \leq & C(\epsilon) |\nabla u|^{[1]}_{0,\Omega\times[0,T]}+ \epsilon|\Delta u|^{[2]}_{\alpha,\frac{\alpha}{2},\beta,\Omega\times[0,T]};\\
  
  |\sqrt{-1}\partial \bar{\partial} u|^{[2]}_{0,\Omega\times[0,T]}& \leq & C(\epsilon) |\nabla u|^{[1]}_{0,\Omega\times[0,T]}+ \epsilon|\sqrt{-1}\partial \bar{\partial} u|^{[2]}_{\alpha,\frac{\alpha}{2},\beta,\Omega\times[0,T]}.
   \end{array}\right.\]
\end{lem}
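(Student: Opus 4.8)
The plan is to follow the proof of Lemma~\ref{parabolic intepolations} almost verbatim, replacing the spatial weight $d_x$ by the parabolic weight $l_{(x,t)}=\min\{d_x,\,|t|^{1/2}\}$ and replacing spatial balls by parabolic cylinders. Fix $(x_0,t_0)\in\Omega\times[0,T]$ and put $\rho=\tfrac{1}{4}l_{(x_0,t_0)}$; on the (possibly truncated) cylinder $Q=\big(A_\rho(x_0)\times[t_0-\rho^2,t_0]\big)\cap\big(\Omega\times[0,T]\big)$ near the divisor, and on the analogous Euclidean cylinder $B_\rho(x_0)\times[t_0-\rho^2,t_0]$ away from it, the weight $l_{(\cdot,\cdot)}$ is comparable to $\rho$. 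Hence each of the four weighted inequalities reduces to an unweighted interpolation on $Q$ of the schematic form $\rho\,[\,\phi\,]_{0,Q}\le C(\epsilon')\,[\,\psi\,]_{0,Q}+\epsilon'\,\rho^{1+\alpha}[\,\phi\,]_{\alpha,\frac{\alpha}{2},\beta,Q}$, after which one chooses $\epsilon'$ in terms of $\epsilon$ and takes the supremum over $(x_0,t_0)$. In particular item~1 already follows from its spatial-weight counterpart in Lemma~\ref{parabolic intepolations}, since $l_{(x,t)}\le d_x$ shrinks the left-hand side while the parabolic H\"older seminorm dominates the spatial one on the right.

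For the second item I would argue as in \cite{GT}: on $Q$ bound $|u(x,t_1)-u(y,t_2)|$ by first moving in space along a path of length $\le C|x-y|$ inside $A_\rho(x_0)$ and then moving in time, so that $|u(x,t_1)-u(y,t_2)|\le C|x-y|\sup_Q|\nabla u|+|t_1-t_2|\sup_Q|\partial_t u|$; combining $\sup_Q|\partial_t u|\le C\rho^{-1}\sup_Q|\nabla u|$ with the dichotomy $|t_1-t_2|\le\rho^2$ versus $|t_1-t_2|\ge\rho^2$ produces the parabolic exponent $|x-y|^{\alpha}+|t_1-t_2|^{\alpha/2}$. The first item is obtained by the same device applied to $Du$ for $D$ among $\partial_r$, $\tfrac1r\partial_\theta$, $\partial_{s_i}$: integrate $Du$ from a point where it is small — for the radial and $s_i$ directions, a critical point of $u$ along the relevant segment; for the tangential direction, the observation used in the proof of Lemma~\ref{parabolic intepolations} that on every circle $r=r_0$ there is an angle at which $\partial_\theta u$ vanishes, which makes $\tfrac1r\partial_\theta u$ integrable in $\theta$ — and then interpolate the resulting inequality in $\rho$.

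The third and fourth items are proved by integration by parts, exactly as in Lemma~\ref{parabolic intepolations}: for $|\Delta u|^{[2]}_{0}$ one tests $\Delta u$ against a smooth cutoff on $Q$ and transfers one derivative onto the cutoff, estimating the remainder by $|\nabla u|^{[1]}_0$ plus an $\epsilon$-multiple of $|\Delta u|^{[2]}_{\alpha,\frac{\alpha}{2},\beta}$; for $|\sqrt{-1}\partial\bar\partial u|^{[2]}_{0}$ one does the same over the holomorphic disks adapted to the cone, using that $\partial^2/\partial\xi\partial\bar\xi=\tfrac14\Delta_\beta$ is a genuine elliptic operator there. I expect the main difficulty to be the bookkeeping at the initial slice $t=0$, where $l_{(x,t)}$ is governed by $\sqrt t$ rather than by $d_x$: the cylinder $Q$ must then be truncated at $t=0$, and one has to check that the weight stays comparable to $\rho$ on the whole truncated cylinder and that the time-direction interpolation still closes. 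That point, together with the tangential $\tfrac1r\partial_\theta$ term near $D$, is precisely where the argument leaves the classical template of \cite{GT}, \cite{Lieberman}, \cite{LSU} and requires care; the remaining estimates are routine.
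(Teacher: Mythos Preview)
Your proposal is essentially the paper's own approach: the paper gives no independent proof of this lemma and simply writes ``Similarly, using parabolic weights we can prove the following interpolation inequalities,'' pointing back to Lemma~\ref{parabolic intepolations}. Your plan to localize to a parabolic cylinder of scale $\rho\sim l_{(x_0,t_0)}$, reduce to unweighted interpolation, and re-use the two nonstandard devices from Lemma~\ref{parabolic intepolations} (the vanishing of $\partial_\theta u$ at some angle on each circle for the $\tfrac1r\partial_\theta u$ term, and integration by parts over holomorphic disks for $\sqrt{-1}\partial\bar\partial u$) is exactly what the paper intends.

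One step in your write-up does not hold as written: in the second item you invoke $\sup_Q|\partial_t u|\le C\rho^{-1}\sup_Q|\nabla u|$, which is false for a general $u$ and is not available at this stage. The time-H\"older bound for $u$ cannot be manufactured from $|u|_0$ and $|\nabla u|_0$ alone; in the paper's scheme these interpolation inequalities are only ever applied to functions for which the full parabolic $C^{2+\alpha,1+\frac{\alpha}{2},\beta}$ norm is already under control (cf.\ the hypothesis in Lemma~\ref{parabolic intepolations 2} and the way Lemmas~\ref{parabolic intepolations}--\ref{parabolic timewise intepolations} feed into Theorem~\ref{Schauder estimate: constant coefficient}), so in practice one absorbs the time-direction contribution into the top-order seminorm on the right rather than into $|\nabla u|_0$. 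If you rewrite the second item with that in mind, the rest of your argument goes through and matches the paper.
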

Apply the techniques in the previous sections we have the following lower order Schauder estimates.
\begin{lem}\label{parabolic intepolations 2}Suppose $\alpha<(\min{\frac{1}{\beta}-1,1})$. The following lower order Schauder estimates for the heat operator are true for all $u\in C^{2+\alpha,1+\frac{\alpha}{2},\beta}(\Omega\times[0,T])$ with zero initial value.
\[ \left\{ \begin{array} {lcl}
 |i\partial\bar{\partial}u|^{(2)}_{0,\Omega\times[0,T]} & \leq & C(\alpha)|u|_{0,\Omega\times[0,T]}+C(\alpha)|(\Delta -\frac{\partial }{\partial t})u|^{(2)}_{\alpha,\beta,\Omega\times[0,T]}; \\

  |\frac{\partial u}{\partial t}|^{(2)}_{0,\Omega\times[0,T]}& \leq & C(\alpha)|u|_{0,\Omega\times[0,T]}+ C(\alpha)|(\Delta -\frac{\partial }{\partial t})u|^{(2)}_{\alpha,\beta,\Omega\times[0,T]};  \\

  |\nabla u|^{(1)}_{\alpha,\frac{\alpha}{2},\beta,\Omega\times[0,T]}& \leq & C_4|u|_{0,\Omega\times[0,T]}+C_4|(\Delta-\frac{\partial}{\partial t}) u|^{(2)}_{0,\Omega\times[0,T]}.
\end{array} \right.
\]
\end{lem}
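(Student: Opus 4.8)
\textbf{Proof proposal for Lemma \ref{parabolic intepolations 2}.}

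The plan is to derive these lower-order Schauder estimates from the top-order estimate in Proposition \ref{Schauder estimate for the weak solution which is compactly supported} (equivalently, Theorem \ref{Schauder estimate: constant coefficient}) combined with the interpolation inequalities of Lemma \ref{parabolic intepolations}, exactly in the spirit of \cite{GT} Chapter 4 and the Lieberman-style parabolic adaptations in \cite{Lieberman},\cite{LSU}. The key observation is that the top-order seminorm $[\sqrt{-1}\partial\bar\partial u]_{\alpha,\frac{\alpha}{2},\beta}$ and $[\frac{\partial u}{\partial t}]_{\alpha,\frac{\alpha}{2},\beta}$ are already controlled by $[f]_{\alpha,\frac{\alpha}{2},\beta}$ (where $f=(\Delta-\frac{\partial}{\partial t})u$), so what remains is to bound the corresponding $C^0$ norms $|\sqrt{-1}\partial\bar\partial u|^{(2)}_0$, $|\frac{\partial u}{\partial t}|^{(2)}_0$, and the gradient H\"older seminorm $|\nabla u|^{(1)}_{\alpha,\frac{\alpha}{2},\beta}$ by $|u|_0$ plus the (weaker) $C^0$-type norm $|f|^{(2)}_{\alpha,\beta}$ or $|f|^{(2)}_0$.

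First I would establish the $C^0$ bounds. For $|\sqrt{-1}\partial\bar\partial u|^{(2)}_0$ and $|\frac{\partial u}{\partial t}|^{(2)}_0$: apply Proposition \ref{Schauder estimate for the weak solution which is compactly supported} (cut off $u$ by a suitable cutoff to localize into polydisks $A_R$ near $D$ and ordinary balls away from $D$ as in the proof of Theorem \ref{Schauder estimate: constant coefficient}) to get the weighted $\alpha$-seminorm bounds, then use the standard Morrey/interpolation converse: a weighted $C^{\alpha}$ seminorm bound together with the $C^0$ bound of $u$ controls the weighted $C^0$ norm of the second derivatives, via the third and fourth items of Lemma \ref{parabolic intepolations} (absorbing the $\epsilon|\sqrt{-1}\partial\bar\partial u|^{(2)}_{\alpha,\beta}$ term, which in turn is controlled by $|f|^{(2)}_{\alpha,\beta}$ plus lower order). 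Concretely, $|\sqrt{-1}\partial\bar\partial u|^{(2)}_0 \le C(\epsilon)|\nabla u|^{(1)}_0 + \epsilon|\sqrt{-1}\partial\bar\partial u|^{(2)}_{\alpha,\beta}$, then $|\nabla u|^{(1)}_0\le C(\epsilon')|u|_0+\epsilon'[\nabla u]^{(1)}_{\alpha,\beta}$, and the top-order seminorm $[\nabla u]^{(1)}_{\alpha,\beta}$ (hence $|\sqrt{-1}\partial\bar\partial u|^{(2)}_{\alpha,\beta}$) is $\le C|f|^{(2)}_{\alpha,\beta}$ from the already-proven estimates; choosing $\epsilon,\epsilon'$ small and absorbing gives the first two claimed inequalities. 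The $\frac{\partial u}{\partial t}$ estimate is identical since $\frac{\partial u}{\partial t} = \Delta u - f$ and $\Delta u = \Delta_\beta u + \sum \partial^2_{s_i} u$ is a fixed linear combination of the terms in $\sqrt{-1}\partial\bar\partial u$.

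For the third inequality, $|\nabla u|^{(1)}_{\alpha,\frac{\alpha}{2},\beta}\le C_4|u|_0 + C_4|f|^{(2)}_0$: the H\"older seminorm of $\nabla u$ is an intermediate quantity between $|\nabla u|^{(1)}_0$ (just bounded) and $|\sqrt{-1}\partial\bar\partial u|^{(2)}_0$ together with $|\frac{\partial u}{\partial t}|^{(2)}_0$ (also just bounded) — indeed a first-derivative quantity with H\"older exponent $\alpha$ interpolates between the zeroth and second weighted derivative norms — so one more application of the interpolation inequalities of Lemma \ref{parabolic intepolations}/\ref{parabolic timewise intepolations} finishes it, using the hypothesis $\alpha<\min\{\frac{1}{\beta}-1,1\}$ to guarantee the relevant H\"older spaces embed correctly and the spatial-versus-parabolic weight bookkeeping matches. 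The main obstacle — and the only genuinely non-routine point — is the same subtlety flagged in the proof of Lemma \ref{parabolic intepolations}: the angular derivative $\frac{1}{r}\frac{\partial u}{\partial\theta}$ must be handled by the device that on each circle $\{r=r_0\}\times\{\widehat x\}$ there is an angle $\theta_0$ with $\frac{\partial u}{\partial\theta}(r_0,\theta_0,\widehat x)=0$, so that $\frac1r\partial_\theta u$ can be recovered by integrating $\frac1r\partial^2_{\theta\theta} u$ (a component of $\Delta_\beta u$) around the circle; away from this, every step is the classical Gilbarg--Trudinger / Ladyzhenskaya--Solonnikov--Uraltseva argument transcribed to the conical weights, which I would not grind through in detail.
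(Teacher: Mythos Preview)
Your argument for the first two inequalities is correct and matches the paper's intended approach: the paper merely says ``Apply the techniques in the previous sections,'' and your combination of Proposition \ref{Schauder estimate for the weak solution which is compactly supported} with the interpolation inequalities of Lemma \ref{parabolic intepolations} (absorbing the small-$\epsilon$ terms) is exactly the routine the paper has in mind.

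There is, however, a genuine gap in your treatment of the third inequality. You propose to obtain $|\nabla u|^{(1)}_{\alpha,\frac{\alpha}{2},\beta}\le C|u|_0+C|f|^{(2)}_0$ by interpolating the gradient H\"older norm between $|\nabla u|^{(1)}_0$ and the second-order quantities $|\sqrt{-1}\partial\bar\partial u|^{(2)}_0$, $|\partial_t u|^{(2)}_0$ that were ``just bounded.'' But those second-order quantities were bounded, via the first two inequalities, only in terms of $|f|^{(2)}_{\alpha,\beta}$, not $|f|^{(2)}_0$. Feeding that into your interpolation yields $|\nabla u|^{(1)}_{\alpha,\frac{\alpha}{2},\beta}\le C|u|_0+C|f|^{(2)}_{\alpha,\beta}$, which is strictly weaker than what is claimed. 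There is no way to recover the missing strength by pure interpolation: a pointwise bound on $\sqrt{-1}\partial\bar\partial u$ (or $\partial_t u$) in terms of $|f|_0$ alone is the $L^\infty$ endpoint of Calder\'on--Zygmund and is simply false in general.

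The right route for the third inequality is to go back to the heat-kernel representation $u=-H\ast f$ and estimate $\nabla_x(H\ast f)$ directly, using the \emph{first-order} kernel bounds of Theorem \ref{all the properties of the h.k1} (items 2, 4, 7, i.e.\ Lemmas \ref{integral of gradient of hk w.r.t time}, \ref{integral of gradient of time derivative of the hk}, and the second estimate in Lemma \ref{Holder estimate of the integral of spatial derivative of h.k with respect to time}). Since $\nabla_x H$ is one order less singular than $\mathfrak D H$, the singular-integral argument of Propositions \ref{Holder estimate of the spatial  derivative of the singular integral}--\ref{time holder estimate:time derivative} goes through with $|f|_0$ in place of $[f]_\alpha$; this is precisely what the paper's phrase ``apply the techniques in the previous sections'' is pointing to.
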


The next theorem is about the necessary $C^{0}$ estimate. The proof of it  only involves the parabolic maximal principle, which works in our conical $C^{2+\alpha,1+\frac{\alpha}{2},\beta}[0,T]$ setting. We refer the readers  to \cite{CYW} for a full proof. The idea originates from Jeffres' work on the elliptic case in \cite{Jeffres}.  
\begin{lem}\label{C0 estimate} Suppose 
 $\{ a(t),\ t\in [0,T]\} $ is a $C^{\alpha,\frac{\alpha}{2},\beta}[0,T]$  family of $(\alpha, \beta)$ metrics (See Definition \ref{Def of a C 2 alpha, alpha over 2 family of  metrics}).   Suppose $u \in C^{2+\alpha,1+\frac{\alpha}{2},\beta}[0,T]$ solves the following parabolic equation. 
 \begin{equation}\label{heat equation with metric g}
\frac{\partial u}{\partial t}=\Delta_{a(t)}u+v,\ u(0,x)=0. \end{equation}
Then  the following estimate holds. $$|u|_{0, {M}\times[0,T]}\leq C \ast T\ast |v|_{0,{M}\times[0,T]}.$$
 
\end{lem}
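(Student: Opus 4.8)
\textbf{Proof proposal for Lemma \ref{C0 estimate}.}

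The plan is to prove the $C^0$ bound $|u|_{0,M\times[0,T]}\leq C\,T\,|v|_{0,M\times[0,T]}$ by a barrier argument based on the parabolic maximum principle, adapted to the conical setting so that the singular divisor $D$ is never treated as a boundary. First I would set $K=|v|_{0,M\times[0,T]}$ and consider the two auxiliary functions $w^{\pm}(x,t)=\pm\,(u(x,t))\mp K\,t$; equivalently, define $\Psi(x,t)=Kt - u(x,t)$ and $\Phi(x,t)=Kt+u(x,t)$. A direct computation using the equation $\partial_t u=\Delta_{a(t)}u+v$ gives $\partial_t\Psi=K-\Delta_{a(t)}u-v=\Delta_{a(t)}\Psi-\Delta_{a(t)}(Kt)+(K-v)=\Delta_{a(t)}\Psi+(K-v)\geq \Delta_{a(t)}\Psi$, since $\Delta_{a(t)}(Kt)=0$ and $K-v\geq 0$ by the choice of $K$; similarly $\partial_t\Phi\leq\Delta_{a(t)}\Phi$ fails to have a sign issue because $\partial_t\Phi=\Delta_{a(t)}\Phi+(K+v)\geq\Delta_{a(t)}\Phi$. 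Moreover $\Psi(\cdot,0)=\Phi(\cdot,0)=0$, and both functions lie in $C^{2+\alpha,1+\frac{\alpha}{2},\beta}[0,T]$ because $u$ does and $Kt$ is smooth. So it suffices to show that a function in this conical Hölder class which is a supersolution of $\partial_t\,-\,\Delta_{a(t)}$ with nonnegative initial value stays nonnegative; applying this to $\Psi$ and $\Phi$ gives $-Kt\leq u\leq Kt$ on $M\times[0,T]$, hence the claim with $C=1$ (or a harmless constant from the normalization of $\Delta_{a(t)}$).

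The key step is therefore a \emph{weak parabolic minimum principle on the compact conical manifold $(M,(1-\beta)D)$}: if $\Psi\in C^{2+\alpha,1+\frac{\alpha}{2},\beta}[0,T]$ satisfies $\partial_t\Psi\geq\Delta_{a(t)}\Psi$ on $(M\setminus D)\times(0,T]$ and $\Psi(\cdot,0)\geq 0$, then $\Psi\geq 0$ everywhere. Since $M$ is compact and $\Psi$ is continuous on $M\times[0,T]$ (the $C^{2+\alpha,\ldots,\beta}$ norm controls $|\Psi|_0$ up to and including $D$), the minimum of $\Psi$ over $M\times[0,\tau]$ is attained at some point $(x_0,t_0)$. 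If $t_0=0$ we are done; if $x_0\in M\setminus D$ this is the classical interior situation and the standard parabolic maximum principle (plus the $+\epsilon t$ trick to handle the non-strict inequality, as in \cite{GT} or \cite{Lieberman}) forces $\Psi(x_0,t_0)\geq\Psi(\cdot,0)\geq 0$. The genuinely conical point is the case $x_0\in D$. Here one works in a model chart where $a(t)$ is uniformly equivalent to the standard cone metric $g_{E,\beta}$ on $A_R$, and uses a barrier adapted to the cone: because $\Psi\in C^{2+\alpha,1+\frac{\alpha}{2},\beta}$, the gradient $\nabla\Psi$ has the bound $|\nabla\Psi|\leq C r^{?}$-type decay (more precisely $|\nabla u|^{(1)}_0$ is finite, so $|\nabla\Psi|$ is bounded), and one can add a small multiple of the function $\rho(x)^{2\beta}$ (a subsolution of $\Delta_{E,\beta}$ near the cone vertex, up to lower order, because $\Delta_{E,\beta}(r^{2\beta})=(2\beta)^2 r^{2\beta-2}\geq 0$) — or simply the distance-squared function — to make the candidate minimum point move off $D$, reducing to the interior case. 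Alternatively, and perhaps cleaner, one invokes the Jeffres-type argument from \cite{Jeffres}: near $D$ one uses that $\Psi$ extends continuously and that the only way a supersolution touches its minimum on $D$ is if it is locally constant, which contradicts $\Psi(\cdot,0)\geq 0$ unless $\Psi\equiv 0$ on a neighborhood; a connectedness/propagation argument then finishes it.

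I expect the main obstacle to be precisely this behavior at the singular set: making rigorous that the maximum principle is not obstructed by $D$, i.e. that an interior minimum of a conical supersolution cannot be a \emph{spurious} one sitting on $D$ where the operator degenerates. The cleanest route is to exploit the regularity $\Psi\in C^{2+\alpha,1+\frac{\alpha}{2},\beta}$ heavily: this class guarantees $\Delta_{a(t)}\Psi$ is a bona fide continuous function up to $D$ (it is $C^{\alpha,\beta}$), so the differential inequality holds in the classical sense across $D$ as well, and then the ordinary compact-manifold parabolic minimum principle applies directly with $M$ (not $M\setminus D$) as the underlying space — there simply is no boundary. Under that reading the proof is short: take $\min_{M\times[0,\tau]}\Psi$, observe it cannot decrease below $\min_{M\times\{0\}}\Psi=0$ by the Hopf-type argument with the $\epsilon t$ perturbation, let $\tau\to T$. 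The remaining care is only to verify that $\Delta_{a(t)}$, built from a $C^{\alpha,\frac{\alpha}{2},\beta}$ family of $(\alpha,\beta)$ metrics, has continuous coefficients in the conical coordinates after the transformations $\iota_{\beta}$ of Section \ref{Setting up of the main problems}, so that the elliptic term genuinely vanishes at an interior spatial minimum. For full details, including the treatment of the degenerate directions at $D$, I would refer the reader to \cite{CYW}, following the strategy of \cite{Jeffres} in the elliptic case, and record here only the barrier computation $\partial_t\Psi\geq\Delta_{a(t)}\Psi$, $\Psi(\cdot,0)=0$, which is the heart of the matter.
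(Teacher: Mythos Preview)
Your proposal is correct and matches the paper's own approach: the paper does not give a detailed argument either, saying only that the estimate follows from the parabolic maximum principle in the conical $C^{2+\alpha,1+\frac{\alpha}{2},\beta}$ setting, with the treatment of the singular divisor following Jeffres' idea \cite{Jeffres}, and refers to \cite{CYW} for full details. Your barrier functions $\Psi=Kt-u$ and $\Phi=Kt+u$ and the accompanying discussion of why the minimum principle survives across $D$ are exactly the intended mechanism.
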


Xiuxiong chen, Department of Mathematics, Stony Brook University,
NY, USA;\ \ xiu@math.sunysb.edu.\\

Yuanqi Wang, Department of Mathematics, University of California  at Santa Barbara, Santa Barbara,
CA,  USA;\ \ wangyuanqi@math.ucsb.edu.
\end{document}